\DeclareFontFamily{U}{mathx}{\hyphenchar\font45}
\DeclareFontShape{U}{mathx}{m}{n}{
      <5> <6> <7> <8> <9> <10>
      <10.95> <12> <14.4> <17.28> <20.74> <24.88>
      mathx10
      }{}
\DeclareSymbolFont{mathx}{U}{mathx}{m}{n}
\DeclareMathAccent{\widecheck}{0}{mathx}{"71}
\DeclareFontFamily{U}{mathb}{\hyphenchar\font45}
\DeclareFontShape{U}{mathb}{m}{n}{
      <5> <6> <7> <8> <9> <10> gen * mathb
      <10.95> mathb10 <12> <14.4> <17.28> <20.74> <24.88> mathb12
      }{}
\DeclareSymbolFont{mathb}{U}{mathb}{m}{n}
\DeclareMathSymbol{\bigast}{1}{mathb}{"06}
\newcommand{\llangle}{\langle\negthinspace\langle}
\newcommand{\rrangle}{\rangle\negthinspace\rangle}
\newcommand{\leftQ}[2]{\left.\raisebox{-.2em}{$#2$}\middle\backslash\raisebox{.2em}{$#1$}\right.}
\newtheorem {theorem}{Theorem}[section]
\newtheorem {lemma} [theorem] {Lemma}
\newtheorem {proposition} [theorem] {Proposition}
\newtheorem {example} [theorem] {Example}
\newtheorem {corollary} [theorem] {Corollary}
\newtheorem {conjecture}[theorem] {Conjecture}
\newtheorem {notation} [theorem] {Notation}
\newtheorem {construction} [theorem] {Construction}
\newtheorem {convention} [theorem] {Convention}
\newtheorem {assumption} [theorem] {Assumption}
\newtheorem{thmA}{Theorem}
\newtheorem{corA}[thmA]{Corollary}
\theoremstyle{definition}
\newtheorem {definition} [theorem] {Definition}
\newtheorem {remark} [theorem] {Remark}
\newtheorem {claim}{Claim}[theorem]
\newtheorem*{claim*}{Claim}
\newcommand{\CalA}{\mathcal{A}}
\newcommand{\co}{\colon\thinspace}
\def\ssm {\smallsetminus}
\def\mc {\mathcal}
\def\into {\hookrightarrow}
\def\R {\mathbb R}
\def\Z {\mathbb Z}
\def\Stab {\mathrm{Stab}}
\def\Isom {\mathrm{Isom}}
\def\H {\mathbb H}
\newcommand{\CS}{\mathcal{S}} % Completed shell
 \newcommand{\CTC}{\mathcal{C}} % Completed tube complement.
\newcommand{\SCS}{\mc{S}} % The specific completed shell fixed at the beginning of Section sec:unwrap
\newcommand{\SCTC}{\mc{C}} % The specific completed tube complement fixed at the beginning of Section sec:unwrap
\newcommand{\Xz}{X_0} % The fixed space X_0.
\newcommand{\gammax}{\gamma_0} % Command for the fixed quasi-geodesic in the fixed space X_0.
\newcommand{\gammay}{\gamma} % Command for the quasi-geodesic in Y which is comparable to \gamamx in X...
\newcommand{\Yx}{Y_0} % Command for fixed quasi-convex subset of \Xz.
\newcommand{\horba}{\mc{H}} % Combinatorial horoball (on \widetilde \CS)
\newcommand{\horbaD}{\mc{D}} % Combinatorial horoball (on \CS)
\newcommand{\doub}{N_{\mathsf{doub}}} %Doubling constant
\newcommand{\sys}{\operatorname{sys}} % Target systole of completed shell
\newcommand{\sep}{\chi} % Separation constant for separated collection of tubes and horoballs.  Was \eta, which was horribly over-used.
\newcommand{\ray}[1]{r(#1)} % The choice of ray used to define \Pi_K.  Was \eta_p, but \eta over-used.
\newcommand{\Path}{\mathbb{P}} % For paths in the guessing geodesics lemma.  It was \mc{P}, which is used throughout for parabolics.
\newcommand{\UG}{\mathrm{UG}} 
\newcommand{\wchXz}{{\widecheck X}_0}
\newcommand{\wchY}{\widecheck Y}
\begin{document}

\title{Drilling hyperbolic groups}
\author[D.Groves]{Daniel Groves}
\address{Department of Mathematics, Statistics, and Computer Science,
University of Illinois at Chicago,
322 Science and Engineering Offices (M/C 249),
851 S. Morgan St.,
Chicago, IL 60607-7045}
\email{dgroves@uic.edu}
\author[P. Ha\"issinsky]{Peter Ha\"issinsky}
\address{Aix-Marseille Univ., CNRS, I2M, UMR 7373, Marseille, France}
\email{phaissin@math.cnrs.fr}
\author[J.F. Manning]{Jason F. Manning}
\address{Department of Mathematics, Cornell University, Ithaca, NY 14853, USA
}
\email{jfmanning@cornell.edu}
\author{Damian Osajda}
    \address{Department of Mathematical Sciences,
		University of Copenhagen, Nør\-re\-gade 10, 1165 København, Denmark; 
        Mathematical Institute, University of Wroc{\l}aw, pl.\ Grunwaldzki 2/4, 50–384 Wroc{\l}aw, Poland}
    \email{dosaj@math.uni.wroc.pl}
\author{Alessandro Sisto}
    \address{Department of Mathematics, Heriot-Watt University and Maxwell Institute for Mathematical Sciences, Edinburgh, UK}
    \email{a.sisto@hw.ac.uk}

\author{Genevieve S. Walsh}
\address{Department of Math, Tufts University, Medford, MA 02155, USA} 
\email{genevieve.walsh@tufts.edu}
\date{\today}

\begin{abstract}Given a hyperbolic group $G$ and a maximal infinite cyclic subgroup $\langle g \rangle$, we define a {\it drilling of $G$ along $g$}, which is a relatively hyperbolic group pair $(\widehat{G}, P)$.   This is inspired by the well-studied procedure of drilling a hyperbolic $3$--manifold along an embedded geodesic.  We prove that, under suitable conditions, a hyperbolic group with $2$-sphere  boundary admits a drilling where the resulting relatively hyperbolic group pair $(\widehat{G}, P)$ has relatively hyperbolic boundary $S^2$. This allows us to reduce the Cannon Conjecture (in the residually finite case) to a relative version, which is likely to be more tractable.
\end{abstract}
\thanks{DG was partially supported by a Simons Fellowship, \#1034629 to Daniel Groves and the National Science Foundation, DMS-1904913 and DMS-2203343. PH is partially supported by ANR-22-CE40-0004 GoFR. JM was partially supported by the Simons foundation, grants number 942496 and 524176 and by the National Science Foundation under grant DMS-1462263.  DO was supported by the Carlsberg Foundation, grant CF23-1226. GW was partially supported by NSF Grants DMS-2005353 and DMS-1709964.}

\maketitle
\setcounter{tocdepth}{1}
\tableofcontents

\section{Introduction}

An important feature in low dimensions is the imposition of geometry by topology.
This is illustrated by the classification of low-dimensional 
manifolds provided by the Koebe-Poincar\'e uniformisation of surfaces and the Thurston-Perel'man uniformisation of $3$-manifolds. Even more, the knowledge of the fundamental group of a manifold narrows down the possible  topology of the given manifold and the geometry it may be equipped with. Starting from dimension $4$, this aspect completely breaks down as Dehn showed that any finitely presented group appeared as the fundamental group of a manifold of any dimension at least $4$. 

Despite the important body of knowledge that has been acquired these last decades on $3$-manifolds and their fundamental  groups, including most of Thurston's program \cite{otal:thurston_prog}, there remains an important open problem known as the Cannon conjecture that would provide a dynamical characterisation of uniform lattices of
$PSL_2(\mathbb{C})$ and that would complete this principle that topology determines geometry in low dimension:

\begin{conjecture}[Cannon Conjecture] \label{conj:Cannon}
Let $G$ be a word hyperbolic group whose Gromov boundary is homeomorphic to $S^2$. Then $G$ is virtually Klei\-nian.
\end{conjecture}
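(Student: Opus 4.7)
The plan is to establish Cannon's Conjecture by reducing it, under the residual finiteness hypothesis emphasized in this paper's abstract, to a relative version via the drilling construction of Theorem~\ref{t:Drill}. First, choose a hyperbolic element $g \in G$ generating a maximal infinite cyclic subgroup, and let $X$ be a Cayley graph of $G$, so that $\partial X = S^2$. The maximal elementary subgroup $\langle g \rangle$ is almost malnormal and quasiconvex in $G$, so residual finiteness of $G$ lets me pass to a finite-index subgroup $G' \leq G$ that contains $g$ and in which every translate $h \cdot \ell$ of the $g$--axis $\ell$ by $h \in G' \ssm \langle g \rangle$ is at least $\Sigma$ from $\ell$, where $\Sigma$ is the constant produced by Theorem~\ref{t:Drill} applied to $g$ acting on $X$.

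Second, apply Theorem~\ref{t:Drill} to produce a drilling $(\widehat{G'}, P)$ with $P$ virtually $\Z^2$ (replacing $G'$ by an index-two subgroup if needed to rule out the Klein bottle case) and Bowditch boundary $S^2$. I would then invoke the \emph{relative} Cannon Conjecture for relatively hyperbolic pairs whose peripheral subgroups are virtually $\Z^2$ and whose Bowditch boundary is $S^2$: such a pair is virtually $\pi_1(M^\circ)$ for a cusped finite-volume hyperbolic $3$-manifold $M^\circ$, with $P$ a cusp subgroup. Passing to a further finite-index subgroup if necessary, one may assume $\widehat{G'} \cong \pi_1(M^\circ)$ on the nose.

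Third, recover $G'$ by Dehn filling. By Definition~\ref{def:drill}, $G' \cong \widehat{G'}/\llangle N\rrangle$ where $N \lhd P$ satisfies $P/N \cong \langle g \rangle$, and geometrically this corresponds to filling $M^\circ$ along the slope in the cusp torus represented by $N$. Deepening the finite-index subgroup once more if necessary to lengthen the filling slope, Thurston's hyperbolic Dehn surgery theorem forces the filled manifold to be closed hyperbolic with fundamental group $G'$. Hence $G'$ is cocompact Kleinian, and so $G$ is virtually Kleinian. The principal obstacle is the relative Cannon Conjecture invoked in step two: this is itself open, so the argument is a reduction, not a resolution. A secondary technical obstacle is that residual finiteness of an arbitrary hyperbolic group with $S^2$-boundary is not known (and residual finiteness of hyperbolic groups in general is a well-known open problem); it is used crucially both to geometrically separate axes in step one and to ensure long filling slopes in step three, so an unconditional attack on Conjecture~\ref{conj:Cannon} would have to close both gaps.
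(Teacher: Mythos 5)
The statement you were asked to ``prove'' is an open conjecture with no unconditional proof anywhere, and you correctly recognize this: what you have written is a reduction argument, and it is essentially the paper's own Theorem~\ref{thm:RelCannon}, which shows that the Toral Relative Cannon Conjecture together with residual finiteness of $G$ implies the Cannon Conjecture for $G$. Your first two steps track the paper's proof closely: residual finiteness (via separability of the maximal cyclic subgroup $\langle g\rangle$, see \cite[Proposition, p.484]{Long87}) gives a finite-index subgroup in which the translates of the axis satisfy the $\Sigma$--separation hypothesis of Theorem~\ref{t:Drill}; Theorem~\ref{t:Drill} then produces the relatively hyperbolic pair with $S^2$ Bowditch boundary; the paper avoids your ``index two to kill the Klein bottle'' step by choosing the finite-index subgroup $G_0$ orientation-preserving on $\partial G$ at the outset, which is the same idea.

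Where your argument and the paper's diverge is the third step, and your version has a genuine gap. You want to invoke Thurston's hyperbolic Dehn surgery theorem and, if the filling slope on $M^\circ$ is too short, ``deepen the finite-index subgroup once more to lengthen the filling slope.'' But the filling slope is not a free parameter: it is the class of $N$, which is determined by the drilling, which was in turn determined by the axis you drilled. Passing to a deeper finite-index subgroup would change $G'$, $g$, the axis, and hence the drilled group and the slope all at once; there is no reason the slope in the new cusp cross-section becomes ``longer'' in the normalized sense Thurston's theorem requires, and you would then have to show the drilled manifold of the deeper subgroup still satisfies the Toral Relative Cannon Conjecture with the correct boundary. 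The paper sidesteps this entirely: once it knows $\widehat{G_1}\cong\pi_1(M)$ with $M$ a one-cusped finite-volume hyperbolic $3$--manifold, it observes that $G_1$ is the fundamental group of \emph{some} closed $3$--manifold obtained by Dehn filling $M$, and then uses that $G_1$ is word-hyperbolic with $\partial G_1\cong S^2$ to conclude from the Geometrization Theorem that this filled manifold is hyperbolic. This is both cleaner and logically necessary, since Thurston's surgery theorem alone cannot be applied to a prescribed slope without a length estimate you do not have.
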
 

The Cannon Conjecture was introduced by Cannon in \cite{Cannon91} as a step towards the global hyperbolisation conjecture, see also \cite[Conjecture 6.2]{MartinSkora} for a broader conjecture.  In the torsion-free case, the Cannon Conjecture is a special case of a question of Wall \cite[Question G2, p.391]{Hom-Gp-Thy}. In dimension 3, this question asks whether every finitely generated $PD(3)$--group is the fundamental group of a closed aspherical $3$--manifold.   These conjectures are connected by work of Bestvina \cite[Theorem 2.8, Remark 2.9]{Bestvina:local_homology_boundary}, who proved (building on work of Bestvina--Mess \cite{BesMes}) that a  torsion-free hyperbolic group is a $\mathrm{PD}(3)$ group exactly when it has Gromov boundary homeomorphic to $S^2$. 

Many approaches to the Cannon Conjecture have been attempted, but so far without success.
One interpretation of the difficulty comes from
the assumption on the boundary: the sphere is so round that one does not know how to handle it. One consequence of that is the fact that all uniform
lattices are quasi-isometric to one another. On the contrary, non-uniform lattices ---that are canonically relatively hyperbolic groups with $2$-sphere Bowditch boundary--- are quasi-isometric if and only if they are commensurable \cite{Schwartz}. This indicates that it is more tractable to work with a relative version of the Cannon conjecture.  

\begin{conjecture}[Toral Relative Cannon Conjecture]  \label{conj:relativecannon}
Let $(G,P)$ be a relatively hyperbolic group pair whose Bowditch boundary is homeomorphic to $S^2$, $P \neq \emptyset$, and where every element of $P$ is $\mathbb{Z}^2$. Then $G$ is Kleinian.
\end{conjecture}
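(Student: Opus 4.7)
The strategy is to upgrade the topological convergence action of $G$ on the Bowditch boundary $\partial(G,P) \cong S^2$ to a conformal action on the round sphere, thereby realizing $G$ as a geometrically finite Kleinian group with rank-$2$ cusps, one conjugacy class of cusps per element of $P$. Since $(G,P)$ is relatively hyperbolic with boundary $S^2$, the action of $G$ on this $S^2$ is already a geometrically finite convergence action whose parabolic subgroups are exactly the conjugates of the peripheral $\Z^2$'s. Thus it suffices to produce a $G$-equivariant homeomorphism $\partial(G,P) \to S^2$ that conjugates the action of $G$ into $\Isom(\H^3)$.

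To produce such a conjugacy, I would fix a visual metric on $\partial(G,P)$ coming from the Groves-Manning cusped space and show that this metric $2$-sphere is quasi-symmetrically equivalent to the standard $S^2$. The natural tool is the Bonk-Kleiner uniformization theorem: an Ahlfors $2$-regular, linearly locally connected metric $2$-sphere is quasi-symmetric to the round sphere. Two things must be verified in the relative setting: first, that $G$ acts by uniform quasi-M\"obius maps with respect to this metric, which follows from general features of the cusped space and its visual boundary; and second, that the Ahlfors regular conformal dimension of $\partial(G,P)$ equals $2$ and is attained. Once quasi-symmetric equivalence with $S^2$ is established, Tukia's theorem conjugates the uniform quasi-M\"obius action into $\Isom(\H^3)$, and the dynamical characterization of geometric finiteness together with the rank of the parabolics yields a complete finite-volume hyperbolic $3$-orbifold with $G$ as its fundamental group, proving $G$ Kleinian.

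The main obstacle is precisely the conformal-dimension step; this is the essential content of the Cannon Conjecture itself and is open. In the relative setting one gains a potentially useful piece of structure: near each parabolic fixed point the rank-$2$ abelian group $\Z^2$ acts cocompactly on a horosphere, producing local flat Euclidean patches on the boundary that pin down the conformal type in cusp neighborhoods. The hope would be to combine this cuspidal rigidity with combinatorial-modulus methods in the spirit of Cannon-Floyd-Parry, or with weak-tangent arguments of Keith-Kleiner and Bonk-Kleiner, to force the conformal dimension down to $2$. An alternative route, closer in spirit to the drilling perspective of this paper, would be to show directly that $G$ is the fundamental group of a compact aspherical $3$-manifold with toroidal boundary, via a $\mathrm{PD}(3)$-pair argument in the style of Wall and Bowditch together with surface-subgroup constructions and a JSJ analysis of the pair $(G,P)$, and then to invoke Thurston-Perelman geometrization to upgrade the manifold structure to a hyperbolic one.
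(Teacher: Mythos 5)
This is a conjecture, not a theorem: the paper explicitly states the Toral Relative Cannon Conjecture as open and does not supply a proof. Indeed, the whole point of Theorem B in the paper is to show that a proof of Conjecture~\ref{conj:relativecannon} would \emph{imply} the residually finite case of the Cannon Conjecture, which would be a major breakthrough. So there is no paper proof against which to compare your argument.

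Your proposal correctly identifies where the difficulty lives, and you concede it yourself: the conformal-dimension step, or equivalently producing a quasi-symmetry to the round sphere. That step is precisely the open content, both in the cocompact and the relative setting, and nothing in your outline resolves it. Your alternative route via realizing $(G,P)$ as a $\mathrm{PD}(3)$-pair and invoking geometrization runs into the same wall: Wall's question on whether $\mathrm{PD}(3)$-groups (or pairs) are fundamental groups of aspherical $3$-manifolds (with boundary) is also open, and in fact (as the paper points out via Bestvina and Bestvina--Mess) the Cannon Conjecture is essentially a special case of it. So neither branch of your proposal closes the argument.

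There are also smaller technical issues worth noting even at the level of a strategy sketch. First, Ahlfors $2$-regularity of the visual metric on the Bowditch boundary of a cusped space is not automatic; the local geometry near parabolic points is governed by horoball distortion, and one must verify that the resulting measure really is $2$-regular (with $\mathbb{Z}^2$ peripherals this is plausible, but it needs an argument). Second, Tukia's rigidity for uniformly quasi-conformal groups of $S^2$ does not apply directly to an action that is merely quasi-M\"obius with respect to \emph{some} visual metric in the conformal gauge; upgrading from quasi-M\"obius on an abstract metric sphere to quasi-conformal on the round $S^2$ is exactly the Bonk--Kleiner uniformization step, so your steps (4) and (6) are not independent. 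None of this undermines the plausibility of the program, but it underscores that what you have written is a road map toward an open problem, not a proof.
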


A group satisfying the hypotheses and conclusion of Conjecture~\ref{conj:Cannon} will be virtually the fundamental group of a closed hyperbolic $3$--manifold.  On the other hand, a group satisfying the hypotheses and conclusion of Conjecture~\ref{conj:relativecannon} will be virtually the fundamental group of a finite-volume, non-compact hyperbolic $3$--manifold.

 A group pair satisfying the hypotheses of the
Toral Relative Cannon Conjecture  is \emph{Haken} in the
sense of \cite[p.282]{KK_coarse_alexander_duality}; this property is an analogue of the
Haken property for $3$-manifolds. The latter enables one to cut a
manifold along incompressible surfaces into finitely many balls yielding
a so-called \emph{hierarchy} that allows proofs based on inductive arguments.
Many important breakthroughs, such as the geometrization and the
cubulation of atoroidal $3$--manifolds, were first solved in the Haken
setting by this procedure. In the same spirit, one may infer that the
toral parabolic subgroups could be used to split the group inductively
into simpler groups, see \cite[p.282]{KK_coarse_alexander_duality}. 

Further evidence for our belief that Conjecture~\ref{conj:relativecannon} is easier than
Conjecture~\ref{conj:Cannon} is the result of the odd-numbered authors of this paper that the Cannon Conjecture \emph{implies} the Toral Relative Cannon Conjecture \cite[Corollary 1.2]{GMS}.  
 Our work implies the following converse in
the residually finite case.

\begin{thmA}\label{thm:RelCannon}
  Suppose that the Toral Relative Cannon Conjecture is true, and suppose that $G$ is a residually finite hyperbolic group for which $\partial G \cong S^2$.  Then $G$ is virtually Kleinian.
\end{thmA}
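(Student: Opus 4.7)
The plan is to combine Theorem~\ref{t:Drill} with the assumed Toral Relative Cannon Conjecture, after passing to a finite-index subgroup of $G$ in which the geometric hypotheses of Theorem~\ref{t:Drill} can be arranged, and then to recover $G$ itself from the resulting drilling by relative Dehn filling. As a first reduction, since a hyperbolic group has only finitely many conjugacy classes of finite-order elements, residual finiteness produces a torsion-free finite-index subgroup of $G$; passing further to the kernel of the (at most index-two) homomorphism recording orientation on $\partial G\cong S^2$, we may assume $G$ is torsion-free and acts by orientation-preserving homeomorphisms on $\partial G$. The group $G$ is still hyperbolic with Gromov boundary $S^2$, so its Cayley graph $X$ with respect to a finite generating set is hyperbolic with $\partial X\cong S^2$ and $G$ acts freely and cocompactly on $X$. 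Choose $g\in G$ generating a maximal infinite cyclic subgroup, let $\ell\subset X$ be its axis, and let $\Sigma$ be the constant furnished by Theorem~\ref{t:Drill}.

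Next I would produce a finite-index subgroup $G'\le G$ containing some power $g^m$ for which the axes hypothesis of Theorem~\ref{t:Drill} holds: for every $h\in G'\smallsetminus\langle g^m\rangle$, the distance from $\ell$ to $h\cdot\ell$ is at least $\Sigma$. By cocompactness of $G\curvearrowright X$, only finitely many cosets $h_1\langle g\rangle,\ldots,h_k\langle g\rangle$ with each $h_i\notin\langle g\rangle$ give translates of $\ell$ within $\Sigma$ of $\ell$, so it suffices to find $G'$ containing a power of $g$ and meeting none of these cosets. I expect this step to be the main technical obstacle: it amounts to finitely many instances of separability of the cyclic subgroup $\langle g\rangle$ from the elements $h_i$ in finite quotients of $G$, which must be extracted from the residual finiteness of $G$ together with the quasiconvexity of $\langle g\rangle$, invoking for instance Minasyan-type results on separability of elementary subgroups in residually finite hyperbolic groups.

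With such $G'$ in hand, Theorem~\ref{t:Drill} produces a drilling $(\widehat{G'},P)$ with Bowditch boundary $S^2$, and since $g^m$ preserves orientation on $\partial X$ the peripheral subgroup $P$ is free abelian of rank two. The hypothesis, Conjecture~\ref{conj:relativecannon}, then gives that $\widehat{G'}$ is Kleinian, so $\widehat{G'}\cong\pi_1(M)$ for a cusped finite-volume hyperbolic $3$-manifold $M$. The drilling also provides a normal subgroup $N\triangleleft P$ with $P/N\cong\langle g^m\rangle$ and $\widehat{G'}/\llangle N\rrangle\cong G'$; geometrically this is a Dehn filling of $M$ along the slope $N$, producing a closed $3$-manifold $M^*$ with $\pi_1(M^*)\cong G'$. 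Since $G'$ is Gromov hyperbolic and $M^*$ is a closed $3$-manifold with torsion-free Gromov-hyperbolic fundamental group, Perelman's Geometrization Theorem implies that $M^*$ admits a hyperbolic metric, so $G'$ is Kleinian. As $G'$ has finite index in $G$, we conclude that $G$ is virtually Kleinian.
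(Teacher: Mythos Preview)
Your proposal is correct and follows essentially the same route as the paper's proof. Two small streamlinings the paper makes: for the separability step it invokes the fact that maximal abelian subgroups of residually finite groups are separable (Long), which produces a finite-index subgroup containing $g$ itself rather than merely a power $g^m$; and it explicitly uses the torsion clause of Theorem~\ref{t:Drill} to deduce that $\widehat{G'}$ is torsion-free before identifying it with $\pi_1$ of a cusped hyperbolic $3$--manifold.
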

Theorem \ref{thm:RelCannon} follows from our main technical result, Theorem \ref{t:Drill} below. We present a short proof of that  just after Remark \ref{rmk:sufficient}. 
Our statement and proof of Theorem \ref{t:Drill} is inspired by an important tool in the study of hyperbolic $3$--manifolds, namely the operation of drilling out an embedded geodesic.  It follows from Thurston's geometrization theorem that  if $M$ is a closed hyperbolic $3$--manifold and $k$ is an embedded geodesic loop then the complement $M \smallsetminus k$ admits a complete hyperbolic metric of finite volume (see~\cite{Sakai,Kojima}).  The operation of drilling geodesics has been used in the proofs of some of the most important theorems about hyperbolic $3$--manifolds.  Some examples are Canary's proof that topologically tame hyperbolic $3$--manifolds are geometrically tame \cite{Canary:Ends}, the proof of the Bers Density Theorem by Brock--Bromberg in \cite{BrockBromberg:Density}, and Calegari--Gabai's proof of Tameness using Shrinkwrapping in \cite{CalegariGabai:Shrinkwrapping}, among many others. We introduce in this paper a notion of `drilling' in the group-theoretic setting.  Here is what we mean by this.

\begin{definition} \label{def:drill}
Suppose that $G$ is a hyperbolic group, and that $g$ is an element so that $\langle g \rangle$ is a maximal cyclic subgroup.  A {\em drilling of $G$ along $g$} is a relatively hyperbolic pair $(\widehat{G},P)$ along with a normal subgroup $N \unlhd P$ with an identification $P/N \cong \langle g \rangle$ so that $\widehat{G} / \llangle N \rrangle \cong G$, with the quotient map inducing the identification of $P/N$ with $\langle g \rangle$.
\end{definition}

Let us illustrate this definition in the classic setting of $3$-manifolds.
Let $M$ be a closed hyperbolic $3$--manifold and $k$ an embedded geodesic loop.  The fundamental group $\pi_1(M)$ is a (Gromov) hyperbolic group since it acts geometrically on $\H^3$. The Gromov boundary of $\pi_1(M)$ is $S^2$.  The pair $(\pi_1(M\smallsetminus k), \mathbb{Z}^2)$ is a relatively hyperbolic pair, and the relatively hyperbolic boundary $\partial(\pi_1(M\smallsetminus k), \mathbb{Z}^2)$ is $S^2$, since $\pi_1(M \smallsetminus k)$ acts geometrically finitely on $\H^3$.  The parabolic subgroups in this action are the conjugates of $\mathbb{Z}^2 = \pi_1(T^2)$, where $T^2$ is the peripheral torus.  Let $m \in \pi_1(M \smallsetminus k)$ represent a meridian, i.e.\ a curve bounding a disk in $M$ which is punctured exactly once by $k$.  The quotient of $\pi_1(M \smallsetminus k)$ by the normal closure of $m$ is $\pi_1(M)$.

 The main technical result of this paper is that given  a hyperbolic group with boundary $S^2$, there are often drillings in the sense of Definition \ref{def:drill}  that are consistent with the above description of drilling a closed hyperbolic 3-manifold along an embedded geodesic. 
 
\begin{thmA} \label{t:Drill}
 Let $X$ be a hyperbolic graph with boundary $S^2$. For every hyperbolic isometry $g$ of $X$, with ($g$--invariant, quasi-geodesic) axis $\ell$, there exists $\Sigma>0$ so that the following holds:

 Suppose $G$ is a subgroup of $\Isom(X)$ so that
 \begin{enumerate}
     \item $G$ acts freely and cocompactly on $X$; and
     \item for any $h \in G \smallsetminus \langle g\rangle$ the axes $\ell$ and $h \cdot \ell$ are at least $\Sigma$ apart.
 \end{enumerate}
 Then there exists a drilling $(\widehat{G},P)$ of $G$ along $g$ which satisfies the following properties:
\begin{enumerate}
\item\label{item:is a drilling} $(\widehat{G},P)$ is relatively hyperbolic with $P$ either free abelian of rank $2$ or the fundamental group of a Klein bottle;
\item\label{item:boundary S^2} The Bowditch boundary of $(\widehat{G},P)$ is a $2$--sphere.
\end{enumerate}
The peripheral subgroup $P$ is free abelian if and only if $g$ preserves an orientation on $\partial X$.

Finally, $\widehat{G}$ is torsion-free if and only if $G$ is torsion-free.
\end{thmA}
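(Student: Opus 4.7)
My strategy is to build the drilling $(\widehat G,P)$ via an explicit cusped hyperbolic space $\wchX$ carrying a cocompact action of $\widehat G$ whose parabolics are conjugates of $P$. Using hypothesis~(2), I choose $\Sigma$ large enough that the $\Sigma$-neighborhoods of the distinct $G$-translates of $\ell$ are pairwise disjoint and dwarf the relevant Morse and quasi-convexity constants of $X$. Let $\Xz\subset X$ be the complement of this $G$-invariant neighborhood, and for each $G$-translate $h\cdot\ell$ glue onto the corresponding boundary tube of $\Xz$ a Groves--Manning combinatorial horoball $\horba$ modeled on $P$, where $P=\mathbb Z^2$ when a generator of $\langle g\rangle$ preserves orientation on $\partial X$ near its two fixed points, and $P$ is the fundamental group of a Klein bottle otherwise. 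This produces $\wchX$ and determines $\widehat G$ via a graph-of-groups presentation with vertex groups $G$ and $P$ amalgamated along the inclusion $\langle g\rangle\hookrightarrow P$. The normal subgroup $N\unlhd P$ is the kernel of the natural projection $P\twoheadrightarrow\langle g\rangle$ killing the meridian, and $\widehat G/\llangle N\rrangle\cong G$ follows at once from the presentation, so $(\widehat G,P)$ is a drilling in the sense of Definition~\ref{def:drill}.

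For relative hyperbolicity I would verify that $\wchX$ is $\delta'$-hyperbolic once $\Sigma$ is sufficiently large via a combination-style argument: both $X$ and combinatorial horoballs are hyperbolic, and they are glued along quasi-isometrically embedded, quasi-convex, well-separated subsets. Cocompactness of the $\widehat G$-action on $\wchX$ minus horoball interiors is immediate from cocompactness of the $G$-action on $\Xz$, so $(\widehat G,P)$ is relatively hyperbolic with cusped space $\wchX$, establishing~(\ref{item:is a drilling}). The orientation clause is then a direct consequence of the choice rule for the horoball model. Torsion-freeness is immediate from the graph-of-groups structure: $\mathbb Z^2$, the Klein bottle group, and $\langle g\rangle$ are all torsion-free, so by Bass--Serre theory $\widehat G$ is torsion-free iff the other vertex group $G$ is.

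\emph{The main obstacle} is item~(\ref{item:boundary S^2}), the identification of the Bowditch boundary $\partial(\widehat G,P)$ with $S^2$. Since $(\widehat G,P)$ is relatively hyperbolic with cusped space $\wchX$, this boundary equals the Gromov boundary of $\wchX$, so the task is to show $\partial\wchX\cong S^2$. My plan is to verify the topological axioms characterizing $S^2$ (Peano continuum of dimension $2$ with no cut points, together with planarity): connectedness and the absence of cut points follow from Bowditch's general theory together with an analysis showing $(\widehat G,P)$ admits no nontrivial peripheral splitting; local connectedness and $2$-dimensionality should be transferred from $\partial X=S^2$ via a careful asymptotic comparison, using that the horoball attachments are topologically neutral when the axes are sufficiently well-separated. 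Planarity is inherited from $\partial X=S^2$, and rules out non-sphere $2$-dimensional Peano continua with no cut points. This is where hypothesis~(2) and the choice of $\Sigma$ combine most delicately with the $S^2$ structure of $\partial X$, and this is the central technical novelty of the theorem.
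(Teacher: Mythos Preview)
Your construction produces the \emph{trivial drilling} of Example~\ref{ex:trivial drill}, not the one in the paper, and the trivial drilling does not satisfy conclusion~(\ref{item:boundary S^2}). Concretely: the group $\widehat G$ you define as an amalgamated product $G*_{\langle g\rangle}P$ splits over the two-ended subgroup $\langle g\rangle$, and such a splitting forces local cut points (indeed cut pairs) in the Bowditch boundary of $(\widehat G,P)$. So the very group you build is one for which your proposed verification of~(\ref{item:boundary S^2}) must fail.

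There is also an internal inconsistency in the construction of the cusped space. If $\wchX$ is obtained by deleting a $G$--invariant family of tubes from $X$ and gluing combinatorial horoballs onto the resulting shells, then the group acting geometrically finitely on $\wchX$ is $G$ itself, with horoball stabilisers the conjugates of $\langle g\rangle$, not $P$. The amalgamated product $G*_{\langle g\rangle}P$ acts on this space only through the quotient $\widehat G\twoheadrightarrow G$, i.e.\ unfaithfully; it does not act geometrically finitely with $P$ as a parabolic. The shell around $\ell$ is coarsely a cylinder $\mathbb R\times S^1$, not a plane, so a horoball glued to it has cyclic, not rank--$2$, stabiliser.

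What is missing is the central idea of the paper: \emph{unwrapping}. The completed shell has coarse fundamental group $\mathbb Z$ (Corollary~\ref{cor:pi_1_Z}), so the completed tube complement has a nontrivial infinite cyclic $D$--cover. The correct $\widehat G$ is not an amalgam but the deck group of the coarse universal cover $\wchY\to\wchXz/G$; equivalently, $\pi_1$ of the quotient $Z_{D_1}$. Only after passing to this cover do the preimages of shells become coarsely $\mathbb R^2$, and only then are the horoball stabilisers isomorphic to $P\cong\mathbb Z^2$ or the Klein-bottle group. The $S^2$ boundary is then obtained as a weak Gromov--Hausdorff limit of the boundaries of partially unwrapped spaces (Sections~\ref{sec:limitsofspheres}--\ref{sec:proof_of_main_theorem}), each of which is shown inductively to be $S^2$ with uniform linear connectivity. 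Your Bass--Serre argument for torsion, while correct for the amalgam, does not apply to this $\widehat G$; the paper handles torsion by a separate geometric argument (Proposition~\ref{prop:torsion}).
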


\begin{remark} \label{rmk:sufficient} 
    In Theorem \ref{t:Drill} we give a sufficient \emph{geometric} condition for drilling.  In \cite[Section 5, p.79]{CalegariFujiwara:scl}, Calegari and Fujiwara suggest that the condition of $g$ having very small \emph{stable commutator length} might be sufficient to perform some kind of drilling.  We do not know if such a condition on the stable commutator length is sufficient, but believe it is an interesting question.
\end{remark}

With Theorem \ref{t:Drill} in hand, we obtain the following proof of Theorem \ref{thm:RelCannon}.

\begin{proof}
Let $X$ be the Cayley graph of $G$ with respect to some generating set.  Since $G$ is residually finite, there is a subgroup $G_0$ of finite index which is torsion-free and preserves an orientation on $\partial G$.

Let $g \in G_0$ be so that $\langle g \rangle$ is maximal cyclic.  Since maximal abelian subgroups of residually finite groups are separable (see, for example, \cite[Proposition, p.484]{Long87}), there is a finite index subgroup $G_1$ of $G_0$ so that $g \in G_1$ and so that the axis of $g$ in $X$ satisfies the hypotheses of Theorem \ref{t:Drill} with respect to the $G_1$--action on $X$.

Therefore, there exists a drilling $\left( \widehat{G_1}, P \right)$ of $G_1$ along $g$ so that $\left(\widehat{G_1}, P \right)$ is relatively hyperbolic with Bowditch boundary $S^2$, $P$ free abelian of rank $2$, and an element $k \in P$ so that
\[	G_1 \cong \widehat{G_1} / \llangle k \rrangle	,	\]
which identifies $\langle g \rangle$ with $P / \langle k \rangle$.

Since $G_1$ is torsion-free, $\widehat{G_1}$ is also torsion-free, by Theorem~\ref{t:Drill}.  According to the Toral Relative Cannon Conjecture, $\widehat{G_1}$ is Kleinian. Suppose that $M$ is a $1$--cusped finite-volume hyperbolic $3$--manifold such that $\pi_1(M) = \widehat{G_1}$.  The group $G_1$ can be obtained as the fundamental group of a Dehn filling of $M$ which implies that $G_1$ is the fundamental group of a closed $3$--manifold.  Since $G_1$ is word-hyperbolic, and $\partial G_1 \cong S^2$, it follows from the Geometrization Theorem that $G_1$ is Kleinian.  This completes the proof.
\end{proof}

The first three paragraphs of the proof of Theorem~\ref{thm:RelCannon} prove the following unconditional statement.
\begin{corA}
  If $G$ is a residually finite hyperbolic group with $\partial G\cong S^2$, then $G$ virtually admits a drilling as in the statement of Theorem~\ref{t:Drill}.
\end{corA}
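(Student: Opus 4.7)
The plan is to apply Theorem~\ref{t:Drill} directly to a Cayley graph of $G$, after shrinking $G$ twice to a finite-index subgroup. Let $X$ be the Cayley graph of $G$ with respect to some finite generating set. Since $\partial G\cong S^2$ is connected, $G$ is one-ended and acts freely, cocompactly, and isometrically on $X$ with $\partial X=\partial G\cong S^2$.

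First, I would produce a finite-index subgroup $G_0\le G$ that is torsion-free and preserves an orientation on $\partial X$. Torsion-freeness follows from residual finiteness together with the standard fact that a hyperbolic group has only finitely many conjugacy classes of finite subgroups: separate representatives of these nontrivial conjugacy classes from the identity by homomorphisms to finite groups and intersect the kernels. The orientation-preserving condition costs at most one further index-$2$ restriction to $G\cap\mathrm{Homeo}^+(\partial X)$.

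Next, pick any $g\in G_0$ generating a maximal cyclic subgroup; such a $g$ exists because the centralizer of any infinite-order element of the torsion-free hyperbolic group $G_0$ is infinite cyclic, and its generator works. Let $\ell\subset X$ be a $g$-invariant quasi-geodesic axis and let $\Sigma=\Sigma(g)$ be the constant supplied by Theorem~\ref{t:Drill}. The main step is a further refinement of $G_0$ to a finite-index subgroup $G_1\le G_0$ still containing $g$ such that for every $h\in G_1\ssm\langle g\rangle$, $d_X(\ell,h\ell)\ge\Sigma$. I would argue that the set
\[
\mathcal{B}=\{\,h\langle g\rangle\in G_0/\langle g\rangle : h\notin\langle g\rangle,\ d_X(\ell,h\ell)<\Sigma\,\}
\]
is finite: properness and cocompactness of the $G_0$-action on $X$ combined with $\langle g\rangle$-equivariance imply that distinct cosets in $\mathcal{B}$ push a fixed compact $\langle g\rangle$-fundamental domain of $\ell$ into a bounded neighborhood of itself, which can accommodate only finitely many translates. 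Using that maximal cyclic subgroups of residually finite hyperbolic groups are separable (the ingredient invoked in the proof of Theorem~\ref{thm:RelCannon} via~\cite[Proposition, p.484]{Long87}), pick coset representatives $h_1,\dots,h_n$ of the elements of $\mathcal{B}$ and find a finite-index subgroup $G_1\le G_0$ containing $\langle g\rangle$ but missing each $h_i$.

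The group $G_1$ then acts freely and cocompactly on $X$, and by construction every $G_1$-translate of $\ell$ other than $\ell$ itself lies at distance at least $\Sigma$ from $\ell$. Theorem~\ref{t:Drill} therefore produces a drilling $(\widehat{G_1},P)$ of $G_1$ along $g$, and since $[G:G_1]<\infty$, this is exactly the statement that $G$ virtually admits such a drilling. The only nontrivial step is the finiteness of $\mathcal{B}$; everything else is a direct consequence of residual finiteness, separability, and the hypotheses of Theorem~\ref{t:Drill} itself.
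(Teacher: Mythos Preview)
Your overall strategy matches the paper's exactly (the paper simply invokes the first three paragraphs of the proof of Theorem~\ref{thm:RelCannon}), and your added detail about torsion-freeness, orientation, and separability is fine. However, the finiteness claim for $\mathcal{B}$ is false as stated, and this is a genuine gap.

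If $h\langle g\rangle\in\mathcal{B}$ with $h\notin\langle g\rangle$, then for every $k\in\Z$ one has
\[
d_X(\ell,\,g^k h\ell)=d_X(g^{-k}\ell,\,h\ell)=d_X(\ell,\,h\ell)<\Sigma,
\]
so $g^k h\langle g\rangle\in\mathcal{B}$ as well. Since $h\notin\langle g\rangle=N_{G_0}(\langle g\rangle)$ (the normalizer equals $\langle g\rangle$ in a torsion-free hyperbolic group with $\langle g\rangle$ maximal cyclic), the cosets $g^k h\langle g\rangle$ are pairwise distinct. Thus $\mathcal{B}$ is infinite whenever it is nonempty. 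Your sketch (``push a fundamental domain of $\ell$ into a bounded neighborhood of itself'') tacitly moves the close-approach point on $\ell$ into the fundamental domain by a power of $g$, but that left-multiplication by $g^k$ changes the left coset.

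The fix is to count \emph{double} cosets. The bad set $B=\{h\in G_0\smallsetminus\langle g\rangle : d_X(\ell,h\ell)<\Sigma\}$ is $\langle g\rangle$--bi-invariant, and your properness argument genuinely shows that $\langle g\rangle\backslash B/\langle g\rangle$ is finite: given $h\in B$, choose $a,b$ so that both the close-approach points land in a compact fundamental domain $F$; then $g^a h g^{b}$ lies in the finite set $\{k\in G_0: kF\cap N_\Sigma(F)\neq\emptyset\}$. Now pick one representative $h_i$ from each double coset and apply separability to get a finite-index $G_1\ge\langle g\rangle$ missing every $h_i$. Since $G_1\supseteq\langle g\rangle$ is $\langle g\rangle$--bi-invariant, missing $h_i$ forces $G_1$ to miss the entire double coset $\langle g\rangle h_i\langle g\rangle$, hence all of $B$. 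With this correction your argument is complete and in line with the paper's.
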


Our approach to proving Theorem~\ref{t:Drill} is to do ``medium-scale geometric topology''.  Our spaces are graphs, which of course do not have much useful local structure.  However, using the Gromov hyperbolic geometry of $X$, we identify certain large (but fixed) scales where a neighborhood of the axis $\ell$ ``looks like" a $3$--dimensional solid tube $\R \times D^2$, and the ``frontier'' of this neighborhood ``looks like'' an open annulus $\R \times S^1$.  Much of Section~\ref{s:shells} is devoted to making  this idea precise in a useful way.

\medskip

Theorem~\ref{t:Drill} also has the following consequence for PD$(3)$ groups and pairs.

\begin{corA}\label{cor:Wall}
    Suppose that $G, X, g$, and $\ell$ satisfy the hypotheses of Theorem~\ref{t:Drill}, and furthermore suppose that $G$ is a PD$(3)$--group (i.e.\ suppose that $G$ is torsion-free).  Let $(\widehat{G},P)$ be the drilling of $G$ along $g$ from the conclusion of Theorem~\ref{t:Drill}.  Then $(\widehat{G},P)$ is a PD$(3)$--pair.
\end{corA}
\begin{proof}
    A PD$(3)$--group has cohomological dimension $3$, so is torsion-free.  By the final conclusion of Theorem~\ref{t:Drill}, $\widehat{G}$ is also torsion-free.  That $(\widehat{G},P)$ is a PD$(3)$--pair follows from \cite[Corollary 1.3]{TshishikuWalsh} (see also \cite[Corollary 4.3]{ManningWang}).
\end{proof}

\bigskip 

As drilling plays an important role in $3$-dimensional topology, we believe that drilling groups will also become a very useful tool for analysing groups. 
Moreover, its proof develops many different tools that are interesting on their own right that go from coarse topology to metric geometry and hyperbolic geometry.
Here are some examples of (group-theoretic) drilling that show the wide potential range of applications we may expect.

\begin{example} \label{ex:3-man drill}
    Let $M$ be a closed hyperbolic $3$--manifold, and let $k$ be an embedded geodesic loop.  Let $M^\circ = M \smallsetminus k$.  If $G = \pi_1(M)$, $g$ is the element represented by $k$, $\widehat{G} = \pi_1(M^\circ)$, and $P$ is the subgroup of $\pi_1(M^\circ)$ corresponding to a regular neighborhood of $k$, then $(\widehat{G},P)$ is a drilling of $G$ along $g$.
\end{example}

\begin{example} \label{ex:trivial drill}
    Let $H$ be a hyperbolic group and suppose that $h \in H$ generates an infinite maximal elementary subgroup.  Let $T = \langle a,b |[a,b]=1 \rangle$.  Define $\widehat{H} = H \ast_{h=a} T$, and note that $(\widehat{H},T)$ is a drilling of $H$ along $h$.  We refer to this drilling as the \emph{trivial drilling}.
\end{example}

\begin{example} \label{ex:free and surface drill}
    Let $F$ be the fundamental group of a $3$--dimensional handlebody, and $f \in F$ a diskbusting curve.  We can realize $f$ as an embedded geodesic in some negatively curved metric on the handlebody, and remove a neighborhood of this curve to obtain a drilling of a free group.  It is possible to choose such a drilling to be nontrivial.

Similarly, one can perform a nontrivial drilling of a surface group $G = \pi_1(S)$ by realizing $G$ as the fundamental group of some quasi-fuchsian $S \times I$, and drilling a simple geodesic $\gamma$ out of $S \times I$.  This can give a trivial drilling, but the drilling is nontrivial if $\gamma$ is non-simple when realized as a geodesic in a hyperbolic metric on $S$ itself. \end{example}

\begin{example} \label{ex:undo filling}
    Let $(\Gamma,P)$ be a relatively hyperbolic group, and suppose that $N \unlhd P$ is such that $P / N \cong \Z$.  It follows from the relatively hyperbolic Dehn filling theorem \cite{osin:peripheral} (see also \cite{rhds}) that $\overline{\Gamma} = \Gamma / \llangle N \rrangle$ is often hyperbolic, and that the image of $P$ in $\overline{\Gamma}$ is maximal elementary.  Thus $(\Gamma,P)$ is a drilling of $\overline{\Gamma}$.  
\end{example}
It is notable that the remarkable $5$--dimensional hyperbolic groups constructed by Italiano, Martelli, and Migliorini in \cite{IMM} fall into the class described in Example~\ref{ex:undo filling}, and so admit (nontrivial) drillings.  Moreover, there are finite-volume hyperbolic manifolds of all dimensions at least $3$ which have quotients as in Example~\ref{ex:undo filling}, so it follows from Examples~\ref{ex:free and surface drill} and~\ref{ex:undo filling} that there are hyperbolic groups of all positive dimensions admitting nontrivial drillings.

\begin{remark}
    Our condition that $P/N \cong \langle g \rangle$ is designed to mimic what happens on the level of the fundamental group when a geodesic is drilled.  There are situations where it is interesting to remove other geometrically controlled sets (for example removing totally geodesic codimension--$2$ sub-manifolds from negatively curved manifolds of high dimension as in \cite{Bel12a,Bel12b}).  Many of our basic tools work in such a setting, and we phrase our results to apply in these settings as much as possible. As an example, we highlight Theorem~\ref{t:Pi_isomorphism}, which shows that, given an appropriate hyperbolic space $X$ and quasi-convex subset $Y$, the coarse fundamental group of the ``completed shell" around $Y$ in $X$ is isomorphic to $\pi_1(\partial X \smallsetminus \Lambda Y)$.   However, we do not pursue these ideas in this paper.
\end{remark}

 Section \ref{sec:outline} gives a general outline of the proof of  Theorem~\ref{t:Drill}, while general background on relatively hyperbolic groups and their boundaries is contained in Section \ref{sec:background}. In Section \ref{s:coarse_topology}, we review and develop some notions of coarse topology, notably the coarse fundamental group, coarse Cartan-Hadamard, and coarse deformation retractions.  Section \ref{sec:linear_connectedness} defines a condition we call `spherical connectivity', which gives a local (in terms of bounded sized balls in a hyperbolic space) criterion for verifying that a boundary is linearly connected.  We also relate linear connectivity of the boundary for visual metrics from varying basepoints to cut points on the boundary.  In Section~\ref{s:shells}, we study the geometry of large tubes around geodesics, and maps from the boundary of a hyperbolic space to the ``boundary" of a neighborhood of a convex subset. We show how to unwrap the complement of a tube about a geodesic (take a coarse infinite cyclic cover) in Section \ref{sec:unwrap}, and develop tools for studying the resulting space, and also the ``unwrapped" space with a horoball attached to the pre-image of the boundary of the tube.  In Section \ref{s:unwrap family}, we show how to unwrap a whole family of these tubes. In Section \ref{sec:limitsofspheres}, we give a condition for a weak Gromov-Hausdorff limit to be $S^2$, which is used in Section~\ref{sec:proof_of_main_theorem} to prove that the boundary of the completely unwrapped space (the limit of the partially unwrapped spaces) is $S^2$.  In Section~\ref{sec:proof_of_main_theorem} we also prove the statement about torsion in Theorem~\ref{t:Drill} completing the proof.  In Section~\ref{sec:CL} we prove that the quotient map from the drilled group to the original group is a ``long filling'' in a reasonable sense.
 In Section~\ref{sec:notation}, we give a guide to some of our notation and constructions.

\subsection*{Acknowledgments}
This project was started at the American Institute for Mathematics (AIM) workshop “Boundaries of groups”, and the bulk of the work was also carried out at AIM in the context of the SQuaRE program. We thank AIM for the very generous support. We thank C. G. Hruska and L. Paoluzzi in particular for early conversations about this work.

\section{Outline of proof of Theorem \ref{t:Drill}} \label{sec:outline}
Our proof of Theorem~\ref{t:Drill} is constructive.  We start with a hyperbolic graph $X_0$ on which $G$ acts geometrically and such that $\partial X_0$ is $S^2$.  We are given a $g$--invariant axis $\gammax$ whose $G$--translates are far apart.   We want to build a space $\widehat Y$ which is hyperbolic, $\partial \widehat Y = S^2$, and such that the drilled group  $(\widehat G, P)$ acts on $\widehat Y$, and this action is cusp-uniform with $P$ as the stabilizer of a horoball.  This implies that $(\widehat{G}, P)$ is relatively hyperbolic, and that the Bowditch boundary is $S^2$. Roughly speaking, we will build $\widehat Y$ as a limit of (uniformly) hyperbolic spaces, and the boundary of $\widehat Y$ will be a limit, in a suitable sense, of the boundaries of the approximating spaces, which will all be homeomorphic to $S^2$.

Classically speaking ``drilling" involves removing a regular neighborhood of a loop in a three-manifold.  Since we are starting with a graph, we must take a more coarse approach.
For example, for some scale $D$ one can define the $D$--fundamental group of a graph to be the fundamental group of the space obtained by gluing on a disk to every loop of length at most $D$, see Section \ref{s:coarse_topology}.  A \emph{$D$--cover} of a space is then a cover to which every loop of length at most $D$ lifts.  
The precise scale $D$ at which we work is fixed in Subsection \ref{ss:noname}. 

We use this coarse theory to understand the connection between a shell around our quasi-geodesic axis $\gammax$ and the boundary $S^2 \ssm \Lambda \gammax$ (where $\Lambda \gammax$ denotes the limit set of $\gammax$ in $\partial \Xz = S^2$).
  For a large number $K$, let $S$ denote the set of points at distance $K$ from $\gammax$.  We prove that $S$ is coarsely connected (Lemma~\ref{lem:sphere_coarse_conn}), and that after $S$ is ``completed" to be a connected graph, $\pi_1^D(S)$ is isomorphic to $\mathbb{Z}$.  

More precisely, there is a projection from $\partial X \smallsetminus \Lambda\gammax$ to $S$ which induces an isomorphism between $\pi_1\left(\partial X \smallsetminus \Lambda\gammax \right)$ and $\pi_1^D(S)$.
See Theorem~\ref{t:Pi_isomorphism} and Corollary~\ref{cor:pi_1_Z} for precise statements.

Moreover, if $T$ is the collection of points in $X$ at distance at most $K$ from $\gamma_0$ then $X \smallsetminus T$ coarsely deformation retracts (see Definition~\ref{def:def retract}) onto $S$ (see Lemma~\ref{lem:cdr}).  This implies their coarse fundamental groups are  canonically isomorphic (see Lemma~\ref{prop:def retract on pi_1}).

In Section~\ref{sec:unwrap}, we fix a number $R_0$ large enough that the preceding results hold for $K = R_0$, and additionally so that every loop representing a nontrivial element of $\pi_1^D(S)$ is very long.

The $D$--universal cover $\widetilde{S}$ of $S$ is a coarse plane, upon which a group $E$ (either $\mathbb{Z}^2$ or the fundamental group of a Klein bottle) acts geometrically, see Lemma~\ref{lem:pi_CS/g}.  The $D$--universal cover of $X \smallsetminus T$ contains a copy of $\widetilde{S}$.  The space obtained by gluing a combinatorial horoball onto the copy of $\widetilde{S}$ is Gromov hyperbolic, and has boundary a $2$--sphere (see Proposition~\ref{prop:hatY_modeled} for hyperbolicity and Corollary~\ref{cor:UG0_S2} for the fact that the boundary is a $2$--sphere).

We then proceed in Section~\ref{s:unwrap family} to inductively build spaces by unwrapping more and more lifts of complements of tubes around translates of $\gammax$ and gluing horoballs onto the resulting coarse planes.  These spaces are all Gromov hyperbolic for a uniform hyperbolicity constant (this uses Lemma~\ref{lem:app_of_CCH}), and the visual metrics on their boundaries are uniformly linearly connected (this uses Corollary~\ref{cor:modeled_on_hatX_LC}.)  
In Proposition~\ref{prop:unwrap_S^2} we show all these boundaries are homeomorphic to $S^2$.

We thus obtain a sequence of based finite-valence graphs, and for any given $N$ the $N$--ball about the basepoint stabilizes in this sequence.  The limiting graph $\widehat{Y}$ is also Gromov hyperbolic (see Corollary~\ref{cor:Yhat_hyp}), and admits a geometrically finite action of a group $\widehat{G}$, which is hyperbolic relative to (a natural copy of) the group $E$, and is the required drilling of $G$ along $\langle g \rangle$.  This is shown in Theorem~\ref{GPrelhyp}.

Realizing the boundary of $\widehat{Y}$ as a weak Gromov--Hausdorff limit (Definition~\ref{def:weakGromovHausdorff}) of the boundary two-spheres of the partially unwrapped and glued spaces, we use the characterization of the two-sphere from Theorem~\ref{thm:limit_sphere} to prove that $\partial \widehat{Y}$ is a two-sphere.  Finally, we prove the statement about torsion in $G$ and $\widehat{G}$ in Proposition~\ref{prop:torsion}, thus completing the proof of Theorem~\ref{t:Drill} (see Section~\ref{sec:proof_of_main_theorem}).

\section{Background}
\label{sec:background} 

For background on hyperbolic spaces and groups and their Gromov boundaries we refer to \cite[III.H]{BH}.
\subsection{Relative hyperbolicity}
There are a number of equivalent definitions of relative hyperbolicity in the literature; see \cite{Hru-relqconv} for a survey.  It is most convenient for us to use Gromov's original definition \cite[8.6]{Gro87}.  In this definition $(G,\mc{P})$ is relatively hyperbolic if $G$ has a \emph{cusp uniform} action on a hyperbolic space with $\mc{P}$ a choice of conjugacy classes of horoball stabilizers.  We spell out what this means in the next few paragraphs.
\begin{definition}[Busemann function, horofunction]\cite[8.1]{GdlH}
  Let $\Upsilon$ be a Gromov hyperbolic space, and let $p\in \partial \Upsilon$.
  For $x\in \Upsilon$ and $\gamma: [0,\infty) \to \Upsilon$ a geodesic ray tending to $p$, define
  \[ h_\gamma(x) = \limsup_{t\to\infty}\left(d_\Upsilon(x,\gamma(t)) - t\right).\]
  The function $h_\gamma: X\to \R$ is a \emph{horofunction} based at $p$.  
  
  We obtain a function $\beta_p: \Upsilon\times\Upsilon\to \R$ by
  \[ \beta_p(x,y) = \sup\left\{h_\gamma(x)\mid \gamma(0) = y \right\} \]
  This is called a \emph{Busemann function based at $p$}.  
\end{definition}
Roughly speaking $\beta_p(x,y)$ measures how much ``farther'' $x$ is from $p$ as compared to $y$.  Note that in some sources ``Busemann function'' refers to what we are here calling a horofunction
(see for example \cite[III.H.3.4]{BH}).
\begin{definition}[Horoball]\label{def:horoball}
  Let $\Upsilon$ be a Gromov hyperbolic space, and let $p\in \partial \Upsilon$.
  A \emph{horoball centered at $p$} is any set $\mc{H}$, so that there is a horofunction $h$ based at $p$ and real numbers $a\le b$, so that
 \[ h^{-1}\left( (-\infty,a]\right) \subset \mc{H} \subset h^{-1}\left( (-\infty,b]\right). \]
\end{definition}

\begin{definition}\label{def:gromov RH}
  Let $H$ be a group, and $\mc{P}$ a finite family of subgroups of $H$.  Suppose that $H$ acts properly on a hyperbolic space $\Upsilon$,
  preserving a family of disjoint open horoballs, and acting cocompactly on the complement of the union of that family.  Suppose further that there is a collection of orbit representatives of horoballs in the family so that $\mc{P}$ is the collection of their stabilizers.  Then the pair $(H,\mc{P})$ is said to be \emph{relatively hyperbolic}.  If $\mc{P} = \{ P \}$ consists of a single subgroup, we say that $(H,P)$ is relatively hyperbolic.
\end{definition}
There may be many spaces $\Upsilon$ which certify the relative hyperbolicity of a pair $(H,\mc{P})$, but they all have $H$--equivariantly homeomorphic boundary at infinity, by a result of Bowditch \cite[Theorem 9.4]{bowditch12}.  Accordingly, $\partial \Upsilon$ is referred to as the \emph{Bowditch boundary} of the pair $(H,\mc{P})$.  

\subsection{Visibility}

Recall the following definition and result from \cite{GMS}.

\begin{definition}\label{defn:visiblespace} \cite[Definition 3.4]{GMS}
  Let $\mu>0$.  A geodesic metric space $Z$ is \emph{$\mu$--visible} if, for every $a,b \in Z$, there is a geodesic ray based at $a$ passing within $\mu$ of $b$. 
\end{definition}

\begin{proposition}\label{lem:local_visibility} \cite[Proposition 6.3]{GMS}
 For every $\nu\geq 1$ the following holds. Let $\Upsilon$ be a proper $\nu$--hyperbolic space and suppose that for 
 all $p,q \in \Upsilon$ with $d(p,q)\leq 100\nu$ there exists a geodesic $[p,q']$ of length at least $200\nu$ with $d(q,[p,q'])\leq \nu$. Then $\Upsilon$ is $5\nu$--visible.
\end{proposition}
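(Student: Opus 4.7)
The plan is to produce a geodesic ray $\gamma$ from $a$ with $d(b, \gamma) \le 5\nu$ by extracting a subsequential Arzel\`a--Ascoli limit of geodesic segments $[a, x_n]$ whose lengths grow without bound. The sequence $(x_n)$ is built by iterating the local hypothesis to construct an infinite $(1, O(\nu))$--quasi-geodesic ray $\widetilde\gamma$ from $a$ that passes within $O(\nu)$ of $b$.

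Concretely, set $L = d(a,b)$ and fix a geodesic $[a,b]$. Choose $p_1$ on $[a,b]$ at distance $\min(L, 100\nu)$ from $b$ and apply the hypothesis to the pair $(p_1, b)$ to obtain a geodesic $\sigma_1$, truncated to length $200\nu$, starting at $p_1$ with $d(b, \sigma_1) \le \nu$; let $x_1$ be its far endpoint. Inductively, given $\sigma_{k-1}$ and $x_{k-1}$, set $p_k = \sigma_{k-1}(100\nu)$ so that $d(p_k, x_{k-1}) = 100\nu$, apply the hypothesis to $(p_k, x_{k-1})$ to obtain $\sigma_k$ of length $200\nu$ from $p_k$ with $d(x_{k-1}, \sigma_k) \le \nu$, and let $x_k$ be its far endpoint. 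Splicing the initial halves of successive $\sigma_k$'s produces the infinite path
\[
\widetilde\gamma \;=\; [a, p_1] \,\cup\, \sigma_1|_{[0, 100\nu]} \,\cup\, \sigma_2|_{[0, 100\nu]} \,\cup\, \cdots,
\]
whose truncation ending at $x_k$ has length $L + k\cdot 100\nu$ (if $L \ge 100\nu$; the other case is similar) and passes within $O(\nu)$ of each of $b, x_1, \ldots, x_{k-1}$.

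The key local estimate is that at every join $p_k$ the Gromov product $(p_{k-1} \mid x_k)_{p_k}$ is $O(\nu)$: the geodesic $\sigma_k$ leaves $p_k$ heading toward a point within $\nu$ of $x_{k-1}$, which lies along the extension of $\sigma_{k-1}$ past $p_k$, exactly opposite to $p_{k-1}$. A four-point-condition computation using $\nu$--hyperbolicity then shows $\widetilde\gamma$ is a global $(1, C\nu)$--quasi-geodesic ray for a universal constant $C$: errors at the joins are uniformly bounded and do not accumulate with $k$. By the Morse lemma for $(1, O(\nu))$--quasi-geodesics in a $\nu$--hyperbolic space, each geodesic $[a, x_n]$ sits in a uniform $C'\nu$--neighbourhood of $\widetilde\gamma$ and so passes within $(1+C')\nu$ of $b$; the numerical choices $100\nu$ and $200\nu$ in the hypothesis are calibrated so that $1 + C' \le 5$.

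Finally, properness of $\Upsilon$ and Arzel\`a--Ascoli yield a subsequential limit ray $\gamma$ from $a$. The closest-approach parameter to $b$ on each $[a, x_n]$ lies in the bounded window $[L - 5\nu, L + 5\nu]$, so after passing to a further subsequence these parameters converge and continuity of the distance function gives $d(b, \gamma) \le 5\nu$. The main obstacle is bookkeeping the constants to close the estimate at precisely $5\nu$: one needs a sharp Morse-type inequality for near-geodesic $(1, O(\nu))$--quasi-geodesics together with careful additive control of the defect at the joins, since even a linear accumulation of error in $k$ would destroy the uniform bound.
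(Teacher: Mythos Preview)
The paper does not supply its own proof of this proposition; it is quoted verbatim from \cite[Proposition 6.3]{GMS} and used as a black box. So there is no in-paper argument to compare against, and for the intended proof one must consult the cited reference.

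On its own terms your outline is the right shape: iterate the local extension hypothesis to produce points $x_n$ escaping to infinity, show each geodesic $[a,x_n]$ passes uniformly close to $b$, and extract a ray by Arzel\`a--Ascoli using properness. Your computation that the Gromov product at each join is at most $2\nu$ is correct, and the limit step is handled properly.

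The gap is precisely the one you flag. Routing the final estimate through the Morse lemma for $(1,C\nu)$--quasi-geodesics will not comfortably land on $5\nu$: standard stability constants are not that sharp, and you have not actually computed $C$ or $C'$. A cleaner route bypasses Morse entirely. Set $b_0=b$ and let $b_{k+1}$ be the far endpoint of the segment produced at step $k$. One checks, as you did, that $(a\mid b_{k+1})_{b_k}\le 2\nu$, and hence $d(a,b_{k+1})\ge d(a,b_k)+90\nu$. A short induction via the four-point condition then gives $(b\mid b_n)_a\ge d(a,b)-3\nu$ for all $n$ (the large gap $90\nu\gg 3\nu$ ensures the minimum in the inequality always falls on the term one wants, so nothing accumulates). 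This yields $(a\mid b_n)_b\le 3\nu$, and the elementary comparison $d(b,[a,b_n])\le (a\mid b_n)_b+2\nu$ gives exactly $5\nu$. The calibrations $100\nu$ and $200\nu$ in the hypothesis exist to make that induction close, not to feed a Morse constant.
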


\begin{remark}
    Some sources say ``almost geodesically complete'' or ``almost extendible'' instead of ``visible'' (see for example \cite{Ontaneda}).  The fact that hyperbolic groups are visible is due to Mihalik.  See \cite[Lemma 3.1]{BesMes} for a proof, which extends easily to any proper hyperbolic space $\Upsilon$ whose isometry group is co-compact.  (In fact it is enough that the action on the space of pairs of distinct points in $\partial \Upsilon$ is co-compact.)
\end{remark}

\subsection{Visual metrics}
Here we recall some notions and facts concerning metrics on the Gromov boundary. For details see e.g.\ \cite[Sec.\ 7.3]{GdlH} or \cite[Chap.\ III.H]{BH}.

\begin{definition}
Let $\Upsilon$ be a Gromov hyperbolic space.
Suppose that $x,y,w \in \Upsilon$.  The \emph{Gromov product} of $x$ and $y$ with respect to $w$ is
\[  \left( x \mid y \right)_w = \frac{1}{2} \left( d_\Upsilon(x,w) + d_\Upsilon(y,w) - d_\Upsilon(x,y) \right)  .   \]

Suppose that $\alpha,\beta$ are geodesic rays with common basepoint $w$.  The \emph{Gromov product} of $\alpha$ and $\beta$ with respect to $w$ is
\[	\left( \alpha \mid \beta \right)_w = \liminf_{s,t\to \infty} \left( \alpha(s) \mid \beta(t) \right)_w.	\] 
\end{definition}

\begin{definition} Let $\Upsilon$ be Gromov hyperbolic space, with basepoint $w$.  A {\em visual metric on $\partial \Upsilon$, based at $w$, with parameters $\epsilon, \kappa$} is a metric $\rho(\cdot, \cdot)$ which is $\kappa$--bi-Lipschitz to $e^{-\epsilon (\cdot \mid \cdot)_w}$. \end{definition} 

\begin{proposition}\label{prop:ek} \cite[III.H.3.21]{BH} Let $\delta >0$. For any positive $\epsilon \leq \frac{1}{6 \delta}$, and $\kappa(\epsilon,\delta) = \left(3-2 e^{2\delta\epsilon}\right)^{-1}$ we have the following:

If $\Upsilon$ is a $\delta$--hyperbolic space and $w \in \Upsilon$, then $\partial \Upsilon$ has a visual metric based at $w$ with parameters $\epsilon$, $\kappa$. 
\end{proposition}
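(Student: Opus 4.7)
The plan is the standard chain-construction of a visual metric, carried out with quantitative control of the multiplicative loss at each step. First extend the Gromov product to $\partial \Upsilon$: for $\xi,\eta \in \partial \Upsilon$, set $(\xi \mid \eta)_w = \sup \liminf_{i,j\to\infty} (x_i \mid y_j)_w$ with the supremum taken over all sequences $x_i \to \xi$ and $y_j\to \eta$. A short computation using $\delta$--hyperbolicity shows that any two admissible liminfs differ by at most $2\delta$, so the function $f(\xi,\eta) := e^{-\epsilon(\xi\mid\eta)_w}$ is well-defined up to a bounded multiplicative constant and satisfies the quasi-ultrametric inequality
\[ f(\xi,\eta) \le e^{2\delta\epsilon}\,\max\bigl\{f(\xi,\zeta),\,f(\zeta,\eta)\bigr\} \]
for all $\xi,\eta,\zeta \in \partial \Upsilon$; this is just the exponential form of $(\xi\mid\eta)_w \ge \min\{(\xi\mid\zeta)_w,(\zeta\mid\eta)_w\} - 2\delta$, using the $2\delta$ slack that comes from passing Gromov's $\delta$--inequality to the boundary.

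Next, define the candidate visual metric by the chain construction
\[ \rho(\xi,\eta) := \inf \sum_{i=1}^n f(\xi_{i-1},\xi_i), \]
the infimum taken over all finite chains $\xi = \xi_0,\xi_1,\dots,\xi_n = \eta$ in $\partial \Upsilon$. Symmetry, the triangle inequality, and $\rho(\xi,\eta)\le f(\xi,\eta)$ (via the trivial chain) are immediate. What must be shown is the matching lower bound $\rho(\xi,\eta) \ge \kappa^{-1} f(\xi,\eta)$ with $\kappa = (3-2e^{2\delta\epsilon})^{-1}$; together with positivity of $f$ on distinct boundary points (another consequence of $\delta$--hyperbolicity, using that geodesic rays with distinct endpoints diverge), this bi-Lipschitz comparison simultaneously shows that $\rho$ is a genuine metric and that it is visual with parameters $\epsilon,\kappa$.

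The lower bound is the main obstacle and is proved by induction on chain length $n$: one shows that for any chain $\xi_0,\dots,\xi_n$,
\[ f(\xi_0,\xi_n) \le \kappa \sum_{i=1}^{n} f(\xi_{i-1},\xi_i). \]
For $n=1$ this is tautological. For $n \ge 2$, let $S = \sum_{i=1}^n f(\xi_{i-1},\xi_i)$ and choose the largest index $k$ with $\sum_{i=1}^{k} f(\xi_{i-1},\xi_i) \le S/2$; then the two subchains $\xi_0,\dots,\xi_k$ and $\xi_{k+1},\dots,\xi_n$ each have total cost at most $S/2$, while the middle edge $\{\xi_k,\xi_{k+1}\}$ has cost at most $S$. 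Applying the inductive hypothesis to the two subchains and then the quasi-ultrametric inequality at $\zeta = \xi_k$ (or $\xi_{k+1}$) gives
\[ f(\xi_0,\xi_n) \le e^{2\delta\epsilon}\max\bigl\{\kappa\cdot S/2,\ f(\xi_k,\xi_{k+1}),\ \kappa\cdot S/2\bigr\} \le e^{2\delta\epsilon}\bigl(\kappa + 2\bigr)\cdot S/2, \]
after bounding the three terms uniformly by $(\kappa + 2)S/2$. Solving $\kappa = \tfrac12 e^{2\delta\epsilon}(\kappa+2)$ for $\kappa$ yields exactly $\kappa = (3-2e^{2\delta\epsilon})^{-1}$, and this is where the hypothesis $\epsilon \le 1/(6\delta)$ enters: it guarantees $e^{2\delta\epsilon}\le e^{1/3} < 3/2$, so $3-2e^{2\delta\epsilon} > 0$ and $\kappa$ is a positive finite constant. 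The same hypothesis ensures the self-improving step in the induction actually contracts. Combining the two bounds gives the desired bi-Lipschitz equivalence $\kappa^{-1} f \le \rho \le f$, completing the proof.
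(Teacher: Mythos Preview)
The paper does not prove this proposition; it simply cites \cite[III.H.3.21]{BH}, and your approach is precisely the chain-construction argument found there. The overall structure---extend the Gromov product, establish the quasi-ultrametric inequality, define $\rho$ via chains, and prove the lower bound by induction on chain length using a balanced split---is correct and standard.

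However, your execution of the inductive step contains two errors. First, a single application of the (binary) quasi-ultrametric inequality does not give a bound by a maximum of three terms; splitting $\xi_0,\dots,\xi_n$ at both $\xi_k$ and $\xi_{k+1}$ requires two applications, and the bookkeeping of the resulting factors of $e^{2\delta\epsilon}$ is more delicate than what you wrote. Second, and independently, your algebra is wrong: the equation $\kappa = \tfrac12 e^{2\delta\epsilon}(\kappa+2)$ solves to $\kappa = 2e^{2\delta\epsilon}/(2-e^{2\delta\epsilon})$, not to $(3-2e^{2\delta\epsilon})^{-1}$. These are close numerically near $\epsilon = 1/(6\delta)$ but are not equal. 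To recover the exact constant $(3-2e^{2\delta\epsilon})^{-1}$ you need to follow the Bridson--Haefliger argument more carefully (they obtain a bound of the form $(1-2\epsilon')d' \le d \le d'$ with $\epsilon' = e^{\delta\epsilon}-1$, and the precise constant in the statement comes from tracking this through). The idea is right; the arithmetic needs to be redone.
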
 

\begin{definition}\label{def:delta_adapted} A \emph{$\delta$--adapted visual metric at $w$} is a visual metric as above where $\epsilon(\delta) = \frac{1}{6\delta}$ and $\kappa = \left(3-2e^{1/3}\right)^{-1} = 4.78984\ldots$ are determined as in Proposition \ref{prop:ek}.  If the basepoint is understood or unimportant, we just say that the metric is \emph{$\delta$--adapted}.
\end{definition} 

\subsection{Strong convergence and weak Gromov-Hausdorff convergence}
The use of the following two types of convergence for metric spaces is important for this paper. For this we mostly follow \cite{GMS}.

\begin{definition}\label{def:weakGromovHausdorff}
	Let $(Z_i)_{i\in\mathbb N}$ and $Z$ be metric spaces. We say that $Z$ is a \emph{weak Gromov--Hausdorff limit} of the sequence $(Z_i)$ if there exists $\lambda\geq 1$ and a sequence of $(\lambda,\epsilon_i)$--quasi-isometries $Z\to Z_i$, with $\epsilon_i\to 0$ as $i\to \infty$.
\end{definition}

\begin{definition}\label{def:strongconverge}
	Let $(Z,p)$ be a pointed metric space. We say that the sequence of pointed metric spaces $(Z_i,p_i)$ \emph{strongly converges} to $(Z,p)$ if the following holds:
	For every $R>0$, for all but finitely many $i$ there are isometries $\phi_i\co B_R(p)\to B_R(p_i)$ so that $\phi_i(p)=p_i$.
\end{definition}

The connection between these two notions of convergence is given in the following.
\begin{proposition}\cite[Proposition 3.5]{GMS}\label{prop:GMS3.5}
Let $\delta>0$.
  Let $(\Upsilon_i,p_i)$ be a sequence of pointed $\delta$--hyperbolic $\delta$--visible metric spaces which strongly converge to the pointed $\delta$--hyperbolic, $\delta$--visible metric space $(\Upsilon,p)$.  Let $\rho$ be a $\delta$--adapted visual metric at $p$ on $\partial \Upsilon$ and, for each $i$, let $\rho_i$ be a $\delta$--adapted visual metric at $p_i$ on $\partial \Upsilon_i$.  The space $(\partial \Upsilon,\rho)$ is a weak Gromov-Hausdorff limit of the spaces $(\partial \Upsilon_i,\rho_i)$.
\end{proposition}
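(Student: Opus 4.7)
The plan is to construct, for each sufficiently large index $i$, a map $\Phi_i \co \partial \Upsilon \to \partial \Upsilon_i$ by transporting geodesic rays from $p$ through the local isometries produced by strong convergence, and to show that $\Phi_i$ approximately preserves Gromov products on pairs whose Gromov product at $p$ is not too large. Combined with the bi-Lipschitz characterization of $\delta$--adapted visual metrics from Proposition~\ref{prop:ek}, this will yield the required $(\lambda,\epsilon_i)$--quasi-isometries witnessing that $(\partial \Upsilon,\rho)$ is a weak Gromov--Hausdorff limit of the $(\partial \Upsilon_i,\rho_i)$.

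First I would fix a scale $R_i \to \infty$ (to be specified later), and for each large $i$ use strong convergence to obtain an isometry $\phi_i \co B_{R_i}(p) \to B_{R_i}(p_i)$ with $\phi_i(p) = p_i$. For $\xi \in \partial \Upsilon$, $\delta$--visibility of $\Upsilon$ yields a geodesic ray $\alpha_\xi$ based at $p$ that represents $\xi$ (obtained by applying visibility to points $x_n \to \xi$ and passing to a limit). I would transport the initial segment $\alpha_\xi|_{[0,R_i]}$ via $\phi_i$ to a geodesic segment in $\Upsilon_i$ whose endpoint $q_i(\xi)$ lies on the sphere of radius $R_i$ around $p_i$. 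Applying $\delta$--visibility of $\Upsilon_i$, I extend this to a geodesic ray based at $p_i$ passing within $\delta$ of $q_i(\xi)$, and declare $\Phi_i(\xi)$ to be its endpoint on $\partial \Upsilon_i$. A symmetric construction using $\phi_i^{-1}$ gives a map $\Psi_i \co \partial \Upsilon_i \to \partial \Upsilon$ which will serve as a coarse inverse.

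The central technical step is to establish a Gromov product comparison
\[ \bigl| (\Phi_i(\xi) \mid \Phi_i(\eta))_{p_i} - (\xi \mid \eta)_p \bigr| \leq C(\delta) \quad\text{whenever}\quad (\xi \mid \eta)_p \leq R_i - C'(\delta), \]
with constants depending only on $\delta$. This reduces to a thin quadrilateral estimate: in a $\delta$--hyperbolic space, two rays from a common basepoint $\delta$--fellow-travel along an initial segment whose length equals the Gromov product of their endpoints at infinity up to additive error $O(\delta)$. The isometry $\phi_i$ preserves this fellow-traveling on $B_{R_i}(p)$, and the two visibility-produced rays in $\Upsilon_i$ deviate from the $\phi_i$--images of $\alpha_\xi|_{[0,R_i]}$ and $\alpha_\eta|_{[0,R_i]}$ by at most $\delta$, so their Gromov product at $p_i$ agrees with $(\xi \mid \eta)_p$ up to a uniform additive constant. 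The analogous estimate for $\Psi_i$ is obtained in the same way.

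Once this comparison is established, the bi-Lipschitz relation $\rho \asymp_\kappa e^{-\epsilon(\cdot\mid\cdot)_p}$ with $\epsilon = 1/(6\delta)$ gives, on pairs with $(\xi \mid \eta)_p \leq R_i - C'$, the estimate
\[ \kappa^{-2} e^{-\epsilon C}\, \rho(\xi,\eta) \leq \rho_i(\Phi_i(\xi),\Phi_i(\eta)) \leq \kappa^2 e^{\epsilon C}\, \rho(\xi,\eta), \]
while for pairs with larger Gromov product, both distances are bounded by $\kappa e^{-\epsilon(R_i - C')}$, which tends to $0$ as $R_i \to \infty$. Hence $\Phi_i$ is a $(\lambda,\epsilon_i)$--quasi-isometric embedding with $\lambda$ depending only on $\delta$ and $\epsilon_i \to 0$, and the symmetric argument for $\Psi_i$ shows $\Phi_i(\partial \Upsilon)$ is $\epsilon_i$--coarsely dense in $\partial \Upsilon_i$. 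The main obstacle I anticipate is the bookkeeping around the dual use of visibility: first in choosing representing rays for arbitrary boundary points of $\Upsilon$, and then in promoting transported finite \emph{segments} to full geodesic \emph{rays} in $\Upsilon_i$. Both steps contribute $O(\delta)$ slack which must be absorbed into constants independent of the pair $(\xi,\eta)$, and one has to check that different choices of representing rays change the Gromov products only by $O(\delta)$ so that $\Phi_i$ and $\Psi_i$ are well-defined up to the coarse error $\epsilon_i$.
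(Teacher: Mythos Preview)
The paper does not supply a proof of this proposition; it is quoted directly from \cite[Proposition 3.5]{GMS}, so there is no in-paper argument to compare your proposal against. Your outline is the natural one and matches the strategy of the original source: push rays through the isometries $\phi_i$ furnished by strong convergence, use $\delta$--visibility to extend the transported segments to genuine rays, and control Gromov products additively to obtain multiplicative control on the visual metrics.

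One point deserves care. You write that a representing ray $\alpha_\xi$ for $\xi\in\partial\Upsilon$ is ``obtained by applying visibility to points $x_n\to\xi$ and passing to a limit.'' The proposition as stated does not assume $\Upsilon$ is proper, so Arzel\`a--Ascoli is not available and a limiting ray need not exist. The fix is already implicit in your construction of $\Phi_i$: you do not actually need a ray in $\Upsilon$ all the way to $\xi$, only a geodesic segment of length $R_i$ from $p$ whose far endpoint has Gromov product with $\xi$ at least $R_i - O(\delta)$. Visibility applied to a single point $x$ with $(x\mid\xi)_p$ large produces exactly such a segment. With this adjustment your Gromov-product comparison and the passage to the bi-Lipschitz estimate go through as written, and the symmetric construction of $\Psi_i$ gives coarse surjectivity.
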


\subsection{Guessing geodesics}

In order to verify that a space $\Xz^{\mathrm{cusp}}(K)$ we build in Subsection \ref{ss:noname} is hyperbolic, we need the following result of Bowditch. For a connected (simplicial) graph $A$, from now on we always consider a path metric $d_A$ on its vertex set.

\begin{proposition} \cite[Proposition 3.1]{bowditch:uniformhyp} \label{prop:Bowditch guess}
Given $h \ge 0$ there exists $k \ge 0$ with the following property.  Suppose that $\Gamma$ is a connected graph, and that for each $x,y \in V(\Gamma)$ there is an associated connected sub-graph $\Path(x,y) \subseteq \Gamma$ so that
\begin{enumerate}
\item For all $x,y,z \in V(\Gamma)$
\[	\Path(x,y) \subseteq N_h \left(\Path(x,z) \cup \Path(z,y) \right)	;	\]
\item For any $x,y \in V(\Gamma)$ with $d_\Gamma(x,y) \le 1$, the diameter of $\Path(x,y)$ in $\Gamma$ is at most $h$.
\end{enumerate}
Then $\Gamma$ is $k$--hyperbolic.  In fact, it suffices to take any $k \ge \frac{1}{2} (3m-10h)$, where $m$ is any positive real number so that
\[	2h(6 + \log_2(m+2)) \le m	.	\]
Moreover, for all $x,y \in V(\Gamma)$, the Hausdorff distance between $\Path(x,y)$ and any geodesic from $x$ to $y$ is at most $m - 4h$.
\end{proposition}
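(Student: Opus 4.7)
The strategy is to establish a uniform Hausdorff bound between each $\Path(x,y)$ and any geodesic segment $[x,y]$ in $\Gamma$, and then to transfer the slim-triangle condition (1) on the $\Path$-system to a slim-triangle condition on actual geodesics, yielding $k$-hyperbolicity. The first ingredient is a dyadic subdivision argument bounding $\Path(x,y)$ inside a neighborhood of any geodesic. Set $n = d_\Gamma(x,y)$ and list consecutive vertices $x = v_0, v_1, \dots, v_n = y$ along a geodesic. Taking $j = \lceil \log_2 n \rceil$ midpoint subdivisions and iterating condition (1) at each level picks up an additive $h$ per subdivision, and at the finest level each $\Path(v_i,v_{i+1})$ has diameter at most $h$ by condition (2). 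Combining these estimates yields $\Path(x,y) \subseteq N_{(j+1)h}([x,y])$.

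The reverse containment, that each point of $[x,y]$ is within a bounded distance of $\Path(x,y)$, is the main obstacle. A crude version is easy: since $\Path(x,y)$ is connected, contains $x$ and $y$, and lies in a bounded neighborhood of $[x,y]$, a coarse intermediate-value argument along a path in $\Path(x,y)$ from $x$ to $y$ shows that no point of $[x,y]$ can be farther than about $2(j+1)h$ from $\Path(x,y)$. However, this bound is only logarithmic in $n$, whereas the desired bound $m-4h$ is uniform. To upgrade to a uniform bound one applies a self-improving bootstrap: reapplying (1) symmetrically (swapping the roles of the ``middle'' vertex and an endpoint) relates $\Path(x,m)$ back to $\Path(x,y)$, and using the crude logarithmic bound to control what happens inside this recursion, one obtains a recursive inequality of the form $F(n) \le F(n/2) + Ch$ whose fixed point is constrained by exactly $2h(6 + \log_2(m+2)) \le m$. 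Choosing $m$ to be the smallest real number satisfying this inequality stabilizes the Hausdorff distance to $m - 4h$, uniformly in $n$.

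With the Hausdorff bound in hand, $k$-hyperbolicity is a short consequence. Given a geodesic triangle with vertices $x,y,z$ and a point $v \in [x,y]$, find $v' \in \Path(x,y)$ with $d(v,v') \le m - 4h$; by (1) pick $v'' \in \Path(x,z) \cup \Path(z,y)$ with $d(v', v'') \le h$; and finally pick a point on $[x,z] \cup [z,y]$ within $m - 4h$ of $v''$. The resulting insize bound $2(m - 4h) + h = 2m - 7h$ converts via the standard equivalence to the hyperbolicity constant $k \ge \tfrac{1}{2}(3m - 10h)$. The hardest step is the uniform Hausdorff bound: turning the easy logarithmic estimate into a constant via careful bookkeeping of the recursion is what forces the specific form of the inequality $2h(6 + \log_2(m + 2)) \le m$ appearing in the statement, and the entire proposition stands or falls with this bootstrap.
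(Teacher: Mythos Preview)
The paper does not prove this proposition; it is quoted verbatim from Bowditch's paper on uniform hyperbolicity of curve graphs, so there is no in-paper argument to compare against. Your outline has the right overall shape---dyadic subdivision for the forward inclusion, a connectedness argument for a crude reverse inclusion, an upgrade to a uniform Hausdorff bound, then transfer to geodesic triangles---and steps 1, 2, and 4 are essentially correct (modulo bookkeeping of constants in step 4, where your insize $2m-7h$ does not quite match the claimed $k\ge\tfrac12(3m-10h)$, but that is a minor issue of which hyperbolicity definition one tracks).

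The genuine gap is step 3. A recursive inequality $F(n)\le F(n/2)+Ch$ in the scale $n=d(x,y)$ has solution $F(n)\le F(1)+Ch\log_2 n$, which is logarithmic, not uniform; such a recursion has no ``fixed point'' that stabilizes to a constant. What Bowditch's argument actually produces is an \emph{implicit inequality in the maximal gap $D=\max_{v\in[x,y]}d(v,\Path(x,y))$ itself}, not a recursion in $n$. One takes the worst point $v$, passes to a subsegment $[a,b]\subset[x,y]$ of length $O(D)$ around it, and applies the forward logarithmic bound \emph{there}---so the logarithm contributes $h\log_2 D$ rather than $h\log_2 n$---before using condition (1) to compare $\Path(a,b)$ back to $\Path(x,y)$. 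The upshot is an inequality of the form $D\le C_1h+C_2h\log_2(D+2)$, which does bound $D$ uniformly and is exactly where the constraint $2h(6+\log_2(m+2))\le m$ comes from. Your sketch conflates this localization-in-$D$ mechanism with a halving recursion in $n$; as written it does not yield a uniform bound, so the heart of the proof is missing.
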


\section{Coarse Topology}\label{s:coarse_topology}

In this section we introduce some tools used in Sections~\ref{s:shells} and \ref{sec:unwrap}. We are working with graphs, so we assume that $D$, $Q \in \mathbb{N}$.

\begin{definition} \label{def:coarse_fund}(Coarse fundamental group) Let $\Gamma$ be a connected graph and let $D\geq 0$.
Let $\Gamma^{D}$ be the space obtained from $\Gamma$ by gluing disks to all edge-loops of length $\leq D$.

Set $$\pi_1^{D}(\Gamma,a_0) = \pi_1(\Gamma^{D},a_0).$$

The {\em $D$--universal cover} of $\Gamma$
is the natural preimage of $\Gamma$ in the universal cover of $\Gamma^{D}$. More generally, a {\em $D$--covering} map of $\Gamma$ is the restriction to the preimage of $\Gamma$ of a covering of $\Gamma^{D}$.
\end{definition}

Observe that a $D$--covering map of $\Gamma$ is a covering for which all the loops of length at most $D$ in $\Gamma$ lift.
The following lemma is a direct consequence of the definition.

\begin{lemma}
Suppose that $\Gamma$ is a connected graph, let $D>0$, let $\widetilde{\Gamma}$ be the $D$--universal cover of $\Gamma$, and let $p \in \widetilde{\Gamma}$.  Then $\pi_1^D(\widetilde{\Gamma},p) = \{ 1 \}$.
\end{lemma}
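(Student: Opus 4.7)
The plan is to unwind the definitions and identify $\widetilde{\Gamma}^D$ with the universal cover of $\Gamma^D$. Let $q \co U \to \Gamma^D$ denote the universal covering, so by definition $\widetilde{\Gamma} = q^{-1}(\Gamma) \subseteq U$. By definition of the coarse fundamental group, we need to show that $\pi_1(\widetilde{\Gamma}^D,p) = \{1\}$, where $\widetilde{\Gamma}^D$ is obtained from $\widetilde{\Gamma}$ by gluing a disk to every edge-loop in $\widetilde{\Gamma}$ of length at most $D$.

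The key step is to identify $U$ with $\widetilde{\Gamma}^D$ as CW-complexes. Because $q$ is a covering map of CW-complexes and restricts to a local isometry of $1$-skeleta, an edge-loop $\widetilde\gamma$ in $\widetilde{\Gamma}$ of length $\ell \leq D$ projects to an edge-loop $\gamma = q \circ \widetilde\gamma$ in $\Gamma$ of the same length, and conversely any edge-loop $\widetilde\gamma$ in $\widetilde{\Gamma}$ is obtained as a (unique) lift of its projection $\gamma = q \circ \widetilde\gamma$. Hence the edge-loops of length $\leq D$ in $\widetilde{\Gamma}$ are in natural bijection with the lifts (to $\widetilde{\Gamma}$) of the edge-loops of length $\leq D$ in $\Gamma$. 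For each such loop $\gamma$ in $\Gamma$, the $2$-cell attached to $\gamma$ in $\Gamma^D$ has, as its preimages under $q$, precisely one $2$-cell for each lift $\widetilde\gamma$, attached along $\widetilde\gamma$. Thus the $2$-cells of $U$ are exactly one disk attached along each edge-loop of $\widetilde{\Gamma}$ of length $\leq D$, which is precisely the recipe defining $\widetilde{\Gamma}^D$. Therefore $U = \widetilde{\Gamma}^D$ as CW-complexes.

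Since $U$ is the universal cover of $\Gamma^D$, it is simply connected, so
\[ \pi_1^D(\widetilde{\Gamma},p) = \pi_1(\widetilde{\Gamma}^D,p) = \pi_1(U,p) = \{1\}, \]
as required.

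I do not anticipate any real obstacle: the only thing to be careful about is the bijection between $2$-cells of $U$ and edge-loops of $\widetilde{\Gamma}$ of length $\leq D$, which amounts to checking that a covering map restricts to a length-preserving map of $1$-skeleta and that $2$-cells lift uniquely once a boundary lift is chosen. Everything else is a formal consequence of the definitions.
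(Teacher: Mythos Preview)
Your proof is correct. The paper does not actually give a proof of this lemma; it simply states that it is ``a direct consequence of the definition,'' and your argument is precisely the natural unwinding of that definition, identifying $\widetilde{\Gamma}^D$ with the universal cover $U$ of $\Gamma^D$ by matching up $2$--cells.
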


The coarse Cartan--Hadamard Theorem below uses the language of the following Definition \ref{d:CSC}, but our focus is on the length rather than the diameter of loops.  We relate these two notions in Lemma \ref{lem:csc v pi_1^D} below.

\begin{definition}\cite[Appendix A]{Coulon14} \label{d:CSC}
  Let $D>0$.  A connected graph $\Gamma$ is said to be {\em $D$--simply connected} if $\pi_1(\Gamma)$ is generated by loops freely homotopic to loops of diameter at most $D$.  If $\Gamma$ is $D$--simply-connected for some $D > 0$ we say that $\Gamma$ is {\em coarsely simply-connected}.
\end{definition}

Basic examples of coarsely simply-connected spaces are hyperbolic spaces.  The following is an immediate consequence of \cite[Lemma III.H.2.6]{BH}.
\begin{lemma}\label{lem:hyp_simply_conn}.
 Let $\Upsilon$ be a $\delta$--hyperbolic graph and, $D\geq 16\delta$. For all $y \in \Upsilon$ we have $\pi_1^{D}(\Upsilon,y)=\{1\}$.
\end{lemma}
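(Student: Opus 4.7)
The plan is, for an arbitrary combinatorial loop $\gamma$ of length $n$ based at $y$, to produce a null-homotopy of $\gamma$ inside $\Upsilon^D$ via a two-step reduction: first decompose $\gamma$ into a concatenation of thin geodesic triangles, then fill each triangle by a ``ladder'' of cells of perimeter at most $16\delta$. Concretely, I would parametrize $\gamma$ at unit speed with vertices $v_0=y, v_1, \ldots, v_n=y$, and for each $i$ choose a geodesic $\beta_i$ from $y$ to $v_i$ (with $\beta_0 = \beta_n$ trivial). Inserting and canceling spurs of the form $\beta_i \cdot \bar{\beta}_i$ at each $v_i$ exhibits $\gamma$ as a concatenation, in $\Upsilon$ itself, of the triangular loops $T_i := \beta_i \cdot [v_i,v_{i+1}] \cdot \bar{\beta}_{i+1}$. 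It therefore suffices to null-homotope each $T_i$ in $\Upsilon^D$.

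Each $T_i$ is a geodesic triangle with two long geodesic sides issuing from $y$ and a third side of length $1$, so Lemma III.H.2.6 of \cite{BH} --- the $\delta$-slimness of geodesic triangles --- provides a pairing between integer points of $\beta_{i+1}$ and points within distance $\delta$ on $\beta_i \cup [v_i,v_{i+1}]$. Using this pairing as the rungs of a ladder, I would exhibit $T_i$ as a union of quadrilateral (and one or two degenerate) cells, each of perimeter at most $2(\delta+1) + 2 \le 16\delta$; here $\delta \geq 1$ holds whenever $\Upsilon$ contains any nontrivial loop, and the statement of the lemma is vacuous otherwise. Each such cell bounds a disk in $\Upsilon^D$ by the very definition of $\Upsilon^D$, so concatenating these disks fills $T_i$, and hence fills $\gamma$.

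The main obstacle is the treatment of the ``tip'' of each triangle $T_i$, where the two geodesics $\beta_i, \beta_{i+1}$ first diverge, roughly at the Gromov product $(v_i \mid v_{i+1})_y$. Beyond this tip the slimness-pairing switches from $\beta_i$ to the single unit edge $[v_i,v_{i+1}]$, and one or two degenerate (triangular or pentagonal) cells are needed to close the ladder; verifying that these tip cells also have perimeter bounded by $16\delta$ requires a short case analysis comparing $|\beta_i|$, $|\beta_{i+1}|$, and $(v_i\mid v_{i+1})_y$, and is the most bookkeeping-heavy step. Once this is in place, the full filling of $\gamma$ is assembled in a routine manner, yielding $[\gamma] = 1$ in $\pi_1^D(\Upsilon,y)$.
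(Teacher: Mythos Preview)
Your approach is correct and is the standard argument behind the result the paper cites; the paper gives no proof beyond invoking \cite[Lemma III.H.2.6]{BH}, which already contains (essentially) the statement that any loop in a $\delta$--hyperbolic geodesic space is a product of loops of length at most $16\delta$. Your triangulate-then-ladder construction is precisely how one proves that statement, so you are unpacking the citation rather than taking a genuinely different route. Note, incidentally, that you invoke III.H.2.6 for ``the $\delta$--slimness of geodesic triangles,'' but slimness is the \emph{definition} (III.H.1); III.H.2.6 is the filling consequence you are reproving.

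One small repair to the bookkeeping: the assertion ``$\delta \geq 1$ holds whenever $\Upsilon$ contains any nontrivial loop'' is false as written. What is true for a unit-edge graph is that a shortest embedded cycle of length $g \ge 3$ is isometrically embedded and forces $\delta \ge g/4 \ge 3/4$; this weaker bound already suffices for your perimeter estimate $2(\delta+1)+2 \le 16\delta$, and the tip cells are handled similarly. With that adjustment the argument is complete.
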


\begin{lemma} \label{lem:csc v pi_1^D}
  Let $\Gamma$ be a graph, and $D>0$. If $\Gamma^{2D}$ is simply connected, then $\Gamma$ is $D$--simply connected.  Conversely, if $\Gamma$ is $D$--simply connected, then $\Gamma^{2D+1}$ is simply connected.
\end{lemma}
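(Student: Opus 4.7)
The plan is to handle the two directions separately, both times going through the standard description of $\pi_1(\Gamma^D)$ as the quotient of $\pi_1(\Gamma)$ by the normal closure of the attaching maps of the $2$--cells (i.e.\ by the loops of length $\le D$). I will also use the fact that, in a connected graph, two loops are freely homotopic if and only if they represent conjugate elements in the fundamental group, so that ``normally generated by a set of loops'' is the same as ``generated by the loops freely homotopic to that set''.

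\medskip

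\emph{($\Gamma^{2D}$ simply connected $\Rightarrow$ $\Gamma$ is $D$--simply connected.)} First I would observe that a loop in $\Gamma$ of combinatorial length $\ell\le 2D$ has diameter $\le D$: any two vertices on such a loop are joined within the loop by two complementary arcs whose lengths sum to $\ell\le 2D$, so the shorter one has length $\le D$. Since $\pi_1(\Gamma^{2D})=\{1\}$, the kernel of $\pi_1(\Gamma)\to\pi_1(\Gamma^{2D})$ is all of $\pi_1(\Gamma)$, and by cellular van Kampen this kernel is the normal closure of the edge-loops of length $\le 2D$. By the diameter bound, these are loops of diameter $\le D$, and conjugates of such loops are freely homotopic to them. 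Hence $\pi_1(\Gamma)$ is generated by loops freely homotopic to loops of diameter $\le D$.

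\medskip

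\emph{($\Gamma$ is $D$--simply connected $\Rightarrow$ $\Gamma^{2D+1}$ simply connected.)} Since $\pi_1(\Gamma)\to\pi_1(\Gamma^{2D+1})$ is surjective, it suffices to show that every generator from the definition of $D$--simple connectivity is trivial in $\pi_1(\Gamma^{2D+1})$. Because free homotopy is conjugacy, and $\pi_1(\Gamma^{2D+1})$ is a quotient of $\pi_1(\Gamma)$, it is enough to show that any loop $\gamma$ in $\Gamma$ of diameter at most $D$ is null-homotopic in $\Gamma^{2D+1}$. For this I would use the standard ``cone off the loop'' argument: write $\gamma$ as a concatenation of edges $e_i=(v_{i-1},v_i)$, fix a basepoint $v$ on $\gamma$, and, using $\mathrm{diam}(\gamma)\le D$, choose paths $p_i$ in $\Gamma$ of length $\le D$ from $v$ to $v_i$. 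Then for each $i$ the ``triangle'' $p_{i-1}\cdot e_i\cdot \overline{p_i}$ is an edge-loop of length $\le 2D+1$, hence bounds a $2$--cell of $\Gamma^{2D+1}$, and these triangles fit together along the paths $p_i$ to give a van Kampen diagram for $\gamma$ in $\Gamma^{2D+1}$.

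\medskip

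I do not expect a serious obstacle here. The only point that requires a moment of care is the passage from ``freely homotopic to'' in the hypothesis of $D$--simple connectivity to ``normally generated by'', which is immediate in a connected graph; once that is noted, both implications reduce to the two elementary combinatorial facts (loops of length $\le 2D$ have diameter $\le D$; loops of diameter $\le D$ admit a triangulation by triangles of perimeter $\le 2D+1$).
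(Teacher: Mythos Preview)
Your proof is correct and follows essentially the same approach as the paper. Both directions rest on the same two elementary facts you identify (length $\le 2D$ forces diameter $\le D$; a loop of diameter $\le D$ breaks into triangles of perimeter $\le 2D+1$), and the paper likewise organizes the argument via van Kampen/disk diagrams. Your cone-off construction in the second direction is in fact a slightly more explicit version of the paper's one-line ``any loop of such diameter can be decomposed into finitely many loops of length at most $2D+1$''.
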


\begin{proof} 
	Suppose $\Gamma^{2D}$ is simply connected, and let $\alpha$ be an edge-loop in $\Gamma$. 
	There exists a (singular) disk diagram $\triangle \to \Gamma^{2D}$ for $\alpha$, that is, $\partial \triangle$ is mapped to $\alpha$ and $2$--cells in $\triangle$ are mapped to $2$--cells in $\Gamma^{2D}$. The boundary of each such $2$--cell has diameter at most $D$, hence the class of $\alpha$ in $\pi_1(\Gamma)$ is generated by loops of such diameter, proving the first implication.
	
	Now suppose that $\Gamma$ is $D$--simply connected. 
 Consider an edge-loop $\alpha$ in $\Gamma^{2D+1}$.  Such a loop is in fact a loop in $\Gamma$. The class of $\alpha$ in $\pi_1(\Gamma)$ is generated by loops of diameter at most $D$ hence
	there exists a map $\triangle^{(1)}\to \Gamma$ from the $1$--skeleton of a combinatorial (singular) disk $\triangle$ such that $\partial \triangle$ is mapped onto $\alpha$ and the boundary of every $2$--cell in $\triangle$ is mapped onto a loop of diameter at most $D$. Observe that any loop of such diameter can be decomposed into
	finitely many loops of length at most $2D+1$, hence the the map $\triangle^{(1)}\to \Gamma$ induces a map 
	$\triangle'\to \Gamma^{2D+1}$ from a subdivision $\triangle'$ of $\triangle$, justifying triviality of the homotopy class of $\alpha$ in $\Gamma^{2D+1}$, and finishing the proof.
\end{proof} 

When calculating constants later, we implicitly use the following immediate consequence of Lemma~\ref{lem:csc v pi_1^D}.  Note that constants are not optimal.

\begin{corollary}
Let $\Gamma$ be a connected graph, $a_0 \in \Gamma$, and $D > 0$.  If $\pi_1^D(\Gamma,a_0) = \{ 1\}$ then $\Gamma$ is $D$--simply-connected.
\end{corollary}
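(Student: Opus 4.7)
The plan is to derive this as an almost immediate corollary of the first implication in Lemma~\ref{lem:csc v pi_1^D}. First I would unpack the hypothesis via Definition~\ref{def:coarse_fund}: assuming $\pi_1^D(\Gamma,a_0) = \{1\}$ says precisely that $\Gamma^D$, the complex obtained from $\Gamma$ by gluing a $2$--disk to every edge-loop of length at most $D$, is simply connected.

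Next I would compare $\Gamma^D$ with $\Gamma^{2D}$. By construction, $\Gamma^{2D}$ is obtained from $\Gamma^D$ by attaching further $2$--cells along the edge-loops of length in $(D,2D]$. Since attaching $2$--cells to a CW complex can only introduce relators in the fundamental group, there is a canonical surjection $\pi_1(\Gamma^D) \twoheadrightarrow \pi_1(\Gamma^{2D})$; as the domain is trivial, $\Gamma^{2D}$ is simply connected as well.

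At this point I would invoke the first half of Lemma~\ref{lem:csc v pi_1^D}, which states that if $\Gamma^{2D}$ is simply connected then $\Gamma$ is $D$--simply connected. This yields the conclusion. As a sanity check, one can also argue directly without passing through $\Gamma^{2D}$: a singular disk diagram $\triangle \to \Gamma^D$ for any edge-loop $\alpha$ in $\Gamma$ expresses $[\alpha] \in \pi_1(\Gamma)$ as a product of conjugates of the boundaries of the attached $2$--cells, and each such boundary is an edge-loop in $\Gamma$ of length $\leq D$, hence of diameter $\leq D$; this is the defining condition of $D$--simple connectedness.

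There is no real obstacle here—the statement is essentially a definitional repackaging once Lemma~\ref{lem:csc v pi_1^D} is available, and the only content is the (trivial) remark that enlarging the set of attached disks can only shrink $\pi_1$.
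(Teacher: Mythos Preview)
Your proof is correct and follows exactly the approach the paper intends: the corollary is stated as an immediate consequence of Lemma~\ref{lem:csc v pi_1^D}, and you have spelled out precisely that deduction (pass from $\Gamma^D$ to $\Gamma^{2D}$ by attaching further cells, then apply the first implication of the lemma). Your additional ``sanity check'' direct argument is also fine and in fact explains the paper's remark that ``constants are not optimal'': loops of length $\le D$ already have diameter $\le D/2$, so one could conclude $D/2$--simple connectedness rather than merely $D$--simple connectedness.
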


As mentioned above, the reason we wish to control the scale of simple connectivity is so we can apply the following theorem.
\begin{theorem}[Coarse Cartan--Hadamard] \cite[Theorem A.1]{Coulon14}\label{t:CCH}
  Let $\nu\geq 0$, and let $\sigma \geq 10^7\nu$.  Let $Z$ be a geodesic space.  If every ball of radius $\sigma$ in $Z$ is $\nu$--hyperbolic and if $Z$ is $10^{-5}\sigma$--simply-connected, then $Z$ is $300\nu$--hyperbolic.
\end{theorem}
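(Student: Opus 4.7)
The plan is to follow the classical local-to-global paradigm for hyperbolicity, combining the local geometric hypothesis (balls of radius $\sigma$ are $\nu$--hyperbolic) with the coarse topological hypothesis (simple connectivity at scale $10^{-5}\sigma$) in order to certify uniformly slim triangles with constant of order $\nu$.

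The core step is to prove that any piecewise geodesic $\gamma$ whose sub-arcs of length at most $\sigma/K$, for a suitable universal constant $K$, are geodesic in the ambient space is in fact a global $(\lambda,\epsilon)$--quasi-geodesic with $\lambda,\epsilon$ depending only on $\nu$. I would carry this out by fixing such a $\gamma$, choosing an actual geodesic $\eta$ with the same endpoints, forming the loop $\gamma\cdot\overline{\eta}$, and using the simple connectivity hypothesis to fill it by a combinatorial disk $\triangle$ whose $2$--cells are edge-loops of diameter at most $10^{-5}\sigma$. Each such $2$--cell then sits inside some ball of radius $\sigma$ in $Z$, where the local $\nu$--hyperbolicity provides slim quadrilaterals. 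An induction on the combinatorial area of $\triangle$ — at each step either excising an extremal $2$--cell or bisecting the disk along a short arc — propagates slimness from individual cells to the full disk, yielding a bound on $d_Z(\gamma(t),\eta)$ depending only on $\nu$.

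With this lemma in hand, hyperbolicity follows from Bowditch's guessing-geodesics criterion (Proposition~\ref{prop:Bowditch guess}). For each pair $x,y \in Z$, define $\Path(x,y)$ to be a local geodesic obtained by concatenating geodesic segments of length $\sigma/K$; by the previous step these are uniform global quasi-geodesics. The inclusion $\Path(x,y) \subset N_h\bigl(\Path(x,z) \cup \Path(z,y)\bigr)$ reduces, via comparison inside balls of radius $\sigma$, to thin-triangle estimates guaranteed by the local $\nu$--hyperbolicity, while the second axiom of the guessing-geodesics proposition is immediate from the length-$\sigma/K$ construction. This outputs $k$--hyperbolicity of $Z$ with $k$ linear in $\nu$, and careful tracking of constants through the quasi-geodesic comparison and through Bowditch's explicit bound reproduces the $300\nu$ conclusion.

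The principal obstacle is the inductive disk-filling step: the local hyperbolicity only yields geometric estimates on balls of radius $\sigma$, while simple connectivity produces only a combinatorial disk $\triangle$ with no a priori metric control on its image. Making the two cooperate requires careful bookkeeping of how errors compound as one traverses many $2$--cells, and this is precisely where the large ratio $\sigma/\nu \geq 10^7$ and the small simple-connectivity scale $10^{-5}\sigma$ are both needed, to prevent the bisection argument from degrading the hyperbolicity constant at each step.
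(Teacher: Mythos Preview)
This theorem is not proved in the paper; it is quoted from \cite[Theorem A.1]{Coulon14} and used as a black box. There is therefore no ``paper's own proof'' to compare your proposal against.

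That said, your outline is in the right spirit for how such local-to-global hyperbolicity results are actually established (going back to Gromov and made precise by Coulon and others): one shows that local geodesics at an appropriate scale are global quasi-geodesics, using the coarse simple connectivity to fill the loop formed by a local geodesic and a true geodesic, and then deduces hyperbolicity. A few cautions. First, the induction on the area of the filling disk is the heart of the matter and is genuinely delicate; your description of ``excising an extremal $2$--cell or bisecting the disk along a short arc'' glosses over the key issue of why the error stays bounded rather than growing with the number of cells. Second, invoking Bowditch's guessing-geodesics criterion (Proposition~\ref{prop:Bowditch guess}) is not how Coulon actually concludes, and it is not clear it would reproduce the specific constant $300\nu$; the constants in Bowditch's criterion involve a logarithmic term and are not obviously compatible with the stated bound. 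If you want to recover the exact statement with the exact constants, you would need to follow Coulon's argument more closely rather than route through Proposition~\ref{prop:Bowditch guess}.
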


\subsection{Combinatorial horoballs}

\begin{definition}\cite[Definition 3.1]{rhds}\label{def:combhoroball}
 Let $\Gamma$ be a simplicial graph.  The \emph{combinatorial horoball  based on $\Gamma$}, is the graph $\horba(\Gamma)$ with vertex set $\Gamma \times \mathbb{Z}_{\geq 0}$ and two types of edges: 
\begin{itemize} 
\item vertical edges connecting $(x, n)$ to $(x,n+1)$ 
\item horizontal edges connecting $(x,n)$ to $(y,n)$ if $d_\Gamma(x,y) \leq 2^n$. 
\end{itemize} 
Vertices $(x,n)$ are \emph{at depth $n$}.
\end{definition} 

The graph $\Gamma$ naturally includes as a subgraph of $\horba(\Gamma)$ but its metric is distorted exponentially.  More precisely we have the following.
\begin{lemma}\label{lem:horbadistort}
   Let $\Gamma$ be a connected simplicial graph, and let $\horba = \horba(\Gamma)$ be the horoball based on $\Gamma$.  For any distinct vertices $v,w\in \Gamma$, we have
   \begin{equation}\label{eq:horbadistort}
       \frac{1}{2} d_\horba((v,0),(w,0))-2 <\log_2(d_\Gamma(v,w)) < \frac{1}{2}d_\horba((v,0),(w,0)) + 1.
   \end{equation}  
\end{lemma}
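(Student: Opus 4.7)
The plan is to bound $m := d_\horba((v,0),(w,0))$ above and below in terms of $n := d_\Gamma(v,w)$, obtaining the two inequalities of~\eqref{eq:horbadistort} separately.

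For the left (upper) inequality, I would set $k = \lceil \log_2 n \rceil$, so that $2^k \geq n$ and hence $(v,k)$ and $(w,k)$ are joined by a single horizontal edge. Ascending $k$ vertical edges from $(v,0)$ to $(v,k)$, taking this horizontal edge, and then descending $k$ vertical edges to $(w,0)$ exhibits a path of length $2k+1$ in $\horba$. Since $k \leq \log_2 n + 1$, this gives $m \leq 2\log_2 n + 3$, which rearranges to $\tfrac{m}{2} - 2 < \log_2 n$.

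For the right (lower) inequality, I would consider an arbitrary combinatorial edge-path $\gamma$ from $(v,0)$ to $(w,0)$ in $\horba$ and let $k$ denote the maximum depth it attains. Since $\gamma$ starts and ends at depth $0$, it must use at least $k$ ``up'' and $k$ ``down'' vertical edges, contributing at least $2k$ to its length. Each of its $H$ horizontal edges sits at some depth $\leq k$ and so connects vertices whose $\Gamma$-distance is at most $2^k$; since horizontal edges are the only ones that move in the $\Gamma$-direction, the triangle inequality in $\Gamma$ forces $H \cdot 2^k \geq n$. Therefore
\[ \operatorname{length}(\gamma) \;\geq\; 2k + \frac{n}{2^k}. \]
Substituting $u = n/2^k$ rewrites the right-hand side as $2\log_2 n + (u - 2\log_2 u)$, so it suffices to check the elementary inequality $u - 2\log_2 u > -2$ for all $u > 0$. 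A one-line minimization locates the infimum of $u - 2\log_2 u$ on $(0,\infty)$ at $u = 2/\ln 2$ and shows it is strictly greater than $-2$ (in fact close to $-0.18$). Taking the infimum of $\operatorname{length}(\gamma)$ over all such paths then gives $m > 2\log_2 n - 2$, i.e.\ $\log_2 n < \tfrac{m}{2} + 1$, as required.

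There is really no obstacle: the upper bound is an explicit path construction, the lower bound is a simple counting estimate over vertical versus horizontal edges, and the only analytic input is the minimization of $u - 2\log_2 u$, for which the target bound $>-2$ is rather generous. If a less sharp form of the inequality (say, constants $\pm 3$ instead of $\pm 2$ and $\pm 1$) were acceptable, one could even dispense with the calculus step and just split into the two cases $k \geq \log_2 n$ and $k < \log_2 n$.
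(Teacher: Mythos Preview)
Your proof is correct and takes a genuinely different route from the paper. The paper's argument invokes the structural fact (cited from \cite[Lemma 3.10]{rhds}) that any two vertices in $\horba$ are joined by a \emph{regular} geodesic consisting of a vertical segment, a horizontal segment of length $h\in\{2,3\}$, and another vertical segment of the same length $D$; from $2^D < d_\Gamma(v,w) \le h\cdot 2^D$ and $d_\horba = 2D+h$ the inequalities drop out, with a separate check when $d_\Gamma(v,w)<6$. Your argument avoids any knowledge of what geodesics in $\horba$ look like: the upper bound comes from one explicit path, and the lower bound from the coarse estimate $\mathrm{length}(\gamma)\ge 2k + n/2^k$ for an arbitrary path of maximum depth $k$, followed by a one-variable minimization. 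This buys you a self-contained proof with no external citations and no case split for small $n$; what the paper's approach buys is an essentially exact description of $d_\horba$ (up to the ambiguity $h\in\{2,3\}$), which is more than the lemma asks for but is the kind of thing one might want elsewhere.
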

\begin{proof}
    As pointed out in \cite[Lemma 3.10]{rhds}, any two vertices of $\horba$ are joined by a \emph{regular} geodesic, which consists of an initial vertical subsegment, followed by a horizontal subsegment of length at most $3$, followed by a final vertical subsegment.  If both the vertical subsegments are nondegenerate, then the horizontal subsegment has length at least $2$.  If $d_\Gamma(v,w)<6$, then $d_\Gamma(v,w) = d_\horba((v,0),(w,0))$ and the inequalities~\eqref{eq:horbadistort} are easily verified.

    Otherwise, the vertical segments are nondegenerate.  Let $h\in\{2,3\}$ be the length of the horizontal subsegment and let $D$ be the length of each vertical subsegment, so $d_\horba((v,0),(w,0)) = 2D + h$.  We have
    \[ 2^D < d_\Gamma(v,w) \le h\cdot 2^D. \]  Taking logs and using the relationship between $D$ and $d_\horba((v,0),(w,0))$ yields the inequalities~\eqref{eq:horbadistort}.
    \end{proof}

\begin{lemma} \label{lem:horoballs csc}
Let $\Gamma$ be a connected simplicial graph, and let $\horba(\Gamma)$ be the combinatorial horoball based on $\Gamma$.  For any $D \ge 5$ and any $p \in \horba(\Gamma)$ we have $\pi_1^D(\horba(\Gamma),p) = \{ 1 \}$.
\end{lemma}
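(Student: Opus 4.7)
The plan is to fill every edge-loop $\alpha$ in $\horba := \horba(\Gamma)$ by a disk diagram whose $2$-cells have perimeter at most $4$, which is in particular at most $D$. Let $V = \{v_1, \ldots, v_L\} \subset \Gamma$ be the finite set of $\Gamma$-vertices appearing in $\alpha$. Since $\Gamma$ is connected, $M := \max_{i,j} d_\Gamma(v_i, v_j)$ is finite. Pick an integer $N$ with $2^N \geq M$, so that every pair $(v_i, N), (v_j, N)$ is joined by a horizontal edge at level $N$. The overall strategy is first to homotope $\alpha$ to a loop $\beta$ lying entirely at level $N$, and then null-homotope $\beta$ inside the complete subgraph on $\{(v_i, N) : 1 \leq i \leq L\}$.

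For the first step, write $\alpha = e_1 \cdots e_L$ with $e_i$ an edge from $w_{i-1} = (v_{i-1}, n_{i-1})$ to $w_i = (v_i, n_i)$. For each $i$, let $\epsilon_i$ denote the vertical elevator from $w_i$ up to $w_i' := (v_i, N)$, and let $\eta_i$ denote the horizontal edge from $w_{i-1}'$ to $w_i'$ when $e_i$ is horizontal, or the trivial path at $w_i'$ when $e_i$ is vertical (so $v_{i-1} = v_i$). I construct an annular diagram with inner boundary $\alpha$ and outer boundary $\beta := \eta_1 \cdots \eta_L$ by gluing $L$ quadrilaterals with boundary $e_i \cdot \epsilon_i \cdot \eta_i^{-1} \cdot \epsilon_{i-1}^{-1}$ along matching elevators. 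For horizontal $e_i$ at level $n$, this quadrilateral subdivides into $N - n$ push-up squares at consecutive levels: the square at depth $m$ has vertices $(v_{i-1}, m), (v_i, m), (v_i, m+1), (v_{i-1}, m+1)$ and perimeter $4$; the required horizontal edges exist since $d_\Gamma(v_{i-1}, v_i) \leq 2^n \leq 2^m$. For vertical $e_i$, the quadrilateral degenerates into a backtrack along a single elevator (as $\epsilon_{i-1}$ factors through $e_i$ and $\epsilon_i$), fillable by length-$2$ bigons.

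For the second step, $\beta$ is a loop in the complete subgraph on $\{(v_i, N)\}$. Fan-triangulating from $(v_1, N)$ decomposes $\beta$ into triangles with vertices $(v_1, N), (v_i, N), (v_j, N)$, each of perimeter $3$ since all three pairs of vertices are joined by horizontal edges at level $N$. Combining the annular diagram with the fan triangulation yields the desired disk diagram filling $\alpha$, with all $2$-cells of perimeter at most $4 \leq D$. Hence $\alpha$ is null-homotopic in $\horba^D$, proving $\pi_1^D(\horba(\Gamma), p) = \{1\}$.

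The only subtle point is verifying that the annular diagram assembles coherently: the right elevator of the $i$-th quadrilateral must coincide with the left elevator of the $(i+1)$-st (both are $\epsilon_i$), and the degenerate "vertical-edge" rectangles contribute no genuine $2$-cells. Both are routine consequences of the definition of the horoball.
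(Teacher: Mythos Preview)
Your argument is correct and gives a self-contained proof, in contrast to the paper's proof, which is a one-line citation of \cite[Proposition~3.7]{rhds}: there the combinatorial horoball is built as a $2$--complex by attaching specific triangles, squares, and pentagons, and that complex is shown to be simply connected; since every attaching loop has length at most $5$, this yields $\pi_1^D(\horba(\Gamma))=\{1\}$ for $D\ge 5$.  Your approach instead pushes an arbitrary edge-loop down to a single deep level $N$ using only ``push-up'' squares of perimeter $4$, and then fan-triangulates at level $N$ where the relevant vertices span a complete graph; this uses only cells of perimeter at most $4$, so in fact you prove the slightly sharper statement that $D\ge 4$ suffices.  One small omission: your choice of $N$ guarantees $2^N\ge M$ but not that $N$ dominates every depth appearing in $\alpha$, so a horizontal edge $e_i$ at some level $n>N$ would not literally give ``$N-n$ push-up squares''.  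This is harmless --- either enlarge $N$ to also exceed $\max_i n_i$, or observe that for $N\le m\le n$ the required horizontal edges at level $m$ still exist because $d_\Gamma(v_{i-1},v_i)\le M\le 2^N\le 2^m$ --- but it is worth stating explicitly.
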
 
\begin{proof}
In \cite[Definition 3.1]{rhds} the combinatorial horoball is defined by including $2$--cells which are triangles, squares, or pentagons.  In \cite[Proposition 3.7]{rhds} it is proved that this complex is simply-connected.
\end{proof}

\subsection{Coarse topology of glued spaces}

\begin{proposition} \label{prop:CvK}
 Let $\Gamma$ be a connected simplicial graph, and let $D\geq 5$.  Suppose that $\Gamma^D$ is simply connected.   Let $\{\Gamma_i\}$ be disjoint connected sub-graphs of $\Gamma$. 
 Let $\widehat \Gamma$ be obtained from $\Gamma$ by gluing a combinatorial horoball $\horba_i$ based on $\Gamma_i$ to each $\Gamma_i$.
 Then $\widehat{\Gamma}$ is connected and $\widehat{\Gamma}^D$ is simply connected.
\end{proposition}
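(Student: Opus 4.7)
The plan is to run a coarse Seifert--van Kampen argument at the level of edge-loops rather than at the level of complexes. Connectedness of $\widehat\Gamma$ is immediate: $\Gamma$ is connected and each horoball $\horba_i$ is attached to $\Gamma$ along the nonempty connected subgraph $\Gamma_i$.

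For the simple connectedness of $\widehat\Gamma^D$, I would take an arbitrary edge-loop $\alpha$ in $\widehat\Gamma$ and show $[\alpha]=1$ in $\pi_1(\widehat\Gamma^D)$. The key structural observation is that the only vertices of $\widehat\Gamma$ lying in both $\Gamma$ and a horoball $\horba_i$ are the vertices of $\Gamma_i$ (identified with the depth-$0$ slice of $\horba_i$), and $\widehat\Gamma$ has no edges between $\Gamma\smallsetminus\Gamma_i$ and $\horba_i\smallsetminus\Gamma_i$. Consequently, $\alpha$ decomposes (uniquely up to cyclic rotation) as a concatenation $\beta_1\gamma_1\cdots\beta_k\gamma_k$, where each $\beta_j$ is a maximal sub-path in $\Gamma$ and each $\gamma_j$ is a sub-path lying in some $\horba_{i_j}$ with endpoints $p_j,q_j\in\Gamma_{i_j}$.

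Next, since each $\Gamma_{i_j}$ is connected, I can choose a path $\delta_j$ in $\Gamma_{i_j}$ from $p_j$ to $q_j$, producing an edge-loop $\gamma_j\delta_j^{-1}$ entirely inside $\horba_{i_j}$. By Lemma~\ref{lem:horoballs csc} (using $D\geq 5$), each such loop is null-homotopic in $\horba_{i_j}^D$, which sits as a subcomplex of $\widehat\Gamma^D$. Performing these replacements one $\gamma_j$ at a time homotopes $\alpha$ inside $\widehat\Gamma^D$ to the loop $\alpha'=\beta_1\delta_1\cdots\beta_k\delta_k$, which lies entirely in $\Gamma$. The hypothesis that $\Gamma^D$ is simply connected, together with the fact that $\Gamma^D\subseteq\widehat\Gamma^D$ as a subcomplex (every loop of length $\leq D$ in $\Gamma$ is still a loop of length $\leq D$ in $\widehat\Gamma$), then trivializes $[\alpha']$ in $\widehat\Gamma^D$, finishing the argument.

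The main thing to watch out for is that one cannot simply present $\widehat\Gamma^D$ as the pushout of $\Gamma^D$ and $\bigsqcup_i\horba_i^D$ along $\bigsqcup_i\Gamma_i^D$: there can be edge-loops of length $\leq D$ in $\widehat\Gamma$ that genuinely alternate between $\Gamma$-edges and horoball edges, and the $2$-cells attached to such loops in $\widehat\Gamma^D$ are not present in either piece. Working loop-by-loop rather than complex-by-complex sidesteps this obstacle, since the alternation structure is absorbed by the decomposition $\alpha=\prod\beta_j\gamma_j$ and the subsequent replacement of each $\gamma_j$ by $\delta_j\subseteq\Gamma$ \emph{before} any null-homotopies in $\Gamma^D$ are invoked.
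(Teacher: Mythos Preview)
Your proof is correct and follows essentially the same approach as the paper: decompose an arbitrary edge-loop into $\Gamma$-segments and horoball-segments, use Lemma~\ref{lem:horoballs csc} (with $D\ge 5$) to replace each horoball-segment by a path in the corresponding $\Gamma_i$, and then kill the resulting $\Gamma$-loop using the hypothesis on $\Gamma^D$. Your closing remark about why a na\"ive pushout description of $\widehat\Gamma^D$ fails is a nice clarification not present in the paper, but the core argument is the same.
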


\begin{proof} 
Since $\widehat{\Gamma}$ is obtained from gluing together connected graphs along connected sub-graphs, it is clear that $\widehat{\Gamma}$ is connected.  Therefore, the basepoint is irrelevant, and we assume that the basepoint $a$ is in $\Gamma$.

We first note since $\Gamma_i$ is connected, any path $p$ in $\horba_i$ with endpoints in $\Gamma_i$ is $D$--homotopic rel endpoints to a path in $\Gamma_i$, by Lemma~\ref{lem:horoballs csc}.

Consider now a loop $\ell$ based at $a \in \Gamma$. Up to homotopy in $\widehat{\Gamma}^D$, we can replace each maximal sub-path of $\ell$ contained in some $\horba_i$ with a path contained in $\Gamma_i$, thereby obtaining a loop $\ell'$ in $\Gamma$. Since $\pi_1^{D}(\Gamma,a)=\{1\}$, we have that $\ell'$, hence $\ell$, is $D$--homotopically trivial, as required.
\end{proof}

\subsection{Coarse deformation retractions}

\begin{definition}\label{def:def retract}
 Let $\Xi$ be a connected graph and $\Gamma \subset \Xi$ a subset.  A \emph{$Q$--deformation retraction} of $\Xi$ onto $\Gamma$ is a sequence of maps $f_0,\ldots,f_n,\ldots:\Xi\to \Xi$ satisfying the following properties:
 \begin{enumerate}
  \item\label{eq:0Id} $f_0$ is the identity,
  \item\label{eq:evId} $f_i|_{\Gamma}$ is the identity for all $i$,
  \item\label{eq:CL} for all $b_1,b_2\in \Xi$ so that $d_\Xi(b_1,b_2) \le 1$ and all $i$ we have $$d_\Xi(f_i(b_1),f_i(b_2))\leq Q,$$
  \item\label{eq:speed} for all $b \in \Xi$ and all $i$ we have $d_\Xi(f_i(b),f_{i+1}(b))\leq 1$,
  \item\label{stability} for each $b\in \Xi$ there is some $n\geq 0$ and $a\in \Gamma$ so that $f_i(b)=a$ for all $i\geq n$.  
  \end{enumerate}
  Define $f(b)$ to be the point $a$ as in item \eqref{stability}. We call $f$ the \emph{stable map}.
\end{definition}
The $Q$--deformation retractions we require are built in Lemma \ref{lem:cdr} below.  A natural example of a $Q$--deformation retraction is given by a projection of hyperbolic graphs onto a neighborhood of a quasi-convex sub-graph, see Lemma~\ref{lem:easy retract}.

It is important to observe that in the following proposition the conditions required on the $Q$--deformation retraction are stronger than those implied by the definition.  In particular, notice that in Item~\eqref{item:defret0} the $d_\Gamma$--distance between $f(b_1)$ and $f(b_2)$ is bounded by $Q$.
\begin{proposition}\label{prop:def retract on pi_1}
Suppose that $\Xi$ is a connected graph, and that $\Gamma \subset \Xi$ is a connected sub-graph.  
Let $a \in \Gamma$, and $Q > 0$.  
Suppose
\begin{enumerate}
\item\label{item:defret0} there exists a $Q$--deformation retraction $f_0, \ldots , f_n , \ldots : \Xi \to \Xi$ of $\Xi$ onto $\Gamma$ with stable map $f$, so that for all $b_1,b_2 \in \Xi$ for which $d_\Xi(b_1,b_2) \le 1$ we have $d_\Gamma(f(b_1),f(b_2)) \le Q$; and 
\item\label{item:defret1} there exists $D > 2Q+2$ so that all $\Gamma$--loops of length at most $Q D$ represent the (conjugacy class of the) identity element of $\pi_1^D(\Gamma,a)$.
\end{enumerate}
Then the inclusion $\iota \co \Gamma \into \Xi$ induces an isomorphism:
\[	\iota_{\ast} \co \pi_1^D(\Gamma,a) \to \pi_1^D(\Xi,a)	.	\]
Moreover, for any $k$, given any loop $\sigma$ in $\Xi$ of length $k$ (possibly not based at $a$), with consecutive vertices $b_0,b_1,\ldots,b_k=b_0$, let $f(\sigma)$ be a loop in $\Gamma$ obtained by connecting $f(b_i)$ with $f(b_{i+1})$ by a path of length at most $Q$ in $\Gamma$, for all $i$ (this is possible by condition \eqref{item:defret1} above). Then $f(\sigma)$ represents the same conjugacy class as $\sigma$ in $\pi_1^D(\Xi,a) \cong \pi_1^D(\Gamma,a)$.
\end{proposition}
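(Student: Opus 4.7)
The plan is to construct an inverse $f_*$ to $\iota_*$ directly from the stable map $f$, and verify compatibility with the $D$--homotopy relation. The key quantitative inputs are that $2Q + 2 < D$ and that $\Gamma$--loops of length at most $QD$ are trivial in $\pi_1^D(\Gamma, a)$.

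I would first define $f_*$ on loops. Given a loop $\sigma$ in $\Xi$ based at $a$ with consecutive vertices $b_0 = a, b_1, \ldots, b_k = a$, connect each $f(b_i)$ to $f(b_{i+1})$ by a path $\alpha_i$ of length at most $Q$ in $\Gamma$; such a path exists by hypothesis~\eqref{item:defret0} applied to $d_\Xi(b_i, b_{i+1}) \le 1$. Because $f$ fixes $a$, the concatenation $f(\sigma) := \alpha_0 \cdots \alpha_{k-1}$ is a loop in $\Gamma$ based at $a$, and I set $f_*[\sigma] := [f(\sigma)]$. Well-definedness on $\pi_1^D(\Xi, a)$ follows from hypothesis~\eqref{item:defret1}: two choices of a given $\alpha_i$ differ by a $\Gamma$--loop of length $\le 2Q \le QD$; removing a backtracking in $\sigma$ changes $f(\sigma)$ by a sub-loop of length $\le 2Q$ in $\Gamma$ (after matching choices of paths around the backtracking); and filling a $\Xi$--loop of length $\le D$ alters $f(\sigma)$ by a $\Gamma$--loop of length $\le QD$. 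The identity $f_* \circ \iota_* = \mathrm{id}$ is immediate: for $\gamma \subset \Gamma$, one may take each $\alpha_i$ to be the corresponding edge of $\gamma$.

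The crux is to show $\iota_* \circ f_* = \mathrm{id}$, i.e., that $\sigma$ is $D$--homotopic in $\Xi$ to $\iota(f(\sigma))$. I would build a grid with vertices $(j, i) := f_j(b_i)$ for $0 \le j \le N$ and $0 \le i \le k$, where $N$ is chosen via~\eqref{stability} so that $f_j(b_i) = f(b_i)$ for all $j \ge N$ and all $i$. By~\eqref{eq:speed}, consecutive entries in a column are at $d_\Xi$--distance at most $1$; by~\eqref{eq:CL}, consecutive entries in a row are at $d_\Xi$--distance at most $Q$, so I would join them by paths $\pi_{j,i}$ of length $\le Q$ in $\Xi$, choosing $\pi_{0,i}$ to be the edge $b_i b_{i+1}$ of $\sigma$ and $\pi_{N,i}$ to lie in $\Gamma$ (possible by~\eqref{item:defret0}). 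Since $b_0 = b_k = a$ and each $f_j$ fixes $a$ by~\eqref{eq:evId}, each row is a loop $\sigma_j$ based at $a$, with $\sigma_0 = \sigma$ and $\sigma_N$ a representative of $\iota(f(\sigma))$ for the choice $\alpha_i := \pi_{N,i}$. Every unit rectangle of the grid has boundary of length at most $2Q + 2 < D$, hence bounds a $2$--cell in $\Xi^D$, making $\sigma_j$ and $\sigma_{j+1}$ $D$--homotopic; telescoping gives $\sigma \simeq \iota(f(\sigma))$.

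For the ``moreover'' clause on non-based loops, I would conjugate: given a loop $\sigma$ at $b \neq a$ and a path $\eta$ in $\Xi$ from $a$ to $b$, apply the based case to $\eta \sigma \eta^{-1}$, whose $f$--image is a representative of a conjugate of $[f(\sigma)]$ in $\pi_1^D(\Gamma, a)$. The main obstacle is the grid argument above: the inequality $2Q + 2 < D$ coming from $D > 2Q + 2$ is exactly the slack needed for the unit rectangles to fill in $\Xi^D$, while the bookkeeping of basepoints and the simultaneous choices of paths in $\Xi$ and in $\Gamma$ requires care but is otherwise routine.
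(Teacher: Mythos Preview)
Your proposal is correct and follows essentially the same approach as the paper: you construct $f_*$ by projecting vertex sequences and connecting by short $\Gamma$--paths, check well-definedness via the $QD$ systole hypothesis, observe $f_*\circ\iota_*=\mathrm{id}$ trivially, and then use the sequence $(f_j)$ to build a $D$--homotopy in $\Xi^D$ from $\sigma$ to $\iota(f(\sigma))$. Your grid argument with unit rectangles of perimeter $\le 2Q+2<D$ is precisely the content the paper compresses into the sentence ``the maps $(f_i)$ provide a homotopy in $\Xi^D$ between $\sigma$ and $f(\sigma)$,'' and your treatment of the unbased ``moreover'' via conjugation is a minor cosmetic variant of the paper's ``follows immediately.''
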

\begin{proof}
Any inclusion $\iota \co \Gamma \into \Xi$ between connected graphs induces a homomorphism $\iota_{\ast} \co \pi_1^D(\Gamma,a) \to \pi_1^D(\Xi,a)$, for any $a \in \Gamma$.

Take a loop $\sigma$ in $\Xi$, and construct a loop $f(\sigma)$ as in the last statement of the theorem. If $\sigma$ was based at $a$, then so is $f(\sigma)$.
Different choices of paths between $b_i$ and $b_{i+1}$ give loops differing by a collection of loops of length at most $2Q \le Q D$, so give the same element of $\pi_1^D(\Gamma,a)$.  Moreover, a relation in $\pi_1^D(\Xi,a)$ is a loop of length at most $D$ in $\Xi$, which is mapped to a loop of length at most $Q D$ in $\Gamma$, which implies (by Condition~\eqref{item:defret1}) that there is a well-defined induced map
\[	f_{\ast} \co \pi_1^D(\Xi,a) \to \pi_1^D(\Gamma,a)		.	\]
Since $f$ is a $Q$--deformation retraction, we clearly have $f \circ \iota = \mathrm{Id}\mid_\Gamma$, which means that 
$f_{\ast} \circ \iota_{\ast} = \mathrm{Id}\mid_{\pi_1^D(\Gamma,a)}$, and thus $\iota_\ast$ is injective.

However, if $\sigma$ is any $\Xi$--loop based at $a$, the fact that $D > 2Q+2$ means that the maps $( f_i)$ provide a homotopy in $\Xi^D$ between $\sigma$ and $f(\sigma)$, which means that $\iota_\ast$ is surjective.

The final statement of the result follows immediately from the above argument.
\end{proof}

\section{Linear connectivity}\label{sec:linear_connectedness}

In this section we investigate linear connectedness of boundaries.  In particular, we link ``uniform" linear connectedness to (weak) cut points (see Theorem~\ref{thm:lin_conn_cut_point} and also Proposition~\ref{prop:cut point implies not uniformly linearly connected} and~\ref{prop:no cut points}).  We also define a condition on graphs called ``spherical connectivity" which allows us to prove a local-to-global result about linearly connected boundaries (see Lemma~\ref{lem:sphere_conn_implies_lin_conn}).  In later sections, we use this criterion to verify that the boundaries of our partially unwrapped-and-glued spaces are uniformly linearly connected, which is crucial in our calculation that the Bowditch boundary of the drilled group is a two-sphere (see Corollary~\ref{cor:modeled_on_hatX_LC}).

\begin{definition} \label{def:lc} A metric space $M$ is $L$--linearly connected if every pair $\lbrace x,y \rbrace$ of points is contained in a connected set $J$ of diameter at most $L \cdot d(x,y)$.  A metric space is {\it linearly connected} if it is $L$--linearly connected for some $L$. \end{definition} 
This condition is also called $L$-$LLC_1$ (see, for example \cite[Section 2]{BonkKleiner:QS}), and has a companion property $L$-$LLC_2$, which together make the property \emph{$L$--locally linearly connected}. Connected boundaries of hyperbolic groups are locally linearly connected, but we only need the property from Definition~\ref{def:lc} in this paper.

As noted by Mackay \cite{MackayQuasiArcs}, when $M$ is locally compact and locally connected, by increasing $L$ by an arbitrarily small amount we may assume $J$ is an arc. 

We use the following notation several times in this section. If $a$, $b$ are two positive quantities, we write $a\lesssim b$ or $b\gtrsim a$ if there is a constant $u= u(\delta)>0$ such that $a\le u b$.
We write $a\asymp b$ if $a\lesssim b$ and $a\gtrsim b$. 
We say that $a \leq b + O(\delta)$ if there exists non-negative number  $C(\delta)$ such that $a-b \leq C\delta$. 

\subsection{Linear connectivity versus cut points}

A \emph{continuum} is a compact, connected, Hausdorff topological space.  A \emph{cut point} of a continuum $M$ is a point $x\in M$ such that $M\smallsetminus x$ is not connected. 
\begin{definition}
 Suppose that $M$ is a continuum.  A point $x \in M$ is a {\em weak cut point} of $M$ if there exist $p,q \in M \smallsetminus \{ x \}$ so that every sub-continuum of $M$ that contains both $p$ and $q$ must also contain $x$.
\end{definition}
An example of a weak cut point that is not a cut point is any point on the vertical line in a topologist's sine curve. 

The next result is one of the main results of this section. We use it in Lemma~\ref{lem:unwrap_is_lin_conn} for proving that our first unwrapped and glued space has boundary which is uniformly linearly connected.

\begin{theorem}\label{thm:lin_conn_cut_point} 
Suppose $\Upsilon_i$ are $\delta$--hyperbolic and $\delta$--visual metric spaces with finitely many isometry types of balls of radius $R$ for every $R>0$ (not depending on $i$).   Endow each $M_i = \partial \Upsilon_i$ with a $\delta$--adapted visual metric $\rho_i$ with basepoint $p_i$, and suppose each $M_i$ is connected.  Further suppose that $M_i$ is not  $L_i$--linearly connected for $L_i \rightarrow \infty$.  

Then there exists a sequence $p'_i$ of basepoints such that a subsequence of $(\Upsilon_i, p'_i)$  strongly converges to $(\Upsilon_\infty, p'_\infty)$ with the property that $M_\infty = \partial \Upsilon_\infty$ has a weak cut point. 
\end{theorem}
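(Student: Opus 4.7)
The plan is to rebase each $\Upsilon_i$ at a point on a bi-infinite quasi-geodesic joining the ``bad pair'' witnessing the failure of $L_i$-linear connectivity in $M_i$, extract a strongly convergent subsequence using the finite ball-types hypothesis, and argue by contradiction that the limit boundary $M_\infty$ must contain a weak cut point separating the limits of the bad pair --- otherwise a uniformly bounded connecting continuum in the limit would transfer back to uniformly bounded connecting continua in the $M_i$, contradicting $L_i \to \infty$.

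For the rebasing, pick $x_i, y_i \in M_i$ so that $\rho_i(x_i, y_i) = r_i$ and every sub-continuum $C \subseteq M_i$ containing $\{x_i, y_i\}$ satisfies $\mathrm{diam}_{\rho_i} C \geq L_i r_i$. Using $\delta$-visibility of $\Upsilon_i$, take a bi-infinite quasi-geodesic $\gamma_i \subseteq \Upsilon_i$ with endpoints $x_i, y_i$ and let $p'_i$ be a point of $\gamma_i$ closest to $p_i$. Then $(x_i \mid y_i)_{p'_i} = O(\delta)$, so in the $\delta$-adapted visual metric $\rho'_i$ at $p'_i$ we have $\rho'_i(x_i, y_i) \asymp 1$. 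The finite-ball-types hypothesis yields, by a diagonal argument, a subsequence along which $(\Upsilon_i, p'_i)$ strongly converges to $(\Upsilon_\infty, p'_\infty)$, still $\delta$-hyperbolic and $\delta$-visible (the local criterion of Proposition~\ref{lem:local_visibility} survives strong limits). By Proposition~\ref{prop:GMS3.5}, $(M_i, \rho'_i)$ converges in weak Gromov--Hausdorff sense to $(M_\infty, \rho'_\infty)$, and along a further subsequence $x_i, y_i$ correspond under the $(\lambda, \epsilon_i)$-quasi-isometries to distinct points $x_\infty, y_\infty \in M_\infty$.

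For the weak cut point, suppose for contradiction that no point of $M_\infty$ is contained in every continuum of $M_\infty$ through $\{x_\infty, y_\infty\}$. By compactness and a refinement argument (analogous to the one underlying Proposition~\ref{prop:no cut points}) there is then a continuum $C_\infty \ni x_\infty, y_\infty$ with $\mathrm{diam}_{\rho'_\infty} C_\infty \leq c(\delta)$. Since weak Gromov--Hausdorff maps do not preserve connectedness directly, the transport to $M_i$ must be carried out upstairs: realize $C_\infty$ as the visual shadow of a connected quasi-convex subset $Z_\infty \subseteq \Upsilon_\infty$ lying in a bounded ball around $p'_\infty$, pull $Z_\infty$ back through the isometric identifications on bounded balls to get connected subsets $Z_i \subseteq \Upsilon_i$ near $\gamma_i$, and let $C_i$ be their visual shadows in $M_i$. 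Each $C_i$ is then a continuum containing $x_i, y_i$ of uniformly bounded $\rho'_i$-diameter contained in the shadow of $\gamma_i$. On that shadow the change-of-basepoint formula for $\delta$-adapted visual metrics gives $\rho_i \asymp r_i \cdot \rho'_i$, and hence $\mathrm{diam}_{\rho_i} C_i = O(r_i)$, contradicting $L_i \to \infty$.

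The main obstacle is the continuum-lifting step: weak Gromov--Hausdorff maps need not preserve connectedness, so one cannot simply push $C_\infty$ directly into the $M_i$ as a continuum. The workaround is to work in the hyperbolic spaces themselves, producing a connected quasi-convex carrier $Z_i$ whose visual shadow is an honest continuum. A secondary subtlety is that the multiplicative relationship between the old and new visual metrics is only uniform on the shadow of $\gamma_i$, which is why the argument must be arranged so that the constructed $Z_i$, and hence $C_i$, lies in this shadow.
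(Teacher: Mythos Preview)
Your overall architecture (rebase at a point on a geodesic joining the bad pair, pass to a strong limit, argue by contradiction) matches the paper's, but the contradiction step has a genuine gap that the paper resolves with different machinery.

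The problem is the continuum-lifting move. You propose to ``realize $C_\infty$ as the visual shadow of a connected quasi-convex subset $Z_\infty \subseteq \Upsilon_\infty$ lying in a bounded ball around $p'_\infty$'' and then transfer $Z_\infty$ back through the isometric ball identifications. This does not work: a continuum in the Gromov boundary is not the shadow of any bounded subset of the space (its ``carrier'' would be a union of rays, hence unbounded), and a bounded set's shadow is an open set, not a prescribed continuum. So there is no way to encode $C_\infty$ by something that survives passage through a finite-radius ball isometry. Relatedly, your change-of-basepoint comparison $\rho_i \asymp r_i\,\rho'_i$ is only valid on a controlled region near the geodesic $\gamma_i$, and you have no mechanism to force the transported object to land there; if $C_\infty$ is, say, all of $M_\infty$ (which it may well be --- nothing in ``no weak cut point'' gives you a small continuum, despite your appeal to a ``refinement argument''), the comparison fails globally.

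The paper fixes both issues simultaneously by identifying a \emph{specific} candidate weak cut point. Using the fine-chain criterion for linear connectivity (Lemma~\ref{lem:GMS6.19}), the bad pair $(x_i,y_i)$ comes with the property that every fine chain joining them meets the far region $\mathcal A_i = M_i\setminus B_{\rho_i}(x_i, i\,\rho_i(x_i,y_i))$. The rebasing lemma (Lemma~\ref{lem:change_basepoint}) is then proved with enough precision to show that (a) $\mathrm{diam}_{\rho'_i}\mathcal A_i \to 0$, so $\mathcal A_i$ converges to a single point $z_\infty$, and (b) on $M_i\setminus\mathcal A_t$ the metrics $\rho_i/\rho_i(x_i,y_i)$ and $\rho'_i$ are bi-Lipschitz with constant depending only on $t$. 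If $z_\infty$ were not a weak cut point, a continuum $K\subseteq M_\infty$ through $x_\infty,y_\infty$ avoiding $z_\infty$ is pushed back \emph{directly on the boundary} via the quasi-inverse $\psi_i$; a fine cover of $K$ yields a fine chain in $M_i$ which, by (b) and the fact that $\psi_i(K)$ stays away from $\mathcal A_i$, is fine in the original metric $\rho_i$ as well --- contradicting the defining property of $(x_i,y_i)$. The specific point $z_\infty$ and the fine-chain reformulation are exactly what let the paper avoid the continuum-lifting problem you flagged but did not solve.
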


We need the following result, which is \cite[Lemma 6.19]{GMS}.

\begin{lemma} \label{lem:GMS6.19}
Let $M$ be a compact metric space. Suppose that there exists $L \geq 1$ so that each $p,q \in M$ can be joined by a chain of points $p = p_1,\ldots,p_n = q$ so that $diam(\lbrace p_1,\ldots, p_n \rbrace) \leq Ld(p, q)$ and $d(p_i, p_{i+1}) \leq d(p, q)/2$. Then $M$ is $5L$--linearly connected. \end{lemma}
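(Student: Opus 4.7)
The plan is to construct a connected subset $J \subset M$ containing $p$ and $q$ with $\operatorname{diam}(J) \leq 5L\,d(p,q)$ by iteratively refining the given chain. Starting from $C_0 = \{p = v_0, v_1, \dots, v_{n_0} = q\}$ supplied by the hypothesis, I would inductively define $C_{k+1} \supseteq C_k$ by applying the hypothesis to each consecutive pair of points in $C_k$ and concatenating the resulting sub-chains. By construction each $C_k$ is an ordered chain from $p$ to $q$, and the consecutive distances in $C_k$ are bounded above by $d(p,q)/2^{k+1}$, since consecutive distances halve at each refinement step. Let $S := \bigcup_{k \geq 0} C_k$ and $J := \overline{S}$, the closure taken in the compact space $M$.

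For the diameter I would prove by induction on $k$ that every point of $C_k$ lies within $L\,d(p,q)\,\sum_{j=0}^{k} 2^{-j}$ of $p$. The base case follows from $p \in C_0$ together with $\operatorname{diam}(C_0) \leq L\,d(p,q)$. For the inductive step, any newly inserted point at level $k+1$ lies in a sub-chain of diameter at most $L\cdot d(v_i,v_{i+1}) \leq L\,d(p,q)/2^{k+1}$ anchored at one of the endpoints $v_i \in C_k$, and that endpoint already satisfies the inductive bound. Letting $k \to \infty$ shows every point of $S$, and hence of $J$, lies within $2L\,d(p,q)$ of $p$, so by the triangle inequality $\operatorname{diam}(J) \leq 4L\,d(p,q) \leq 5L\,d(p,q)$.

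For connectedness I would use the fact that a compact metric space is connected exactly when it admits no partition into two disjoint nonempty closed subsets. Suppose $J = A \sqcup B$ were such a partition; compactness of $J$ would give $\epsilon := d(A,B) > 0$. Since $A,B$ are clopen in $J$ and $S$ is dense in $J$, each intersects $S$, so there exists an integer $m$ large enough that $C_m$ contains a point of $A$ and a point of $B$, and also $d(p,q)/2^{m+1} < \epsilon$. Viewing $C_m$ as an ordered chain $p = w_0, w_1, \dots, w_N = q$, the first index $i$ at which $w_i$ and $w_{i-1}$ lie in different pieces of the partition produces two consecutive points of $C_m$ at distance at most $d(p,q)/2^{m+1} < \epsilon$ yet in distinct pieces, contradicting $d(A,B) = \epsilon$.

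The only mildly subtle point is organizing the refinement so that the cumulative displacement from $p$ is controlled by the geometric series $\sum 2^{-j}$; once that is in hand, the fact that consecutive-point distances in $C_m$ tend uniformly to zero is precisely what powers the standard compactness-based connectedness criterion, and no further difficulty remains.
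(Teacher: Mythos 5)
Your proof is correct, and since the paper does not reprove this lemma (it simply cites \cite[Lemma 6.19]{GMS}), there is no in-text argument to compare against. Your approach — iteratively refine the chain so that consecutive gaps halve at each stage, take $J$ to be the closure of the union of all refinements, control $\operatorname{diam}(J)$ by the geometric series $L\,d(p,q)\sum_j 2^{-j}$, and deduce connectedness from the fact that a compact chain-connected-at-all-scales metric space cannot be partitioned into two disjoint nonempty closed pieces of positive mutual distance — is exactly the standard construction for this kind of statement and is almost certainly the argument in the cited source. Two small points worth making explicit if you write this up: first, when refining the segment from $v_i$ to $v_{i+1}$ you are implicitly using that the inserted sub-chain has one of its endpoints (say $v_i$) already in $C_k$, so the displacement estimate anchors correctly; you state this but it is the load-bearing observation. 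Second, the estimate $\sup_{w\in S}d(w,p)\le 2L\,d(p,q)$ gives $\operatorname{diam}(J)\le 4L\,d(p,q)$, which is actually stronger than the claimed $5L$; the slack of $L$ is harmless but you could note it.
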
 

We use the following lemma to find the new basepoints $p_i'$ in the proof of Theorem \ref{thm:lin_conn_cut_point}:

\begin{lemma}\label{lem:change_basepoint}
 For every $\delta >0$, there exist constants $c$,$\lambda,t_0>1$ so that the following holds. 
  Suppose that $\Upsilon$ is $\delta$--hyperbolic and $\delta$--visible, and let $p\in \Upsilon$, and $x,y\in\partial \Upsilon$, with $x \ne y$.   Let $\rho$ be a $\delta$--adapted visual metric on $\partial \Upsilon$ at $p$. There exists $p'\in \Upsilon$ so that in any $\delta$--adapted visual metric $\rho'$ on $\partial \Upsilon$ at $p'$ we have:
 \begin{enumerate}
 \item $\rho'(x,y)>1/c$,\label{item:far}
 \item for every $t \geq t_0$, the set $\mathcal A_t=\{z\in \partial \Upsilon: \rho(z,x)\geq t \rho(x,y)\}$ has $\rho'$--diameter at most $c/t$
 and for any $z\in\CalA_t$, $\rho'(x,z),\rho'(y,z)\ge 1/c$,\label{item:diam}
 \item on $\partial \Upsilon - \mathcal A_t$, $\frac{1}{\rho(x,y)}\rho$ and $\rho'$ are $\lambda t^2$--bi-Lipschitz equivalent. \label{item:bilip}
\end{enumerate}
\end{lemma}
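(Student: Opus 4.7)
The plan is to use the standard change-of-basepoint formula for Gromov products at the boundary,
\[
(a\mid b)_{p'} = (a\mid b)_{p} - \tfrac{1}{2}\bigl[\beta_a(p,p') + \beta_b(p,p')\bigr] + O(\delta),
\]
and choose $p'$ so that from its vantage point the points $x$ and $y$ lie in nearly opposite directions. Concretely, I would use $\delta$--visibility (Proposition~\ref{lem:local_visibility}) to fix a geodesic ray $\gamma_x$ from $p$ toward $x$, and set $p' = \gamma_x(\tau)$ where $\tau := (x\mid y)_p$. By $\delta$--thinness of ideal triangles, $p'$ lies within $O(\delta)$ of any geodesic ray from $p$ to $y$, so both $\beta_x(p,p') = \tau$ and $\beta_y(p,p') = \tau \pm O(\delta)$.

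The next step is to estimate $\beta_z(p,p')$ for an arbitrary $z\in\partial \Upsilon$, using the four-point inequality together with a divergence-of-geodesics argument to obtain
\[
\beta_z(p,p') \;=\; \begin{cases} \tau & \text{if } (z\mid x)_p \ge \tau, \\ 2(z\mid x)_p - \tau & \text{if } (z\mid x)_p < \tau, \end{cases} \quad \pm O(\delta).
\]
The hypothesis $\rho(z,x) \ge t\rho(x,y)$ together with the visual comparison inequality translates into $(z\mid x)_p \le \tau - \tfrac{1}{\epsilon}\log t + O(1)$, and the standard hyperbolicity fact that the smaller two of $(z\mid x)_p, (z\mid y)_p, (x\mid y)_p$ are approximately equal then gives $(z\mid y)_p = (z\mid x)_p + O(\delta)$ for $z\in \mathcal{A}_t$.

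With these in hand the three conclusions fall out by direct substitution into the change-of-basepoint formula. For (1), plugging in $a=x$, $b=y$ gives $(x\mid y)_{p'} = O(\delta)$, hence $\rho'(x,y) \gtrsim 1$. For (2), the above Busemann estimate yields $(z\mid x)_{p'}, (z\mid y)_{p'} = O(\delta)$ for every $z\in\mathcal{A}_t$, hence $\rho'(z,x),\rho'(z,y) \gtrsim 1$; and for $z_1,z_2 \in \mathcal{A}_t$, taking (without loss) $(z_1\mid x)_p \le (z_2\mid x)_p$ and using $(z_1\mid z_2)_p \ge (z_1\mid x)_p - O(\delta)$ (four-point condition) yields $(z_1\mid z_2)_{p'} \ge \tau - (z_2\mid x)_p - O(\delta) \ge \tfrac{1}{\epsilon}\log t - O(1)$, so $\rho'(z_1,z_2) \lesssim 1/t$. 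For (3), for $z\notin \mathcal{A}_t$ one has $(z\mid x)_p > \tau - \tfrac{1}{\epsilon}\log t - O(1)$, so the formula gives $\beta_z(p,p') = \tau + O(\tfrac{1}{\epsilon}\log t)$, whence $(z_1\mid z_2)_{p'} = (z_1\mid z_2)_p - \tau + O(\tfrac{1}{\epsilon}\log t)$. Exponentiating by $-\epsilon$ turns the additive $O(\log t)$ error into the multiplicative factor $t^{O(1)}$, absorbed into $\lambda t^2$ in the statement (and $e^{-\epsilon \tau}\asymp \rho(x,y)$ produces the rescaling).

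The main obstacle is obtaining the Busemann formula in Step~2 with an explicit $O(\delta)$ error, since in a general $\delta$--hyperbolic space (as opposed to a tree) one cannot simply read it off from branching, but must combine the four-point condition with an argument that geodesic rays from $p$ to $x$ and to $z$ fellow-travel up to the divergence scale $(z\mid x)_p$. A secondary bookkeeping task is to track how the $\tfrac{1}{\epsilon}\log t$ errors propagate through the change-of-basepoint formula so that the final multiplicative constant in (3) is genuinely of the form $\lambda t^2$ and not a higher power.
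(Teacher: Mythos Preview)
Your proposal is correct and follows essentially the same route as the paper: both choose $p'$ on a geodesic ray from $p$ toward $x$ at parameter $\tau=(x\mid y)_p$, and both reduce the three conclusions to estimates on Gromov products at $p'$ in terms of those at $p$. The only difference is packaging: the paper carries out each estimate via a tree approximation of four or five points (\cite[Thm 2.12]{GdlH}), reading off products directly in the tree, whereas you phrase everything through the change-of-basepoint identity $(a\mid b)_{p'}=(a\mid b)_p-\tfrac12[\beta_a(p,p')+\beta_b(p,p')]+O(\delta)$ together with your two-case formula for $\beta_z(p,p')$. These are equivalent---your Busemann dichotomy is exactly what the tree approximation yields---and the paper's tree language is in fact one clean way to resolve the ``main obstacle'' you flag in Step~2. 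Your error tracking in (3) is also right: the additive $\tfrac{2}{\epsilon}\log t$ slop in $\beta_z$ exponentiates to a multiplicative $t^2$, matching the paper's constant.
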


\begin{proof} 
Let $a= e^{1/6\delta}$. By the definition of $\delta$--adapted visual metric, we have $\rho(w,z)\asymp a^{-(w|z)_p}$ for all $w,z\in \partial \Upsilon$.  
We use the fact that if $x,y$ are points on $\partial \Upsilon$, then as in \cite[III.H.3.18.(3)]{BH}, 
\[  (x|y)_p = d(p,(x,y)) + O(\delta) , \]
where $(x,y)$ is any bi-infinite geodesic from $x$ to $y$.

By \cite[Thm 2.12]{GdlH}, there exists a constant $C=C(\delta)\ge 0$ 
such that, for any $\{p,x,y, z,w\} \in \Upsilon \cup \partial \Upsilon$, $p \in \Upsilon$, there is a $(1,C)$--quasi-isometry
$$f\co  Y= [p,x)\cup [p,y)\cup [p,z)\cup [p,w)\to T$$ into a metric tree which
preserves distances to $p$. Let us write  $\bar u = f(u)$ for any $u\in Y$.  For $u\in \partial Y = \{x,y,z,w\}$ we write $\bar u$ for the corresponding point in $\partial T$.
It follows that, for any $q\in Y$ and $u,v\in Y\cup\partial Y$, we have 
$(\bar u|\bar v)_{\bar q}= d(\bar q, \bar c)= (u|v)_q + O(\delta)$
where $\bar c$ is the center of the tripod $\{\bar q, \bar u,\bar v\}$.

\medskip 
Let us consider the point $p'$ on a geodesic ray with origin $p$  which is asymptotic
to $x$ such that $d(p,p')= (x|y)_p$.  The approximating tree is a tripod where $\bar p'$ is $O(\delta)$ from the center. This implies that $\rho' (x,y)$ is comparable to 1, because $( x | y)_{p'}=O(\delta)$. This proves (\ref{item:far}), for $c\ge c_1(\delta)$
\medskip

From now on, we assume that $\bar p' = f(p')$ is the center of the tripod
$\{\bar p, \bar x,\bar y\}$.

\medskip

We turn to the proof of (\ref{item:diam}).  Let $t\ge 1$ and pick $z,w$ such that  $\rho(w,x),\rho(z,x)\ge t \rho(x,y)$ , i.e.,  $\max\{(x|w)_p, (x|z)_p\}\le (x|y)_p -\log_a t + O(\delta)$, see Figure~\ref{fig:PH1}(left). 
Thus, $$(\bar x| \bar z)_{\bar p}\le   (x|z)_p + O(\delta) \le (x|y)_p -\log_a t + O(\delta) \le  d_T(\bar p, \bar p')  -\log_a t + O(\delta)$$
and similarly $$(\bar y| \bar z)_{\bar p}\le   d_T(\bar p, \bar p')  -\log_a t + O(\delta)$$ as 
$$\rho(y,z) \ge (t-1)\rho(x,y) \hbox{ by the triangle inequality and } (y|z)_p \le  (x|y)_p - \log_a(t-1)\,.$$
Thus, we may find some $t_0>1$ large enough such that, if $t\ge t_0$, then,
in the approximate tree $T$, the point $\bar p'$ belongs to $(\bar z, \bar x)\cap (\bar z, \bar y)$. 

\begin{figure}[h!]
	\begin{center}
		\includegraphics[scale=0.6]{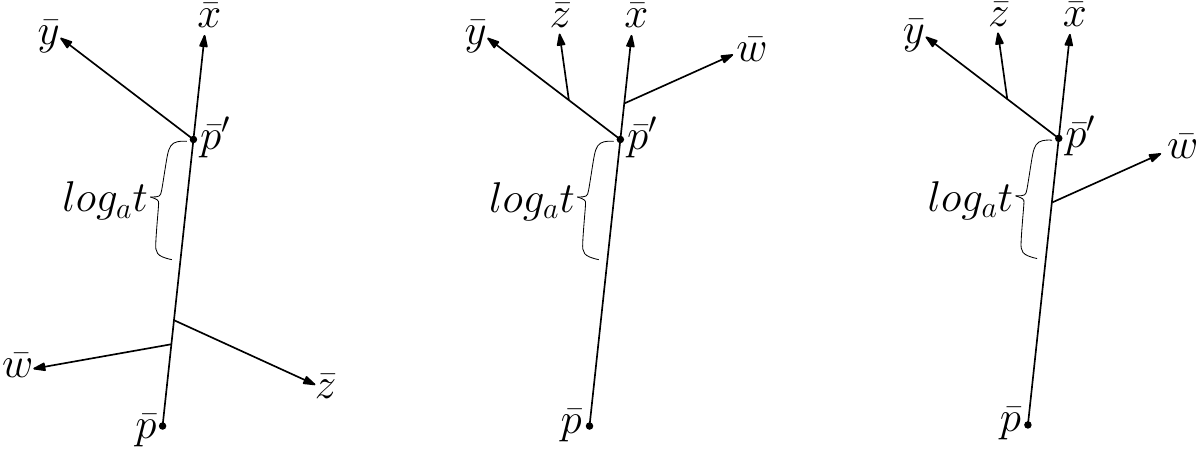}
	\end{center}
	\caption{The proof of Lemma~\ref{lem:change_basepoint}: Conclusion (\ref{item:diam}) (left); conclusion (\ref{item:bilip}), in case $(\bar x| \bar w)_{\bar p}  \ge d_T(\bar p - \bar p')$ (center); Conclusion (\ref{item:bilip}), in case $(\bar x| \bar w)_{\bar p}  \le d_T(\bar p - \bar p')$ (right).}\label{fig:PH1}
\end{figure}

We obtain, with the observation that $(\bar x|\bar w)_{\bar p}= (\bar p'|\bar w)_{\bar p}$ and
$(\bar x|\bar z)_{\bar p}= (\bar p'|\bar z)_{\bar p}$,
$$\left\{\begin{array}{l} 
(\bar x| \bar w)_{\bar p} + (\bar p| \bar w)_{\bar p'} = (\bar x|\bar y)_{\bar p}\\
(\bar x| \bar z)_{\bar p} + (\bar p| \bar z)_{\bar p'} = (\bar x|\bar y)_{\bar p}\end{array}\right.$$
Therefore,
\begin{eqnarray*}
(\bar w| \bar z)_{\bar p'} & \ge &  \min\{(\bar p| \bar z)_{\bar p'},(\bar p| \bar w)_{\bar p'}\} \\
& \ge &  (\bar x| \bar y)_{\bar p}  - \max\{(\bar x| \bar z)_{\bar p}, (\bar x| \bar w)_{\bar p}  \}\\
& \ge & \log_a t + O(\delta)\,.\end{eqnarray*}
Hence $\rho ' (z,w)\le  c_2/t$ for some $c_2=c_2(\delta)>0$.  This implies that the diameter
of $\mathcal A_t$ is bounded by $c_2/t$. Note that $\bar{p}'$ belongs to both geodesics  $(\bar x, \bar z)$ and $(\bar y,\bar z)$ so that $\rho'(x,z),\rho'(y,z) \asymp 1$ thus 
$\rho'(x,z),\rho'(y,z) \gtrsim 1$, which ends the proof of  (\ref{item:diam}), taking $c = \max\{c_1,c_2\}$.

\medskip

Let us deal with the proof of (\ref{item:bilip}). 
If $w,z \in \partial \Upsilon - \CalA_t$,  
then $\rho(w,x),\rho(z,x)\le t \rho(x,y)$, i.e.,  $\min\{(x|w)_p, (x|z)_p\}\ge (x|y)_p -\log_a t + O(\delta)$.
By the triangle inequality, we have
\begin{equation}\label{eq:lma6} |(z|w)_{p'}-(z|w)_p|\le d_{\Upsilon}(p- p')= (x|y)_p. \end{equation} Therefore
$$ \frac{ \rho'(z,w)}{\rho(x,y)^2}\gtrsim \frac{\rho(z,w)}{\rho(x,y)} \gtrsim \rho'(z,w)\,.$$
This is one of the coarse inequalities that we need.  Now we find an upper bound independent of $\rho(x,y)$.

If $\log_a t \ge (x|y)_p$ then we have essentially no constraints: 
 the inequality \eqref{eq:lma6}  implies that
$$\frac{\rho(z,w)}{\rho(x,y)}\lesssim  \frac{\rho'(z,w)}{\rho(x,y)^2}  \lesssim t^2  \rho'(z,w)\,.$$
So we restrict our attention to the condition  $\log_a t\le (x|y)_p$. Let us work on the approximate tree $T$. 
Switching $z$ and $w$ if necessary, we may assume that  $(\bar x| \bar w)_{\bar p} \le (\bar x| \bar z)_{\bar p}  $.
Let us consider two cases. If $(\bar x| \bar w)_{\bar p}  \ge d_T(\bar p - \bar p')$ (see Figure~\ref{fig:PH1}(center)) then 
$$(\bar z|\bar w)_{\bar p} = d_T({\bar p}  -\bar{p}') + (\bar z|\bar w)_{\bar p'}  = (\bar x|\bar y)_{\bar p} + (\bar z|\bar w)_{\bar p'}$$
hence $$\frac{\rho(z,w)}{\rho(x,y)} \asymp \rho'(z,w)\,.$$
If, on the contrary, $(\bar x| \bar w)_{\bar p}  \le d_T(\bar p - \bar p')$ (see Figure~\ref{fig:PH1}(right)) then we first observe that $w\in B(x,t\rho(x,y))$ implies that
$(x|w)_p \ge (x|y)_p -\log_a t + O(\delta)$ holds. Together with $(\bar z|\bar w)_{\bar p} =  (\bar x |\bar w)_{\bar p}$,
we deduce
that 
\begin{eqnarray*} (\bar w|\bar z)_{\bar p'} \le (\bar w|\bar p)_{\bar p'} & = &  (\bar x|\bar y)_{\bar p} - (\bar p'|\bar w)_{\bar p}\\
& = & (\bar x|\bar y)_{\bar p} - (\bar x|\bar w)_{\bar p}\\
& \le &    \log_a t  +O(\delta)
 \end{eqnarray*}
and hence
\begin{eqnarray*} (\bar x|\bar y)_{\bar p}   - ( \bar   z|\bar w)_{\bar p}   
  & \le  &    \log_a t  +O(\delta) \\ &  \le &   2  \log_a t - (\bar z|\bar w)_{\bar p'}  +O(\delta)  \,.
 \end{eqnarray*}
 This translates into $$\frac{\rho(z,w)}{\rho(x,y)} \lesssim  t^2 \rho'(z,w)$$
 and concludes the proof of (\ref{item:bilip}) and of the lemma.
\end{proof}

Before presenting the proof of Theorem  \ref{thm:lin_conn_cut_point}, we introduce the following terminology. A {\it fine chain} joining  two points $x$ and $y$ of a metric
space  $M$ is a finite sequence of points $m_0,\ldots, m_n$ such that $m_0=x$, $m_n=y$ and $d(m_j,m_{j+1})\le d(x,y)/2$ for all $0\le j <n$. 

\begin{proof}[Proof of Theorem \ref{thm:lin_conn_cut_point}]
We first pass to a subsequence so that $L_i\ge 5i$, so that the space $M_i$ is not $5i$--linearly connected.
 Lemma \ref{lem:GMS6.19} implies that for each $i$ we may find a pair of points $(x_i, y_i)$ in $M_i$ so that any fine chain in $M_i$
joining $x_i$ to $y_i$  meets the (non-empty) set  $\CalA_i= M_i \ssm B_{\rho_i}(x_i, i \rho_i(x_i,y_i))$.

Let $p'_i$ be chosen according to Lemma \ref{lem:change_basepoint}, applied with $x=x_i, y=y_i$, and let $\rho'_i$ be the corresponding re-centered $\delta$--adapted visual metric from the conclusion of that lemma.  
Let $(\Upsilon_\infty, p'_\infty)$ be a strong limit of a subsequence of $(\Upsilon_i, p'_i)$, which exists because there are finitely many isometry types of balls of any radius in the spaces $\Upsilon_i$.  
The  spaces $(M_i, \rho_i')$ weakly Gromov--Hausdorff converge to $(M_\infty,\rho'_\infty)$ where $M_\infty=\partial \Upsilon_\infty$ and $\rho'_\infty$ is a $\delta$--adapted visual metric, by Proposition~\ref{prop:GMS3.5}.  

Denote by $\phi_i \co (M_i,\rho'_i) \to (M_\infty,\rho'_\infty)$ choices of $(k,\epsilon_i)$--quasi-isometries that witness the weak Gromov--Hausdorff convergence. Choose $z_i$ in $\CalA_i$.  
Let $\psi_i\co M\to M_i$ be quasi-inverses for the $\phi_i$'s.  We may assume that, for each $i$, $\psi_i(\phi_i(z_i))=z_i$ and that $\psi_i$ is a  $(k,\epsilon_i)$--quasi-isometry, by replacing $\epsilon_i$ by $3k\epsilon_i$ if necessary.

Passing to a  subsequence and reindexing,  we may also assume that $\epsilon_i i^2$ tends to zero,  and that
$\phi_i(x_i)$ (resp. $\phi_i(y_i)$, $\phi_i(z_i)$) converges to a point $x_\infty$ (resp. $y_\infty$, $z_\infty$) in $M_\infty$.  
The points $x_\infty$, $y_\infty$ and $z_\infty$ are at least $(1/kc)$--apart by Lemma \ref{lem:change_basepoint} \eqref{item:far} and \eqref{item:diam}. 

\medskip

Recall that every fine chain connecting $x_i$ to $y_i$ meets $\CalA_i$. The diameters with respect to $\rho_i'$ of the $\mc{A}_i$ go to $0$ as $i \to \infty$ by Lemma \ref{lem:change_basepoint}\eqref{item:diam}, 
and $\phi_i(\CalA_i)$  converge to $z_\infty$ in $M_\infty$.  The intuitive idea is that the $\mathcal A_i$ separate $x_i$ from $y_i$, so in the limit $z_\infty$ is a weak cut point of $M_\infty$. 
We now make this more precise.

\medskip

In order to obtain a contradiction, suppose that $z_\infty$ is not a weak cut point in $M_\infty$, so that there is a sub-continuum $K$ of $M_{\infty}$ which contains $x_\infty$ and $y_\infty$ but not $z_\infty$. 

We first claim that, for $i$ large enough, $\psi_i(K)$ does not meet $\CalA_i$.  We show that there exists
$\eta>0$ such that, for any $i$ large enough, $\rho_i'(\psi_i(w),w_i) \ge \eta$  for any $w\in K$ and $w_i\in \CalA_i$:
\begin{eqnarray*}
\rho_i'(\psi_i(w),w_i) & \ge &  \rho_i'(\psi_i(w), \psi_i(\phi_i(z_i))) - \rho'_i(z_i,w_i)\\
& \ge & \frac{1}{k} \rho'_\infty(w,\phi_i(z_i)) - \epsilon_i - \hbox{\rm diam}_{\rho_i'} \CalA_i \\
& \ge &  \frac{1}{k} \rho'_\infty(w,z_\infty) - \left(\frac{1}{k} \rho_\infty'(\phi_i(z_i),z_\infty) +\epsilon_i + c/i\right) \\
& \ge &  \frac{1}{k} \rho'_\infty(z_\infty,K) - \left(\frac{1}{k} \rho_\infty'(\phi_i(z_i),z_\infty) +\epsilon_i + c/i\right)
\end{eqnarray*}
Thus, if $i$ is sufficiently large, then $\eta= \rho'_\infty(z_\infty,K)/(2k)$ bounds $\rho_i'(\psi_i(w),w_i)$ from below.

For any $i\ge 0$, we may find a finite subset $F_i\subset K$ such that $K\subset \cup_{x\in F_i} B_{\rho_\infty'}(x, \epsilon_i/k)$. Since $K$ is a continuum,  the nerve of this cover is connected.
Applying $\psi_i$, we get that $\psi_i(K) \subset \cup_{x\in F_i} B_{\rho_i'}(\psi_i(x),  2\epsilon_i)$ and the nerve of this cover is also connected.
For $i$ large enough, $\rho_i'(\psi_i(K),\CalA_i) \ge \eta >  2\epsilon_i$, so Lemma \ref{lem:change_basepoint} \eqref{item:bilip} implies that, for some $\lambda >0$ and each $x\in F_i$,
\[B_{\rho_i'}(\psi_i(x),  2\epsilon_i) \subset  B_{\rho_i}(\psi_i(x),  2\lambda  i^2\epsilon_i\rho_i(x_i,y_i))\]
so that \[ \psi_i(K) \subset \cup_{x\in F_i} B_{\rho_i}(\psi_i(x),  2\lambda  i^2\epsilon_i\rho_i(x_i,y_i)).\]
The nerve of the cover $\{B_{\rho_i}(\psi_i(x),  2\lambda  i^2\epsilon_i\rho_i(x_i,y_i))\}_{x\in F_i}$ of $\psi_i(K)$ is still connected.

Since we made sure that $\lim i^2\epsilon_i=0$, we may pick $i$ large enough so that $ 2\lambda  i^2\epsilon_i\le 1/4$. Thus, $\psi_i(F_i) \subset \psi_i(K)$ contains a fine chain connecting $x_i$ and $y_i$.  Since $\psi_i(K)\subset M_i \smallsetminus \mc{A}_i$, we obtain a contradiction.
\end{proof}

In the opposite direction we have the following statement about hyperbolic spaces with cut points in their boundary; the visual metrics on the boundary of such a space cannot be uniformly linearly connected.  (A collection of metrics is \emph{uniformly} linearly connected if there is some constant of linear connectedness which works for all of them.)
\begin{proposition}\label{prop:cut point implies not uniformly linearly connected}
Let $X$ be a proper geodesic $\delta$--hyperbolic space whose boundary is connected and has a cut point.  There is a sequence of points $\{x_i\}$ in $X$ satisfying the following.   If for each $i$, $\rho_i$  is a $\delta$--adapted visual metric based at $x_i$ then the metrics $\{\rho_i\}$ are not uniformly linearly connected.
\end{proposition}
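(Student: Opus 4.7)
The plan is to pick points $a, b$ in different components of $\partial X \setminus \{c\}$ and choose basepoints $x_i$ along a geodesic ray escaping to $c$. The main estimate I will use is the standard fact that for boundary points $\xi, \eta \in \partial X$ and $p \in X$, the Gromov product $(\xi \mid \eta)_p$ equals $d(p, [\xi, \eta]) + O(\delta)$, where $[\xi, \eta]$ is any bi-infinite geodesic from $\xi$ to $\eta$. With $x_i$ on a geodesic $\gamma_{ac}$ escaping toward $c$, this should yield $(a \mid c)_{x_i} = O(\delta)$ while $(a \mid b)_{x_i} \to \infty$, hence $\rho_i(a,c)$ stays bounded below while $\rho_i(a,b) \to 0$.

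Concretely: since $c$ is a cut point, write $\partial X \setminus \{c\} = U \sqcup V$ as a disjoint union of nonempty open sets of $\partial X \setminus \{c\}$ and pick $a \in U$, $b \in V$; bi-infinite geodesics $\gamma_{ab}$ and $\gamma_{ac}$ exist since $X$ is proper and hyperbolic. Take $x_i = \gamma_{ac}(t_i)$ with $t_i \to \infty$ toward the $c$-end of $\gamma_{ac}$. The first estimate $(a \mid c)_{x_i} = O(\delta)$ is immediate since $x_i \in \gamma_{ac}$. For the second, one must show $d(x_i, \gamma_{ab}) \to \infty$: if this failed along some subsequence, there would be points $y_i \in \gamma_{ab}$ with $d(x_i, y_i)$ bounded and hence $y_i \to c$ in $X \cup \partial X$; but any sequence on the bi-infinite geodesic $\gamma_{ab}$ converges in the bordification only to $a$ or to $b$, contradicting $c \notin \{a, b\}$. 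Translating via the $\delta$-adapted visual metric (Definition \ref{def:delta_adapted}), we obtain $\rho_i(a, c) \geq \alpha$ for some constant $\alpha = \alpha(\delta) > 0$ independent of $i$, and $\rho_i(a, b) \to 0$.

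For the conclusion, any connected subset $J \subset \partial X$ containing both $a$ and $b$ must also contain $c$: otherwise $J \subset \partial X \setminus \{c\}$ and then $J = (J \cap U) \sqcup (J \cap V)$ would disconnect $J$, contradicting its connectedness. Therefore $\mathrm{diam}_{\rho_i}(J) \geq \rho_i(a, c) \geq \alpha$. If the metrics $\{\rho_i\}$ were uniformly $L$-linearly connected for some fixed $L$, then for each $i$ we could find such a $J_i$ with $\mathrm{diam}_{\rho_i}(J_i) \leq L \, \rho_i(a, b)$, and the right-hand side tends to $0$, contradicting the lower bound $\alpha$.

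The only genuinely non-formal step is the divergence $d(x_i, \gamma_{ab}) \to \infty$, and even this is a standard consequence of $x_i \to c$ in the bordification together with $c \notin \{a, b\}$; everything else is routine manipulation of Gromov products, visual metrics, and the topological meaning of a cut point.
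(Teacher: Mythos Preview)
Your proof is correct and follows essentially the same approach as the paper: choose basepoints escaping to the cut point, so that the visual distance between $a$ and $b$ shrinks to zero while the distance from $a$ to the cut point stays bounded below, and then use that any connected set containing $a,b$ must contain the cut point. The paper's proof is terser (it takes $x_i$ on any ray tending to the cut point rather than on a bi-infinite geodesic $\gamma_{ac}$, and asserts the Gromov-product estimates without the detailed justification you give), but the idea is identical.
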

\begin{proof}
  Let $p\in \partial X$ be a cut point, and let $a,b$ be in different components of $\partial X\ssm\{p\}$.  Since every curve joining $a$ and $b$ has to go through the point $p$, it suffices to find points $x_i$ so that the Gromov products $(a|p)_{x_i}$ and $(b|p)_{x_i}$ are bounded above, but the Gromov products $(a|b)_{x_i}$ tend to infinity.
  
  Let $\gamma$ be any geodesic ray in $X$ tending to $p$.  Then the points $x_i = \gamma(i)$ satisfy this property.
\end{proof}

For the Bowditch boundary of a relatively hyperbolic pair, existence of a cut point prevents even non-uniform linear connectivity.

\begin{proposition} \label{prop:no cut points}
 Let $(H,\mathcal P)$ be a relatively hyperbolic pair, with each $P\in\mathcal P$ infinite.
 If the Bowditch boundary (when endowed with any visual metric) is linearly connected, then it does not have cut points.\end{proposition}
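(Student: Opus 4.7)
The plan is to argue by contradiction. Suppose that some $\delta$--adapted visual metric $\rho$ on $M := \partial(H,\mc{P})$, based at a point $p$ in a cusped space $X$ for $(H,\mc P)$, is $L$--linearly connected, and yet $M$ has a cut point $c$. First I would invoke a theorem of Bowditch: under our hypotheses (connected boundary, infinite peripheral subgroups), every cut point of $M$ is a parabolic fixed point. Let $P\in\mc{P}$ be the (infinite) stabilizer of $c$ after conjugating, and choose an infinite sequence of distinct elements $(g_n)$ in $P$. Write the components of $M\smallsetminus\{c\}$ as $A\sqcup B\sqcup\cdots$, fix $a\in A$ and $b\in B$, and set $a_n=g_n a$, $b_n=g_n b$. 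Since $g_n$ is a self-homeomorphism of $M$ fixing $c$, the points $a_n\in g_n A$ and $b_n\in g_n B$ lie in distinct components of $M\smallsetminus\{c\}$, so any connected subset of $M$ meeting both must contain $c$. The convergence-group dynamics also yields $a_n,b_n\to c$.

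Applying $L$--linear connectivity furnishes, for each $n$, a connected $J_n$ containing $\{a_n,b_n\}$ with $\mathrm{diam}_\rho(J_n)\le L\,\rho(a_n,b_n)$; since $c\in J_n$,
\[ \rho(a_n,c)\le L\,\rho(a_n,b_n). \]
To contradict this, I would estimate both sides via the cusped-space geometry near the horoball $\mc H_c$ attached at $c$. Put $p$ on the horosphere $\partial \mc H_c$, and let $|g_n|:=d_\Gamma(p,g_np)$ be the horospherical translation length, which tends to infinity because $P$ acts properly on the horosphere. Gromov products $(\cdot\mid\cdot)_p$ on $\partial X$ are computed, up to $O(\delta)$, as $d_X(p,[\cdot,\cdot])$. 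Using the combinatorial horoball distance formula in Lemma~\ref{lem:horbadistort}, the geodesic $[a_n,c]=g_n\cdot[a,c]$ enters $\mc H_c$ at $g_n v_a$ (a horospherical position at $d_\Gamma$-distance $\asymp|g_n|$ from $p$) and runs straight to the apex $c$; its closest approach to $p$ occurs at depth $\asymp\log|g_n|$ inside $\mc H_c$, so $(a_n\mid c)_p=\log|g_n|+O(1)$. On the other hand, the geodesic $[a_n,b_n]=g_n\cdot[a,b]$ only dips into $\mc H_c$ to bounded depth, since the horospherical separation $d_\Gamma(g_n v_a,g_n v_b)=d_\Gamma(v_a,v_b)$ is independent of $n$, while being translated horospherically to distance $\asymp |g_n|$ from $p$; this gives $(a_n\mid b_n)_p=2\log|g_n|+O(1)$. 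Converting to visual distances via $\rho(\cdot,\cdot)\asymp e^{-\epsilon(\cdot\mid\cdot)_p}$,
\[ \rho(a_n,c)\asymp|g_n|^{-\epsilon}\qquad\text{and}\qquad \rho(a_n,b_n)\asymp|g_n|^{-2\epsilon}, \]
so $\rho(a_n,c)/\rho(a_n,b_n)\asymp|g_n|^{\epsilon}\to\infty$, contradicting the displayed bound for $n$ large.

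The principal obstacle is making the two Gromov-product estimates precise, which requires careful quasi-geodesic analysis in the combinatorial cusped space and exploits the logarithmic distortion of horoball distances (Lemma~\ref{lem:horbadistort}). The underlying heuristic is that a parabolic isometry shrinks two distinct boundary points together at visual rate $|g_n|^{-2\epsilon}$ while only drawing either one to the fixed point $c$ at the slower rate $|g_n|^{-\epsilon}$; the ``pinch'' at $c$ is thus quadratically thinner than the visual speed at which the two components $g_n A, g_n B$ collapse onto it, leaving no room for short connected bridges. A minor subsidiary point is the appeal to Bowditch's classification of cut points of a connected Bowditch boundary with infinite peripherals as parabolic fixed points.
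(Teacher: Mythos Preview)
Your approach is essentially the same as the paper's: both argue by contradiction, use that any cut point must be parabolic, and show that under iteration by parabolics $g_n$ the visual distance $\rho(g_n a,g_n b)$ decays quadratically faster than $\rho(g_n a,c)$, violating linear connectivity. The paper packages the key inequality as a separate lemma (Lemma~\ref{lem:cvparab}): for any parabolic $g$ fixing $p$, one has $\rho(gx,gy)\lesssim \rho(p,gx)\,\rho(p,gy)$, proved model-independently via Busemann functions and tree approximations. You instead derive the equivalent Gromov-product estimates by direct computation in the combinatorial cusped space using the horoball distortion formula (Lemma~\ref{lem:horbadistort}). Both routes lead to the same ``quadratic pinch'' contradiction; the paper's is slightly cleaner in that it avoids committing to a particular cusped model and does not require tracking where the geodesics sit relative to the horoball.

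Two small points. First, the general statement that cut points of the Bowditch boundary are parabolic, with no extra hypotheses on the peripherals beyond infiniteness, is due to Dasgupta--Hruska rather than Bowditch; that is what the paper cites. Second, your justification of the lower bound $(a_n\mid b_n)_p\ge 2\log|g_n|-O(1)$ only treats the portion of $[a_n,b_n]$ lying near the horoball; to be complete you should also verify that the rays of $g_n\cdot[a,b]$ heading out toward $a_n$ and $b_n$ stay at least that far from $p$. This follows from quasi-convexity of $\mc H_c$ together with the fact that the nearest-point projection of the fixed geodesic $[a,b]$ to $\mc H_c$ is a bounded subset of the horosphere, so any geodesic from $g_n^{-1}p$ to a point of $[a,b]$ must pass within $O(\delta)$ of that bounded set.
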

\begin{proof}
 Suppose that the Bowditch boundary $\partial(H,\mc{P})$ has a cut point $p$.  Then $p$ is a parabolic point by \cite[Theorem 1.1]{Dasgupta_Hruska}.
 Let $x$ and $y$ lie in different components of $\partial(H,\mc{P})\ssm\{p\}$. Let  $J$ be any arc connecting $x$ and $y$, so containing $p$. It follows from Lemma \ref{lem:cvparab} below that, for any visual distance $\rho$,
 we have for all $h\in \Stab(p)$,  $$\frac{\rho(hx,hy)}{\hbox{\rm diam}_\rho h(J)} \le   \frac{\rho(hx,hy)} {\rho(p, hx) }   \lesssim  \rho(p, hy ) \,.$$
Considering an infinite  sequence  $(h_n)$, the right-hand side tends to $0$, so $\partial (H,\mc{P})$ cannot be linearly connected.
\end{proof}

\begin{lemma}\label{lem:cvparab} Let $\Upsilon$ be a proper geodesic  $\delta$--hyperbolic space and let $P$ be a parabolic group of isometries of $\Upsilon$ fixing the parabolic point $p \in \partial \Upsilon$.  Let $\rho$ be a $\delta$--adapted visual metric on $\partial \Upsilon$, and let $x,y\in \partial \Upsilon\ssm\{p\}$.  

There is a $C>0$ so that, for all $g\in P$,
$$\rho(gx,gy) \le C  \rho(p,gx)  \rho(p, gy) .$$

\end{lemma}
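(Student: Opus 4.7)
The plan is to convert the multiplicative inequality into an additive inequality for Gromov products, apply isometry equivariance to transport the basepoint, and then control the remaining error using parabolicity of $P$. Since $\rho$ is a $\delta$--adapted visual metric based at some $w \in \Upsilon$, we have $-\log\rho(\alpha,\beta) = \epsilon(\alpha\mid\beta)_w + O(1)$ with constants depending only on $\delta$, so the conclusion of the lemma is equivalent to the additive bound
\[
(gx \mid gy)_w \geq (p \mid gx)_w + (p \mid gy)_w - C'
\]
for some $C' = C'(x,y,p,w,\delta)$ independent of $g \in P$.

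I would then apply the standard change-of-basepoint formula
\[
(\alpha \mid \beta)_{w'} - (\alpha \mid \beta)_w = -\tfrac{1}{2}\bigl(\beta_\alpha(w', w) + \beta_\beta(w', w)\bigr) + O(\delta)
\]
with $w' = gw$ to each of the three Gromov products above. By isometry equivariance of the Gromov product together with $g^{-1}p = p$, we have $(gx\mid gy)_{gw} = (x\mid y)_w$, $(p\mid gx)_{gw} = (p\mid x)_w$, and $(p\mid gy)_{gw} = (p\mid y)_w$. The Busemann cocycles $\beta_{gx}(gw, w)$ and $\beta_{gy}(gw, w)$ each appear twice with opposite signs on the two sides of the inequality and cancel, leaving
\[
(gx \mid gy)_w - (p \mid gx)_w - (p \mid gy)_w = \bigl[(x\mid y)_w - (p\mid x)_w - (p\mid y)_w\bigr] + \beta_p(gw, w) + O(\delta).
\]
The bracketed quantity depends only on the fixed data $x, y, p, w, \delta$ and is therefore a constant in $g$.

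The crux is then to bound $\beta_p(gw, w)$ uniformly over $g \in P$. I would justify this as a standard feature of parabolicity: since $g$ is an isometry fixing $p$, the horofunction $\beta_p(\cdot, w)\circ g$ differs from $\beta_p(\cdot, w)$ by a constant $c_g$ up to $O(\delta)$, which yields the cocycle identity $\beta_p(g^nw, w) = nc_g + O(\delta)$. A parabolic isometry has stable translation length zero, so $d(g^nw, w) = o(n)$, which forces $|c_g| = O(\delta)$ uniformly in $g$. I expect the principal technical obstacle to be the careful bookkeeping of the various $O(\delta)$ errors, to ensure they remain uniform as $g$ ranges over all of $P$; the change-of-basepoint manipulation and the equivariance step themselves are routine.
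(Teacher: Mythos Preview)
Your approach is correct and is more streamlined than the paper's. Both arguments reduce the lemma to the single fact that $|\beta_p(gw,w)|$ is bounded uniformly over $g\in P$; the paper simply cites this from \cite{GdlH} (Prop.~8.11, Rem.~8.13.ii, Thm.~8.16), whereas you sketch a proof via stable translation length. The difference lies in how this fact is converted into the desired inequality. The paper uses a four-point tree approximation to establish the identity
\[
\sup_{z\in(xy)}\beta_p(w,z) + (x\mid y)_w \;=\; (x\mid p)_w + (y\mid p)_w + O(\delta),
\]
applies it once to $(x,y)$ and once to $(gx,gy)$, and compares the two suprema using the quasi-invariance of $\beta_p$ under $g$. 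Your route---transport the basepoint from $w$ to $gw$, use equivariance $(gx\mid gy)_{gw}=(x\mid y)_w$ together with $gp=p$, and observe that the $\beta_{gx}$ and $\beta_{gy}$ terms cancel---arrives at the same identity
\[
(gx\mid gy)_w - (p\mid gx)_w - (p\mid gy)_w \;=\; \bigl[(x\mid y)_w - (p\mid x)_w - (p\mid y)_w\bigr] + \beta_p(gw,w) + O(\delta)
\]
more directly, without the geometric detour through the geodesic $(xy)$.

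Two minor corrections: with the paper's sign convention for $\beta_p$ the change-of-basepoint formula carries $+\tfrac12$, not $-\tfrac12$ (harmless here, since only $|\beta_p(gw,w)|$ matters); and in your cocycle step the accumulated error is $O(n\delta)$, not $O(\delta)$. But since the quasi-cocycle defect depends only on $\delta$, dividing $|n c_g|\le d(g^n w,w)+O(n\delta)$ by $n$ and letting $n\to\infty$ still yields $|c_g|\le O(\delta)$ with a bound uniform in $g$, so the conclusion stands.
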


\begin{proof} 
 It  follows from 
\cite[Prop.\,8.11, Rem.\,8.13.ii, Thm.\,8.16]{GdlH} that horoballs based at $p$ are quasi-preserved by $P$: there exists a constant
$C_0 \ge 0$ depending only on $\delta$ such that, for any $x\in \Upsilon$ and any $g\in P$, 
\begin{equation}\label{eq;buse} |\beta_p(x,g(x))|\le C_0\end{equation} 
where $\beta_p$ denotes a  Busemann function at $p$.

\medskip

Let $w$ be the basepoint for the visual metric $\rho$, and write $B=\max \{ (x|p)_w,(y|p)_w\}$; fix $g\in P$. 
We  claim that there is a constant $C_1=C_1(\delta) \ge 0$ such that  
$$ (g(x) | g(y))_w \ge  (g(x)|p)_w +   (g(y)|p)_w\   -(   C_1  +2B)\,. $$
From the claim, we may conclude: 
$$\rho(gx,gy)\lesssim e^{-\epsilon (g(x)|g(y))_w}   \lesssim e^{-\epsilon ( ( g(x)|p)_w+(g(y)|p)_w )}  \lesssim  \rho(p,gx)  \rho(p, gy)\,.$$

\medskip
 
 To prove the claim, we first observe that, given any 
 four points  $x,y,w,p\in (\Upsilon\cup\partial \Upsilon)$ with $w\in \Upsilon$ and $p\in\partial \Upsilon$ 
 \begin{equation}\label{eq:config4}
\left| \left(\sup_{z\in (xy)}\beta_p(w,z)+ (x|y)_w\right) -((x|p)_w + (y|p)_w  )\right|\le C_2.\end{equation}
for some constant $C_2(\delta)\ge 0$. 
This may be checked by approximating these points by a tree \cite[Thm 2.12]{GdlH}.
 
Thus, we get
 $$ \sup_{z\in (xy)}\beta_p(w,z) \le C_2+2B\,.$$
 Since $g\in P$, we have by the quasi-cocycle property of Busemann functions \cite[Prop.\,8.2.iii]{GdlH} and by \eqref{eq;buse} $$\left| \sup_{z\in (xy)}\beta_p(w,z)  -  \sup_{z\in (g(x) g(y))}\beta_p(w,z)\right|  \le C_0 + C_3$$
 for some constant $C_3=C_3(\delta)$.
Hence, with a second application of  \eqref{eq:config4} we obtain
$$(g(x)|g(y))_w \ge  (g(x)|p)_w  +  (g(y)|p)_w   -  C_0 - C_3 - 2B -2 C_2$$
proving the claim.
\end{proof}
 
\subsection{Linear connectivity versus spherical connectivity}
In this subsection we introduce our local-to-global criterion for linear connectedness of the boundary.

\begin{definition}
Let $\Delta,R \ge 0$,
let $\Upsilon$ be a $\delta$--hyperbolic and $\delta$--visible metric space, and let $y \in \Upsilon$.  We say that $\Upsilon$ is {\em $(\Delta,R)$--spherically connected at $y$} if for any points $p,q\in \Upsilon$ 
with $d(p,y)=d(q,y)=R$, the following holds. There is a sequence $p=p_0,\dots,p_n=q$ so that $d(y,p_i)=R$, $(p_i|p_{i+1})_y \geq R - 5\delta$ and $(p | p_i)_y\geq (p | q)_y-\Delta$.
\end{definition}

\begin{lemma}\label{lem:lin_conn_implies_sphere_conn}
For every $\delta, L$ there exists $\Delta=\Delta(\delta,L)$ such that the following holds.  Let $\Upsilon$ be $\delta$--hyperbolic and  $\delta$--visible and $y \in \Upsilon$ be 
such that any $\delta$--adapted visual metric on $\partial \Upsilon$ with basepoint $y$ is $L$--linearly connected.  For every $R$ we have that $\Upsilon$ is $(\Delta,R)$--spherically connected at $y$.
\end{lemma}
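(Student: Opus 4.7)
The plan is to leverage $\delta$--visibility to transfer a continuum in $\partial\Upsilon$ witnessing linear connectivity into a chain on the $R$--sphere around $y$. First, if $(p|q)_y \geq R - 5\delta$, the trivial chain $p_0 = p,\ p_1 = q$ satisfies all requirements, so I may assume $(p|q)_y < R - 5\delta$. By $\delta$--visibility from $y$, there exist geodesic rays $\gamma_p, \gamma_q$ issuing from $y$ passing within $\delta$ of $p$ and $q$ respectively; let $\xi = \gamma_p(\infty)$ and $\eta = \gamma_q(\infty)$. Standard estimates in $\delta$--hyperbolic spaces (via tree approximation of the five points $y,p,q,\xi,\eta$) give $|(p|q)_y - \min(R,(\xi|\eta)_y)| = O(\delta)$, so the reduction forces $(\xi|\eta)_y < R + O(\delta)$.

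Let $\rho$ be a $\delta$--adapted visual metric based at $y$, with $\rho \asymp_\kappa e^{-\epsilon(\cdot|\cdot)_y}$ where $\epsilon = 1/(6\delta)$ and $\kappa = (3-2e^{1/3})^{-1}$. By $L$--linear connectivity, pick a continuum $J \subset \partial \Upsilon$ with $\xi, \eta \in J$ and $\mathrm{diam}_\rho J \leq L\, \rho(\xi,\eta)$. Since $J$ is compact and connected, for any $\tau > 0$ there is a chain $\xi = \zeta_0, \zeta_1, \ldots, \zeta_n = \eta$ in $J$ with $\rho(\zeta_i,\zeta_{i+1}) \leq \tau$. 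Choose $\tau$ small enough (depending on $R,\delta$) that $(\zeta_i|\zeta_{i+1})_y \geq R + C_1\delta$ for each $i$, where $C_1$ is an absolute constant fixed in the next step. For each $i$, pick a geodesic ray $\gamma_i$ from $y$ tending to $\zeta_i$, taking $\gamma_0 = \gamma_p$ and $\gamma_n = \gamma_q$, and assemble the chain $p'_0 = p$, $p'_i = \gamma_{i-1}(R)$ for $1 \leq i \leq n+1$, and $p'_{n+2} = q$.

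The verifications are then: (a) $d(y,p'_i) = R$ for every $i$ by construction, using $|d(y,p)-d(y,\gamma_p(R))|=0$ etc.; (b) $(p'_i|p'_{i+1})_y \geq R - 5\delta$, using that $(\zeta_{i-1}|\zeta_i)_y \geq R + C_1\delta$ forces $\gamma_{i-1}$ and $\gamma_i$ to remain within $O(\delta)$ of each other up to time $R$ (choose $C_1$ so the error fits in the slack), together with the fact that $p$ and $\gamma_p(R)$ are within $2\delta$ by visibility (and likewise for $q$), handling the two padding edges; (c) the diameter bound $\rho(\xi,\zeta_j) \leq L\rho(\xi,\eta)$ converts, via the sandwich $\log(1/(\kappa r))/\epsilon \leq (\cdot|\cdot)_y \leq \log(\kappa/r)/\epsilon$ between visual distance and Gromov product, into $(\xi|\zeta_j)_y \geq (\xi|\eta)_y - 6\delta\log(\kappa^2 L)$; composing with the min--approximation gives $(p|p'_i)_y \geq (p|q)_y - 6\delta\log(\kappa^2 L) - O(\delta)$. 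Setting $\Delta := 6\delta\log(\kappa^2 L) + C_2\delta$ for a suitable $C_2=C_2(\delta)$ finishes the proof.

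The main technical subtlety, and where the argument could go wrong, is that $\Delta$ must be independent of $R$. This works precisely because $L$--linear connectivity controls the ratio $\mathrm{diam}_\rho J / \rho(\xi,\eta)$ by $L$ regardless of how far apart $\xi$ and $\eta$ are, so the logarithmic loss in comparing $(\xi|\zeta_j)_y$ to $(\xi|\eta)_y$ does not depend on $R$, even though the mesh $\tau$ chosen for the fine chain does. The other delicate point is that $p'_0 = p$ and $p'_{n+2} = q$ must hold exactly, not just approximately, hence the two short padding edges $p\to\gamma_p(R)$ and $\gamma_q(R)\to q$, whose Gromov products at $y$ are bounded below by $R - \delta$ using $d(p,\gamma_p(R)), d(q,\gamma_q(R)) \leq 2\delta$.
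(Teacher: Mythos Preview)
Your proof is correct and follows essentially the same route as the paper's: reduce to the case $(p|q)_y < R-5\delta$, use $\delta$--visibility to approximate $p,q$ by rays, invoke $L$--linear connectivity of $\partial\Upsilon$ to obtain a small continuum/arc joining the endpoints, pick a fine chain of boundary points along it, and project back to the $R$--sphere via rays, with two short padding edges at the ends. The paper carries out the same computation with explicit constants, arriving at $\Delta = \tfrac{1}{\epsilon}\log(\kappa^2 L) + 5\delta$, which matches your $6\delta\log(\kappa^2 L)+O(\delta)$ since $\epsilon = 1/(6\delta)$.
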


\begin{proof} Since the only base point considered here is  the point $y$, we write $(\cdot|\cdot)_y=(\cdot|\cdot)$.
We first gather some estimates that are used in the proof. 

Let $z\in \Upsilon$ and $\xi$ be the endpoint of a ray $[y,\xi)$ that passes at distance $\delta$ from $z$. Then the $\delta$--hyperbolicity implies 
\begin{equation}\label{eq:lcisc1} (z|\xi) \ge d(y,z)- 2\delta\,.\end{equation}

Let $\xi,\zeta\in\partial \Upsilon$, $R>0$.  Let us consider points $p$ and $q$ at distance $R$ from the base point $y$ that are on rays defining $\xi$ and $\zeta$ respectively.
Then   $$ (p|q) \ge \min\{(p|\xi), (\xi| \zeta),(\zeta|q)\} -2\delta \ge \min\{R, (\xi| \zeta)\} - 2\delta  \ge  (\xi| \zeta)- 2\delta$$
since $(p|q)\le R$, and, by symmetry, we get 
\begin{equation}\label{eq:lcisc2}  (p|q) -2\delta \le (\xi| \zeta)\le  (p| q) + 2\delta \,. \end{equation}

\medskip

We now establish the spherical connectivity. Let $p$ and $q$ be on the sphere of radius $R$ around $y$. If $(p|q) \ge R-5\delta$, then we are done. Otherwise,
 we approximate $p$ and $q$ by rays $\gamma_p$ and $\gamma_q$ that pass within $\delta$ of $p$ and $q$, respectively.  
The associated points on $\partial \Upsilon$ are $\xi_p$ and $\xi_q$. 

The hyperbolicity and \eqref{eq:lcisc1} implies 
\begin{eqnarray*} 
(\xi_p| \xi_q)  & \ge & \min\{ (\xi_p| p), (p|q), (q| \xi_q)\} -2\delta\\
& \ge & \min\{R-2\delta, (p|q)\} - 2\delta\end{eqnarray*}
Since $(p|q)\le R-5\delta$ we have
\begin{equation}\label{eq:lcisc4}
(\xi_p|\xi_q) \ge (p|q) - 2\delta\,. \end{equation}

Let $J$ be an arc connecting $\xi_p$ and $\xi_q$ such that $\rho (\xi,\xi_p) \le L\rho (\xi_p,\xi_q)$ holds for  every point $\xi$ in $J$ and let $\eta=  e^{-\epsilon R}/\kappa$.  
Since $J$ is an arc, there is a sequence of points $(\xi_j)_{1\le j\le n-1}$ on $J$ represented by rays $\gamma_p = \alpha_1,\ldots,\alpha_{n-1} = \gamma_q$  with $\rho(\xi_j,\xi_{j+1})\le \eta$. 
Let $p_j$ be the intersection of $\alpha_j$ with the sphere of radius $R$ about  $y$, and write $p = p_0$ and $q = p_n$. Let us check that this chain satisfies the conditions of spherical connectedness.

We first note that $(p_0|p_1),\, (p_{n-1}|p_n)\ge R-\delta$ by construction; 
for $1\le j\le n-2$, the estimate \eqref{eq:lcisc2} shows us that
\begin{eqnarray*} 
(p_j|p_{j+1}) & \ge  &  (\xi_j | \xi_{j+1})-  2\delta \\
& \ge &  \frac{-1}{\epsilon} \log(\kappa\eta)  -2\delta \\
& \ge  & R-2\delta \,.
\end{eqnarray*}

If $j= 1,n$, then  $(p|p_j) \ge \min\{(p|q), R -2\delta\} \ge (p|q)$ by \eqref{eq:lcisc1} and  since $(p|q)\le R-5\delta$. 
If $2\le j\le n-1$, then by \eqref{eq:lcisc2}  and\eqref{eq:lcisc4}, 
\begin{eqnarray*} 
(p_1 |p_{j}) & \ge  &   (\xi_p  | \xi_{j}) -  2\delta \\
& \ge &   \frac{-1}{\epsilon} \log \kappa \rho(\xi_p,\xi_j) -2\delta \\
& \ge  & \frac{-1}{\epsilon} \log \kappa  L \rho(\xi_p,\xi_q)   -2\delta \\
& \ge  & \frac{-1}{\epsilon} \log (\kappa^2L) + (\xi_p|\xi_q)  -2\delta \\
& \ge &   (p|q)  - \frac{1}{\epsilon} \log (\kappa^2L) - 4\delta
\end{eqnarray*}
so  $$(p|p_j) \ge  \min\{(p|p_1),(p_1|p_j)\} -\delta \ge   (p|q)  - \frac{1}{\epsilon} \log (\kappa^2L) - 5\delta $$
since $(p|p_1) \ge R- \delta \ge R-5\delta\ge (p|q)$.   Therefore, we may set $$\Delta= \frac{1}{\epsilon} \log (\kappa^2L) + 5\delta$$
that only depends on $\delta$ and $L$.
\end{proof}

We now prove a kind of converse to Lemma \ref{lem:lin_conn_implies_sphere_conn}.

\begin{lemma}\label{lem:sphere_conn_implies_lin_conn}
For every $\Delta, \delta$ there exists $R_0 = R_0(\Delta,\delta)$ and $L = L(\Delta,\delta)$ so that the following holds.  Suppose that $\Upsilon$ is a $\delta$--hyperbolic and $\delta$--visible metric space, and that there exists $R \ge R_0$ so that for every $y \in \Upsilon$ the space $\Upsilon$ is $(\Delta,R)$--spherically connected at $y$.  Then any $\delta$--adapted visual metric on $\partial \Upsilon$ with any basepoint is $L$--linearly connected.
\end{lemma}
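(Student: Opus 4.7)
My plan is to verify the hypothesis of Lemma~\ref{lem:GMS6.19}: for each pair $\xi,\zeta \in \partial\Upsilon$ I construct a chain $\xi = \xi_0,\ldots,\xi_n = \zeta$ in $\partial \Upsilon$ with $\rho(\xi_i,\xi_{i+1}) \le \rho(\xi,\zeta)/2$ and $\mathrm{diam}\{\xi_0,\ldots,\xi_n\} \le L\,\rho(\xi,\zeta)$, for a constant $L=L(\Delta,\delta)$. Throughout, $\epsilon = 1/(6\delta)$ and $\kappa = (3-2e^{1/3})^{-1}$, and $\rho$ is $\kappa$--bi-Lipschitz to $e^{-\epsilon(\cdot|\cdot)_y}$ by Definition~\ref{def:delta_adapted}. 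Set $C = \epsilon^{-1}\log(2\kappa^2)+10\delta$ and require $R_0 \ge \Delta + C + 100\delta$.

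Given the pair, put $T = (\xi|\zeta)_y$, and let $\gamma_\xi$ be a geodesic ray from $y$ to $\xi$, provided by $\delta$--visibility. Set $t^* = \max\{0,T-R+C\}$, $y' = \gamma_\xi(t^*)$, $p = \gamma_\xi(t^*+R)$, and choose $q$ at distance $R$ from $y'$ on a ray from $y'$ to $\zeta$. Then $(\xi|\zeta)_{y'} = T - t^* = \min\{T,R-C\}$, and $(p|q)_{y'} = \min\{T,R-C\} + O(\delta)$. Applying $(\Delta,R)$--spherical connectivity at $y'$ produces a chain $p = p_0,\ldots,p_n = q$ on the $R$--sphere at $y'$ with $(p_i|p_{i+1})_{y'} \ge R-5\delta$ and $(p|p_i)_{y'} \ge (p|q)_{y'} - \Delta$. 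Extend each $p_i$ by visibility to $\xi_i \in \partial\Upsilon$, with $\xi_0 = \xi$ and $\xi_n = \zeta$; the $\delta$--hyperbolic $\min$--inequality then yields $(\xi_i|\xi_{i+1})_{y'} \ge R - O(\delta)$ and $(\xi|\xi_i)_{y'} \ge (p|q)_{y'} - \Delta - O(\delta)$.

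The central technical step is to show $(y|\xi_i)_{y'} \le O(\delta)$. When $t^* = 0$ this is automatic since $y' = y$. When $t^* > 0$, $y'$ lies on $\gamma_\xi$ so $(y|\xi)_{y'} = 0$, while $(\xi|\xi_i)_{y'} \ge R-C-\Delta-O(\delta) \ge 2\delta$ by the choice of $R_0$. The tree approximation \cite[Theorem 2.12]{GdlH} applied to the four points $\{y,\xi,\xi_i,y'\}$ forces the two smallest of the three Gromov products $(y|\xi)_{y'}, (y|\xi_i)_{y'}, (\xi|\xi_i)_{y'}$ to agree up to $O(\delta)$, and since $(\xi|\xi_i)_{y'}$ is the largest, we conclude $(y|\xi_i)_{y'} \le O(\delta)$.

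The change-of-basepoint identity
\[(a|b)_y = d(y,y') + (a|b)_{y'} - (y|a)_{y'} - (y|b)_{y'}\]
applied with $a,b\in\{\xi,\xi_i,\xi_{i+1}\}$ gives
\[(\xi_i|\xi_{i+1})_y \ge t^*+R-O(\delta) \ge T+C-O(\delta), \qquad (\xi|\xi_i)_y \ge T-\Delta-O(\delta).\]
Converting via $\rho \asymp e^{-\epsilon(\cdot|\cdot)_y}$, the first bound yields $\rho(\xi_i,\xi_{i+1}) \le \kappa^2 e^{-\epsilon C + O(\epsilon\delta)}\rho(\xi,\zeta) \le \rho(\xi,\zeta)/2$ by the choice of $C$, and the second gives $\rho(\xi,\xi_i) \le \kappa^2 e^{\epsilon\Delta + O(\epsilon\delta)}\rho(\xi,\zeta)$, bounding the chain diameter by $2\kappa^2 e^{\epsilon\Delta + O(\epsilon\delta)}\rho(\xi,\zeta)$. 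Lemma~\ref{lem:GMS6.19} then yields $L$--linear connectedness with $L = 10\kappa^2 e^{\epsilon\Delta + O(\epsilon\delta)}$. The main obstacle is precisely the control of $(y|\xi_i)_{y'}$ under basepoint change; placing $y'$ on $\gamma_\xi$ is what brings the tree approximation into play.
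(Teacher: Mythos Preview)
Your proof is correct and follows the same overall arc as the paper's: move the basepoint to some $y'$ on the ray $\gamma_\xi$, apply $(\Delta,R)$--spherical connectivity at $y'$, extend the chain to $\partial\Upsilon$ by visibility, and convert back to the original visual metric to feed into Lemma~\ref{lem:GMS6.19}.  The genuine difference is in how the transfer from $y'$ back to $y$ is carried out.

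The paper places $y'$ at distance $(\xi|\zeta)_y$ along $\gamma_\xi$ (so $(\xi|\zeta)_{y'}=O(\delta)$) and then invokes Lemma~\ref{lem:change_basepoint}: the bound $\rho'(\xi,\xi_j)\le \kappa^2 e^{\epsilon(\Delta+4\delta)}\rho'(\xi,\zeta)$ is used only to certify $\xi_j\notin\mathcal A_t$, after which the bi-Lipschitz comparison $\frac{1}{\rho(\xi,\zeta)}\rho\asymp_{\lambda t^2}\rho'$ does the work.  You instead place $y'$ earlier, at distance $t^*=\max\{0,(\xi|\zeta)_y-R+C\}$, so that $(\xi|\zeta)_{y'}\approx R-C$ is large.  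This makes the spherical connectivity constraint $(p|p_i)_{y'}\ge (p|q)_{y'}-\Delta$ genuinely informative, giving $(\xi|\xi_i)_{y'}\ge R-C-\Delta-O(\delta)>2\delta$, which in turn forces $(y|\xi_i)_{y'}\le O(\delta)$ via the four-point inequality (since $(y|\xi)_{y'}=0$).  The exact identity $(a|b)_y=d(y,y')+(a|b)_{y'}-(y|a)_{y'}-(y|b)_{y'}$ then transfers the Gromov-product estimates directly, with no appeal to Lemma~\ref{lem:change_basepoint}.

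Your route is more self-contained and avoids the somewhat heavy Lemma~\ref{lem:change_basepoint}; the paper's route reuses that lemma, which is needed elsewhere anyway.  The trade-off is that your argument hinges on the placement of $y'$ and the observation that $(y|\xi_i)_{y'}$ is small---a step that requires a moment's thought but, as you note, follows cleanly from the tree approximation once $(\xi|\xi_i)_{y'}$ has been made large.  One minor point: the identity you quote is exact for interior points and holds up to $O(\delta)$ for boundary points; you use it that way, so this is fine, but it is worth saying explicitly.
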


 \begin{proof} 
We fix a base point $y\in \Upsilon$ and let $\rho$ denote a  $\delta$--adapted visual distance.  We check the criterion of Lemma  \ref{lem:GMS6.19}. 
Let $\xi, \zeta \in \partial \Upsilon$ with $\xi \ne \zeta$ and assume that $R\ge R_0$ for some constant $R_0$ that will be fixed later on. 

Let us apply Lemma \ref{lem:change_basepoint} to the triple $(y,\xi,\zeta)$ and denote by $y'$ the new basepoint with associated visual distance $\rho'$.

Let $p$, $q$ be at distance $R$ from the base point $y'$ on rays $[y',\xi)$ and $[y',\zeta)$ respectively. Therefore, the argument of \eqref{eq:lcisc2} leads us to
\begin{equation}\label{eq:scilc1}   (p|q)_{y'}-2\delta \le (\xi|\zeta)_{y'} \le (p|q)_{y'} + 2\delta \,. \end{equation}
Since $\Upsilon$ is $(\Delta,R)$--spherically connected at $y'$, we may find
$p=p_0,\dots,p_n=q$ so that $d(y',p_j)=R$, $(p_j|p_{j+1})_{y'} \geq R - 5\delta$ and $(p | p_j)_{y'}\geq (p | q)_{y'}-\Delta$.
  
For any $ 1\le j\le n-1$, we may find  a ray that passes at distance at most $\delta$ from $p_j$ with end point $\xi_j\in \partial \Upsilon$. 

On the one hand, we have 
\begin{eqnarray*}  (\xi | \xi_j)_{y'}  & \ge & \min \{ (\xi|p)_{y'}, (p|p_j)_{y'}, (p_j|\xi_j)_{y'} \} - 2\delta\\
&  \ge  & \min\{R, (p|q)_{y'}-\Delta, R-2\delta\} - 2\delta\\
& \ge & (p|q)_{y'}- \Delta - 2\delta\\
& \ge & (\xi|\zeta)_{y'} - \Delta - 4\delta
\end{eqnarray*}
so that 
\begin{equation}\label{eq:comparison}
    \rho'(\xi,\xi_j) \le \kappa e^{-\epsilon(\xi | \xi_j)_{y'}} \le \kappa^2 e^{\epsilon(\Delta + 4\delta)} \rho'(\xi,\zeta)\,.
\end{equation}
This bound shows that there is a choice of $t$, depending only on $\delta$ and $\Delta$, so that for each $j$ we have $\xi_j \not\in \mathcal{A}_t$, where $\mathcal{A}_t$ is the set defined in Lemma~\ref{lem:change_basepoint}.\eqref{item:diam} with $x = \xi$, $y = \zeta$.
Lemma \ref{lem:change_basepoint}.\eqref{item:bilip} tells us that there is a constant $\lambda_0=\lambda t^2 \ge 1$, where $\lambda_0$ depends only on $\delta$ and $\Delta$ such that
$$\frac{1}{\lambda_0} \frac{ \rho(\xi_i,\xi_j)}{\rho(\zeta,\xi) } \le \rho'(\xi_i,\xi_j) \le  \lambda_0\frac{ \rho(\xi_i,\xi_j)}{\rho(\zeta,\xi) }$$ holds for all $0\le i,j\le n$.

On the other hand, the same argument as for \eqref{eq:lcisc4}  leads to $$(\xi_j|\xi_{j+1})_{y'} \ge (p_j|p_{j+1})_{y'}  -2\delta \ge R- 7\delta$$
so that, by definition of $R$, we have  $$\rho'(\xi_j,\xi_{j+1}) \le \kappa  e^{-\epsilon(R_0-7\delta)}\,.$$

Hence, Lemma \ref{lem:change_basepoint}.\eqref{item:bilip} (with the same choice of $t$, and hence $\lambda_0$, as above) implies that
\begin{eqnarray*} \rho(\xi_j,\xi_{j+1})   & \le & \lambda_0 \rho'(\xi_j,\xi_{j+1})  \rho(\zeta,\xi) \\
 & \le & \lambda_0\kappa  e^{-\epsilon(R_0-7\delta)}  \rho(\zeta,\xi) \\
 &  \le   & \rho (\xi,\zeta)/2 ,
\end{eqnarray*}
if we pick $R_0$ large enough.
This implies that the assumptions of Lemma \ref{lem:GMS6.19} hold and we may deduce that $\partial \Upsilon$ is $L$--linearly connected
where $L$ only depends on $\delta$ and $\Delta$.
\end{proof}

\section{Shells and tube complements} \label{s:shells}

In this section we study the geometry of large tubes and complements of large tubes around bi-infinite geodesics in hyperbolic spaces, in particular when the boundary of the hyperbolic space is $S^2$. One of the main results of the section is Corollary~\ref{cor:pi_1_Z}, which shows that the coarse fundamental group of the boundary of a tube is isomorphic to the fundamental group of the boundary at infinity minus the limit points of the tube, which is isomorphic to $\mathbb Z$. The isomorphism arises from a closest point projection map, see Definition \ref{def:proj}. 

Another result important for the later parts of the paper is Theorem \ref{thm:cusp_tube_hyp}, which roughly speaking says that removing a large tube around bi-infinite geodesic and adding a combinatorial horoball on the corresponding shell results in a hyperbolic space; this space will serve as a ``local" model for other hyperbolic spaces we construct later on.

Two key technical constructions in this section are completed shells and completed tube complements, Definitions \ref{def:completed shell} and \ref{def:CTC}, which allow us for example to make sense of the cusped space described above.

We now fix some notation and hypotheses which we use throughout this section. The reader might want to keep in mind a space $\Xz$ acted upon geometrically by a hyperbolic group $G$, as will be the case in Section \ref{s:unwrap family} below.

\begin{assumption} \label{ass:X} 
Let $\Xz$ be a locally finite graph and suppose that $\Xz$ is $\delta_0$--hyperbolic and $\delta_0$--visible for some $\delta_0$.  

Let $\lambda_0 \ge 0$ be a constant, and let $\Yx$ denote a $\lambda_0$--quasi-convex subset of $\Xz$ such that $\partial \Xz \ssm \Lambda \Yx$ is path connected, where $\Lambda \Yx$ is the limit set of $\Yx$ in $\partial \Xz$.  Suppose further that if $y \in \Yx$ and $\alpha$ is a geodesic in $\Xz$ starting at $y$ and limiting to a point in  $\Lambda \Yx$ then $\alpha$ stays entirely in the $\lambda_0$--neighborhood of $\Yx$. Fix a basepoint $w_0 \in \Yx$.

Suppose further that some $\delta_0$--adapted visual metric $\rho_0$ on $\partial \Xz$ based at $w_0$ is $L_0$--linearly connected and $\doub$--doubling.
\end{assumption}

\subsection{Shells and projections}

\begin{definition} \label{def:shell}
Suppose that $Z$ is a metric space, $W \subset Z$, and  let $K > 0$ be an integer.

  The {\em $K$--shell of $W$ in $Z$} is $S_K^Z(W)=\{z\in Z \mid d(z,W)=K\}$, and the {\em (open) $K$--tube of $W$ in $Z$} is $T_K^Z(W) = \{z \in Z\mid d(z, W) < K\}$. 
  The \emph{(closed) $K$--neighborhood of $W$ in $Z$} is $N_K^Z(W) = \{ z \in Z \mid d(z,W) \le K \}$.
  
  Often, the space $Z$ is implicit, and we write $S_K(W)$, $T_K(W)$, and $N_K(W)$.
\end{definition}
We remark that if $Z$ is a graph and $W$ is a sub-graph, then $S_K^Z(W)$ is in the $0$--skeleton of $Z$, since $K$ is an integer. This will be important later.

Recall that the pair $(\Xz,\Yx)$ satisfies the conditions given in Assumption~\ref{ass:X}.  Recall also that we fixed $w_0\in\Yx$. Recall that the limit set of $\Yx$ in $\partial \Xz$ is denoted by $\Lambda \Yx$.

\begin{definition} \label{def:proj}
  For $K \geq 0$, define the map $\Pi_K=\Pi_{S_K(\Yx)} \co \partial \Xz\ssm \Lambda\Yx\to S_K(\Yx)$ as follows. For each $p\in\partial \Xz \ssm \Lambda\Yx$, choose a representative ray $\ray{p}$ from $w_0$ to $p$.  Then $\Pi_K(p)$ is the last point on $\ray{p}$ at distance $K$ from $\Yx$.
\end{definition}
Note that the map $\Pi_K$ involves some choices, such as $w_0$ and $\ray{p}$.  For large choices of $K$, Lemma \ref{lem:piK well-defined} shows that the choices don't matter much.  We use the chosen paths $\ray{p}$ later.

The next lemma shows that a geodesic ray starting on $\Yx$ which leaves $T_K(\Yx)$ does not stay near $S_K(\Yx)$ for very long.
\begin{lemma} \label{lem:straightgeodesic} Suppose $K \ge \lambda_0 + 3\delta_0$.  Let $\alpha$ be a geodesic ray from $x\in\Yx$ tending to a point in $\partial \Xz \ssm \Lambda\Yx$.  Let $y\in \alpha \cap S_K(\Yx)$.  If $z$ is any point on $\alpha$ with $d(z, S_K(\Yx)) \le 2 \delta_0$, then $d(z,y) \le 6 \delta_0$. \end{lemma}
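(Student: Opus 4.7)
The plan is to apply the $\delta_0$-slim triangle condition to one carefully chosen triangle. Without loss of generality assume $t_y\le t_z$; the case $t_z<t_y$ is handled by the symmetric argument using the triangle built from $y$ and a $\Yx$-projection of $y$. Let $z'\in\Yx$ be a closest-point projection of $z$, so that $d(z,z')=d(z,\Yx)\le K+2\delta_0$ (since $d(z,S_K(\Yx))\le 2\delta_0$). Consider the geodesic triangle with vertices $x$, $z$, and $z'$: the side $[x,z]$ is a subsegment of $\alpha$, the side $[z,z']$ has length $d(z,z')\le K+2\delta_0$, and the side $[x,z']$ joins two points of $\Yx$, so by $\lambda_0$-quasi-convexity of $\Yx$ it lies in $N_{\lambda_0}(\Yx)$.

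By $\delta_0$-slimness, $y\in[x,z]$ is within $\delta_0$ of some point $q$ on one of the other two sides. The case $q\in[x,z']$ is eliminated: it would force $d(y,\Yx)\le\delta_0+\lambda_0$, contradicting $d(y,\Yx)=K\ge\lambda_0+3\delta_0$. Hence $q\in[z,z']$. The reverse triangle inequality gives $d(q,z)\ge(t_z-t_y)-\delta_0$, so $d(q,z')\le d(z,z')-(t_z-t_y)+\delta_0$. Combining,
\[
K \;=\; d(y,\Yx) \;\le\; d(y,z') \;\le\; d(y,q)+d(q,z') \;\le\; d(z,z')-(t_z-t_y)+2\delta_0 \;\le\; K+4\delta_0-(t_z-t_y),
\]
which rearranges to $t_z-t_y\le 4\delta_0$. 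The symmetric argument with the triangle $x,y,y'$ and $z\in[x,y]$ yields $t_y-t_z\le 4\delta_0$ in the reverse direction, so in either case $d(y,z)=|t_z-t_y|\le 4\delta_0\le 6\delta_0$.

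The main potential obstacle is the rigidity of the elimination step, which uses $d(y,\Yx)=K\ge\lambda_0+3\delta_0$ in one direction and $d(z,\Yx)\ge K-2\delta_0\ge\lambda_0+\delta_0$ in the other. The latter inequality is tight at the boundary $K=\lambda_0+3\delta_0$, and the gap between the $4\delta_0$ produced by the argument and the stated $6\delta_0$ seems designed to absorb this edge case via a more careful tripod-correspondence estimate (or an appeal to the $\delta_0$-visibility of $\Xz$ to extend $[x,y']$ slightly before running the same slim-triangle argument).
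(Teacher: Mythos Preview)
Your argument follows the same slim-triangle strategy as the paper, and the case $t_y\le t_z$ is handled correctly (in fact with a sharper constant $4\delta_0$). The difference is organizational: rather than splitting into two cases depending on which of $y,z$ lies farther along $\alpha$, the paper chooses a single point $u$ on $\alpha$ beyond \emph{both} $y$ and $z$, projects $u$ to $u'\in\Yx$, and works with the single triangle $x,u,u'$. Then both $y$ and $z$ lie on the side $[x,u]\subset\alpha$, both get pushed by $\delta_0$-slimness onto $[u,u']$ (the side $[x,u']$ being ruled out since it lies in $N_{\lambda_0}(\Yx)$), and one uses that $p\mapsto d(p,\Yx)$ is an isometry along $[u,u']$ to conclude $d(y',z')\le 4\delta_0$, whence $d(y,z)\le 6\delta_0$. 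This symmetrizes the two points and avoids your case analysis entirely.

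Your final paragraph is not a proof: you correctly identify that in the symmetric case the elimination step for $z$ only gives $d(z,\Yx)\ge\lambda_0+\delta_0$, which is not strict when $K=\lambda_0+3\delta_0$, but the suggested fixes (a ``tripod-correspondence estimate'' or extending $[x,y']$ via visibility) are vague and do not obviously close the gap. Note, however, that the paper's single-triangle proof faces exactly the same boundary issue when eliminating the possibility that $z$ is $\delta_0$-close to $[x,u']$; the paper simply asserts the conclusion. So this is a shared soft spot at the extremal value of $K$, not a defect peculiar to your approach. In any case, the lemma is only ever applied downstream with $K=R_0\gg\lambda_0+3\delta_0$, so the boundary case is never relevant. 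If you want a clean write-up, adopt the paper's single far vertex $u$; your $t_y\le t_z$ computation then becomes the whole proof.
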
 

\begin{proof} Let $u$ be a point of $\alpha$ so that $z$ and $y$ are both between $u$ and $x \in \Yx$.  Project $u$ to $\Yx$ and call the projection $u'$.  Let $\Delta$ be a geodesic triangle with one side that part of $\alpha$ from $x$ to $u$, and third vertex $u'$.  Since the side $xu'$ lies within $\lambda_0$ of $\Yx$, and $K \ge \lambda_0 + 3\delta_0$, $z$ and $y$ are at most $\delta_0$ from points $z'$ and $y'$ on the $uu'$ side of this triangle.  However, the geodesic $uu'$ travels as quickly to $\Yx$ as possible, so since $|d(z',\Yx) - K| \le 3\delta_0$ and $| d(y,\Yx) - K| \le \delta_0$, we have $d(z',y') \le 4\delta_0$.
Thus $d(z,y) < \delta_0 + \delta_0 +4 \delta_0$, as required.  
\end{proof} 

\begin{lemma}\label{lem:piK well-defined}
  For any $K \ge \lambda_0 + 3\delta_0$, and any $p \in \partial \Xz \smallsetminus \Lambda\Yx$, changing the ray $\ray{p}$ or the basepoint $w_0$ in Definition~\ref{def:proj} changes the location of $\Pi_K(p)$ by at most $8\delta_0$.
\end{lemma}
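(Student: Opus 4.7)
The strategy is to show that any two geodesic rays $r_1, r_2$ from (possibly different) points of $\Yx$ to the same boundary point $p \notin \Lambda \Yx$ fellow-travel closely enough that the $K$-level sets of $d(\cdot,\Yx)$ along them are within $8\delta_0$ of each other. The two key inputs are: (a) in a $\delta_0$-hyperbolic space, two geodesic rays asymptotic to a common boundary point remain $2\delta_0$-close (after suitable synchronization of parametrizations when the basepoints differ); and (b) Lemma~\ref{lem:straightgeodesic}, which, provided $K \ge \lambda_0 + 3\delta_0$, shows that on each such ray the set of points within $2\delta_0$ of $S_K(\Yx)$ has diameter at most $6\delta_0$.

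Concretely, set $y_i := \Pi_K(p)$ computed via $r_i$. By (a) there exists $z \in r_2$ with $d(y_1, z) \le 2\delta_0$. Since $d(y_1,\Yx) = K$, the triangle inequality forces $\bigl|d(z,\Yx) - K\bigr| \le 2\delta_0$, hence $d(z, S_K(\Yx)) \le 2\delta_0$. Now $r_2$ is a geodesic ray from a point of $\Yx$ to a boundary point $p\notin\Lambda\Yx$, so (b) applies with the distinguished point $y = y_2$, yielding $d(z, y_2) \le 6\delta_0$. The triangle inequality then gives $d(y_1, y_2) \le 8\delta_0$. This single argument handles both the change-of-ray case (common basepoint $w_0$) and the change-of-basepoint case (basepoints in $\Yx$).

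The main technical point is justifying the $2\delta_0$ fellow-travel constant in (a). For a common basepoint, it follows from the slimness of ideal triangles in a $\delta_0$-hyperbolic space applied to the degenerate triangle $(w_0,p,p)$ with sides $r_1, r_2$. For distinct basepoints $w_0, w_0' \in \Yx$, one approximates by long finite triangles $[w_0, w_0', r_2(T)]$ with $T\to\infty$ and uses slimness to match the tails of $r_1$ and $r_2$ up to $2\delta_0$ after reparametrizing. A minor subtlety is that the initial segments of $r_1, r_2$ can lie inside the $\lambda_0$-neighborhood of $\Yx$, where the fellow-travel matching requires the synchronization shift; however, since $K \ge \lambda_0 + 3\delta_0$, the points $y_1, y_2$ lie outside this neighborhood on the aligned tails, so the matching at $y_1$ is valid. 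Once this is in place, the rest of the argument is the clean two-line estimate above.
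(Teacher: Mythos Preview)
Your proof is correct and follows essentially the same approach as the paper: find a point on the second ray within $2\delta_0$ of $y_1$, then invoke Lemma~\ref{lem:straightgeodesic} for the extra $6\delta_0$. The paper's justification of the $2\delta_0$ step is slightly cleaner than yours: rather than treating the common-basepoint and different-basepoint cases separately and invoking a limiting/reparametrization argument, it considers directly the partially ideal triangle with vertices $w,w',p$ and sides $r_1,r_2,[w,w']$; since $[w,w']\subset N_{\lambda_0}(\Yx)$ by quasi-convexity and $d(y_1,\Yx)=K>\lambda_0+2\delta_0$, the $2\delta_0$--slimness of this triangle forces $y_1$ to lie within $2\delta_0$ of $r_2$, with no synchronization discussion needed.
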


\begin{proof} Let $w$ and $w'$ be two arbitrary points on $\Yx$.  Let $g$ and $g'$ be two geodesics from $w$ and $w'$ limiting at $p$.  There is a partially ideal triangle with vertices $w,w',p$, and sides $g,g'$ and a geodesic $[w,w']$.  Let $y$ and $y'$ be the points on $g$ and $g'$ at distance $K$ from $\Yx$. Since $K > \lambda_0 +3\delta_0$, the point $y'$ on $g'$ lies within $2 \delta_0$ of some point $y''$ on $g$.  By Lemma \ref{lem:straightgeodesic}, this point $y''$  is within $6 \delta_0$ of $y$. 
\end{proof}
\subsection{The coarse topology of shells}
Throughout this subsection, we let $S_K(\Yx)$ denote $S_K^{\Xz}(\Yx)$, and similarly drop the superscript for tubes. For our projections, we continue to work with a fixed $w_0$ on $\Yx$. 

The following is well-known, we include an argument for completeness.

\begin{lemma} \label{lem:tubequasiconvex} 
For any $K \ge \lambda_0$ the tube $T_K(\Yx)$ is $2\delta_0$--quasi-convex.
\end{lemma}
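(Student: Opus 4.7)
The plan is to combine the standard $2\delta_0$-slimness of geodesic quadrilaterals in a $\delta_0$-hyperbolic space with the $\lambda_0$-quasi-convexity hypothesis on $\Yx$.

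First I would pick arbitrary points $a, b \in T_K(\Yx)$ and choose $a', b' \in \Yx$ realizing the distances $d(a, \Yx)$ and $d(b, \Yx)$, both of which are strictly less than $K$. Then I would form the geodesic quadrilateral with cyclic vertices $a, a', b', b$, cut it into two geodesic triangles by inserting a diagonal (say $[a, b']$), and apply $\delta_0$-thinness to each half. The conclusion is that every point of the side $[a, b]$ lies within $2\delta_0$ of the union of the other three sides $[a, a']$, $[a', b']$, $[b', b]$.

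Next I would verify that each of these three sides is already contained in $T_K(\Yx)$. Any point $p \in [a, a']$ satisfies $d(p, \Yx) \le d(p, a') \le d(a, a') < K$ and so lies in the tube, and the same argument handles $[b, b']$. For the side $[a', b']$, the $\lambda_0$-quasi-convexity of $\Yx$ gives $[a', b'] \subset N_{\lambda_0}(\Yx)$, which under the hypothesis $K \ge \lambda_0$ is contained in the (open or closed) tube $T_K(\Yx)$, using that $\Yx \subset T_K(\Yx)$. Combining these observations yields $[a, b] \subset N_{2\delta_0}(T_K(\Yx))$, which is exactly the $2\delta_0$-quasi-convexity required.

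The argument is essentially routine once the quadrilateral decomposition is set up; the only minor subtlety is the borderline case $K = \lambda_0$, where the $\lambda_0$-neighborhood is not strictly contained in the open tube, but this is harmless since the witnesses to quasi-convexity can be replaced by points of $\Yx$ itself, which lie in $T_K(\Yx)$ trivially.
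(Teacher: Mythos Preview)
The proposal is correct and follows essentially the same argument as the paper: form the quadrilateral on $a,a',b',b$, use $2\delta_0$--slimness to place any point of $[a,b]$ within $2\delta_0$ of one of the other three sides, and observe that $[a,a']$, $[b,b']$ lie in the tube by choice of $a',b'$ while $[a',b']$ lies in $N_{\lambda_0}(\Yx)\subseteq T_K(\Yx)$ by quasi-convexity. Your version is slightly more explicit about the triangle decomposition and even flags the harmless $K=\lambda_0$ boundary issue, which the paper glosses over.
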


\begin{proof}
 Let $x,y\in T_K(\Yx)$ and consider a geodesic $[x,y]$ between them. Also, let $x',y'\in\Yx$ be closest points to $x,y$ respectively. Consider any $z\in[x,y]$, and we have to show that it lies within $2\delta_0$ of $T_K(\Yx)$.
 By hyperbolicity, $z$ lies $2\delta_0$ close to a geodesic $[x,x'],[y,y'],$ or $[x',y']$. The first two geodesics are contained in $T_K(\Yx)$, while the third one is contained in $T_{\lambda_0}(\Yx)\subseteq T_K(\Yx)$, so we are done.
\end{proof}

\begin{lemma}\label{lem:project_from_infty}
For any $K \ge \lambda_0 + 3\delta_0$, the map $\Pi_K$ is $8\delta_0$--coarsely surjective. Furthermore, any $p\in \partial \Xz\ssm \Lambda\Yx$ has an open neighborhood $U_p$ so that for every $q\in U_p$ we have $d(\Pi_K(p),\Pi_K(q))\leq 8\delta_0$.
\end{lemma}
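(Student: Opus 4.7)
The plan is to establish both assertions directly from $\delta_0$--visibility, the last clause of Assumption \ref{ass:X}, Lemma \ref{lem:straightgeodesic}, and Lemma \ref{lem:piK well-defined}. The idea is: given a target point $s$ on the shell or a boundary point $p$, I build a concrete geodesic ray from $w_0$ with controlled behavior near $S_K(\Yx)$, and then invoke Lemma \ref{lem:piK well-defined} to replace the chosen representative ray $\ray{\cdot}$ by this concrete ray at a cost of at most $8\delta_0$.

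For coarse surjectivity, let $s\in S_K(\Yx)$. Using $\delta_0$--visibility with endpoints $w_0$ and $s$, I obtain a geodesic ray $r$ from $w_0$ passing within $\delta_0$ of $s$. Since a point of $r$ lies within $\delta_0$ of $s$ and $s$ is at distance $K\ge \lambda_0+3\delta_0$ from $\Yx$, the ray $r$ escapes the $\lambda_0$--neighborhood of $\Yx$. The last clause of Assumption \ref{ass:X} (contrapositive) then forces the endpoint $p$ of $r$ to lie in $\partial \Xz\ssm\Lambda\Yx$. Moreover, since $p\notin\Lambda\Yx$, a straightforward argument using $\delta_0$--hyperbolicity and properness shows $d(r(t),\Yx)\to\infty$, so a ``last point at distance $K$ on $r$'' exists. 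Let $s'$ be the point on $r$ within $\delta_0$ of $s$; then $d(s',S_K(\Yx))\le\delta_0\le 2\delta_0$, and Lemma \ref{lem:straightgeodesic} (applied to $\alpha=r$ with $y$ equal to the last point of $r$ at distance $K$) gives $d(s',y)\le 6\delta_0$, hence $s$ is at controlled distance from this last point of $r$ at distance $K$. Replacing $r$ by the chosen ray $\ray{p}$ via Lemma \ref{lem:piK well-defined} completes the surjectivity statement.

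For the local constancy, let $p\in\partial \Xz\ssm\Lambda\Yx$ and write $y_p=\Pi_K(p)$, which lies on $\ray{p}$ at $\Yx$--distance $K$. Choose $T$ strictly larger than the parameter at which $y_p$ occurs on $\ray{p}$. The set $U_p=\{q\in\partial \Xz\ssm\Lambda\Yx\mid (p\mid q)_{w_0}>T\}$ is open in $\partial \Xz$. For $q\in U_p$, any geodesic ray $r_q$ from $w_0$ to $q$ $O(\delta_0)$--fellow-travels $\ray{p}$ past $y_p$, by standard thin-triangle arguments for rays based at a common point with large Gromov product. In particular, there is a point of $r_q$ within $\delta_0$ of $y_p$, whose $\Yx$--distance is within $\delta_0$ of $K$. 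Applying Lemma \ref{lem:straightgeodesic} to $r_q$ shows that this point is within $6\delta_0$ of the last point of $r_q$ at distance $K$, and one last use of Lemma \ref{lem:piK well-defined} compares this to $\Pi_K(q)$ defined via the chosen ray $\ray{q}$.

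The key technical step, and in my view the main subtle point, is verifying that a ray escaping the $\lambda_0$--neighborhood has a well-defined ``last point at distance $K$'', i.e.\ that $d(r(t),\Yx)\to\infty$ whenever the endpoint lies outside $\Lambda\Yx$. This relies on the fact that $\Lambda\Yx$ is the full set of accumulation points of $\Yx$ in $\partial \Xz$, together with properness of $\Xz$: a sequence $r(t_n)$ with bounded distance to $\Yx$ would produce a sequence in $\Yx$ converging to the endpoint of $r$, forcing that endpoint into $\Lambda\Yx$. Once this is in hand, the bounds obtained from Lemma \ref{lem:straightgeodesic} and Lemma \ref{lem:piK well-defined} assemble into the claimed $8\delta_0$ estimates (possibly after mild reorganization of constants).
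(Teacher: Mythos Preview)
Your approach is essentially the same as the paper's: visibility for coarse surjectivity, fellow-traveling of rays with large Gromov product for local constancy, and Lemma~\ref{lem:straightgeodesic} as the key estimate linking a point near the shell to the ``last point at distance $K$'' on a ray. You are also more careful than the paper about two points it glosses over: that the endpoint of the visibility ray lies outside $\Lambda\Yx$ (via the last clause of Assumption~\ref{ass:X}), and that $d(r(t),\Yx)\to\infty$ so the last point at distance $K$ exists.

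The one substantive issue is the constant. Routing through Lemma~\ref{lem:piK well-defined} doubles the work: you apply Lemma~\ref{lem:straightgeodesic} once on your auxiliary ray $r$ (or $r_q$) and then again implicitly inside Lemma~\ref{lem:piK well-defined} to pass to $\ray{p}$ (or $\ray{q}$), yielding roughly $7\delta_0+8\delta_0$ rather than $8\delta_0$. The paper avoids this by comparing the visibility ray $r$ directly with the \emph{chosen} ray $\ray{q}$ via a single thin triangle based at $w_0$: the point on $r$ near $s$ is $\delta_0$--close to a point $z$ on $\ray{q}$, so $d(z,S_K(\Yx))\le 2\delta_0$, and one application of Lemma~\ref{lem:straightgeodesic} on $\ray{q}$ gives $d(z,\Pi_K(q))\le 6\delta_0$, hence $d(s,\Pi_K(q))\le 8\delta_0$. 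The same simplification works in the second part: just take $r_q=\ray{q}$ from the start, so no final appeal to Lemma~\ref{lem:piK well-defined} is needed. With that adjustment your argument recovers the stated $8\delta_0$.
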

\begin{proof}
 By $\delta_0$--visibility, any point $x$ on $S_K(\Yx)$ lies $\delta_0$--close to a geodesic ray $r$ from $w_0$ to some $q\in\partial \Xz$. It follows that $x$ is within $\delta_0$ of a point $y$ on $r$.  Let $\ray{q}$ be the geodesic ray used to define $\Pi_K(q)$ starting at $w_0$. We can form a geodesic triangle between $w_0$, and two points on $r$ and $\ray{q}$ which are far away from $S_K(\Yx)$.  Then $y$ is distance at most $\delta_0$ from a point $z$ on $\ray{q}$.  It follows from Lemma \ref{lem:straightgeodesic} that $x$ is within $8 \delta_0$ of $\Pi_K(q)$.   This proves the first assertion.
 
 Let $p$ be in $\partial \Xz \ssm \Lambda\Yx$ and consider the geodesic $\ray{p}$  used to define $\Pi_K(p)$.  Pick a neighborhood $U_p$ of $p$ in $\partial \Xz$ such that any geodesic from $w_0$ to a point $q$ in $U_p$ fellow travels $\ray{p}$ until both geodesics are a distance at least $2K$ from $\Yx$.  Since (partially) ideal triangles are $2\delta_0$--thin, $\Pi_K(q)$ is within $2 \delta_0$ of a point on $\ray{p}$. Then using Lemma \ref{lem:straightgeodesic}, $d\left( \Pi_K(p), \Pi_K(q) \right) \le 8\delta_0$. 
\end{proof}

\begin{definition}
Let $(A,d)$ be a metric space.  An \emph{$s$--path in $A$} is a function $f\co \{0,1,\ldots,n_f\}\to A$ so that $d(f(i-1),f(i))\leq s$ for each $i\in\{1,\ldots,n_f\}$. The number $\sum\limits_{i=1}^{n_f} d\left( f(i-1), f(i)\right)$ is called its \emph{length}. 

We call a space \emph{$s$--path-connected} if every pair of points is joined by an $s$--path.
\end{definition}

Notice that for any $x, y \in A$ and any $s > 0$ the length of any $s$--path between $x$ and $y$ is at least $d(x,y)$. 

\begin{lemma}\label{lem:sphere_coarse_conn}
For any $K \ge \lambda_0 + 3\delta_0$ the shell $S_K(\Yx)$ is $8\delta_0$--path connected.\end{lemma}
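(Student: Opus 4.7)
The plan is to use the projection $\Pi_K$ from $\partial \Xz \smallsetminus \Lambda \Yx$ to $S_K(\Yx)$ (well defined since $K \ge \lambda_0 + 3\delta_0$), together with path-connectedness of $\partial \Xz \smallsetminus \Lambda \Yx$ (from Assumption~\ref{ass:X}). The idea is that the coarse surjectivity and the local almost-constancy of $\Pi_K$ (both from Lemma~\ref{lem:project_from_infty}) let us transport a path at infinity into a sequence of $8\delta_0$--close points on the shell.

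First, given $x,y\in S_K(\Yx)$, apply Lemma~\ref{lem:project_from_infty} to find $p,q\in\partial \Xz \smallsetminus \Lambda \Yx$ with $d(x,\Pi_K(p))\le 8\delta_0$ and $d(y,\Pi_K(q))\le 8\delta_0$. Choose a path $\gamma\co[0,1]\to \partial \Xz \smallsetminus \Lambda \Yx$ with $\gamma(0)=p$ and $\gamma(1)=q$, which exists by hypothesis. For each $t\in[0,1]$ let $U_{\gamma(t)}$ be the open neighborhood given by the second part of Lemma~\ref{lem:project_from_infty}. By continuity of $\gamma$, the preimages $\gamma^{-1}(U_{\gamma(t)})$ form an open cover of $[0,1]$, so by compactness there is a subdivision $0=t_0<t_1<\cdots<t_n=1$ such that for each $i\in\{1,\dots,n\}$ the image $\gamma([t_{i-1},t_i])$ is contained in $U_{\gamma(t_{i-1})}$.

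By the defining property of $U_{\gamma(t_{i-1})}$, both $\Pi_K(\gamma(t_{i-1}))$ and $\Pi_K(\gamma(t_i))$ lie within $8\delta_0$ of $\Pi_K(\gamma(t_{i-1}))$; in particular, the latter pair is at distance at most $8\delta_0$. Define a finite sequence in $S_K(\Yx)$ by
\[
x,\ \Pi_K(\gamma(t_0)),\ \Pi_K(\gamma(t_1)),\ \dots,\ \Pi_K(\gamma(t_n)),\ y.
\]
Consecutive terms in the middle are at distance at most $8\delta_0$ by the preceding observation, and the first and last jumps are at most $8\delta_0$ by the choice of $p$ and $q$. This is the required $8\delta_0$--path in $S_K(\Yx)$ connecting $x$ and $y$.

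There is no serious obstacle here beyond carefully combining the two conclusions of Lemma~\ref{lem:project_from_infty} with path-connectedness of $\partial \Xz \smallsetminus \Lambda \Yx$; the only point worth flagging is that one must refine the subdivision so that \emph{consecutive} subdivision points $\gamma(t_{i-1}),\gamma(t_i)$ both lie in a \emph{single} neighborhood $U_{\gamma(t_{i-1})}$ (rather than in overlapping neighborhoods), since otherwise the triangle inequality would only give $16\delta_0$ rather than the desired $8\delta_0$.
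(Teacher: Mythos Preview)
Your argument is essentially the paper's own proof, written out in more detail: both use the coarse surjectivity and local near-constancy of $\Pi_K$ from Lemma~\ref{lem:project_from_infty} to push a path in $\partial\Xz\smallsetminus\Lambda\Yx$ down to an $8\delta_0$--chain in the shell.

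One small technical caveat worth noting: the specific subdivision you assert, with $\gamma([t_{i-1},t_i])\subset U_{\gamma(t_{i-1})}$ (centered at the \emph{left endpoint}), does not follow from compactness alone; without a uniform lower bound on the sizes of the $U_{\gamma(t)}$ it can genuinely fail. The Lebesgue-number argument only yields $\gamma([t_{i-1},t_i])\subset U_{\gamma(s_i)}$ for some $s_i\in[0,1]$, which a priori gives $16\delta_0$ between $\Pi_K(\gamma(t_{i-1}))$ and $\Pi_K(\gamma(t_i))$. The clean fix is simply to insert the centers $\Pi_K(\gamma(s_i))$ into your chain between $\Pi_K(\gamma(t_{i-1}))$ and $\Pi_K(\gamma(t_i))$; each consecutive pair is then within $8\delta_0$ and you get an honest $8\delta_0$--path. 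The paper's terser ``cover the path with finitely many open neighborhoods \ldots and apply $\Pi_K$'' should be read in this spirit.
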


\begin{proof}
Let $x, y \in S_K(\Yx)$.  By the first part of Lemma \ref{lem:project_from_infty}, there exist $p, q \in \partial \Xz \smallsetminus \Lambda\Yx$ so that $d(x,\Pi_K(p)), d(y,\Pi_K(q)) \le 8\delta_0$.  Since $\partial \Xz\ssm \Lambda\Yx$ is path-connected, we can consider a path from $p$ to $q$ in it. We can then cover the path with finitely many open neighborhoods as described in the second part of Lemma \ref{lem:project_from_infty}, and apply $\Pi_K$ to obtain a $8\delta_0$--path between $\Pi_K(p)$ and $\Pi_K(q)$.
\end{proof}

The following result is a more precise version of \cite[Lemma 6.21]{GMS}, and we explain how to adapt the proof of \cite{GMS}.
Recall that $\epsilon = \epsilon(\delta_0)$ and $\kappa$ are the constants from Definition \ref{def:delta_adapted}, and that $\partial \Xz$ is equipped with a $\delta_0$--adapted visual metric $\rho_0$ based at $w_0$ which is $L_0$--linearly connected and $\doub$--doubling.
\begin{lemma} \label{lem:GMS}
For any $B > 0$ there exists $N=N(\delta_0,L_0,\doub,B)$ such that for every pair of rays $\eta_1, \eta_2$ based at $w_0$ with different endpoints in $\partial \Xz$ there exists $n\leq N$, and a sequence of rays $\eta_1= \alpha_1, \ldots , \alpha_n = \eta_2$ starting at $w_0$ with $(\alpha_i | \alpha_{i+1})_{w_0} \geq (\eta_1 | \eta_2)_{w_0} + B$ and $(\alpha_i | \eta_1)_{w_0} \geq (\eta_1 | \eta_2)_{w_0} -\Delta_0$,
where $\Delta_0 = \frac{\log(2\kappa^2L_0)}{\epsilon} + 20\delta_0$.
\end{lemma}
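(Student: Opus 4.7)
The plan is to leverage the $L_0$--linear connectedness of $(\partial \Xz, \rho_0)$ together with the $\doub$--doubling property to extract a finite chain of rays whose endpoints lie along a short arc joining $\xi_1$ and $\xi_2$ in $\partial \Xz$, where $\xi_i$ denotes the endpoint of $\eta_i$. Write $g = (\eta_1|\eta_2)_{w_0}$. The visual metric satisfies $\rho_0(\xi_1,\xi_2) \asymp e^{-\epsilon g}$ up to the multiplicative constant $\kappa$ and an additive $O(\delta_0)$ slack in the exponent (the latter reflecting the standard discrepancy between the Gromov product of rays and that of their endpoints). As noted after Definition \ref{def:lc}, since $\partial \Xz$ is compact, locally connected and locally compact we may upgrade linear connectedness to an arc: choose an arc $J \subseteq \partial \Xz$ from $\xi_1$ to $\xi_2$ with $\mathrm{diam}_{\rho_0}(J) \le L_0 \rho_0(\xi_1,\xi_2)$.

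Next, set $r = \kappa^{-1} e^{-\epsilon(g+B+10\delta_0)}$ and, parameterizing $J$ monotonically, extract a sequence $\zeta_1 = \xi_1, \zeta_2, \dots, \zeta_n = \xi_2$ of consecutive points along $J$ with $\rho_0(\zeta_i,\zeta_{i+1}) \le r$. Because $\mathrm{diam}_{\rho_0}(J)/r \le L_0 \kappa^2 e^{\epsilon(B+10\delta_0)}$ depends only on $\delta_0, L_0, B$, the $\doub$--doubling property bounds the length of this chain by some $N = N(\delta_0, L_0, \doub, B)$: repeatedly halving a cover of $J$ by balls yields at most $\doub^{\lceil \log_2(\mathrm{diam}(J)/r)\rceil}$ balls of radius $\le r$, and each such ball contains only boundedly many of the $\zeta_i$ (since consecutive $\zeta_i$ are $r$--separated and doubling controls the cardinality of $r$--separated sets in any $2r$--ball).

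For each $i$, let $\alpha_i$ be a geodesic ray from $w_0$ to $\zeta_i$, with $\alpha_1 = \eta_1$ and $\alpha_n = \eta_2$. The bi-Lipschitz comparison $\rho_0(\cdot,\cdot) \asymp_\kappa e^{-\epsilon(\cdot|\cdot)_{w_0}}$ combined with the $O(\delta_0)$ correction between Gromov products of rays and of their endpoints converts $\rho_0(\zeta_i,\zeta_{i+1}) \le r = \kappa^{-1} e^{-\epsilon(g+B+10\delta_0)}$ into $(\alpha_i|\alpha_{i+1})_{w_0} \ge g+B$, once the $10\delta_0$ of padding is absorbed. Similarly $\rho_0(\zeta_i,\xi_1) \le \mathrm{diam}_{\rho_0}(J) \le L_0 \kappa^2 e^{-\epsilon g}$ upgrades (after padding) to $(\alpha_i|\eta_1)_{w_0} \ge g - \frac{\log(2\kappa^2 L_0)}{\epsilon} - 20\delta_0 = g - \Delta_0$, giving the second estimate.

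The main technical obstacle is bookkeeping of the $O(\delta_0)$ additive constants that arise in passing between three related quantities: the Gromov products $(\alpha|\alpha')_{w_0}$ computed via $\liminf$, the corresponding Gromov products of the boundary points, and the values of $\rho_0$ on $\partial \Xz$; the explicit $20\delta_0$ in the definition of $\Delta_0$ and the $10\delta_0$ in the choice of $r$ are precisely to absorb these slacks. Everything else is essentially a counting argument via doubling, and the arc-upgrading of linear connectedness is standard.
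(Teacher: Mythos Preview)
Your proposal is correct and follows essentially the same approach as the paper: connect the endpoints $\xi_1,\xi_2$ in $\partial \Xz$ by an arc of controlled diameter via $L_0$--linear connectedness, extract a fine chain of points along this arc, use the doubling property to bound the chain length by a constant depending only on $\delta_0,L_0,\doub,B$, and convert the visual-metric bounds back to Gromov-product bounds. The paper largely defers to \cite[Lemma 6.21]{GMS} for the chain construction and only spells out the doubling count and the verification of $(\alpha_i|\alpha_{i+1})_{w_0}\ge (\eta_1|\eta_2)_{w_0}+B$; your write-up makes the $O(\delta_0)$ bookkeeping more explicit. One small slip: you claim ``consecutive $\zeta_i$ are $r$--separated'' in the counting step, but your construction only guarantees $\rho_0(\zeta_i,\zeta_{i+1})\le r$; either thin the chain to be $r/2$--separated first, or (as the paper does) bound $n$ directly by the number of balls in a doubling cover of $J$ rather than via a packing argument.
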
 

\begin{proof}
The statement of the lemma differs from that of \cite[Lemma 6.21]{GMS} in two ways. First, the constant $R$ (which is our $\Delta_0$) in the statement of \cite[Lemma 6.21]{GMS} is allowed to depend on the hyperbolic space rather than just on $\delta_0$ and $L_0$. Second,  \cite[Lemma 6.21]{GMS} provides no bound on $n$.

Regarding the first difference, the explicit constant $\Delta_0$, which depends only on $L_0$ and $\delta_0$, is actually given in the proof in \cite{GMS} (third line). 

Regarding the bound on $n$, we first outline the proof of \cite[Lemma 6.21]{GMS}. First of all, in said proof the limit points $p_1,p_2$ of $\eta_1,\eta_2$ get connected by an arc $I$ of diameter at most $2L_0\rho_0(p_1,p_2)$. The sequence of rays $\alpha_i$ is then obtained as any chain of rays with limit points $a_i$ in $I$ where the $a_i$ form a chain that starts at $p_1$, ends at $p_2$, and any two consecutive points are sufficiently close. Explicitly, we need

$$\rho(a_i,a_{i+1})\leq e^{-\epsilon B}\rho(p_1,p_2)/\kappa^2,$$

which guarantees
$$e^{-\epsilon (\alpha_i | \alpha_{i+1})_{w_0}} \leq \kappa \rho(a_i,a_{i+1})\leq e^{-\epsilon B} \rho(p_1,p_2)/\kappa \leq  e^{-\epsilon B}e^{-\epsilon  (\eta_1 | \eta_2)_{w_0}},$$

and in turn this yields the required $(\alpha_i | \alpha_{i+1})_{w_0} \geq (\eta_1 | \eta_2)_{w_0} + B$.

To get the desired bound on $n$, we observe that subspaces of an $\doub$--doubling space are $\doub^2$--doubling (the intersection of a ball with a subspace is contained in a ball of twice the radius in the induced metric). From this, we see that $I$ can be covered by at most $\doub^{2\lceil\log_2(4L_0e^{-\epsilon B}/(2\kappa^2))\rceil}$ balls of radius $e^{-\epsilon B}\rho(p_1,p_2)/(2\kappa^2)$. Therefore, we can extract a chain of points with the required bound on the length from the centers of these balls.
\end{proof}

  Recall the hyperbolicity constant $\delta_0$ and the linear connectedness constant $L_0$ were fixed in Assumption~\ref{ass:X}, as was the quasi-convexity constant $\lambda_0$ of $\Yx$.  In the following we also fix the constant $\Delta_0$ from Lemma~\ref{lem:GMS}.

\begin{lemma}\label{lem:proper_dist}
  There is a function $\Phi\co \R_{\ge 0}\to \R_{\ge 0}$ so that for any $C >0$ and $K \ge C + \Delta_0 + \lambda_0 + 3\delta_0$, if $x,y \in S_K(\Yx)$ satisfy $d(x,y) \le C$ then there exists an $8\delta_0$--path in $S_K(\Yx)$ from $x$ to $y$ of length at most $\Phi(C)$.  
  \end{lemma}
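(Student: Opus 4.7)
My plan is to construct the desired $8\delta_0$--path by projecting a controlled chain of geodesic rays from $w_0$ through the map $\Pi_K$, sharpening the qualitative argument of Lemma~\ref{lem:sphere_coarse_conn} into a quantitative length bound. First I reduce to the case where $x$ and $y$ lie in the image of $\Pi_K$: by the $8\delta_0$--coarse surjectivity of Lemma~\ref{lem:project_from_infty}, I choose $p,q\in\partial\Xz\smallsetminus\Lambda\Yx$ with $d(x,\Pi_K(p))\le 8\delta_0$ and $d(y,\Pi_K(q))\le 8\delta_0$. It then suffices to build an $8\delta_0$--path in $S_K(\Yx)$ joining $\Pi_K(p)$ to $\Pi_K(q)$ of length bounded by a function of $C$, and to concatenate with the two $8\delta_0$--steps at each end.

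Next I bound $(p\mid q)_{w_0}$ from below. Since $w_0\in\Yx$ and $x,y\in S_K(\Yx)$, we have $d(w_0,x),d(w_0,y)\ge K$ and $d(x,y)\le C$, so $(x\mid y)_{w_0}\ge K-C/2$. Because $x$ is $8\delta_0$--close to the ray $\ray{p}$ with $d(w_0,x)\ge K$, one has $(p\mid x)_{w_0}\ge K-O(\delta_0)$, and analogously for $q,y$. The four-point inequality then yields $(p\mid q)_{w_0}\ge K-C/2-O(\delta_0)$.

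I then apply Lemma~\ref{lem:GMS} to $\eta_1=\ray{p},\eta_2=\ray{q}$ with a constant $B$ depending only on $C$ (and the fixed $\delta_0,\lambda_0,\Delta_0,L_0,\doub$), chosen so that the consecutive rays in the resulting chain $\alpha_1=\ray{p},\ldots,\alpha_n=\ray{q}$ satisfy $(\alpha_i\mid\alpha_{i+1})_{w_0}$ large enough to force $2\delta_0$--fellow-traveling past the exit times of both $\alpha_i$ and $\alpha_{i+1}$ from $T_K(\Yx)$. Lemma~\ref{lem:GMS} also gives $n\le N(\delta_0,L_0,\doub,B)$, a function of $C$. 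A small perturbation within the open neighborhoods from Lemma~\ref{lem:project_from_infty}, using path-connectedness of $\partial\Xz\smallsetminus\Lambda\Yx$, arranges that each $p_i:=\alpha_i(\infty)$ lies in $\partial\Xz\smallsetminus\Lambda\Yx$, so that $\Pi_K(p_i)\in S_K(\Yx)$ is well-defined.

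Consecutive projections are then close: the large Gromov product gives a point of $\alpha_{i+1}$ within $2\delta_0$ of $\Pi_K(p_i)\in S_K(\Yx)$, and Lemma~\ref{lem:straightgeodesic} applied to $\alpha_{i+1}$ identifies that point with $\Pi_K(p_{i+1})$ up to an additional $6\delta_0$, so $d(\Pi_K(p_i),\Pi_K(p_{i+1}))\le 8\delta_0$. The chain $x,\Pi_K(p_1),\ldots,\Pi_K(p_n),y$ is thus an $8\delta_0$--path in $S_K(\Yx)$ of length at most $8(n+1)\delta_0$, and we take $\Phi(C):=8(N(\delta_0,L_0,\doub,B(C))+1)\delta_0$. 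The main obstacle is verifying that $B$ really can be chosen depending only on $C$, because the exit times $d(w_0,\Pi_K(p_i))$ can a priori grow with $K$ when an intermediate ray $\alpha_i$ lingers inside $T_K(\Yx)$. I expect this is handled either by selecting $p,q$ via visibility at $x,y$ so that the exit times of $\ray{p},\ray{q}$ are $K+O(\delta_0)$ and propagating the bound along the chain via the Gromov product inequalities in Lemma~\ref{lem:GMS}, or by an invocation of the change-of-basepoint Lemma~\ref{lem:change_basepoint} to work temporarily with a basepoint near $\pi_{\Yx}(x)$.
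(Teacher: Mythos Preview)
Your overall strategy coincides with the paper's: reduce to $x=\Pi_K(p)$, $y=\Pi_K(q)$ via the coarse surjectivity of $\Pi_K$, apply Lemma~\ref{lem:GMS} to obtain a chain of rays $\alpha_1,\ldots,\alpha_n$ with $n$ bounded in terms of $C$, and argue that the projections $\Pi_K(p_i)$ form an $8\delta_0$--path.

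The gap is in how you use Lemma~\ref{lem:GMS}. You invoke only the first inequality $(\alpha_i\mid\alpha_{i+1})_{w_0}\ge(\eta_1\mid\eta_2)_{w_0}+B$, and then try to arrange $p_i\notin\Lambda\Yx$ by an ad hoc perturbation. Such a perturbation cannot be carried out while preserving the Gromov product bounds, and is in any case unnecessary. The paper instead exploits the \emph{second} inequality $(\alpha_i\mid\ray{p})_{w_0}\ge(\ray{p}\mid\ray{q})_{w_0}-\Delta_0$. Since $(\ray{p}\mid\ray{q})_{w_0}\ge(x\mid y)_{w_0}-2\delta_0$ and the tripod center $y_1\in\ray{p}$ at parameter $(x\mid y)_{w_0}$ lies at distance $\ge K-C/2-\delta_0$ from $\Yx$, each $\alpha_i$ carries a point $z_i$ at distance at least $K-C/2-\Delta_0-O(\delta_0)\ge\lambda_0+2\delta_0$ from $\Yx$; this is exactly where the hypothesis $K\ge C+\Delta_0+\lambda_0+3\delta_0$ enters. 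Assumption~\ref{ass:X} (a ray from $w_0\in\Yx$ limiting on $\Lambda\Yx$ stays in $N_{\lambda_0}(\Yx)$) then forces $p_i\notin\Lambda\Yx$ with no perturbation needed.

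The same second inequality is what addresses your ``exit time'' obstacle. It anchors every $\alpha_i$ to a definite location on $\ray{p}$ near $y_1$, so that (via Lemma~\ref{lem:straightgeodesic} and the $2\delta_0$--quasiconvexity of $T_K(\Yx)$) the point $\Pi_K(p_i)$ cannot wander far from $x$; the required $B$ in Lemma~\ref{lem:GMS} is then of order $C/2+O(\delta_0)$, independent of $K$. Your proposed alternatives (rechoosing $p,q$ via visibility, or moving the basepoint) are not needed and would not by themselves control the intermediate rays $\alpha_i$.
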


\begin{proof}  
We use the same idea as in Lemma \ref{lem:sphere_coarse_conn}.  Let $C > 0$, and suppose that $K > C + \Delta_0 + \lambda_0 + 3\delta_0$.  Let $x,y \in S_K(\Yx)$ satisfy $d(x,y) \le C$.  Lemma~\ref{lem:project_from_infty} says that $\Pi_K$ is $8\delta_0$--coarsely surjective, so by moving $x$ and $y$ at most $8\delta_0$, we may assume that $x = \Pi_K(p)$ and $y=\Pi_K(q)$, for some $p, q \in \partial \Xz \smallsetminus \Lambda\Yx$.

Let $\ray{p}$ and $\ray{q}$ denote the geodesic rays from $w_0$ defining $\Pi_K(p)$ and $\Pi_K(q)$, respectively.  Note that because $x,y \in S_K(\Yx)$ and $d(x,y) \le C$, we know that there are points $y_1 \in \ray{p}$ and $y_2 \in \ray{q}$ which lie at distance at least $K-(1/2 C+\delta_0)$ from $\Yx$  and so that $d(y_1,y_2) \le \delta_0$. As in the conclusion of Lemma \ref{lem:GMS}, choose a sequence $\ray{p} = \alpha_1, \ldots, \alpha_n = \ray{q}$ of geodesic rays so that $(\alpha_i\mid \alpha_{i+1})_{w_0} \ge K + 10\delta_0$ for each $i$, and $(\alpha_i \mid \ray{p})_{w_0} \ge (\ray{p} \mid \ray{q})_{w_0} - \Delta_0$. By the same lemma, we can also require that $n$ is bounded in terms of $\delta_0,L_0,\doub,C$ only (as $(\ray{p} \mid \ray{q})_{w_0}$ is at least $K$ minus a constant depending on $C$ and $\delta_0$). It now follows that there exists a point $z_i$ on each $\alpha_i$ at distance at least $K - ((1/2) C + \delta_0) - \Delta_0 \ge \lambda_0 + 2\delta_0$ from $\Yx$.  By the assumption on $\Yx$ in Assumption~\ref{ass:X}, any geodesic ray from $w_0$ which leaves the $\lambda_0$--neighborhood of $\Yx$ cannot limit on $\Lambda\Yx$.  Thus, the sequence of rays $\alpha_i$ each limit on a point $p_i$ of $\partial \Xz \smallsetminus \Lambda\Yx$.

As in the proof of Lemma \ref{lem:sphere_coarse_conn}, the points $x =  \Pi_K(p_1), \ldots , \Pi_K(p_n) = y$ form a $8\delta_0$--path.
\end{proof}

\subsection{Completed shells}

\begin{definition} \label{def:completed shell}
Let $\Gamma$ be a graph, and suppose $\Xi \subset \Gamma$ is a subgraph.  Let $K,s$ be positive integers, and let $S^\Gamma_K(\Xi)$ be the $K$--shell about $\Xi$ in $\Gamma$ (which consists of vertices).  The {\em completed $K$--shell about $\Xi$ in $\Gamma$ at scale $s$}, denoted $\CS^\Gamma_{K,s}(\Xi)$ is the graph obtained from $S^\Gamma_K(\Xi)$ by joining each pair of  points $a,b \in S^\Gamma_K(\Xi)$ so that $d_\Gamma(a,b) \le s$ with an edge-path of length $d_\Gamma(a,b)$.
\end{definition}

\begin{convention}\label{conv:CS}
In applying Definition \ref{def:completed shell} above, 
we always use $s = 8\delta_0$.  Thus, we use $\CS_K^\Gamma(\Xi)$ to denote $\CS_{K,8\delta_0}^\Gamma(\Xi)$.

Our default space is $\Xz$, so we use $\CS_K(\Xi)$ to denote $\CS_K^{\Xz}(\Xi) = \CS_{K,8\delta_0}^{\Xz}(\Xi)$.
\end{convention}

The following is an immediate corollary of Lemma \ref{lem:sphere_coarse_conn}.
\begin{lemma} \label{lem:shcon}
If $K \ge \lambda_0 + 3\delta_0$ the completed shell $\CS_K(\Yx)$ is connected.
\end{lemma}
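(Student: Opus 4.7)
The plan is to directly combine the definition of the completed shell with the coarse path-connectedness established in Lemma~\ref{lem:sphere_coarse_conn}. Recall that by Convention~\ref{conv:CS} the scale used in forming $\CS_K(\Yx)$ is exactly $s = 8\delta_0$, and that $\CS_K(\Yx)$ has $S_K(\Yx)$ as its vertex set together with an edge-path between any two vertices at $\Xz$--distance at most $8\delta_0$.

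First I would fix two arbitrary vertices $x, y \in \CS_K(\Yx) = S_K(\Yx)$. Since the hypothesis $K \ge \lambda_0 + 3\delta_0$ is exactly the one required in Lemma~\ref{lem:sphere_coarse_conn}, that lemma produces an $8\delta_0$--path in $S_K(\Yx)$ from $x$ to $y$, that is, a finite sequence $x = x_0, x_1, \ldots, x_n = y$ of vertices of $S_K(\Yx)$ with $d_{\Xz}(x_i, x_{i+1}) \le 8\delta_0$ for every $i$.

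By the defining property of the completion, each consecutive pair $(x_i, x_{i+1})$ is joined by an edge-path in $\CS_K(\Yx)$ of combinatorial length $d_{\Xz}(x_i, x_{i+1})$. Concatenating these edge-paths produces a path in $\CS_K(\Yx)$ from $x$ to $y$, so $\CS_K(\Yx)$ is connected. There is no real obstacle here; the lemma is essentially a transcription of Lemma~\ref{lem:sphere_coarse_conn} using the matching scale $s = 8\delta_0$ chosen in Convention~\ref{conv:CS}.
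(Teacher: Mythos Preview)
Your proof is correct and follows the same approach as the paper, which simply notes that the lemma is an immediate corollary of Lemma~\ref{lem:sphere_coarse_conn}. One very minor imprecision: the vertex set of $\CS_K(\Yx)$ is not literally $S_K(\Yx)$ since the added edge-paths contribute new vertices, but since each such vertex lies on a path between two points of $S_K(\Yx)$ this does not affect the argument.
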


\begin{definition} \label{def:CTC}
Let $\Gamma$ be a graph and $\Xi \subset \Gamma$ a subgraph.  Let $K,s $ be positive integers. The {\em completed $K$--tube complement of $\Xi$ in $\Gamma$ at scale $s$}, denoted $\CTC^\Gamma_{K,s}(\Xi)$ is the space
\[	\CTC^\Gamma_{K,s}(\Xi) = \left( \Gamma \ssm T^\Gamma_K(\Xi) \right) \sqcup_{S^\Gamma_K(\Xi)} \CS^\Gamma_{K,s}(\Xi)	,	\]
obtained from `completing' the copy of $S^\Gamma_K(\Xi)$ in $\Gamma \ssm T^\Gamma_K(\Xi)$ at scale $s$.
\end{definition}
Note that $\CTC^\Gamma_{K,s}(\Xi)$ and $\CS^\Gamma_{K,s}(\Xi)$ are graphs.

\begin{convention}\label{conv:CTC}
As in Convention~\ref{conv:CS}, when completing complements of tubes about $\Yx$ in $\Xz$, we always use $s = 8\delta_0$ and denote the space $\CTC^\Gamma_{K,s}(\Xi)$ by $\CTC^\Gamma_K(\Xi)$, and when $\Gamma = \Xz$ we omit the superscript and simply write $\CTC_K(\Xi)$.
\end{convention}

\begin{lemma} \label{lem:CTC connected}
Suppose that $K \ge \lambda_0 + 3\delta_0$.  Then $\CTC_K(\Yx)$ is a connected graph. 
\end{lemma}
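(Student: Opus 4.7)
The plan is to reduce to two observations: $\CS_K(\Yx)$ is already known to be connected, and every vertex of $\Xz\ssm T_K(\Yx)$ can be joined to $S_K(\Yx)$ by a path that stays outside the open tube.

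First, by Lemma~\ref{lem:shcon} (which applies since $K\geq \lambda_0+3\delta_0$), the completed shell $\CS_K(\Yx)$ is a connected sub-graph of $\CTC_K(\Yx)$. Because the identification in Definition~\ref{def:CTC} glues the copy of $S_K(\Yx)$ in $\Xz\ssm T_K(\Yx)$ to the copy inside $\CS_K(\Yx)$, it suffices to show that every vertex $x$ of $\Xz\ssm T_K(\Yx)$ lies in the same component of $\CTC_K(\Yx)$ as some vertex of $S_K(\Yx)$.

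For this, let $x$ be a vertex with $d(x,\Yx)\geq K$, and pick a closest point $y\in \Yx$ together with a geodesic edge-path $x=v_0,v_1,\dots,v_n=y$ in $\Xz$ from $x$ to $y$. The function $i\mapsto d(v_i,\Yx)$ takes the integer value $d(x,\Yx)\geq K$ at $i=0$, the value $0$ at $i=n$, and changes by at most $1$ when $i$ increases by $1$ (since $\Xz$ is a graph and $\Yx$ a sub-graph). Let $i_0$ be the largest index with $d(v_{i_0},\Yx)=K$; such an index exists by the intermediate-value property for integer-valued $1$-Lipschitz sequences, and by maximality $d(v_j,\Yx)\geq K$ for all $j\leq i_0$. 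Hence the sub-path $v_0,\dots,v_{i_0}$ lies entirely in $\Xz\ssm T_K(\Yx)$ and joins $x$ to the vertex $v_{i_0}\in S_K(\Yx)$ through edges of $\Xz\ssm T_K(\Yx)\subseteq \CTC_K(\Yx)$.

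Combined with the connectedness of $\CS_K(\Yx)$, this shows $\CTC_K(\Yx)$ is connected. The only technical point to watch is the correct handling of the distance function on vertices of a graph, which is why we take $K$ to be an integer and pick the \emph{last} vertex on the geodesic at distance exactly $K$; there is no real obstacle here, as once the Lipschitz property of $d(\cdot,\Yx)$ is noted, the conclusion is immediate.
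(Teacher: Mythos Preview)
Your approach is essentially identical to the paper's: use Lemma~\ref{lem:shcon} for connectedness of $\CS_K(\Yx)$, then connect any vertex of $\Xz\ssm T_K(\Yx)$ to $S_K(\Yx)$ via a path in $\Xz$ toward $\Yx$.

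There is one small slip in your justification. You take the \emph{largest} index $i_0$ with $d(v_{i_0},\Yx)=K$ and assert ``by maximality $d(v_j,\Yx)\ge K$ for all $j\le i_0$.'' Maximality of $i_0$ says nothing about earlier indices; a $1$--Lipschitz integer sequence could in principle dip below $K$ and return. The conclusion is nonetheless correct because your geodesic goes to a \emph{closest} point $y\in\Yx$: then $d(v_i,\Yx)=d(x,\Yx)-i$ exactly, so the distance is strictly monotone and the index $i_0$ is unique. Either observe this monotonicity explicitly, or (more simply) take the \emph{smallest} index $i_0$ with $d(v_{i_0},\Yx)=K$; then the $1$--Lipschitz property alone gives $d(v_j,\Yx)\ge K$ for $j\le i_0$, which is what you want. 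With that fix the argument is complete and matches the paper's.
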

\begin{proof}
By Lemma \ref{lem:shcon}, $\CS_K(\Yx)$ is connected, so we only need to show that every point of $\CTC_K(\Yx)$ is connected to the completed shell.  Take a point in $ \Xz \ssm T^Z_K(\Yx)$.  It is connected to $\Yx$ by a path in $\Xz$  (since $\Xz$ is a connected graph). This path hits $S^{\Xz}_K(\Yx) \subset \CS_K(\Yx)$. 
\end{proof}

\begin{convention}
Whenever we talk about a completed tube complement, we ensure that Lemma \ref{lem:CTC connected} applies, and the implied metric is the path metric.
\end{convention}

\subsection{Geometry of cusped spaces} 
 The main result in this subsection is Theorem~\ref{thm:cusp_tube_hyp}, which gives us control over the geometry of a space obtained by attaching a horoball to a $K$--shell in a tube complement.

\begin{definition} \label{def:projection}
Suppose $Z$ is a metric space, $W \subset Z$ and $z \in Z$.  Then $P_W(z)$ denotes a point in $W$ which is closest to $z$ (amongst all points in $W$).
\end{definition}

\begin{remark}All of our spaces are locally compact and $\delta$--hyperbolic, and our subsets $W$ are closed, so such points $P_W(z)$ exist, though they typically are not unique.  Nothing that we do depends on the particular choice of $P_W(z)$. 
\end{remark}

\begin{lemma} \label{lem:little}  Let $\Upsilon$ be a $\delta$--hyperbolic metric space and $A$ a $\lambda$--quasi-convex subset.  Suppose $K \ge 4\delta + \lambda$ and let $y,z \in \Upsilon$, and suppose there is a geodesic $[z,y] \subset \Upsilon \ssm N_K(A)$.  Suppose $y' \in [y,P_A(y)] \cap S_K(A)$ and $z' \in [z,P_A(z)] \cap S_K(A)$ (for some choices of geodesics $[y,P_A(y)]$, $[z,P_A(z)]$).  Then $d(y',z') < 8 \delta$. 
\end{lemma}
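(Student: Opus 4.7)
Along the geodesic $[y, P_A(y)]$ the distance to $A$ decreases with slope exactly $1$: if $v_t$ is the point at distance $t$ from $y$, then $d(v_t, P_A(y)) = d(y,A) - t$ and the bounds $d(v_t, A) \le d(v_t, P_A(y))$ together with $d(v_t, A) \ge d(y, A) - t$ force equality. Hence $y'$ is the unique point on $[y, P_A(y)]$ with $d(y, y') = d(y, A) - K$ and $d(y', P_A(y)) = K$, and analogous facts hold for $z'$. The rough idea is then to route $y'$ to $z'$ through the geodesic quadrilateral built from the four points $y$, $z$, $P_A(z)$, $P_A(y)$, with quasi-convexity of $A$ eliminating the ``bad'' side.

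The plan is to split this quadrilateral by the diagonal $[z, P_A(y)]$ into $\delta$-slim triangles $T_1 = (y, z, P_A(y))$ and $T_2 = (z, P_A(z), P_A(y))$, and to push $y'$ across these triangles. By $\delta$-slimness of $T_1$, $y'$ is within $\delta$ of some point of $[y, z] \cup [z, P_A(y)]$. The first easy case is that $y'$ is within $\delta$ of some $w \in [z, P_A(y)]$; applying $\delta$-slimness of $T_2$ to $w$, the alternative that $w$ is close to $[P_A(y), P_A(z)] \subset N_\lambda(A)$ would force $d(y', A) \le 2\delta + \lambda < K$, contradicting $K \ge 4\delta + \lambda$. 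So $w$ is within $\delta$ of some $w_2 \in [z, P_A(z)]$, and the slope-$1$ monotonicity along this geodesic pins $w_2$ within $2\delta$ of $z'$, giving $d(y', z') \le 4\delta$.

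In the remaining case $y'$ is within $\delta$ of some $u \in [y, z]$, forcing $d(u, A) \in (K, K+\delta]$ because $[y, z] \subset \Upsilon \ssm N_K(A)$. I will run the symmetric analysis for $z'$ using the other diagonal $[y, P_A(z)]$: either we conclude $d(y', z') \le 4\delta$ analogously, or $z'$ is within $\delta$ of some $v \in [y, z]$ with $d(v, A) \in (K, K+\delta]$.

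The main obstacle is this doubly-degenerate case, where both $y' \approx u$ and $z' \approx v$ lie on $[y, z]$, so that $d(y', z') \le d(y', u) + d(u, v) + d(v, z')$ is controlled once $d(u, v)$ is. To bound $d(u, v)$ I plan to use the $2\delta$-slim quadrilateral on $u$, $v$, $P_A(v)$, $P_A(u)$: the midpoint $m$ of $[u, v]$ has $d(m, A) > K$ (since $m \in [y, z]$), which rules out proximity to $[P_A(u), P_A(v)] \subset N_\lambda(A)$; hence $m$ lies within $2\delta$ of one of the short sides $[u, P_A(u)]$ or $[v, P_A(v)]$, each of length at most $K + \delta$, and slope-$1$ monotonicity of $d(\cdot, A)$ along these sides pins $m$ close to $u$ or $v$, bounding $d(u, v)$ and hence $d(y', z') < 8\delta$. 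The delicate part will be tracking constants tightly enough in this last case to extract the sharp strict bound $8\delta$; the hypothesis $K \ge 4\delta + \lambda$ is tuned precisely so that the $\lambda$-quasiconvexity slack from each appearance of $N_\lambda(A)$ is absorbed without degradation, and monotonicity (rather than mere $1$-Lipschitzness) of $d(\cdot, A)$ along the radial geodesics is what makes the final estimate tight.
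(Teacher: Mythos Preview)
Your strategy matches the paper's through the non-degenerate cases: split the quadrilateral by a diagonal, rule out the $A$-side via quasi-convexity, and pin via slope-$1$ monotonicity along the radial geodesics. The divergence is in the doubly-degenerate case, and there your midpoint argument in the auxiliary quadrilateral $(u,v,P_A(v),P_A(u))$ does not deliver the stated bound. Tracking the constants: the midpoint $m$ lies within $2\delta$ of some $q$ on, say, $[u,P_A(u)]$; from $d(m,A)>K$ you get $d(q,A)>K-2\delta$, and from $d(u,A)\le K+\delta$ the slope-$1$ identity gives $d(u,q)<3\delta$, hence $d(m,u)<5\delta$ and so $d(u,v)<10\delta$. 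This yields only $d(y',z')<12\delta$, not $8\delta$. There is no slack left in these estimates to absorb.

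The paper's route in this case stays in the \emph{original} quadrilateral $(y,z,P_A(z),P_A(y))$: every point of the subsegment $[u,v]$ is within $2\delta$ of $[y,P_A(y)]\cup[z,P_A(z)]$ (the $A$-side being excluded since $K\ge 4\delta+\lambda$), so there is a transition point $p\in[u,v]$ within $2\delta$ of a point $y_p\in[y,P_A(y)]$ and a point $z_p\in[z,P_A(z)]$. One then bounds $d(p,z')$ and $d(p,y')$ directly by $4\delta$ via the thin triangle $(z',z,v)$, which has one side $[v,z']$ of length at most $\delta$. Passing to a fresh quadrilateral based at $u,v$ severs this direct link to $y'$ and $z'$, and that is exactly where the extra $4\delta$ is lost.
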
 

\begin{proof}
  Choose a geodesic $[P_A(y),P_A(z)]$ and consider the geodesic quadrilateral
  $[y, P_A(y)]\cup[P_A(y),P_A(z)]\cup[P_A(z),z]\cup[y, z]$.
  Since $K \ge 4 \delta + \lambda$, every point of $[y, z]$ is distance at least $4 \delta$ from $[P_A(y),P_A(z)]$.  Choose a geodesic $[P_A(y),z]$ and consider the geodesic triangle $[y, P_A(y)]\cup[y, z]\cup[P_A(y), z]$.  Since $y' \in [y, P_A(y)]$, $y'$ is within $\delta$ of either $[y,z]$ or $[P_A(y),z]$.  If $y'$ is within $\delta$ of some point $q$ on $[P_A(y),z]$ then $q$ is not within $\delta$ of $[P_A(y),P_A(z)]$, so it is distance $\delta$ from a point $q'$ on $[z, P_A(z)]$.  We have $|d(q',A)-K| \le 2\delta$, since $d(y',A) = K$.  Hence the distance from $q'$ to $z'$ is at most $2\delta$.  So $d(y',z') < 4 \delta$ in this case.

  Therefore, we assume that $y'$ is within $\delta$ of some point  $u\in [y, z]$.  By a symmetric argument we can assume that $z'$ is within $\delta$ of some  $v\in [y, z]$.
  If $u=v$ we are done.  We assume that the order along $[y,z]$ is $y,u,v,z$ (else it follows easily that $d(y',z') < 4 \delta$).

  Using $\lambda$--quasi-convexity, $[y, z]$ lies outside of the $4 \delta$--neighborhood of $[P_A(y), P_A(z)]$, so every point of the subsegment $[u, v]\subseteq [y,z]$ lies within $2 \delta$ of $[y, P_A(y)] \cup [z, P_A(z)]$.  Let $p$ be a point on $[u,v]$ which is distance at most $2\delta$ from some point $y_p$ on $[y,P_A(y)]$ and distance at most $2 \delta$ from a point $z_p$ on $[z, P_A(z)]$. 

 If $z_p$ is  closer to $A$ than $z'$, it is at most $2 \delta$ closer, so $d(z', z_p) < 2 \delta$.  Then $d(z',p) < 4 \delta$.  If $z_p$ is further away from $A$ than $z'$, then $z_p$ is on $[z', z]$.  Consider the triangle $[z',z],[z,v],[v,z']$, where the length of $[v,z']$ is at most $\delta$.  If $z_p$ is $\delta$--close to $[v, z']$, then $d(z_p,z') < 2 \delta$ and $d(z',p)< 4 \delta$.  If $z_p$ is $\delta$--close to $[z,v]$, $v$  is at most $3 \delta$ from $p$, so again $d(z',p) < 4 \delta$.  So in any case $d(z',p)< 4 \delta$.  A similar argument shows $d(y',p) < 4 \delta$, so $d(y',z') < 8 \delta$, as required.  
\end{proof}

\begin{lemma}\label{lem:constant_progress}
  Let $\Upsilon$ be a $\delta$--hyperbolic space, let $A$ be a $\lambda$--quasi-convex subset of $\Upsilon$, and suppose $\alpha$ is a geodesic ray starting at $w\in A$ and not converging to a point in $\Lambda A$.  For any $y,z\in \alpha$ which are distance greater than $6\delta+\lambda$ from $A$, we have
  \[ d(y,z) \sim_{10\delta} |d(y,A)-d(z,A)| .\]
\end{lemma}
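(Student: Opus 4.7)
The plan is to apply the $\delta$--thin triangle condition to the geodesic triangle with vertices $w$, $z$, and $p := P_A(z)$. Without loss of generality, assume $y$ lies between $w$ and $z$ along $\alpha$, so that $[w,y] \cup [y,z]$ is a subsegment of $\alpha$ and in particular $[w,z]$ is a geodesic side of this triangle. The other sides are $[w,p]$ and $[p,z]$.

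First I would observe that the side $[w,p]$ joins two points of $A$ (since $w \in A$ and $p \in A$) and so lies in the $\lambda$--neighborhood of $A$ by quasi-convexity. By $\delta$--thinness of the triangle, the point $y \in [w,z]$ is within $\delta$ of some point on $[w,p] \cup [p,z]$. The first alternative would give $d(y,A) \le \delta + \lambda$, contradicting the hypothesis $d(y,A) > 6\delta + \lambda$. Hence there exists $q \in [p,z]$ with $d(y,q) \le \delta$.

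Next I would decompose $[p,z]$ at $q$. Since $[p,z]$ is a geodesic of length $d(z,A)$, we have $d(q,p) + d(q,z) = d(z,A)$. From $d(q,p) \ge d(q,A) \ge d(y,A) - \delta$, I conclude
\[
d(q,z) \;\le\; d(z,A) - d(y,A) + \delta,
\]
and therefore
\[
d(y,z) \;\le\; d(y,q) + d(q,z) \;\le\; 2\delta + \bigl( d(z,A) - d(y,A) \bigr).
\]
The triangle inequality $|d(y,A) - d(z,A)| \le d(y,z)$ gives the reverse direction. Combining, $d(y,z)$ and $|d(y,A)-d(z,A)|$ differ by at most $2\delta \le 10\delta$: if $d(z,A) \ge d(y,A)$ this is immediate, and otherwise the displayed inequality forces $d(y,z) \le 2\delta$ (so both quantities are at most $2\delta$).

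The proof is essentially a one-triangle slim-triangles calculation, so there is no substantive obstacle; the crux is simply recognizing the correct auxiliary triangle $\{w, z, P_A(z)\}$ and using $d(y,A) > 6\delta + \lambda$ to rule out the ``wrong'' side. The hypothesis that $\alpha$ does not converge to a point of $\Lambda A$ is not used in the estimate itself, only in guaranteeing that pairs $y,z$ on $\alpha$ with $d(\cdot, A) > 6\delta + \lambda$ actually exist.
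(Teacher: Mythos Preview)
Your proof is correct, and it is simpler than the paper's. The paper argues via Lemma~\ref{lem:little}: it first checks that the subsegment of $\alpha$ from $z$ to $y$ stays outside $N_{5\delta+\lambda}(A)$, then applies Lemma~\ref{lem:little} (with $K=4\delta+\lambda$) to the projections $P_A(y),P_A(z)$ and the associated shell points, and finally translates the resulting $8\delta$ bound on shell points into the desired estimate, picking up the $10\delta$ along the way. Your single triangle $\{w,z,P_A(z)\}$ bypasses all of this and gives the sharper bound $2\delta$; it also makes clear that only $d(y,A)>\delta+\lambda$ is needed, and that the hypothesis about $\Lambda A$ plays no role in the estimate. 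The paper's route has the virtue of reusing Lemma~\ref{lem:little}, which it needs elsewhere anyway, but your direct argument is both shorter and stronger for this particular statement.
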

\begin{proof}
  Without loss of generality we assume that $d(y,w)>d(z,w)$.
  
  An easy argument shows that under the hypotheses, the subsegment of $\alpha$ joining $z$ to $y$ lies entirely outside the $(5\delta+\lambda)$--neighborhood of $A$, so we may apply Lemma~\ref{lem:little} with $K = 4\delta+\lambda$.  
  Let $y''=P_A(y)$, $z''=P_A(z)$, and let $y' \in [y,y''] \cap S_K(A)$, $z' \in [z,z''] \cap S_K(A)$.

  Another easy argument using quasi-convexity of $A$ shows $(w\mid y)_{y''}$ and $(w\mid z)_{z''}$ are bounded above by $\lambda+\delta$.  Since $K>\lambda+\delta$, there are points $\widehat{y},\widehat{z}$ on $\alpha$ within $\delta$ of $y',z'$, respectively, and satisfying $d(y,\widehat{y})=d(y,y')$, $d(z,\widehat{z})=d(z,z')$.  Lemma~\ref{lem:little} implies that $d(\widehat{y},\widehat{z})\le 10\delta$.  So in particular we have $d(z,\widehat{z})\sim_{10\delta}d(z,\widehat{y})$, and so (since  $d(y,A) = d(y,y')+d(y',y'') = d(y,\widehat{y}) + K$, and similarly for $d(z,A)$)
  \[ d(y,A) - d(z,A) \sim_{10\delta} d(y,\widehat{y})-d(z,\widehat{y}) = d(y,z) .\]
  The last equality holds because $\widehat{y}$ lies between $w$ and $z$ on $\alpha$.  This is because $\widehat{y}$ is at most $5\delta+\lambda$ from $A$, but, as noted in the first paragraph, the subsegment of $\alpha$ joining $z$ to $y$ lies entirely outside the $(5\delta+\lambda)$--neighborhood of $A$.
\end{proof}

\begin{definition}\label{def:cusping} 
Let $K>\lambda_0+\delta_0$ and suppose $\Xz^{\mathrm{cusp}}(\Yx,K)$ is obtained by gluing a combinatorial horoball $\horbaD = \mathcal{H}(\CS_K(\Yx))$ based on $\CS_K(\Yx)$ to $\CTC_K(\Yx)$.  

We say that $\Xz^{\mathrm{cusp}}(\Yx,K)$ is obtained from $\Xz$ by {\em cusping the $K$--shell around $\Yx$}. Note that this depends on the set $\Yx$, which is fixed in Assumptions \ref{ass:X}. Therefore, we drop the $\Yx$ from our notation and refer simply to $\Xz^{\mathrm{cusp}}(K)$.\end{definition} 

The hyperbolicity constant of $\horbaD= \mathcal{H}(\CS_K(\Yx))$ does not depend on $K$.

For each pair $(x,y)$ of points in $\Xz^{\mathrm{cusp}}(K)$, we define a path $\mc{P}(x,y)$ connecting points $x,y$.   For each $x$ outside the tube, choose once and for all some $x'\in S_K(\Yx)$ closest to $x$ and a geodesic $[x,x']$ in $\Xz$ from $x$ to $x'$.  Two such choices of $x'$ differ only by $\delta_0$.  If $x,y$ lie outside the horoball $\horbaD$, then let $[x,y]$ be a  geodesic in $\Xz$ from $x$ to $y$.  If $[x,y]$ does not intersect $T_K(\Yx)$, then set $\mc{P}(x,y)=[x,y]$.  Otherwise,  let $\mc{P}(x,y)$ be the union of the two geodesics $[x,x']$ and $[y,y']$ and a geodesic in $\horbaD$ connecting $x',y'$.  If $x,y$ both lie in $\horbaD$, then let $\mc{P}(x,y)$ just be a geodesic in $\horbaD$ connecting them.  If one of them, say $x$, lies outside $\horbaD$ and $y$ lies in $\horbaD$, then again consider a shortest geodesic $[x,x']$ from $x$ to (the closure of) $T_K(\Yx)$, and define $\mc{P}(x,y)$ to be the concatenation of $[x,x']$ and a geodesic in $\horbaD$ from $x'$ to $y$.

\begin{theorem}\label{thm:cusp_tube_hyp}
Under Assumptions~\ref{ass:X} there exists $\delta_1=\delta_1(\delta_0)$ so that for any $K \ge \lambda_0 + 4\delta_0$ the space $\Xz^{\mathrm{cusp}}(K)$ is $\delta_1$--hyperbolic and $\delta_1$--visible.
\end{theorem}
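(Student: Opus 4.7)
The plan is to apply Bowditch's guessing geodesics criterion (Proposition~\ref{prop:Bowditch guess}) to the family of paths $\mc{P}(x,y)$ defined immediately before the statement, and then to derive visibility from Proposition~\ref{lem:local_visibility}. The family $\mc{P}$ is designed so that, for $x,y \in \CTC_K(\Yx)$, either we use an honest $\Xz$--geodesic avoiding $T_K(\Yx)$, or we break it into two $\Xz$--geodesics $[x,x']$ and $[y,y']$ going to the shell together with a horoball geodesic $[x',y']\subset\horbaD$. The combinatorial horoball $\horbaD=\mc{H}(\CS_K(\Yx))$ is itself $\delta_\horba$--hyperbolic for a constant $\delta_\horba$ independent of $K$ (this is standard, see \cite{rhds}), so we have good control of geodesics in each piece.

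First I would verify condition~(2) of Proposition~\ref{prop:Bowditch guess}: for $x,y\in \Xz^{\mathrm{cusp}}(K)$ with $d(x,y)\le 1$, the path $\mc{P}(x,y)$ has uniformly bounded diameter. This is immediate when both points lie entirely in $\horbaD$ or both lie in $\CTC_K(\Yx)$ with a short $\Xz$--geodesic avoiding $T_K(\Yx)$. The only case requiring attention is when $x,y$ lie on opposite sides of the shell $\CS_K(\Yx)$; then $x'$ and $y'$ are within a bounded distance of each other in $\CS_K(\Yx)$ (using Lemma~\ref{lem:little} with $A=\Yx$), and both subgeodesics $[x,x']$, $[y,y']$ have length at most a constant depending only on $\delta_0$.

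The main obstacle is condition~(1), the thinness of $\mc{P}$--triangles. I would split into cases according to how many of $x,y,z$ lie inside $\horbaD$. The heart of the argument is the following projection lemma: for $a,b\in \Xz\ssm T_K(\Yx)$ with chosen closest-shell points $a',b'$, if some $\Xz$--geodesic $[a,b]$ enters $T_K(\Yx)$ then $a'$ and $b'$ are within $8\delta_0$ of each other (Lemma~\ref{lem:little}), while if $[a,b]$ avoids $T_K(\Yx)$ then the projections $\alpha\mapsto \alpha'$ along $[a,b]$ move distance coarsely like change in $d(\cdot,\Yx)$ (Lemma~\ref{lem:constant_progress}). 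Using these two facts, any $\mc{P}$--triangle $\mc{P}(x,y)\cup\mc{P}(y,z)\cup\mc{P}(z,x)$ decomposes into a uniformly bounded number of pieces, each of which either lies entirely in $\Xz$ and is thin by $\delta_0$--hyperbolicity of $\Xz$, or lies entirely in $\horbaD$ and is thin by $\delta_\horba$--hyperbolicity of the horoball, or is a ``transition'' of bounded length across the shell. A finite case analysis (for instance: all three points in $\horbaD$; two in $\horbaD$ and one outside, split by whether the outside geodesic enters the tube; one in $\horbaD$ and two outside; all three outside, with subcases depending on how many of the geodesics $[x,y],[y,z],[z,x]$ enter the tube) then produces a uniform constant $h$ depending only on $\delta_0$ and $\delta_\horba$. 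Proposition~\ref{prop:Bowditch guess} then yields a hyperbolicity constant $\delta_1'=\delta_1'(\delta_0)$.

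Finally, I would verify $\delta_1$--visibility via Proposition~\ref{lem:local_visibility}. Given any $p,q\in \Xz^{\mathrm{cusp}}(K)$ at distance at most $100\delta_1'$, I need to exhibit a geodesic from $p$ of length at least $200\delta_1'$ passing within $\delta_1'$ of $q$. If $p$ lies in $\horbaD$, I extend downward indefinitely inside $\horbaD$, which is possible since combinatorial horoballs contain infinite geodesic rays converging to the parabolic point. If $p$ lies in the coarse tube complement, I use $\delta_0$--visibility of $\Xz$ to extend a geodesic; if the extended $\Xz$--geodesic enters $T_K(\Yx)$ I reroute it through the shell and then into $\horbaD$, again obtaining an arbitrarily long geodesic ray. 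In each case the rerouting only changes distances by an additive constant depending on $\delta_0$, so the local-extension hypothesis is satisfied and $\Xz^{\mathrm{cusp}}(K)$ is $5\delta_1'$--visible. Taking $\delta_1=\max\{\delta_1',5\delta_1'\}$ completes the proof.
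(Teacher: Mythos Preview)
Your approach is essentially the paper's: apply Bowditch's guessing-geodesics criterion to the family $\mc{P}(x,y)$, handle the thin-triangles condition by a case split according to which sides pass through $\horbaD$, and then deduce visibility via Proposition~\ref{lem:local_visibility} by producing long rays built from an outward $\Xz$--ray concatenated with a vertical ray in $\horbaD$.

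However, you have the content of Lemma~\ref{lem:little} backwards, and you call this the ``heart of the argument.'' Lemma~\ref{lem:little} says that if the $\Xz$--geodesic $[a,b]$ \emph{avoids} $N_K(\Yx)$, then the shell projections $a',b'$ are within $8\delta_0$; it says nothing when $[a,b]$ \emph{enters} the tube (and indeed that claim is false --- take $a,b$ far apart along $\Yx$). In the mixed cases of the triangle analysis it is precisely the side that \emph{does not} enter $T_K(\Yx)$ whose endpoints have nearby shell projections, allowing the two horoball geodesics to fellow-travel. Once you swap the hypothesis back, your case analysis lines up with the paper's. You should also make explicit the small claim the paper isolates: for $a,b\in \Xz\ssm T_K(\Yx)$ with $d_{\Xz}(a,b)\le 8\delta_0$ one has $d_{\Xz^{\mathrm{cusp}}(K)}(a,b)\le 8\delta_0$; without it, thinness in $\Xz$ does not automatically transfer to thinness in $\Xz^{\mathrm{cusp}}(K)$. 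Finally, Lemma~\ref{lem:constant_progress} is about points on a ray issuing from $\Yx$, not about arbitrary geodesics avoiding the tube; it is used in the visibility step (to show the outward ray is an $\Xz^{\mathrm{cusp}}(K)$--quasi-geodesic), not in the hyperbolicity step.
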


\begin{proof}
 We prove that $\Xz^{\mathrm{cusp}}(K)$ is hyperbolic by applying Proposition \ref{prop:Bowditch guess} to the paths $\mc{P}(x,y)$.

Suppose $d_{\Xz^{\mathrm{cusp}}(K)}(x,y)=1$. If $x,y$ lie outside $\horbaD$, then so does the edge connecting them, and $\mc{P}(x,y)$ is an edge.  This is also the case if $x,y$ both lie in $\horbaD$.  In the mixed case, the point lying in $\horbaD$ actually lies in the boundary horosphere, and again $\mc{P}(x,y)$ is a single edge.

To apply Proposition~\ref{prop:Bowditch guess} we must show that the $\mc{P}$--triangles are uniformly slim.  We consider the case of vertices $x,y,z$ all lying outside $\horbaD$, the other cases being similar.  Suppose that none of $\mc{P}(x,y),\mc{P}(x,z)$ and $\mc{P}(y,z)$ intersects $\horbaD$.  Slimness of the corresponding $\mc{P}$--triangle follows from hyperbolicity of $\Xz$ and the following claim:

\begin{claim} \label{claim:closemetrics} For any $a,b\in \Xz \ssm T_K(\Yx)$ if $d_{\Xz}(a,b)\leq 8\delta_0$, then $d_{\Xz^{\mathrm{cusp}}(K)}(a,b)\leq 8\delta_0$.
\end{claim} 
\begin{proof}
 There is a geodesic $\alpha$ in $\Xz$ of length at most $8\delta_0$ joining $a,b$. Every maximal subpath of $\alpha$ that is contained in the closure of $T_K(\Yx)$ has endpoints at distance at most $8\delta_0$ from each other in $\Xz$.  Since $s = 8\delta_0$ (see Convention \ref{conv:CS}) the distance between these endpoints in the completed shell (and in $\Xz^{\mathrm{cusp}}(K)$) is no more than the distance in $\Xz$. \end{proof}

Suppose that all sides $\mc{P}(x,y),\mc{P}(x,z)$ and $\mc{P}(y,z)$ intersect $\horbaD$, then we can decompose the triangle into
\begin{itemize}
 \item 3 pairs of geodesics in $\Xz$ that coincide, and
 \item a geodesic triangle in $\horbaD$.
\end{itemize}
From hyperbolicity of $\horbaD$, we get the desired conclusion (letting $\delta_1$ be bigger than this hyperbolicity constant).  

Let us now consider the case that $\mc{P}(x,y),\mc{P}(x,z)$ intersect $\horbaD$, but $\mc{P}(y,z)$ does not.  Let $x',y', z'$ be closest points to $x,y,z$ in the closure of $T_K(\Yx)$ (chosen as in the definition of $\mc{P}$).  In this case we decompose the triangle into
\begin{itemize}
 \item a pair of coinciding geodesics starting at $x$ ($[x,x']$),
 \item geodesics in $\horbaD$ from $x'$ to $y'$ and from $x'$ to $z'$,
 \item a geodesic $[y,z]$ and geodesics $[y,y'],[z,z']$.
\end{itemize}

Notice that, since $[y,z]$ does not intersect $T_K(\Yx)$,  we have $d_{\Xz}(y',z')\leq 8\delta_0$ by Lemma \ref{lem:little}.  Hence, hyperbolicity of $\horbaD$ says that the geodesics in the second item fellow-travel with uniform constant. By considering the quadrilateral with vertices $y,y',z,z'$, and using the fact that $y' = P_{S_K(Y_0)}(y)$ and $z' = P_{S_K(Y_0)}(z)$, we see any point on a geodesic from the third item is within $\Xz$--distance $10\delta_0$ of the union of the other two.  So we are done in this case as well, since the $\Xz^{\mathrm{cusp}}(K)$--distance is bounded in terms of this in Claim \ref{claim:closemetrics} 
The case that only one of the sides, say $\mc{P}(x,y)$ intersects $\horbaD$ is similar, except that in this case we have the bound $d_{\Xz^{\mathrm{cusp}}}(x',y')\leq d_{\Xz^{\mathrm{cusp}}}(x',z')+d_{\Xz^{\mathrm{cusp}}}(z',y')\leq 16\delta_0$.

So far we not only proved hyperbolicity of $\Xz^{\mathrm{cusp}}(K)$, but we also have a description of paths $\mc{P}(\cdot,\cdot)$ that stay close to geodesics. 

It remains to show visibility. In order to show it, we prove that there exists a constant $v=v(\delta_0)$ such that every $p\in \Xz^{\mathrm{cusp}}(K)$ lies $v$--close to a bi-infinite $(v,v)$--quasi-geodesic. This suffices since this implies that we can ``prolong'' any geodesic using one of the two rays from a geodesic line passing near its endpoint.

We first claim that there exists $v_0 = v_0(\delta_0)$ such that given any $x\in \Xz \ssm T_K(\Yx)$ there is an $\Xz$ geodesic ray $\alpha(x)$ in $\Xz$ starting on $S_K(\Yx)$, passing $v_0$--close to $x$, and intersecting $S_K(\Yx)$ only at its starting point. To see this, note that coarse surjectivity of $\Pi_{K'}$ for $K'$ such that $x$ lies in the $K'$--shell (Lemma \ref{lem:project_from_infty}) says that there exists an $\Xz$--ray starting on $\Yx$ and passing $8\delta_0$--close to $x$. We claim that a subray intersecting $S_K(\Yx)$ only at its starting point $q$ is a uniform quasi-geodesic in $\Xz^{\mathrm{cusp}}(K)$. This is because, by Lemma \ref{lem:constant_progress}, for $q'$ on the ray, the length of the subpath of the ray from $q$ to $q'$ is coarsely equal to the distance in $\Xz$, hence coarsely equal to the distance in  $\Xz^{\mathrm{cusp}}(K)$, from $q'$ to $S_K(\Yx)$. (We apply Lemma \ref{lem:constant_progress} to a smaller tube to get this, and we apply it in $\Xz$.) This implies that there cannot be ``shortcuts'' from $q$ to $q'$ in $\Xz^{\mathrm{cusp}}(K)$.

Consider first the case that $p$ lies in $\horbaD$. The required geodesic line consists of a vertical ray in $\horbaD$ within distance 1 of $p$, and a geodesic ray $\alpha(x)$ where $x\in S_K(\Yx)$ is the starting point of the vertical ray. The fact that these two rays form a quasi-geodesic line is again an application of Lemma \ref{lem:constant_progress}. If $p$ does not lie in $\horbaD$, we concatenate $\alpha(p)$ and a vertical ray in $\horbaD$.
\end{proof}

\subsection{Fundamental groups of completed shells}\label{ss:noname}
The purpose of this subsection is to understand the (coarse) fundamental groups of completed shells.

We first define a map from the fundamental group of $\partial \Xz\ssm\Lambda \Yx$ to the coarse fundamental group of a completed shell, in the proof of Proposition \ref{prop:homomorphism} below.  We do this by projecting paths using $\Pi_K$.  Since $\Pi_K$ is not continuous, it does not literally send paths to paths, but by sampling a path in $\partial \Xz \ssm \Lambda \Yx$ sufficiently densely, we obtain a coarse path in the shell.  ``Connecting the dots'' in the completed shell gives an honest path, which is well defined up to homotopy.  We formalize this process of projection in the following definition.

\begin{definition}\label{def:project_paths}
  Let $\eta: [a,b]\to \partial\Xz\ssm\Lambda \Yx$ be a path, and let $K\ge \lambda_0 + 3\delta_0$, so we can apply Lemma~\ref{lem:project_from_infty}.
  A \emph{projection of $\eta$ to $\CS_K(\Yx)$} is any path $\alpha$ constructed by the following procedure:  Let $a=t_0,\ldots,t_n=b$ be a partition so that each $\eta([t_i,t_{i+1}])$ lies in an open neighborhood $U_p$ as in Lemma~\ref{lem:project_from_infty}.  For each $i$ let $z_i = \Pi_K(\eta(t_i))$, let $\gamma_i$ be a geodesic segment in $\CS_K(\Yx)$ joining $z_i$ to $z_{i+1}$, and let $\alpha$ be the concatenation $\gamma_0\cdots\gamma_{n-1}$.
\end{definition}
The projection of $\eta$ is not unique, but the next lemma says that any two projections are homotopic rel endpoints.

\begin{lemma}\label{lem:welldef_projected_path}
  Let $(\Xz,\Yx)$ be as in Assumption~\ref{ass:X}.  For any $K \ge \lambda_0 + 3\delta_0 $ and any $D \ge 24\delta_0$, the following holds:
  Let $\eta$ be a path in $\partial\Xz\ssm\Lambda \Yx$, and let $\alpha$ and $\alpha'$ be two projections of $\eta$ to $\CS_K(\Yx)$.  Then $\alpha$ and $\alpha'$ are homotopic rel endpoints in $(\CS_K(\Yx))^D$.

  Moreover, the paths $\alpha$ and $\alpha'$ are Hausdorff distance at most $24\delta_0$ from one another, with respect to the path metric on $\CS_K(\Yx)$.
\end{lemma}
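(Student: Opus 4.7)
My plan is to prove both assertions together by analyzing how the projection changes under refining the partition and under changing the geodesic representatives between consecutive shell vertices.

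For the Hausdorff bound (the ``moreover'' part), I would take any $x \in \alpha$; it lies on some geodesic segment $\gamma_i$ of $\alpha$ from $z_i = \Pi_K(\eta(t_i))$ to $z_{i+1} = \Pi_K(\eta(t_{i+1}))$, where $\eta([t_i,t_{i+1}]) \subset U_{p_i}$ for some $p_i$ as in Lemma~\ref{lem:project_from_infty}. By that lemma, both $z_i$ and $z_{i+1}$ lie within $\Xz$-distance $8\delta_0$ of the ``hub'' $\Pi_K(p_i)$, giving $d_{\CS_K(\Yx)}(z_i,z_{i+1}) \le 16\delta_0$ through $\Pi_K(p_i)$; hence, since $\gamma_i$ is a geodesic, $d_{\CS_K(\Yx)}(x,z_i) \le 8\delta_0$. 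Now let $j$ be the index with $t_i \in [t'_j, t'_{j+1}]$ in the partition for $\alpha'$. Since $\eta(t_i), \eta(t'_j) \in U_{p'_j}$, Lemma~\ref{lem:project_from_infty} gives $d_\Xz(z_i, z'_j) \le 16\delta_0$ and therefore $d_{\CS_K(\Yx)}(z_i,z'_j) \le 16\delta_0$. Combining yields $d_{\CS_K(\Yx)}(x, z'_j) \le 24\delta_0$, and the symmetric argument with $\alpha,\alpha'$ swapped completes the Hausdorff bound.

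For the homotopy assertion, my plan is a reduction to a common ``fine'' refinement. First I would select a common refinement $\{s_k\}$ of the two partitions with the strengthened property that each subinterval satisfies $[s_k,s_{k+1}] \subset U_{\eta(s_k)}$, which is possible by openness of the $U_p$ and compactness of $[a,b]$. Consecutive refined projections $\Pi_K(\eta(s_k))$ and $\Pi_K(\eta(s_{k+1}))$ then have $\Xz$-distance $\le 8\delta_0$, so they are joined by a single direct edge in $\CS_K(\Yx)$. It then suffices to prove that (i) each of the original projections $\alpha, \alpha'$ is homotopic in $(\CS_K(\Yx))^D$ to a ``fine'' projection $\tilde\alpha,\tilde\alpha'$ using this refinement, and (ii) any two fine projections built over the same refined partition are homotopic in $(\CS_K(\Yx))^D$. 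For (ii), two geodesic choices between consecutive $8\delta_0$-close shell vertices form bigons of total length $\le 16\delta_0$, which are trivially null-homotopic in $(\CS_K(\Yx))^D$ for $D \ge 24\delta_0$.

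The workhorse for step (i) is a hub-and-spoke construction: for each coarse interval $[t_i,t_{i+1}] \subset U_{p_i}$, I draw spokes from $\Pi_K(p_i)$ to each refined shell vertex $\Pi_K(\eta(s_k))$ with $s_k \in [t_i,t_{i+1}]$; by Lemma~\ref{lem:project_from_infty} each such spoke is a $\CS_K(\Yx)$ edge-path of length $\le 8\delta_0$. The bigon between $\alpha$'s coarse segment $\gamma_i$ and the fine sub-chain then decomposes into triangles consisting of two spokes and a single direct $\CS_K(\Yx)$-edge of length $\le 8\delta_0$, each of perimeter at most $24\delta_0$.

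The main obstacle is handling the triangle involving the coarse segment $\gamma_i$ itself, since $\gamma_i$ can have $\CS_K(\Yx)$-length up to $16\delta_0$ (and so would produce a triangle of perimeter up to $32\delta_0$ if paired directly with two $8\delta_0$-spokes). I would resolve this by subdividing $\gamma_i$ through its (at most one) intermediate shell vertex $w$, and then showing that an auxiliary spoke from $\Pi_K(p_i)$ to $w$ can be routed through $z_i$ (or $z_{i+1}$) to produce two sub-triangles, each decomposable into pieces of perimeter $\le 24\delta_0$ by combining with the original spokes to the endpoints. Carefully verifying that all the needed shortcuts in $\CS_K(\Yx)$ exist and that every face of the resulting disk diagram has perimeter at most $24\delta_0$ is the technical heart of the proof.
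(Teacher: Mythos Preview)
Your approach is more elaborate than the paper's.  The paper reduces to two elementary moves.  If $\alpha,\alpha'$ use the \emph{same partition}, they differ only in the choice of $\CS_K(\Yx)$--geodesic between each pair $z_i,z_{i+1}$; the two choices form a short bigon, which is null in $(\CS_K(\Yx))^D$.  If $\alpha'$ is obtained from $\alpha$ by \emph{inserting a single partition point} $t'_{i+1}\in(t_i,t_{i+1})$, then the three geodesic sides $\gamma_i,\gamma'_i,\gamma'_{i+1}$ span a single short triangle.  The general case follows by passing to a common refinement of the two partitions and iterating these two moves.  For the Hausdorff bound the paper likewise passes through a projection $\alpha''$ built on a common refinement and bounds its distance to each of $\alpha,\alpha'$ separately (by $12\delta_0$ each).

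By contrast, your hub-and-spoke construction inserts the auxiliary vertex $\Pi_K(p_i)$ into every cell, and the ``main obstacle'' you identify (the triangle involving $\gamma_i$, of perimeter up to $32\delta_0$) is created precisely by that detour.  Your proposed fix --- subdividing $\gamma_i$ at an intermediate shell vertex $w$ and routing a spoke to $w$ through $z_i$ --- does not obviously close the gap: the resulting quadrilateral with corners $\Pi_K(p_i),z_i,w,z_{i+1}$ still has perimeter up to $32\delta_0$, and neither diagonal is known to have length at most $8\delta_0$.  The paper avoids this entirely by never introducing the hub and working directly with the geodesic triangle on the sample vertices.  Two smaller points: the strengthened refinement condition $[s_k,s_{k+1}]\subset U_{\eta(s_k)}$ is unnecessary (a plain common refinement suffices) and is not obviously achievable, since the neighborhoods $U_{\eta(t)}$ need not depend semicontinuously on $t$; and in your Hausdorff argument the step ``$d_{\CS_K(\Yx)}(x,z_i)\le 8\delta_0$'' should read ``$d_{\CS_K(\Yx)}(x,\{z_i,z_{i+1}\})\le 8\delta_0$'', after which one proceeds with the nearer endpoint.
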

\begin{proof}
  We use the notation of Definition~\ref{def:project_paths}, adding a prime to objects used in defining $\alpha'$.
  Suppose first that $\alpha$ and $\alpha'$ arise from the same partition of the domain of $\eta$, so $t_i = t_i'$ for each $i$.  For each $i$, the segments $\gamma_i$ and $\gamma_i'$ form a loop of length at most $16\delta_0 < D$ so they are homotopic rel endpoints in $(\CS_K(\Yx))^D$.  Thus $\alpha$ and $\alpha'$ are homotopic rel endpoints in $(\CS_K(\Yx))^D$.

  Now suppose that $\alpha'$ arises from a refinement of the partition for $\alpha$, say by adding a single point $t_{i+1}' \in (t_i,t_{i+1})$.  The geodesics $\gamma_i$, $\gamma_i'$ and $\gamma_{i+1}'$ form a geodesic triangle of length at most $24\delta_0\le D$, so again $\alpha$ and $\alpha'$ are homotopic rel endpoints in $(\CS_K(\Yx))^D$.

  The general case follows readily.

  For the statement about Hausdorff distance, note that $\alpha$ and $\alpha'$ come from partitions with a common refinement.  If $\alpha''$ is a projection using this refinement, it is straightforward to see that $\alpha''$ is Hausdorff distance at most $12\delta_0$ from either $\alpha$ or $\alpha'$.
\end{proof}

\begin{proposition}\label{prop:homomorphism}
    Let $(\Xz,\Yx)$ be as in Assumption~\ref{ass:X}.  For any $K\ge \lambda_0+3\delta_0$ and any $D\ge 24\delta_0$, and any point $\ast\in \partial\Xz\ssm \Lambda \Yx$, the projection $\Pi_K$ induces a well-defined homomorphism \[\Pi_\ast: \pi_1(\partial \Xz\ssm\Lambda \Yx,\ast)\to \pi_1^{D}(\CS_K(\Yx),\Pi_K(\ast)).\]
\end{proposition}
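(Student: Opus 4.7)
The plan is to define the map on loops via Definition~\ref{def:project_paths}, then verify (a) independence of the choice of projection, (b) the homomorphism property, and (c) invariance under homotopy of loops in $\partial\Xz\ssm\Lambda\Yx$. The first two items are essentially bookkeeping, while (c) is the main content.

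First I would define $\Pi_\ast$ as follows: given a loop $\eta\co[0,1]\to\partial\Xz\ssm\Lambda\Yx$ based at $\ast$, let $\alpha$ be any projection of $\eta$ in the sense of Definition~\ref{def:project_paths}. Then $\alpha$ is a loop in $\CS_K(\Yx)$ based at $\Pi_K(\ast)$, since $\eta(0)=\eta(1)=\ast$ and consecutive sampled points project to shell points within $8\delta_0$, connected by geodesic edges of $\CS_K(\Yx)$. Lemma~\ref{lem:welldef_projected_path} (using $D\ge 24\delta_0$) shows that the class $[\alpha]\in\pi_1^D(\CS_K(\Yx),\Pi_K(\ast))$ does not depend on the partition nor on the choices of geodesic edges, so the assignment $\eta\mapsto[\alpha]$ is well defined on \emph{paths}.

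Next, the homomorphism property follows by choosing a partition for a concatenation $\eta_1\cdot\eta_2$ that refines the midpoint $1/2$: the projection of $\eta_1\cdot\eta_2$ is then literally the concatenation of projections of $\eta_1$ and $\eta_2$, both based at $\Pi_K(\ast)$. Combined with the independence statement, this shows the induced map on $\pi_1$ (once defined) is a homomorphism.

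The main step is to show that if $H\co[0,1]^2\to\partial\Xz\ssm\Lambda\Yx$ is a homotopy rel endpoints between two loops $\eta_0$ and $\eta_1$, then the projected loops represent the same class in $\pi_1^D(\CS_K(\Yx),\Pi_K(\ast))$. Using Lemma~\ref{lem:project_from_infty}, each point of the compact image of $H$ has an open neighborhood $U_p$ on which $\Pi_K$ varies by at most $8\delta_0$. By compactness and the Lebesgue number lemma, there is a triangulation $\mathcal{T}$ of $[0,1]^2$ (refining the top and bottom edges subordinate to partitions used to project $\eta_0$ and $\eta_1$, and so that the left and right vertical edges collapse under $H$ to $\ast$) such that each closed triangle of $\mathcal{T}$ is mapped by $H$ into some $U_p$. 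Assigning to each vertex $v$ of $\mathcal{T}$ the shell point $\Pi_K(H(v))$ and to each edge of $\mathcal{T}$ a $\CS_K(\Yx)$--geodesic (of length $\le 8\delta_0$) between its endpoints produces a simplicial map $\Phi\co\mathcal{T}^{(1)}\to\CS_K(\Yx)$. The boundary of any $2$--simplex of $\mathcal{T}$ is sent to a loop of length at most $3\cdot 8\delta_0=24\delta_0\le D$ in $\CS_K(\Yx)$, hence bounds a $2$--cell in $\CS_K(\Yx)^D$. Thus $\Phi$ extends to a continuous map $\mathcal{T}\to\CS_K(\Yx)^D$, which is a homotopy in $(\CS_K(\Yx))^D$ between projections of $\eta_0$ and $\eta_1$, rel $\Pi_K(\ast)$ (using that the vertical edges of $\mathcal{T}$ map to constant paths at $\ast$, hence to edge-paths in the $8\delta_0$--ball around $\Pi_K(\ast)$, which are themselves null-homotopic in $(\CS_K(\Yx))^D$ since their lengths are bounded by $D$). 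Combined with the well-definedness established above, this shows $\Pi_\ast$ descends to a well-defined homomorphism on $\pi_1(\partial\Xz\ssm\Lambda\Yx,\ast)$.

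The main obstacle is ensuring that the triangulation argument in the last paragraph yields loops of length at most $D$; this is exactly why one triangulates rather than subdividing into squares, and why the hypothesis $D\ge 24\delta_0$ appears. Everything else is coarse bookkeeping using Lemmas~\ref{lem:project_from_infty} and~\ref{lem:welldef_projected_path}.
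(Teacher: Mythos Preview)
Your proof is correct and follows essentially the same approach as the paper's own argument: define $\Pi_\ast$ via projected paths (using Lemma~\ref{lem:welldef_projected_path} for well-definedness), obtain the homomorphism property by refining at the midpoint, and prove homotopy invariance by triangulating the homotopy square finely enough that each triangle lands in some $U_p$ from Lemma~\ref{lem:project_from_infty}, so its boundary projects to a loop of length at most $24\delta_0\le D$. Your treatment is slightly more explicit about the vertical edges and the Lebesgue number argument, but the content is the same.
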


\begin{proof} 
  Let $\eta: [0,1] \rightarrow \partial \Xz \ssm \Lambda \Yx$ be a loop based at $\ast$.  We define $\Pi_*([\eta]) = [\alpha]$ where $\alpha$ is any projection of $\eta$ to $\CS_K(\Yx)$.  Lemma~\ref{lem:welldef_projected_path} tells us this is a well-defined element of $\pi_1^D(\CS_K(\Yx),\Pi_K(\ast))$.  
	
  To show that $\Pi_*([\eta])$ is independent of the representative $\eta$, we argue as follows:  Suppose $\eta,\zeta$ represent the same element of $\pi_1(\partial \Xz\ssm \Lambda \Yx,\ast)$.	The image of a homotopy from $\eta$ to $\zeta$ is a compact subset of $\partial \Xz\ssm \Lambda \Yx$, so we can cover it with finitely many neighborhoods $U_p$ as in Lemma~\ref{lem:project_from_infty}.  Now choose a sufficiently fine triangulation $T$ of $I \times I$ so that the image of every triangle is contained in one of the $U_p$.  Take projections to $\CS_K(\Yx)$ of all the edges of this triangulation.  Each edge projects to a path of length at most $8\delta_0$, so the boundary of each triangle projects to a loop of length at most $24\delta_0\le D$.  Putting these together gives a homotopy (rel $\Pi_K(\ast)$) in $(\CS_K(\Yx))^D$ from a projection of $\eta$ to a projection of $\zeta$.  In particular these projections represent the same element of $\pi_1^D(\CS_K(\Yx),\Pi_K(\ast))$.

Finally, we show that $\Pi_\ast$ is a homomorphism.  Let $\eta_1$ and $\eta_2$ be loops in $\partial \Xz \ssm \Lambda \Yx$ based at $\ast$.
If we ensure $\frac{1}{2}$ is a vertex in the subdivision of $[0,1]$ then we may obtain a projection of $\eta_1\cdot\eta_2$ which is a concatenation $\alpha_1\cdot\alpha_2$, where $\alpha_i$ is a projection of $\eta_i$ for $i\in \{1,2\}$.  
From this it follows that $\Pi_{\ast}$ is a homomorphism.
\end{proof}

\begin{definition} \label{def:center}
    Suppose that $w \in \Xz$ and that $\alpha$ and $\beta$ are rays in $ \Xz$ starting at $w$, ending at points $z,z' \in \partial\Xz$, respectively.  Let $T(\alpha,\beta)$ be the tripod obtained by gluing two rays together along an initial segment of length $(z \mid z')_w$.  Let $c(w,\alpha,\beta)$ be the point of $\alpha$ in the preimage of the trivalent point of $T(\alpha,\beta)$ (or the point $w$ if $(z\mid z')_w = 0$).  
\end{definition}
According to \cite[Lemma 3.7]{GMS}, there is a $(1,5\delta_0)$--quasi-isometry from $\alpha \cup \beta$ to $T(\alpha,\beta)$ which is isometric on each of $\alpha$ and $\beta$.   The utility of the center $c(w,\alpha,\beta)$ is that it lies at distance $(z\mid z')_w$ from $w$, and therefore can be used to estimate the visual distance between $z$ and $z'$ as viewed from $w$.

We now fix some more constants:
Let
    \begin{equation*}
        C_0 = 2\left(\frac{2\log\kappa + \log L_0}{\epsilon} + 29\delta_0\right)+20\delta_0,
    \end{equation*}
and let
 \[ D_0 =  3\Phi(C_0) + 108\delta_0, \]
        where $\Phi$ is the function from Lemma~\ref{lem:proper_dist}.

    \begin{lemma}\label{lem:claim_localpath}
      There exists $R_{\ref{lem:claim_localpath}}$ depending only on $\delta_0$, $\lambda_0$, and $L_0$,
      so that if $K\ge R_{\ref{lem:claim_localpath}}$, the following holds.
          Suppose that $x,y\in \partial \Xz\ssm \Lambda \Yx$ satisfy $d(\Pi_K(x),\Pi_K(y))\leq 24\delta_0$. There exists a path $\eta$ in $\partial \Xz\ssm \Lambda \Yx$ connecting $x$ and $y$, and such that for every $z\in \eta$ we have $d_{\Xz}(\Pi_K(x),\Pi_K(z))\leq C_0$.
    \end{lemma}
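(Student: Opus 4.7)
The plan is to build $\eta$ directly as a short arc in $\partial \Xz$ using $L_0$--linear connectedness of the visual metric $\rho_0$, and then to verify that for $K$ sufficiently large this arc avoids $\Lambda \Yx$ and its pointwise $\Pi_K$--image stays uniformly close to $\Pi_K(x)$. Set $R = d_{\Xz}(w_0, \Pi_K(x))$. Since $\Pi_K(x)$ and $\Pi_K(y)$ both lie on $S_K(\Yx)$ and are at $d_{\Xz}$--distance at most $24\delta_0$, the rays $\ray{x}$ and $\ray{y}$ must fellow-travel past their $K$--shell crossings, which gives $(x\mid y)_{w_0} \ge R - O(\delta_0)$ and hence $\rho_0(x, y) \le \kappa\, e^{-\epsilon R + O(\delta_0)}$. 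The $L_0$--linear connectedness of $\rho_0$ (combined with the Mackay refinement noted after Definition~\ref{def:lc}) supplies an arc $\eta \subset \partial \Xz$ from $x$ to $y$ contained in the closed $\rho_0$--ball of radius $L_0 \rho_0(x, y)$ about $x$.

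Next I bound $\rho_0(x, \Lambda \Yx)$ from below. By Assumption~\ref{ass:X}, every ray from $w_0 \in \Yx$ to a point $p \in \Lambda \Yx$ stays in $N_{\lambda_0}(\Yx)$, while $\ray{x}$ leaves $N_{\lambda_0 + 3\delta_0}(\Yx)$ at some point $q_x$ and then travels outward at coarsely unit speed by Lemma~\ref{lem:constant_progress}; in particular $d_{\Xz}(w_0, q_x) = R - K + \lambda_0 + O(\delta_0)$. This bounds the fellow-traveling of $\ray{x}$ with any ray to a point of $\Lambda \Yx$, so $(x \mid p)_{w_0} \le R - K + \lambda_0 + O(\delta_0)$ for all $p \in \Lambda \Yx$, whence $\rho_0(x, \Lambda \Yx) \ge \kappa^{-1} e^{-\epsilon(R - K + \lambda_0) + O(\delta_0)}$. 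Choosing $R_{\ref{lem:claim_localpath}}$ large enough in terms of $\delta_0, \lambda_0, L_0, \kappa, \epsilon$, the inequality $L_0 \rho_0(x, y) < \rho_0(x, \Lambda \Yx)$ then holds whenever $K \ge R_{\ref{lem:claim_localpath}}$, placing the arc $\eta$ inside $\partial \Xz \ssm \Lambda \Yx$.

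For the projection bound, fix $z \in \eta$. From $\rho_0(x, z) \le L_0 \rho_0(x, y)$ I obtain $(x \mid z)_{w_0} \ge R - A$, where $A = \tfrac{\log(L_0 \kappa^2)}{\epsilon} + O(\delta_0)$. Approximating $\ray{x} \cup \ray{z}$ by a tripod (Definition~\ref{def:center}) whose branch point lies at arc-length $(x \mid z)_{w_0}$ from $w_0$ and coarsely on both rays, Lemma~\ref{lem:constant_progress} applied to $\ray{x}$ places this branch point at distance $K - A'$ from $\Yx$, where $A' = R - (x\mid z)_{w_0} \in [0, A]$. A second application of Lemma~\ref{lem:constant_progress} to $\ray{z}$ (using that $\ray{z}$ cannot re-enter $N_{\lambda_0}(\Yx)$, by Assumption~\ref{ass:X}) pins down the last crossing of $S_K(\Yx)$ along $\ray{z}$---namely, $\Pi_K(z)$---at arc-length $R + O(\delta_0)$ from $w_0$, i.e.\ at distance $A' + O(\delta_0)$ past the branch point on the $\ray{z}$--leg, matching $\Pi_K(x)$'s position $A'$ past the branch point on the $\ray{x}$--leg. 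Traversing the tripod yields $d_{\Xz}(\Pi_K(x), \Pi_K(z)) \le 2A' + O(\delta_0) \le C_0$, where the last inequality is exactly what the definition of $C_0$ was engineered to provide.

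The main obstacle is the constant bookkeeping in the last paragraph: one must verify that Lemma~\ref{lem:constant_progress} applies throughout (i.e.\ the relevant sub-rays stay outside $N_{\lambda_0 + 6\delta_0}(\Yx)$), that the \emph{last} $S_K$--crossing of $\ray{z}$ is the one located near arc-length $R$ rather than after a hypothetical excursion back into $N_K(\Yx)$, and that all accumulated $O(\delta_0)$ errors fit inside the slack provided by the $29\delta_0$ and $20\delta_0$ terms in the definition of $C_0$. Taking $R_{\ref{lem:claim_localpath}}$ large enough to absorb these error terms together with the $\log$ quantities from $\kappa, L_0, \epsilon$ closes the argument.
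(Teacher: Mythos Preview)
Your proposal is correct and follows essentially the same approach as the paper: use $L_0$--linear connectedness to produce the arc $\eta$, then control $(x\mid z)_{w_0}$ via the tripod/center picture and invoke Lemma~\ref{lem:constant_progress} to locate $\Pi_K(z)$ near $\Pi_K(x)$. The only organizational difference is that you first establish $\eta\cap\Lambda\Yx=\emptyset$ globally by comparing $L_0\rho_0(x,y)$ to $\rho_0(x,\Lambda\Yx)$, whereas the paper deduces $z\notin\Lambda\Yx$ for each $z\in\eta$ as a byproduct of the projection estimate; both routes are fine.
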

        \begin{proof}
Recall that $\rho_0$ is a $\delta_0$--adapted visual metric on $\partial \Xz$ based at $w_0\in \Yx$ with constants $\kappa, \epsilon = \epsilon(\delta_0)= \frac{1}{6\delta_0}$ (see Definition~\ref{def:delta_adapted}). That is, for all $x,y\in \partial \Xz$ we have
	\begin{align}
		\label{e:d1}
		\kappa^{-1}e^{-\epsilon(x|y)_{w_0}}\le \rho_0(x,y) \le  \kappa e^{-\epsilon(x|y)_{w_0}}.
	\end{align}
  We prove the lemma with
 \[   R_{\ref{lem:claim_localpath}} = \left(\frac{2\log\kappa + \log L_0}{\epsilon} \right) + 40\delta_0 + \lambda_0 .\]
        Accordingly, assume $K \ge  R_{\ref{lem:claim_localpath}}$.
          By linear connectivity, there exists a path $\eta$ in $\partial \Xz$ connecting $x$ and $y$ and such that,
	for every $z\in \eta$ we have 
	\begin{align}
		\label{e:D5}
		\rho_0(x,z)\leq L_0\rho_0(x,y).
	\end{align}
        Fix some such $z$.
        We will see using the $\lambda_0$--quasi-convexity of $\Yx$  that $z\notin\Lambda \Yx$.
        Let $\alpha$ be the geodesic ray defining $\Pi_K(x)$, and let $\beta_y,\beta_z$ be the geodesic rays defining $\Pi_K(y),\Pi_K(z)$ respectively.

        Let $c_y,c_z\in \alpha$ be the centers $c_y = c(w_0,\alpha,\beta_y)$ and $c_z = c(w_0,\alpha,\beta_z)$ from Definition~\ref{def:center}.  If $d_{\Xz}(c_z,\Yx) \ge K$ then it is easy to bound $d_{\Xz}(\Pi_K(x),\Pi_K(z))$, so we assume $d_{\Xz}(c_z,\Yx) < K$.

        By \eqref{e:d1} we have (writing $|\cdot|$ for $d_{\Xz}(w_0,\cdot)$)
	\begin{align*}
          \rho_0(x,z) & \ge \kappa^{-1}e^{-\epsilon |c_z| },\mbox{ and }\\
          \rho_0(x,y) & \le \kappa e^{-\epsilon |c_y|}.
	\end{align*}
        Taking logs and applying \eqref{e:D5} we deduce
        \[ -\epsilon |c_y| + \log\kappa + \log L_0 \ge -\epsilon|c_z| - \log\kappa.\]
        Rearranging gives
        \begin{equation}
          \label{eq:czest1}
          |c_z| \ge |c_y| - \frac{2\log\kappa + \log L_0}{\epsilon}.
        \end{equation}

        Next we note that since $d_{\Xz}(\Pi_K(x),\Pi_K(y))\le 24\delta_0$, the distance between the images of $\Pi_K(x)$ and $\Pi_K(y)$ in the comparison tripod $T(\alpha,\beta_y)$ is at most $29\delta_0$, and so
        \begin{equation}
          \label{eq:cyest}
          |c_y| \ge |\Pi_K(x)| - 29\delta_0.
        \end{equation}
        Combining \eqref{eq:czest1} with \eqref{eq:cyest} gives
        \begin{equation*}
          \label{eq:czest2}
          |c_z| \ge |\Pi_K(x)| - \left(\frac{2\log\kappa + \log L_0}{\epsilon} + 29\delta_0\right).
        \end{equation*}
        Since $c_z$ and $\Pi_K(x)$ both lie on the geodesic ray $\alpha$, and $d_{\Xz}(\Pi_K(x),\Yx) = K$, we have
        \begin{equation*}
          \label{eq:awayfromgamma0}
          d_{\Xz}(c_z,\Yx) \ge K - \left(\frac{2\log\kappa + \log L_0}{\epsilon} + 29\delta_0\right) \ge 11\delta_0 + \lambda_0.
        \end{equation*}
        Let $c_z' = c(w_0,\beta_z,\alpha)\in \beta_z$, so that $d_{\Xz}(c_z,c_z')\le 5\delta_0$.  We have $d_{\Xz}(c_z',\Yx) \ge 6\delta_0 + \lambda_0$.  In particular, since  $\beta_z$ contains a point further than $\lambda_0$ (the quasi-convexity constant) from $\Yx$, we deduce that $z$ is not in the limit set of $\Yx$.        
        In particular we may apply Lemma~\ref{lem:constant_progress}  to estimate the distance between points on $\beta_z$ in terms of the distance to $\Yx$.
        We conclude $d_{\Xz}(c_z',\Pi_K(z)) \le \left(\frac{2\log\kappa + \log L_0}{\epsilon} + 29\delta_0+ 5\delta_0 \right) + 10\delta_0$, and finally
        \begin{align*}
          d_{\Xz}(\Pi_K(x),\Pi_K(z)) & \le d_{\Xz}(\Pi_K(x),c_z) + d_{\Xz}(c_z,c_z')+d_{\Xz}(c_z',\Pi_K(z))\\
                   & \le \left(\frac{2\log\kappa + \log L_0}{\epsilon} + 29\delta_0\right) + 5\delta_0 \\ & + \left(\frac{2\log\kappa + \log L_0}{\epsilon} + 29\delta_0+ 5\delta_0 \right) + 10\delta_0\\
          & = 2\left(\frac{2\log\kappa + \log L_0}{\epsilon} + 29\delta_0\right) + 20\delta_0 =C_0
        \end{align*}
        as desired. 
        \end{proof}
\begin{proposition}\label{prop:out_and_back}
  Let $(\Xz,\Yx)$ be as in Assumption~\ref{ass:X}.  For any $D \ge D_0$, any $K \ge \max\{R_{\ref{lem:claim_localpath}}, C_0 + \Delta_0 + \lambda_0 + 3\delta_0 \}$, and any points $x,y\in \partial\Xz\ssm\Lambda \Yx$ the following holds:  Let $\alpha$ be a path in $\CS_K(\Yx)$ joining $\Pi_K(x)$ to $\Pi_K(y)$.  Then there is a path $\eta$ from $x$ to $y$ in $\partial\Xz\ssm\Lambda \Yx$ so that any projection of $\eta$ to $\CS_K(\Yx)$
  is homotopic rel endpoints to $\alpha$ in $(\CS_K(\Yx))^D$.
  Moreover such a projection lies in a $D_0/3$--neighborhood of $\alpha$.
\end{proposition}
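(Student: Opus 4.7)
The plan is to build $\eta$ piecewise by following $\alpha$ in the shell and lifting each small segment to a boundary path via Lemma~\ref{lem:claim_localpath}.  First I subdivide $\alpha$ into unit edges with consecutive vertices $z_0 = \Pi_K(x), z_1,\ldots, z_n = \Pi_K(y)$; by Convention~\ref{conv:CS} each consecutive pair satisfies $d_{\Xz}(z_{i-1},z_i)\le 8\delta_0$.  Using the $8\delta_0$--coarse surjectivity of $\Pi_K$ (Lemma~\ref{lem:project_from_infty}), for each intermediate $z_i$ I pick $x_i\in\partial\Xz\ssm \Lambda\Yx$ with $d_{\Xz}(z_i,\Pi_K(x_i))\le 8\delta_0$, and set $x_0=x$, $x_n=y$.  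Then consecutive projections $\Pi_K(x_{i-1}),\Pi_K(x_i)$ differ by at most $24\delta_0$ in $\Xz$, so Lemma~\ref{lem:claim_localpath} (whose hypothesis $K\ge R_{\ref{lem:claim_localpath}}$ is assumed) yields a path $\eta_i$ in $\partial\Xz\ssm\Lambda\Yx$ from $x_{i-1}$ to $x_i$ whose $\Pi_K$--image stays within $X_0$--distance $C_0$ of $\Pi_K(x_{i-1})$.  I concatenate: $\eta = \eta_1\cdot\eta_2\cdots\eta_n$.

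By choosing a partition for the projection that includes the cut-off times between the $\eta_i$'s, I obtain a projection $\alpha'$ of $\eta$ of the form $\alpha_1'\cdots\alpha_n'$, where each $\alpha_i'$ is a projection of $\eta_i$ starting at $\Pi_K(x_{i-1})$ and ending at $\Pi_K(x_i)$; by Lemma~\ref{lem:welldef_projected_path} any other projection is homotopic rel endpoints to this one in $(\CS_K(\Yx))^D$.  The ``moreover'' statement is then immediate: every vertex on $\alpha_i'$ is the $\Pi_K$--image of a point of $\eta_i$, hence within $X_0$--distance $C_0$ of $\Pi_K(x_{i-1})$.  Since $K\ge C_0+\Delta_0+\lambda_0+3\delta_0$, Lemma~\ref{lem:proper_dist} upgrades this to a $\CS_K(\Yx)$--distance bound of $\Phi(C_0)$ from $\Pi_K(x_{i-1})$, and since $\Pi_K(x_{i-1})$ is within $8\delta_0$ of $z_{i-1}\in\alpha$ (a single edge of $\CS_K(\Yx)$), every point of $\alpha'$ lies within $\CS_K(\Yx)$--distance $\Phi(C_0)+8\delta_0\le D_0/3$ of $\alpha$.

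For the homotopy, I consider the loop $\ell_i$ formed by $\alpha_i'$, an edge-path of $\CS_K(\Yx)$--length $\le 8\delta_0$ from $\Pi_K(x_i)$ to $z_i$, the reverse of the edge of $\alpha$ from $z_{i-1}$ to $z_i$, and a similar short path from $z_{i-1}$ back to $\Pi_K(x_{i-1})$.  By the preceding paragraph the entire loop lies in the $\CS_K(\Yx)$--ball of radius $r=\Phi(C_0)+8\delta_0$ about $\Pi_K(x_{i-1})$.  Coning at $\Pi_K(x_{i-1})$ — that is, joining every vertex traversed by $\ell_i$ to $\Pi_K(x_{i-1})$ by a $\CS_K(\Yx)$--geodesic — decomposes $\ell_i$ into triangular subloops of perimeter at most $2r + (\text{max edge length})\le 2\Phi(C_0)+ 16\delta_0 + 8\delta_0 \le D_0 \le D$.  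Thus each $\ell_i$ is null-homotopic in $(\CS_K(\Yx))^D$, which is exactly the statement that $\alpha_i'$ is homotopic rel endpoints to the $i$th edge of $\alpha$ (after absorbing the short connecting paths at the vertices, which are likewise loops of length $\le D$).  Concatenating these piecewise homotopies produces the desired homotopy from $\alpha'$ to $\alpha$ in $(\CS_K(\Yx))^D$.  The main obstacle is purely bookkeeping — verifying that every triangle produced by coning, and every connecting loop at the seams between the $\alpha_i'$'s, has length bounded by $D_0$; once the constants $C_0$ and $D_0$ are unpacked this is a routine computation, and the essential geometric inputs are the coarse surjectivity of $\Pi_K$, Lemma~\ref{lem:claim_localpath}, and the conversion between $X_0$-- and $\CS_K(\Yx)$--distances supplied by Lemma~\ref{lem:proper_dist}.
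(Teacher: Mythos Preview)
Your proof is correct and follows essentially the same strategy as the paper's: subdivide $\alpha$ at shell points, lift each short piece to a boundary path via Lemma~\ref{lem:claim_localpath}, and then cone the resulting small loops in $(\CS_K(\Yx))^D$.  The only cosmetic difference is that the paper performs the final homotopy in two stages (first contracting each projected piece $\alpha_i''$ to a short $\CS_K(\Yx)$--geodesic via triangles of perimeter $\le 2\Phi(C_0)+8\delta_0$, then observing that this geodesic together with the corresponding subpath of $\alpha$ bounds a loop of length $\le 48\delta_0$), whereas you cone the entire quadrilateral $\ell_i$ in a single step; the constants accommodate either route.
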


\begin{proof}
  Let $\alpha$ be such a path, and divide it into subpaths $\alpha_i$ from $x_i$ to $x_{i+1}$ of length at most $8\delta_0$, where the points $x_i$ lie in the shell $S_K(\Yx)$.
  For each $i$ there is a point $p_i\in \partial \Xz$ so that $d_{\Xz}(\Pi_K(p_i), x_i) \le 8\delta_0$.  (Note that $p_i\notin \Lambda \Yx$.)  Inserting geodesic segments back and forth between the points $x_i$ and $\Pi_K(p_i)$ produces a path $\alpha'$ composed of subpaths $\alpha_i':= [\Pi_K(p_i),x_i]\cdot\alpha_i\cdot [x_{i+1},\Pi_K(p_{i+1})]$.  (Being sure to use the same geodesic segment $[x_{i+1},\Pi_K(p_{i+1})]$ at the end of $\alpha_i'$ and, in reverse, at the beginning of $\alpha_{i+1}'$.)

  By Lemma~\ref{lem:claim_localpath} there is, for each $i$, a path $\eta_i$ in $\partial X_0\ssm \Lambda \Yx$ joining $p_i$ to $p_{i+1}$ so that $\Pi_K(\eta_i)$ lies in a $C_0$--ball (with respect to $d_{\Xz}$) around $\Pi_K(p_i)$.  Since $K$ is large enough to apply Lemma~\ref{lem:proper_dist}, we conclude that $d_{\CS_K(\Yx)}(\Pi_K(p_i),\Pi_K(z))\le \Phi(C_0) < D/6$ for any $z\in \eta_i$.  Let $\eta$ be the path from $p$ to $q$ which is the concatenation of these paths $\eta_i$, and let $\alpha''$ be a projection of $\eta$ to $\CS_K(\Yx)$ which passes through the points $\Pi_K(p_i)$.  (Choose the partition to be a refinement of the one which includes the endpoints of the domains of the $\eta_i$.)  Let $\alpha_i''$ be the subpath of $\alpha''$ from $\Pi_K(p_i)$ to $\Pi_K(p_{i+1})$.  This path is composed of segments $[\Pi_K(z),\Pi_K(z')]$ of length at most $8\delta_0$, so that both endpoints are $\Xz$--distance at most $8\delta_0$ from either $\Pi_K(p_i)$.

  We claim the loop composed of $\alpha_i''$ and $\alpha_i'$ is contractible in $(\CS_K(\Yx))^D$.  We first note that each $\alpha_i''$ is homotopic to a $\CS_K(\Yx)$--geodesic $\gamma_i$ joining its endpoints.  This is because each projected $\eta_i$ is composed of geodesic segments of length at most $8\delta_0$, each of whose endpoints can be connected to $p_i$ (the initial point of $\alpha_i''$) by a path in $\CS_K(\Yx)$ of length at most $\Phi(C_0)$.
  This decomposes the loop consisting of $\alpha_i''$ and $\gamma_i$ into triangular loops of length at most $2\Phi(C_0) + 8\delta_0 < D$.
  See Figure~\ref{fig:homotopy}.

\begin{figure}[h!]
	\begin{center}
		\includegraphics[scale=0.6]{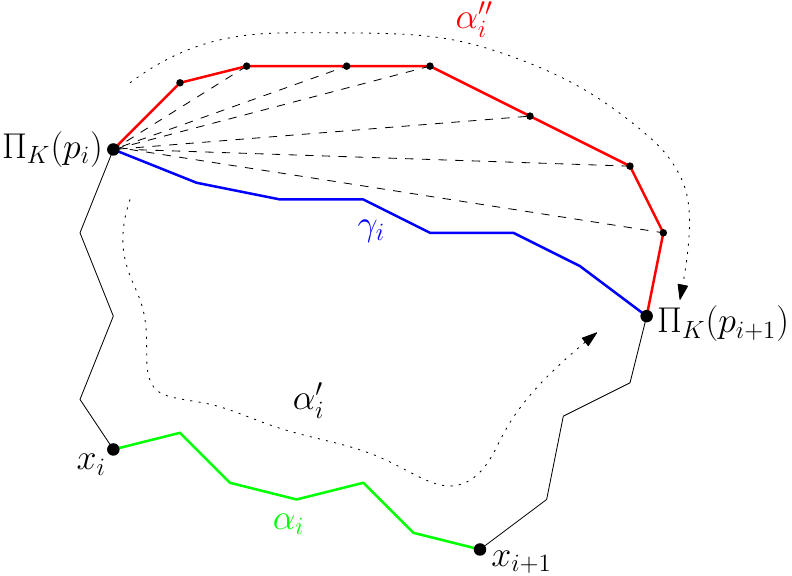}
	\end{center}
	\caption{Homotoping a projection of $\eta$ to $\alpha$.}\label{fig:homotopy}
\end{figure}
  
  The paths $\gamma_i$ and $\alpha_i'$ have length at most $24\delta_0$, so together they bound a loop of length at most $48\delta_0<D$.

  Putting the pieces together we obtain a homotopy rel endpoints from the projection $\alpha''$ to $\alpha$.  Lemma~\ref{lem:welldef_projected_path} says any two projections are homotopic rel endpoints.  

  We also see from this picture that $\alpha''$ is contained in the $\Phi(C_0) + 12\delta_0$--neighborhood of $\alpha$.  Hence (by Lemma~\ref{lem:welldef_projected_path}) any projection of $\eta$ lies within $\Phi(C_0) + 36\delta_0=D_0/3$ of $\alpha$.
\end{proof}

\begin{corollary}\label{c:surject_pi1}
    Let $(\Xz,\Yx)$ be as in Assumption~\ref{ass:X}.  Then for $R_{\ref{c:surject_pi1}}=\max\{R_{\ref{lem:claim_localpath}}, C + \Delta_0 + \lambda_0 + 3\delta_0 \}$, for all $K \geq R_{\ref{c:surject_pi1}}$ and $D\ge D_0$,  the following holds:

  For any point $\ast\in \partial \Xz\ssm \Lambda \Yx$, the homomorphism  $\Pi_\ast : \pi_1(\partial \Xz\ssm\Lambda \Yx,\ast)\to \pi_1^{D}(\CS_K(\Yx),\Pi_K(\ast))$ from Proposition~\ref{prop:homomorphism} is surjective.
\end{corollary}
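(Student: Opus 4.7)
The plan is to derive surjectivity of $\Pi_\ast$ as an essentially immediate consequence of Proposition~\ref{prop:out_and_back}, which was set up precisely for this purpose. The strategy is: take an arbitrary class in $\pi_1^D(\CS_K(Y_0),\Pi_K(\ast))$, represent it by a loop, feed that loop into Proposition~\ref{prop:out_and_back} as a path from $\Pi_K(\ast)$ to itself, and interpret the resulting path in $\partial\Xz\ssm\Lambda\Yx$ as a preimage loop.

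In detail, given a class $[\beta]\in \pi_1^D(\CS_K(\Yx),\Pi_K(\ast))$, I choose a loop representative $\beta$ based at $\Pi_K(\ast)$; regarded as a path in $\CS_K(\Yx)$ from $\Pi_K(\ast)$ to itself, this is exactly the kind of input that Proposition~\ref{prop:out_and_back} expects. Applying that proposition with $x=y=\ast$ and with $\beta$ in the role of the path $\alpha$ produces a path $\eta$ in $\partial\Xz\ssm\Lambda\Yx$ from $\ast$ to $\ast$---i.e.\ a loop based at $\ast$---any projection of which is homotopic rel endpoints to $\beta$ in $(\CS_K(\Yx))^D$. By the definition of $\Pi_\ast$ given in the proof of Proposition~\ref{prop:homomorphism}, this says exactly that $\Pi_\ast([\eta])=[\beta]$, which is surjectivity.

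The only bookkeeping is to verify that the threshold $R_{\ref{c:surject_pi1}} = \max\{R_{\ref{lem:claim_localpath}},\, C_0 + \Delta_0 + \lambda_0 + 3\delta_0\}$ and the bound $D\ge D_0$ assumed in the corollary are large enough to invoke both Proposition~\ref{prop:out_and_back} (whose thresholds on $K$ are $R_{\ref{lem:claim_localpath}}$ and $C_0+\Delta_0+\lambda_0+3\delta_0$, and whose threshold on $D$ is $D_0$) and Proposition~\ref{prop:homomorphism} (whose thresholds $K\ge\lambda_0+3\delta_0$ and $D\ge 24\delta_0$ are clearly implied, since $D_0 \ge 108\delta_0$). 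This is transparent from the definitions.

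Because Proposition~\ref{prop:out_and_back} has already done all the genuine work---constructing, for any path in $\CS_K(\Yx)$ between two points $\Pi_K(x)$ and $\Pi_K(y)$, a path in $\partial\Xz\ssm\Lambda\Yx$ from $x$ to $y$ whose projection tracks the given path up to $D$--homotopy---there is no substantive obstacle here. The main conceptual difficulty was already faced in that proposition and in Lemma~\ref{lem:claim_localpath}, where linear connectedness of $(\partial\Xz,\rho_0)$ and quantitative control on $\Pi_K$ were used to lift short steps in the completed shell back to short arcs in the boundary.
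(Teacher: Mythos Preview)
Your argument is correct and is exactly the approach the paper intends: the corollary is stated immediately after Proposition~\ref{prop:out_and_back} with no separate proof, precisely because applying that proposition with $x=y=\ast$ and $\alpha$ a loop based at $\Pi_K(\ast)$ produces a loop $\eta$ in $\partial\Xz\ssm\Lambda\Yx$ whose projection represents $[\alpha]$. Your bookkeeping on the thresholds is also right.
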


Recall the choice of rays $\eta_p$ from Definition~\ref{def:proj}, used to define the projection function $\Pi_K$.

\begin{lemma}\label{lem:small_eta}
    In the context of Proposition~\ref{prop:out_and_back}, there is a function $R_{\ref{lem:small_eta}}(E,\iota)$ defined for positive $E$ and $\iota$ so that for any $K>R_{\ref{lem:small_eta}}(E,\iota)$ and any path $\alpha$ in $\CS_K(\Yx)$ of diameter at most $E$, the path $\eta$ produced by Proposition~\ref{prop:out_and_back} has $\rho_0$--diameter at most $\iota$.
\end{lemma}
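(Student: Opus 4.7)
The plan is to show $\rho_0(z,z')$ is small for any $z,z'\in \eta$ by producing a lower bound on the Gromov product $(z\mid z')_{w_0}$ that grows with $K$. Since $\rho_0$ is $\delta_0$--adapted at $w_0$, any such lower bound translates directly into an exponentially small upper bound on $\rho_0(z,z')$.

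First I would bound $d_{\Xz}(\Pi_K(z),\Pi_K(z'))$ from above. Writing $\eta$ as the concatenation of the subpaths $\eta_i$ from $p_i$ to $p_{i+1}$ constructed in the proof of Proposition~\ref{prop:out_and_back}, if $z\in\eta_i$ and $z'\in\eta_j$ then Lemma~\ref{lem:claim_localpath} gives $d_{\Xz}(\Pi_K(z),\Pi_K(p_i))\le C_0$ and $d_{\Xz}(\Pi_K(z'),\Pi_K(p_j))\le C_0$. By construction of the $p_i$ we have $d_{\Xz}(\Pi_K(p_i),x_i)\le 8\delta_0$ where $x_i,x_j$ lie on $\alpha$. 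Since edges of $\CS_K(\Yx)$ have length at least their $\Xz$--length, $d_{\Xz}(x_i,x_j)\le d_{\CS_K(\Yx)}(x_i,x_j)\le E$. Combining, we obtain $d_{\Xz}(\Pi_K(z),\Pi_K(z'))\le 2C_0+16\delta_0+E=:E'$.

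Next I would bound $(z\mid z')_{w_0}$ below by $K-E'/2-O(\delta_0)$. Consider the defining rays $\ray{z},\ray{z'}$ from $w_0$ and the comparison tripod $T(\ray{z},\ray{z'})$ of Definition~\ref{def:center}. Because $z,z'\notin\Lambda\Yx$ and $w_0\in \Yx$, Lemma~\ref{lem:straightgeodesic} shows that each of $\Pi_K(z),\Pi_K(z')$ lies on its defining ray at distance from $w_0$ that is within $O(\delta_0)$ of $K$. The $(1,5\delta_0)$--quasi-isometry from $\ray{z}\cup\ray{z'}$ to $T(\ray{z},\ray{z'})$ (\cite[Lemma~3.7]{GMS}) then forces the trivalent point $c=c(w_0,\ray{z},\ray{z'})$ to satisfy $|c|\ge K-E'/2-O(\delta_0)$, since two points on the two prongs at common distance $\approx K$ from the apex and within $\Xz$--distance $E'$ of each other must branch off the common segment at depth at least $K-E'/2$, up to an additive hyperbolicity error. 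Passing to the limit along the rays yields $(z\mid z')_{w_0}\ge K-E'/2-O(\delta_0)$.

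Combining these two steps with the visual metric inequality gives $\rho_0(z,z')\le \kappa\,e^{-\epsilon(K-E'/2-O(\delta_0))}$, and it suffices to set
\[
R_{\ref{lem:small_eta}}(E,\iota)=\max\!\left\{R_{\ref{c:surject_pi1}},\; C_0+\tfrac{E}{2}+\tfrac{1}{\epsilon}\log(\kappa/\iota)+A\right\}
\]
for an appropriate constant $A=A(\delta_0,\lambda_0)$ absorbing the additive errors. The main obstacle is the second step: one must argue honestly that a ``branching depth'' estimate on the tripod approximation survives the passage to boundary points and yields a clean lower bound on the Gromov product; everything else is bookkeeping with the constants already established earlier in the section.
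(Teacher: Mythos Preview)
Your approach is essentially the paper's: bound $d_{\Xz}(\Pi_K(z),\Pi_K(z'))$ above, then use the tripod comparison to get a lower bound on $(z\mid z')_{w_0}$ growing with $K$.  The paper obtains the first bound more directly from the ``Moreover'' clause of Proposition~\ref{prop:out_and_back} (any projection of $\eta$ lies in a $D_0/3$--neighborhood of $\alpha$ in $\CS_K(\Yx)$), giving $d_{\Xz}(\Pi_K(z),\Pi_K(z'))<E+D_0$ without revisiting the construction; your route through Lemma~\ref{lem:claim_localpath} and the points $p_i,x_i$ is equally valid and yields a comparable constant.

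One correction is needed in your second step.  The assertion that $\Pi_K(z)$ lies on its ray at distance from $w_0$ ``within $O(\delta_0)$ of $K$'' is false in general: $\Pi_K(z)$ has distance exactly $K$ from $\Yx$, but $\Yx$ is unbounded, so $d_{\Xz}(w_0,\Pi_K(z))$ can be arbitrarily large.  Lemma~\ref{lem:straightgeodesic} does not give what you attribute to it.  Fortunately only the \emph{lower} bound $|\Pi_K(z)|\ge K$ (immediate from $w_0\in\Yx$) is needed: in the comparison tripod, if $\Pi_K(z),\Pi_K(z')$ lie on distinct prongs at distances $\ge K$ from the root and within $E'+5\delta_0$ of each other, then the branch point satisfies $|c|\ge K-(E'+5\delta_0)/2$, and the other cases give even better bounds.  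With this fix the remainder of your argument is fine.
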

\begin{proof}
  Let $z,z'$ be two points in $\eta$.  There is a projection of $\eta$ which passes through the points $a=\Pi_K(z)$ and $a'=\Pi_K(z')$, so the points $a,a'$ lie in a $D_0/3$--neighborhood of $\alpha$.  In particular $d_{\Xz}(a,a')< E + D_0$.  One can see from this that the center $c(w_0,\ray{z},\ray{z'})$ must be at least $K - (E + D_0 + 5\delta_0)$ from the basepoint $w_0$.  The distance from the basepoint to the center is the Gromov product $(z|z')_{w_0}$ so we deduce
  \[ \rho_0(z,z') \le \kappa e^{-\epsilon(K - (E + D_0 + 5\delta_0))}. \]
  Fixing $E$, we may increase $K$ to make the right-hand side less than $\iota$, as desired.
\end{proof}

\begin{assumption} \label{ass:for_injectivity}
  In the setting of Assumption \ref{ass:X}, assume further that $\partial \Xz\ssm \Lambda \Yx$ is locally simply connected, and that some $H_0< \operatorname{Isom}(\Xz)$ preserves and acts cocompactly on $\Yx$.  Choose a constant $A_0$ so that for all $x \in \Yx$ there exists $h \in H_0$ so that $d_{\Xz}(h\cdot x, w_0) \le A_0$.
\end{assumption}

\begin{lemma} \label{lem:translate_to_w_0}
    In the setting of Assumption~\ref{ass:for_injectivity}, for all $\zeta \in \partial X_0 \smallsetminus \Lambda \Yx$ there exists $h \in H_0$ so that for any $\xi \in \Lambda \Yx$ and any choice of geodesic ray $\beta_\xi$ from $w_0$ to $\xi$ we have 
    \[ d\left(w_0,c(w_0,\ray{h\cdot \zeta},\beta_\xi)\right) \le A_0 + \lambda_0 + 12\delta_0 . \]
    In particular, $h\cdot \zeta \in \partial \Xz \smallsetminus \mathring{N}_{r_0}(\Lambda \Yx )$, for $r_0 = \kappa^{-1} e^{-\epsilon(A_0 + \lambda_0 + 12\delta_0)}$.
\end{lemma}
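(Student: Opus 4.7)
The plan is to choose $h \in H_0$ so that $h\zeta$ has a ``coarse projection'' to $\Yx$ lying within $A_0 + O(\lambda_0+\delta_0)$ of $w_0$, then use quasi-convexity of $\Yx$ to bound the Gromov product $(h\zeta \mid \xi)_{w_0}$ uniformly over $\xi \in \Lambda \Yx$.

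First I select $h$. Since $\zeta \notin \Lambda \Yx$, the quantity $d_{\Xz}(\ray{\zeta}(t), \Yx)$ must eventually exceed any fixed constant: otherwise a sequence of closest points in $\Yx$ to $\ray{\zeta}(t_n)$ would converge to $\zeta$, placing $\zeta$ in $\Lambda\Yx$. Let $t_* = \sup\{t : d_{\Xz}(\ray{\zeta}(t), \Yx) \le \lambda_0 + 2\delta_0\} < \infty$, put $q_* = \ray{\zeta}(t_*)$, and pick $p_* \in \Yx$ with $d_{\Xz}(q_*, p_*) \le \lambda_0 + 2\delta_0$. By Assumption~\ref{ass:for_injectivity} choose $h \in H_0$ with $d_{\Xz}(h p_*, w_0) \le A_0$; then $d_{\Xz}(w_0, h q_*) \le A_0 + \lambda_0 + 2\delta_0$.

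Next I bound $(h\zeta \mid \xi)_{w_0}$ for $\xi \in \Lambda \Yx$. Since $w_0 \in \Yx$, Assumption~\ref{ass:X} gives that any geodesic ray $\beta_\xi$ from $w_0$ to $\xi$ lies in $N_{\lambda_0}(\Yx)$. By the choice of $t_*$, the geodesic ray $\alpha := h \cdot \ray{\zeta}|_{[t_*, \infty)}$ from $h q_*$ to $h\zeta$ stays outside $N_{\lambda_0 + 2\delta_0}(\Yx)$. The tripod approximation of the triangle with vertices $w_0, h q_*, h\zeta$ and sides $\ray{h\zeta}$, $\alpha$, and a geodesic $[w_0, hq_*]$ of length $\le A_0+\lambda_0+2\delta_0$ shows that $\ray{h\zeta}(t)$ lies within $\delta_0$ of $\alpha$ once $t$ exceeds $(h q_* \mid h\zeta)_{w_0} + O(\delta_0) \le A_0+\lambda_0+O(\delta_0)$, and hence $\ray{h\zeta}(t)$ leaves $N_{\lambda_0 + \delta_0}(\Yx)$. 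Because $\ray{h\zeta}$ and $\beta_\xi$ (which remains in $N_{\lambda_0}(\Yx)$) can $2\delta_0$--fellow-travel only up to distance $(h\zeta\mid\xi)_{w_0}$ from $w_0$, this forces $(h\zeta\mid\xi)_{w_0} \le A_0+\lambda_0+12\delta_0$.

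By Definition~\ref{def:center}, $d_{\Xz}(w_0, c(w_0, \ray{h\zeta}, \beta_\xi)) = (h\zeta\mid\xi)_{w_0}$, which gives the first conclusion. The ``in particular'' statement then follows immediately from the visual metric estimate $\rho_0(h\zeta,\xi) \ge \kappa^{-1} e^{-\epsilon(h\zeta\mid\xi)_{w_0}} \ge \kappa^{-1}e^{-\epsilon(A_0+\lambda_0+12\delta_0)} = r_0$ for every $\xi \in \Lambda \Yx$. The main obstacle will be carefully tracking parameters through the tripod approximation in the second paragraph so as to obtain the stated $12\delta_0$ slack rather than a larger constant; this is standard but fiddly hyperbolic geometry, and it is the only place where constants need to be pinned down precisely.
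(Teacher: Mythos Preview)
Your approach is essentially the same as the paper's: pick the last point on $\ray{\zeta}$ that is close to $\Yx$, translate by $h\in H_0$ so that this point lands near $w_0$, and then argue that the center $c(w_0,\ray{h\zeta},\beta_\xi)$ must occur on $\ray{h\zeta}$ no farther from $w_0$ than the translated point.  The paper streamlines this by taking the point to be $z=\Pi_{\lambda_0+6\delta_0}(\zeta)$ (Definition~\ref{def:proj}) and invoking Lemma~\ref{lem:piK well-defined} to compare $h\cdot z$ with $v=\Pi_{\lambda_0+6\delta_0}(h\zeta)$, rather than running your separate thinness argument on the triangle $w_0,hq_*,h\zeta$; then every point after $v$ on $\ray{h\zeta}$ is at distance $\ge \lambda_0+6\delta_0$ from $\Yx$, while the $(1,5\delta_0)$ tripod comparison after Definition~\ref{def:center} gives $d(c,\Yx)\le \lambda_0+5\delta_0$, forcing $c$ to lie before $v$.

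One concrete issue with your write-up: the threshold $\lambda_0+2\delta_0$ is too small for the constants to close.  After passing through thinness of the partially ideal triangle $(w_0,hq_*,h\zeta)$ (the paper uses $2\delta_0$ for such triangles), you only conclude that $\ray{h\zeta}(t)$ lies outside $N_{\lambda_0}(\Yx)$ for large $t$; but the center $c$ is only guaranteed to satisfy $d(c,\Yx)\le \lambda_0+5\delta_0$, so you cannot force $c$ to precede your threshold point.  Raising your threshold from $\lambda_0+2\delta_0$ to $\lambda_0+8\delta_0$ (or using the paper's $\lambda_0+6\delta_0$ together with the existing lemmas) makes your argument go through with the stated $12\delta_0$ slack.
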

\begin{proof}
    Let $\ray{\zeta}$ be the geodesic ray from $w_0$ to $\zeta$ chosen in Definition~\ref{def:proj}, and let $z = \Pi_{\lambda_0 + 6\delta_0}(\zeta)$ be the projection from $\zeta$ to $S_{\lambda_0 + 6\delta_0}$ as in that definition.  Then $z$ lies at distance exactly $\lambda_0 + 6\delta_0$ from $\Yx$, so there exists $h \in H_0$ so that $d(w_0,h \cdot z) \le A_0 + \lambda_0 + 6\delta_0$.

    Let $\ray{h \cdot \zeta}$ and $v = \Pi_{\lambda_0 + 6\delta_0}(h \cdot \zeta)$ be the analogous geodesic ray from $w_0$ to $h \cdot \zeta$ and the projection for $h \cdot \zeta$ to $S_{\lambda_0 + 6\delta_0}$.  Lemma~\ref{lem:piK well-defined} implies that $d(h \cdot z,v) \le 6\delta_0$, which implies that $d(w_0, v) \le A_0 + \lambda_0 + 12\delta_0$.  Now, $v$ lies at distance $\lambda + 6\delta_0$ from $\Yx$, and every point after $v$ on $\ray{h \cdot \zeta}$ lies at least that distance from $\Yx$.

    Let $\xi \in \Lambda \Yx$ be arbitrary, and choose a geodesic ray $\beta_{\xi}$ from $w_0$ to $\xi$.  By the assumption on $\Yx$ in Assumption~\ref{ass:X}, $\beta_{\xi}$ lies entirely in the $\lambda_0$--neighborhood of $\Yx$.  It follows that the center $c\left(w_0,\ray{h \cdot \zeta},\beta_{\xi} \right)$ lies closer to $w_0$ on $\ray{h \cdot \zeta}$ than $v$ does, which implies that  \[ d\left(w_0,c(w_0,\ray{h\cdot \zeta},\beta_\xi)\right) \le A_0 + \lambda_0 + 12\delta_0 ,\]
    as required.
\end{proof}

\begin{theorem}\label{t:Pi_isomorphism}
  Let $\Xz, \Yx, H_0, A_0$ be as in Assumption~\ref{ass:for_injectivity}, and let $D_0 = 3\Phi(C_0) + 108\delta_0$ be as above. For all $D\geq D_0$, and for all $\sys > 0$ there exists
  \[ R_{\ref{t:Pi_isomorphism}}=R_{\ref{t:Pi_isomorphism}}({\Xz},\Yx,D,\sys,A_0) \] so that for all $K \geq R_{\ref{t:Pi_isomorphism}}$ and all $\ast\in\partial\Xz\ssm\Lambda\Yx$
  the following properties hold: 
	\begin{enumerate}
		\item\label{item:boundary_to_shell} the homomorphism $$\Pi_{\ast} \co \pi_1(\partial {\Xz}\ssm\Lambda \Yx,\ast)\to \pi_1^{D}(\CS_K(\Yx),\Pi_K(\ast)) $$ 
  from Proposition~\ref{prop:homomorphism} is an isomorphism;
		\item\label{item:long_loop} any homotopically nontrivial loop in the one-skeleton of $(\CS_K(\Yx))^D$ has length at least $\sys$;
		\item\label{item:diagram} for any $h\in H_0$, there is a canonical isomorphism \[\pi_1^D(\CS_K(\Yx),\Pi_K(h\ast))\cong \pi_1^D(\CS_K(\Yx),h\Pi_K(\ast))\] and moreover
  there is a commutative diagram as follows:  
  		\begin{equation} \label{eq:pi1_cd} \tag{$\dagger$}
			\begin{tikzcd}	
                \pi_1(\partial {\Xz}\ssm\Lambda \Yx,\ast) \arrow{r}{\text{$\Pi_{\ast}$}} \arrow{d}& 
                \pi_1^{D}(\CS_K(\Yx),\Pi_K(\ast))\arrow{d}\\
                \pi_1(\partial {\Xz}\ssm\Lambda \Yx,h\ast)\arrow{r}{\text{$\Pi_{h\ast}$}} &
                \pi_1^{D}(\CS_K(\Yx),\Pi_K(h\ast))
			\end{tikzcd}
		\end{equation}
  The left vertical arrow is the obvious map induced by the action of $h$ on $\partial{\Xz} \ssm \Lambda \Yx$ and the right vertical arrow is the composition of the obvious map induced by the action of $h$ on $\CS_K(\Yx)$ with the above canonical isomorphism.
	\end{enumerate}
\end{theorem}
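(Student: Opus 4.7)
Plan.  I would prove the three items in the order (3), (2), (1), with a common technical lemma handling both (2) and (1).  For (3), the coarse $H_0$-equivariance of $\Pi_K$ in Lemma~\ref{lem:piK well-defined} shows that $\Pi_K(h\ast)$ and $h\Pi_K(\ast)$ lie within $X_0$-distance $8\delta_0\le D$, so an edge-path between them of length $\le 8\delta_0$ induces the advertised canonical change-of-basepoint isomorphism on $\pi_1^D$.  Commutativity of the square then follows because $H_0$ carries the chosen rays $r(p)$ of Definition~\ref{def:proj} to rays ending at $h\cdot p$, and by Lemma~\ref{lem:welldef_projected_path} the $h$-translate of a projection of $\eta$ is homotopic, modulo the above change-of-basepoint path, to a projection of $h\cdot\eta$ in $(\CS_K(\Yx))^D$.

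The technical core is the following claim: for every $\Lambda>0$, provided $K$ is sufficiently large in terms of $\Lambda,\delta_0,\lambda_0,L_0$ and the constants of Assumption~\ref{ass:for_injectivity}, every loop $\beta$ of length $\le\Lambda$ in $\CS_K(\Yx)$ admits a lift in $\partial \Xz\smallsetminus\Lambda\Yx$ (in the sense of Proposition~\ref{prop:out_and_back}) that is null-homotopic.  To prove it, first combine compactness of the set $K_c := \partial\Xz\smallsetminus\mathring N_{r_0}(\Lambda\Yx)$ (with $r_0$ from Lemma~\ref{lem:translate_to_w_0}) with local simple connectedness of $\partial\Xz\smallsetminus\Lambda\Yx$ (Assumption~\ref{ass:for_injectivity}) to obtain a uniform modulus $\iota_0>0$ such that every loop based at a point of $K_c$ of $\rho_0$-diameter at most $\iota_0$ is null-homotopic in $\partial\Xz\smallsetminus\Lambda\Yx$.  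Next, given $\beta$ based at $p$, pick $x$ with $\Pi_K(x)$ near $p$ and use Lemma~\ref{lem:translate_to_w_0} to find $h\in H_0$ with $hx\in K_c$.  Finally, apply Proposition~\ref{prop:out_and_back} to $h\cdot\beta$ and Lemma~\ref{lem:small_eta} with $E=\Lambda$ to produce, for $K$ large, a lift $\tilde\gamma$ of $h\cdot\beta$ based at $hx$ with $\rho_0$-diameter at most $\iota_0$; such $\tilde\gamma$ is null-homotopic, and $h^{-1}\tilde\gamma$ is then a null-homotopic lift of $\beta$.  Item (2) follows at once by taking $\Lambda=\sys$: a null-homotopic lift $\tilde\beta$ of a loop $\beta$ of length $\le\sys$ yields $\Pi_\ast[\tilde\beta]=[\beta]=0$ in $\pi_1^D(\CS_K(\Yx))$, which is exactly item (2) (combined with item (3) to deal with the basepoint).

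For item (1), given $[\eta]\in\ker\Pi_\ast$ the projection $\alpha$ bounds a disk diagram $\Delta$ in $(\CS_K(\Yx))^D$.  I would lift $\Delta$ to a null-homotopy of $\eta$ in $\partial\Xz\smallsetminus\Lambda\Yx$ cell by cell: choose vertex lifts $\phi(v)\in\partial\Xz\smallsetminus\Lambda\Yx$ (matching $\eta$ on $\partial\Delta$), edge lifts $\phi(e)$ via Proposition~\ref{prop:out_and_back}, and for each 2-cell $f$ fill the concatenation $\phi(\partial f):=\phi(e_1)\cdots\phi(e_n)$ by a disk.  Lemma~\ref{lem:small_eta} applied to each edge bounds the $\rho_0$-diameter of $\phi(\partial f)$ by $2D\iota_{\mathrm{edge}}$, and the null-homotopy is produced by the $H_0$-translation argument of the technical claim.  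The hard part, which I expect to be the main obstacle, is that the technical claim produces \emph{some} null-homotopic lift of $\partial f$, whereas what we need is that the specific concatenation $\phi(\partial f)$ is null-homotopic so that the cell-level disks glue coherently along shared edges.  Overcoming this requires an $H_0$-equivariant choice of $\phi(v)$ and $\phi(e)$ together with careful tracking of diameters in the basepoint-dependent $\delta_0$-adapted visual metric at $hw_0$ attached to each 2-cell (using that $\rho_{hw_0}(h\cdot,h\cdot)$ is comparable to $\rho_{w_0}(\cdot,\cdot)$ by $H_0$-equivariance), so that the local null-homotopies can be chosen to agree on common edges.
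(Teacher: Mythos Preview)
Your overall architecture matches the paper's closely: item~(3) via coarse $H_0$--equivariance of $\Pi_K$, item~(2) via the translate-lift-diameter-$\theta$ argument, and item~(1) by lifting a van Kampen diagram cell by cell. The paper proves them in the order (2), (1), (3), but that is immaterial.

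The place where you diverge from the paper is your ``hard part'' in item~(1), and here you are making things harder than they need to be. You correctly observe that your technical claim only yields \emph{some} null-homotopic lift of $\partial f$, whereas you need the specific loop $\phi(\partial f)$ to be null-homotopic. Your proposed remedy --- $H_0$--equivariant edge lifts and tracking diameters in a different visual metric $\rho_{hw_0}$ for each $2$--cell --- would work but is unnecessarily heavy. The paper's approach (implicit in the phrase ``the argument above proving item~(2) shows'') is simpler: apply the \emph{method} of item~(2) directly to $\phi(\partial f)$ rather than to $\partial f$. Concretely, choose $h\in H_0$ so that $h$ times the basepoint of $\phi(\partial f)$ lies in the compact set $K_c$; since $h$ is an isometry of $\CS_K(\Yx)$ and $\Pi_K$ is coarsely $h$--equivariant (Lemma~\ref{lem:piK well-defined}), the loop $h\cdot\phi(\partial f)$ still has $\Pi_K$--projection of $\CS_K$--diameter bounded by $\mathrm{diam}(\partial f)+2D_0/3+16\delta_0$. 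Now the \emph{proof} of Lemma~\ref{lem:small_eta} (not its literal statement, which is phrased for the $\eta$ produced by Proposition~\ref{prop:out_and_back}) only uses that the $\Pi_K$--images of any two points on the path are $X_0$--close; hence it bounds the $\rho_0$--diameter of $h\cdot\phi(\partial f)$ directly, and $\theta$ finishes the job. No coherence condition on the lifts $\phi(e)$ is needed, and you never leave the fixed metric $\rho_0$.

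In short: your worry about bounding the $\rho_0$--diameter \emph{before} translating (and then losing the bound) disappears if you translate first and bound the $\Pi_K$--projection diameter instead, since that quantity is $H_0$--invariant. This is the one genuine simplification the paper's argument has over yours.
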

\begin{proof}
  We claim first there is a function $\theta: (0,\infty)\to (0,1]$, depending only on the pair $(\Xz,\Yx)$ and the visual metric $\rho_0$ on $\partial \Xz$ chosen in Assumption~\ref{ass:X}, so that any loop in $\partial {\Xz}\ssm \mathring{N}_r(\Lambda \Yx )$ of diameter at most $\theta(r)$ is contractible in $\partial {\Xz}\ssm \Lambda \Yx$.  (The proof of the existence of such a function $\theta$ uses the local simple connectivity of $\partial {\Xz}\ssm \Lambda \Yx$ and a compactness argument.)  We may assume that $\theta$ is non-decreasing.

  Let $A_0$ be the diameter of a fundamental domain for the action of $H_0$ on $\Yx$ as fixed in Assumption~\ref{ass:for_injectivity}, and let $A = A_0 + \lambda_0 + 12\delta_0$.
Fix $D \ge D_0$ and also $\sys$, and let $\iota_0 = \min\{ \frac{1}{2} \kappa^{-1}e^{-\epsilon A}, \theta\left(\frac{1}{2} \kappa^{-1}e^{-\epsilon A}\right) \}$.

We claim that
\[  R_{\ref{t:Pi_isomorphism}} = \max \left\{ R_{\ref{c:surject_pi1}}, R_{\ref{lem:small_eta}}\left( \max\left\{ D,\sys\right\} + 24\delta_0,\iota_0\right) \right\} \]
satisfies the conclusions of the theorem.

We first establish Item~\eqref{item:long_loop}.  Let $\alpha$ be a loop in $\CS_K(\Yx)$ of length less than $\sys$.  If $\alpha$ is not already contractible in $\CS_K(\Yx)$, then it contains a point in $S_K(\Yx)$.
Some point of $\alpha$ is at most $8\delta_0$ from some point $\Pi_K(x)$.  In particular $\alpha$ is freely homotopic in $\CS_K(\Yx)$ to a loop $\alpha'$ based at $\Pi_K(x)$, of length at most $\sys+16\delta_0$.  Apply Lemma~\ref{lem:translate_to_w_0} to the point $x$ to find an element $h$ so that $h\cdot x \in \partial \Xz \smallsetminus \mathring{N}_{r_0}\left(\Lambda \Yx \right)$, for $r_0 = \kappa^{-1}e^{-\epsilon A}$.  Let $\eta$ be the path produced from $h \cdot \alpha'$ by Proposition~\ref{prop:out_and_back}.

By the choice of $\iota_0 = \min\left\{ \frac{1}{2} \kappa^{-1}e^{-\epsilon A}, \theta\left(\frac{1}{2} \kappa^{-1}e^{-\epsilon A}\right) \right\}$, and the fact that $K \ge R_{\ref{lem:small_eta}}\left( \max\left\{ D,\sys\right\} + 24\delta_0,\iota_0\right)$, Lemma~\ref{lem:small_eta} implies that the $\rho_0$--diameter of $\eta$ is at most $\frac{1}{2} \kappa^{-1}e^{-\epsilon A}$, so $\eta \subseteq \partial \Xz \smallsetminus \mathring{N}_{r}(\Lambda \Yx)$, for $r = \frac{1}{2} \kappa^{-1}e^{-\epsilon A}$.  Since the $\rho_0$--diameter of $\eta$ is at most $\theta\left(\frac{1}{2} \kappa^{-1}e^{-\epsilon A}\right)$, the choice of the function $\theta$ implies that $\eta$ is contractible in $\partial\Xz \smallsetminus \Lambda \Yx$.

Applying Proposition~\ref{prop:homomorphism} with $\ast = h \cdot x$ (the basepoint of $\eta$) implies that any projection of $\eta$ to $\CS_K(\Yx)$ is contractible in $(\CS_K(\Yx))^D$.  It follows from Proposition~\ref{prop:out_and_back} that $h \cdot \alpha'$, and hence $h \cdot \alpha$ are contractible in $(\CS_K(\Yx))^D$.  Since $h \in \Isom(\Xz)$ preserves $\CS_K(\Yx)$, we see that $\alpha'$ is contractible in $(\CS_K(\Yx))^D$, proving Item~\ref{item:long_loop}.

We now prove item~\eqref{item:boundary_to_shell}.  In view of Corollary~\ref{c:surject_pi1}, it is enough to prove that the homomorphism $\Pi_{\ast}$ is injective.  To that end, let $\alpha$ be a loop in $\CS_{K}(\Yx)$ based at $\Pi_K(\ast)$ which represents the trivial element of $\pi_1^{D}(\CS_K(\Yx),\Pi_K(\ast))$.  By applying Proposition~\ref{prop:out_and_back}, we may assume that $\alpha$ is the projection to $\CS_{K}(\Yx)$ of a path $\eta$ in $\partial \Xz \smallsetminus \Lambda \Yx$ based at $\ast$.  Since $\alpha$ represents the identity, it can be filled with a van Kampen diagram whose $2$--cells are labelled by loops in $\CS_K(\Yx)$ of length at most $D$.  Modifying this diagram, we may assume that all $2$--cells are triangles of length at most $D+24\delta_0$, all of whose corners are in the image of $\Pi_K$.

Applying Proposition~\ref{prop:out_and_back} to the $1$--cells of this van Kampen diagram $\Delta$, we obtain a map from the $1$-skeleton of $\Delta$ into $\partial \Xz \smallsetminus \Lambda \Yx$ so that the map restricted to the boundary of $\Delta$ is $\eta$, and the boundary of each $2$--cell in $\Delta$ is mapped to a loop which projects to a loop in $\CS_{K}(\Yx)$ at distance at most $D_0/3$ from the corresponding loop in the original van Kampen diagram. The argument above proving item~\eqref{item:long_loop} shows that the boundary of each $2$--cell in $\Delta$ maps to a contractible loop in $\partial \Xz \smallsetminus \Lambda \Yx$.  This proves that $\eta$ represents the trivial element of $\pi_1\left( \partial \Xz \smallsetminus \Lambda \Yx, \ast \right)$, as required.  This proves item~\eqref{item:boundary_to_shell}.

Finally, we prove item~\ref{item:diagram}.  Observe that by Lemma~\ref{lem:piK well-defined}, for any $\zeta \in \partial \Xz \smallsetminus \Lambda \Yx$ and any $h \in H_0$, we have $d_{\Xz} \left( h \cdot \Pi_K(\zeta), \Pi_K(h \cdot \zeta) \right)\le 8\delta_0$. Since there is a unique $D$--homotopy class of path of length at most $8\delta_0$ between $\Pi_K(h\cdot \ast)$ and $h \cdot \Pi_K(\ast)$, there is a canonical change of basepoint isomorphism, which is the isomorphism from the statement.

We now establish the commutativity of \eqref{eq:pi1_cd}.  Let $\eta$ be a loop based at $\ast$ in $\partial \Xz\ssm\Lambda \Yx$, and let $\alpha$ be a projection of $\eta$ to $\CS_K(\Yx)$.  Let $\ast = z_0,\ldots,z_n = \ast$ be the points used in defining the projection $\alpha$.  Adding more points if necessary, we may use the points $h z_i$ to define a projection $\alpha'$ of $h\eta$.
The diagram  
		\begin{equation*}
			\begin{tikzcd}
				\partial {\Xz}\ssm\Lambda \Yx\arrow{d}{\text{$h$}}\arrow{r}{\text{$\Pi_K$}} & \CS_K(\Yx)\arrow{d}{\text{$h$}}  \\
				\partial {\Xz}\ssm\Lambda \Yx \arrow{r}{\text{$\Pi_K$}} & 
				\CS_K(\Yx) 
			\end{tikzcd}
		\end{equation*}
commutes up to $8\delta_0$.    
Hence for each $i$, there is a path of length at most $8\delta_0$ joining $\Pi_K(h z_i)$ to $h\Pi_K(z_i)$ in $\CS_K(\Yx)$.  
These combine with subsegments of $h \alpha$ and $\alpha'$ to form quadrilaterals of length at most $32\delta_0 \le D$ determining a $D$--homotopy from $h \alpha$ to $\alpha'$.  In the diagram \eqref{eq:pi1_cd} the loop $\alpha'$ represents the result of going down and then right; the loop $h\alpha$ represents the result of going right and then down.  
\end{proof}

The following is an immediate consequence of Item~\ref{item:boundary_to_shell} of Theorem~\ref{t:Pi_isomorphism}, and Lemma~\ref{lem:shcon} which implies that the basepoint does not matter.

\begin{corollary} \label{cor:pi_1_Z}
    Let $\Xz$, $\Yx$, $H_0$, and $A_0$ be as in Assumption~\ref{ass:for_injectivity}, and suppose furthermore that $\Yx$ is a bi-infinite quasi-geodesic, and that $\partial \Xz \cong S^2$.  Let $D_0 = 3\Phi(C_0) + 108\delta_0$ be as above, and fix $D \ge D_0$ and $\sys$.  Let $R_{\ref{t:Pi_isomorphism}} = R_{\ref{t:Pi_isomorphism}}(X_0,Y_0,D_0,\sys,A_0)$ be the constant from Theorem~\ref{t:Pi_isomorphism}.  Then for all $K \ge R_{\ref{t:Pi_isomorphism}}$ and all $p \in \CS_K(\Yx)$ we have
    \[\pi_1^D\left( \CS_K(\Yx), p \right) \cong \Z.\]
\end{corollary}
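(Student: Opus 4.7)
The plan is to combine Theorem~\ref{t:Pi_isomorphism}(1) with a direct computation of the fundamental group of the boundary minus a limit set. First I would invoke Theorem~\ref{t:Pi_isomorphism}(1), which (since $K \ge R_{\ref{t:Pi_isomorphism}}$ and all the hypotheses of Assumption~\ref{ass:for_injectivity} are in force) furnishes, for any $\ast \in \partial X_0 \smallsetminus \Lambda Y_0$, an isomorphism
\[
\Pi_\ast \co \pi_1(\partial X_0 \smallsetminus \Lambda Y_0, \ast) \xrightarrow{\;\cong\;} \pi_1^D\bigl(\CS_K(Y_0),\Pi_K(\ast)\bigr).
\]

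Next I would identify the left-hand side. Since $Y_0$ is a bi-infinite $(\lambda_0$--quasi-convex$)$ quasi-geodesic in the proper $\delta_0$--hyperbolic space $X_0$, the standard Morse/stability lemma for quasi-geodesics in hyperbolic spaces implies that $Y_0$ has exactly two endpoints at infinity, so $\Lambda Y_0 \subset \partial X_0$ consists of precisely two points. Combined with the hypothesis $\partial X_0 \cong S^2$, this gives
\[
\partial X_0 \smallsetminus \Lambda Y_0 \;\cong\; S^2 \smallsetminus \{\text{two points}\},
\]
which is homeomorphic to an open annulus and therefore has $\pi_1 \cong \Z$.

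Finally, to pass from the distinguished basepoint $\Pi_K(\ast)$ to an arbitrary $p \in \CS_K(Y_0)$, I would appeal to Lemma~\ref{lem:shcon}, which says $\CS_K(Y_0)$ is connected. Connectedness of the underlying graph implies that $\pi_1^D$ is independent of basepoint up to (non-canonical) isomorphism, yielding $\pi_1^D(\CS_K(Y_0), p) \cong \Z$ as required. I do not anticipate a significant obstacle: the entire content is packaged in Theorem~\ref{t:Pi_isomorphism}, and the only new input is the elementary topological identification $\Lambda Y_0 = \{\xi^+, \xi^-\}$ for a bi-infinite quasi-geodesic.
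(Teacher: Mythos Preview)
Your proposal is correct and follows essentially the same approach as the paper, which states the corollary is an immediate consequence of Theorem~\ref{t:Pi_isomorphism}(\ref{item:boundary_to_shell}) together with Lemma~\ref{lem:shcon} for the basepoint change. You have in fact supplied slightly more detail than the paper by explicitly noting why $\Lambda \Yx$ consists of exactly two points.
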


\begin{remark}
    In case $\partial \Xz \cong S^2$ and $\Isom(\Xz)$ acts cocompactly on $\Xz$, it follows from \cite[Lemma 2.5]{BonkKleiner:QS} that $\partial \Xz$, equipped with any visual metric, is \emph{linearly locally contractible}.  This means that one can take the function $\theta$ from the proof of Theorem~\ref{t:Pi_isomorphism} to be a linear function.  It is then straightforward to prove that the conclusion of Corollary~\ref{cor:pi_1_Z} remains true (for a (possibly larger) $R_{\ref{t:Pi_isomorphism}}$) even without the assumption that $\Yx$ admits a cocompact action by $H_0$, and without the reliance on the diameter of the fundamental domain.  In fact, in this setting, the constant $R_{\ref{t:Pi_isomorphism}}$ does not depend on the choice of $\Yx$, only on its quasi-convexity constant (as well as $\Xz$, etc.).

    Since this argument works only in a setting of considerably smaller generality than Theorem~\ref{t:Pi_isomorphism}, we decided not to pursue it, since we believe that Theorem~\ref{t:Pi_isomorphism} should be applicable in many situations beyond the one considered in this paper.

    However, we also believe that the uniformity of drilling radius sketched in this remark will be important in other contexts.
\end{remark}

\subsection{Coarse topology of tube complements}\label{ss:ct of tubes}

In this section we assume that we are in the setting of Assumption~\ref{ass:for_injectivity}.

Recall the construction of the completed shell from Definition~\ref{def:completed shell}, the completed tube complement from Definition~\ref{def:CTC}, and also Conventions \ref{conv:CS} and \ref{conv:CTC}.

In this subsection, we investigate the coarse topology of completed shells and completed tube complements.

Throughout this subsection, fix the following notation. 

\begin{notation} \label{not:Y} 
Let $\delta_1$ be the hyperbolicity constant of ${\Xz}^{\mathrm{cusp}}(K)$ (for large enough $K$) from Theorem~\ref{thm:cusp_tube_hyp}, and let $\delta_2 = 1500\delta_1$ (5 times the hyperbolicity constant from the conclusion of the Coarse Cartan-Hadamard~\ref{t:CCH}).
Without loss of generality, we assume that $\delta_1 \ge \delta_0$, where $\delta_0$ is the hyperbolicity constant for $\Xz$ from Assumption~\ref{ass:X}.  Recall that $Y_0 \subset X_0$ is a $\lambda_0$--quasi-convex subset of $X_0$, as in Assumption~\ref{ass:X}.
\end{notation}

\begin{definition} \label{def:comparable}
Suppose that $Z$ is a graph, that $Y \subset Z$, and that $\alpha > 0$.  Then $(Z,Y)$ is \emph{$\alpha$--tube comparable to $(\Xz,\Yx)$} if there is an isomorphism of graphs from $N^Z_{\alpha}(Y)$ to $N^{\Xz}_{\alpha}(\Yx)$ which takes $Y$ (isomorphically) onto $\Yx$.
\end{definition}

\begin{remark} \label{rem:def_ambi}
In our context, all spaces are graphs.  There are two natural metrics on $N^{\Xz}_\alpha(\Yx)$ that one might consider.  The first is the induced metric from $d_{\Xz}$.  The second is the induced graph metric on $N^{\Xz}_\alpha(\Yx)$, and this is what is meant in Definition~\ref{def:comparable} above.  We choose this second meaning since it is easier to verify.

For some of our applications, it is not enough that $N^Z_\alpha(Y)$ and $N^{\Xz}_\alpha(\Yx)$ be isomorphic as graphs, and we need their induced metrics to be isometric.  However, in our applications, each of these graphs are $2\delta_2$--quasi-convex (by the obvious variant of Lemma~\ref{lem:tubequasiconvex}).  In this case, if $N_{\alpha+2\delta_2}^Z(Y)$ and $N_{\alpha+2\delta_2}^{\Xz}(\Yx)$ are isometric with the induced graph metrics then $N_{\alpha}^Z(Y)$ and $N_\alpha^{\Xz}(\Yx)$ are isometric with their induced metrics from $d_Z$, $d_{\Xz}$, respectively.  
\end{remark}
The issue raised in Remark~\ref{rem:def_ambi} is the reason why many of the hypotheses about sets being tube comparable have an extra $2\delta_2$ beyond what might first be expected.

The following is immediate from Remark~\ref{rem:def_ambi}, from the definition of the completed shell, and from the isomorphism between shells induced by being $(K+4\delta_0)$--tube comparable.  We need $K+4\delta_0$ so that any path of length $8\delta_0$ between points on the $K$--shell is included in the tube comparison.
\begin{lemma} \label{lem:CS iso}
Suppose that $(Z,Y)$ is $(K+4\delta_0)$--tube comparable to $(\Xz,\Yx)$.  Then $\CS^Z_K(Y)$ is isometric to $\CS^{\Xz}_K(\Yx)$.
\end{lemma}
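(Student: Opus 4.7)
The strategy is to show directly that the graph isomorphism $\phi\colon N^Z_{K+4\delta_0}(Y) \to N^{\Xz}_{K+4\delta_0}(\Yx)$ given by the tube comparability restricts to an isomorphism of completed shells. The proof has three steps: identifying vertex sets, identifying which pairs of vertices get connected by a completion edge, and verifying that the lengths of these edges agree.

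First I would show $\phi$ sends $S^Z_K(Y)$ bijectively onto $S^{\Xz}_K(\Yx)$. For this, observe that for any vertex $z$ with $d_Z(z,Y) \le K$, any geodesic edge-path from $z$ to $Y$ proceeds through vertices whose distances to $Y$ are non-increasing, and in particular stays within $N^Z_K(Y)$. Hence $d_Z(z,Y)$ equals the distance from $z$ to $Y$ computed in the sub-graph $N^Z_{K+4\delta_0}(Y)$, and the analogous statement holds in $\Xz$. Since $\phi$ is a graph isomorphism identifying $Y$ with $\Yx$, it preserves distances computed in these sub-graphs, and hence preserves distances to $Y$ for any vertex in $N^Z_K(Y)$. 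Consequently $\phi$ restricts to a bijection $S^Z_K(Y) \to S^{\Xz}_K(\Yx)$.

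Next I would handle the completion edges. By Convention~\ref{conv:CS}, a pair $a,b \in S^Z_K(Y)$ gets a completion edge precisely when $d_Z(a,b) \le 8\delta_0$, of length $d_Z(a,b)$. The key observation is that any edge-path of length $\ell \le 8\delta_0$ joining $a$ and $b$ stays inside $N^Z_{K+4\delta_0}(Y)$: the $j$-th vertex on such a path is within $\min(j,\ell-j) \le 4\delta_0$ of either endpoint, and each endpoint is at distance $K$ from $Y$. The same observation applies to $\phi(a)$ and $\phi(b)$ in $\Xz$. So $\phi$ and $\phi^{-1}$ transport such short paths, which shows both that $d_Z(a,b) \le 8\delta_0 \iff d_{\Xz}(\phi(a),\phi(b)) \le 8\delta_0$, and (by the same transport argument applied to a realizing path) that the two distances agree whenever either is $\le 8\delta_0$.

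Combining these, $\phi$ induces a bijection on vertices and a length-preserving bijection on edges between $\CS^Z_K(Y)$ and $\CS^{\Xz}_K(\Yx)$, i.e.\ an isometry of graphs. The only subtle point is the bookkeeping for the $4\delta_0$ buffer: I would expect this is precisely why the hypothesis asks for tube comparability at radius $K+4\delta_0$ rather than $K$, since $4\delta_0$ is exactly the maximum excursion of a $\le 8\delta_0$-long path whose endpoints lie on the $K$-shell. There is no real obstacle here, just care to verify that every distance computation relevant to building $\CS^Z_K(Y)$ (distances $\le K$ to $Y$, and distances $\le 8\delta_0$ between shell vertices) is witnessed inside $N^Z_{K+4\delta_0}(Y)$.
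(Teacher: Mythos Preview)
Your proof is correct and follows essentially the same approach as the paper. The paper's argument is a single sentence pointing to Remark~\ref{rem:def_ambi}, the definition of the completed shell, and the observation that the $4\delta_0$ buffer is exactly what is needed so that any path of length at most $8\delta_0$ between $K$--shell points stays inside the $(K+4\delta_0)$--neighborhood; your three steps spell out precisely this reasoning in detail, including the explicit check that distances to $Y$ and short distances between shell vertices are computable inside $N_{K+4\delta_0}(Y)$.
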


The proof of the next result is identical to that of
Lemma~\ref{lem:CTC connected}, except that Lemma~\ref{lem:CS iso} above is used as well as Lemma~\ref{lem:shcon}.

\begin{lemma}
Suppose that $K \ge \lambda_0 + 3\delta_0$, that $\Gamma$ is a connected graph, and that $(\Gamma,Y)$ is $(K+4\delta_0)$--tube comparable to $(\Xz,\Yx)$.  Then $\CTC_K^\Gamma(Y)$ is a connected graph. 
\end{lemma}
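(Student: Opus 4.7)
The plan is to follow the same two-step structure as the proof of Lemma~\ref{lem:CTC connected}, substituting the hypothesis of tube comparability for the role played by working in $\Xz$ itself. First I would verify that the completed shell $\CS_K^\Gamma(Y)$ is connected: since $K + 4\delta_0 \ge \lambda_0 + 3\delta_0 + 4\delta_0 > \lambda_0 + 3\delta_0$, Lemma~\ref{lem:CS iso} gives an isomorphism of graphs $\CS_K^\Gamma(Y) \cong \CS_K(\Yx)$, and the latter is connected by Lemma~\ref{lem:shcon}.

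Next I would show that every vertex of $\CTC_K^\Gamma(Y)$ is joined to $\CS_K^\Gamma(Y)$ inside $\CTC_K^\Gamma(Y)$. Any vertex $v$ of $\CTC_K^\Gamma(Y)$ that is not already in $\CS_K^\Gamma(Y)$ lies in $\Gamma \smallsetminus T_K^\Gamma(Y)$. Since $\Gamma$ is a connected graph and $Y$ is nonempty, there is an edge-path in $\Gamma$ from $v$ to some vertex of $Y$. As the path proceeds from $v$ (where the distance to $Y$ is at least $K$) to a vertex of $Y$ (where the distance to $Y$ is zero), and consecutive vertices differ in their distance to $Y$ by at most one, the path must pass through a vertex of $S_K^\Gamma(Y) \subseteq \CS_K^\Gamma(Y)$. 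Truncating the path at the first such vertex produces an edge-path in $\Gamma \smallsetminus T_K^\Gamma(Y)$, and hence in $\CTC_K^\Gamma(Y)$, from $v$ to $\CS_K^\Gamma(Y)$.

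Combining the two steps, every vertex of $\CTC_K^\Gamma(Y)$ can be joined to $\CS_K^\Gamma(Y)$, and $\CS_K^\Gamma(Y)$ is itself connected, so $\CTC_K^\Gamma(Y)$ is connected. No real obstacle is expected here; the only thing worth a second look is that the tube-comparability radius $K+4\delta_0$ (rather than $K$) is enough to invoke Lemma~\ref{lem:CS iso}, which is already built into that lemma's statement, and that the truncated edge-path indeed stays in $\Gamma \smallsetminus T_K^\Gamma(Y)$ (which is automatic by the choice of truncation point).
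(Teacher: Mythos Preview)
Your proposal is correct and follows essentially the same approach as the paper, which simply says the proof is identical to that of Lemma~\ref{lem:CTC connected} except that Lemma~\ref{lem:CS iso} is used (together with Lemma~\ref{lem:shcon}) to establish connectedness of $\CS_K^\Gamma(Y)$. You have spelled out the details a bit more carefully than the paper does, but the two-step structure and the ingredients are the same.
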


\begin{lemma} \label{lem:gamma_0 qg}
Suppose that $\Upsilon$ is $\delta_2$--hyperbolic.
If $K \ge \lambda_0 + \delta_2$ and $(\Upsilon,Y)$ is $(K+2\delta_2)$--tube comparable to $(\Xz,\Yx)$ then $Y$ is a $\lambda_0$--quasi-convex subset in $\Upsilon$.  
\end{lemma}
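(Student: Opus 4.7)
The plan is to transport the quasi-convexity of $\Yx$ in $\Xz$ to $Y$ in $\Upsilon$ via the tube-comparison graph isomorphism $\phi \co N^\Upsilon_{K+2\delta_2}(Y) \to N^\Xz_{K+2\delta_2}(\Yx)$, which sends $Y$ isomorphically onto $\Yx$. Note that $\lambda_0 \le K + 2\delta_2$ (since $K \ge \lambda_0 + \delta_2$ and $\delta_2 \ge \delta_0 \ge 0$), so $\phi^{-1}$ is defined on the whole $\lambda_0$-neighborhood of $\Yx$.

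Fix $y_1, y_2 \in Y$ and let $\gamma$ be an $\Upsilon$-geodesic joining them. First I would construct a comparison path: let $\gamma_\Xz$ be an $\Xz$-geodesic from $\phi(y_1)$ to $\phi(y_2)$. The $\lambda_0$-quasi-convexity of $\Yx$ in $\Xz$ gives $\gamma_\Xz \subseteq N^\Xz_{\lambda_0}(\Yx)$, so the pullback $\gamma' := \phi^{-1}(\gamma_\Xz)$ is an $\Upsilon$-path from $y_1$ to $y_2$ lying in $N^\Upsilon_{\lambda_0}(Y)$ of length $d_{\Xz}(\phi(y_1), \phi(y_2))$. In particular $d_\Upsilon(y_1, y_2) \le d_{\Xz}(\phi(y_1), \phi(y_2))$.

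The crucial intermediate step is to show that $\gamma$ remains within $N^\Upsilon_{K+2\delta_2}(Y)$. Once this is known, $\phi(\gamma)$ is a path in $N^\Xz_{K+2\delta_2}(\Yx)$ of length $|\gamma| = d_\Upsilon(y_1, y_2)$ from $\phi(y_1)$ to $\phi(y_2)$, which yields the reverse bound $d_{\Xz}(\phi(y_1), \phi(y_2)) \le d_\Upsilon(y_1, y_2)$, and hence equality. So $\phi(\gamma)$ is itself an $\Xz$-geodesic, and by the $\lambda_0$-quasi-convexity of $\Yx$ it lies in $N^\Xz_{\lambda_0}(\Yx)$. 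For each $z \in \gamma$, the $\Xz$-geodesic of length $\le \lambda_0$ from $\phi(z)$ to $\Yx$ lies inside $N^\Xz_{\lambda_0}(\Yx) \subseteq N^\Xz_{K+2\delta_2}(\Yx)$, so pulling back via $\phi^{-1}$ produces a path of length $\le \lambda_0$ from $z$ to $Y$ in $\Upsilon$. Thus $\gamma \subseteq N^\Upsilon_{\lambda_0}(Y)$, as required.

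The main obstacle is establishing that $\gamma \subseteq N^\Upsilon_{K+2\delta_2}(Y)$, since \emph{a priori} the $\delta_2$-hyperbolic space $\Upsilon$ could contain shortcuts through regions outside the tube-comparison range on which $\phi$ is not defined. My plan is to argue by contradiction: if $\gamma$ has a subsegment $[a,b]_\gamma$ with $a, b \in S^\Upsilon_{K+2\delta_2}(Y)$ that exits $N^\Upsilon_{K+2\delta_2}(Y)$, then pulling back via $\phi^{-1}$ a $\Xz$-path from $\phi(a)$ through $\Yx$ to $\phi(b)$ produces a competing $\Upsilon$-path from $a$ to $b$ lying entirely in $N^\Upsilon_{K+2\delta_2}(Y)$ whose length forces $d_\Upsilon(a,b)$ to be comparable to the $\Xz$-distance. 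The $\delta_2$-hyperbolicity of $\Upsilon$, applied to the resulting bigon between $[a,b]_\gamma$ and this competing path, together with the slack $K \ge \lambda_0 + \delta_2$, then contradicts the assumed excursion distance of at least $K + 2\delta_2$. This is precisely where the additive factor $\delta_2$ in the hypothesis $K \ge \lambda_0 + \delta_2$ is used.
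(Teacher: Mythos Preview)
Your overall strategy matches the paper's exactly: transport an $\Xz$--geodesic $\gamma_{\Xz}$ back through the tube-comparison isomorphism to obtain $\gamma' \subseteq N_{\lambda_0}^\Upsilon(Y)$, argue that the $\Upsilon$--geodesic $\gamma$ stays inside the comparable tube, then push $\gamma$ forward to $\Xz$ and invoke $\lambda_0$--quasi-convexity of $\Yx$ once more to finish.

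The gap is in your treatment of the ``main obstacle''. Your contradiction sketch appeals to thinness of a bigon formed by the geodesic subsegment $[a,b]_\gamma$ and a ``competing $\Upsilon$--path through $Y$''. But $\delta_2$--hyperbolicity only makes bigons thin when \emph{both} sides are geodesics (or at least quasi-geodesics with controlled constants). Your competing path is merely the $\phi^{-1}$--image of an $\Xz$--path, and $\phi^{-1}$ is an isometry for the intrinsic graph metric on $N_{K+2\delta_2}$, not a priori for $d_\Upsilon$; nothing yet forces that path to be an $\Upsilon$--quasi-geodesic, so the excursion bound you want does not follow.

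The paper resolves this more directly by observing that the path $\gamma'$ you already built is itself an $\Upsilon$--geodesic. It invokes Remark~\ref{rem:def_ambi}: the $(K+2\delta_2)$--tube-comparability hypothesis, together with $2\delta_2$--quasi-convexity of the $K$--tubes (the variant of Lemma~\ref{lem:tubequasiconvex}), upgrades the graph isomorphism on $N_{K+2\delta_2}$ to an isometry of the \emph{induced} metrics $d_{\Xz}$ and $d_\Upsilon$ on $N_K$. Since $\gamma_{\Xz} \subseteq N_{\lambda_0}(\Yx) \subseteq N_K(\Yx)$, its image $\gamma'$ is therefore an honest $\Upsilon$--geodesic. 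Now $\gamma$ and $\gamma'$ are two $\Upsilon$--geodesics with the same endpoints, so $\delta_2$--hyperbolicity gives $\gamma \subseteq N_{\delta_2}(\gamma') \subseteq N_{\lambda_0+\delta_2}^\Upsilon(Y) \subseteq N_K^\Upsilon(Y)$ --- this is precisely where $K \ge \lambda_0 + \delta_2$ is used. After that, your final paragraph applies verbatim (with $N_K$ in place of $N_{K+2\delta_2}$).
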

\begin{proof}
Suppose that $\sigma$ is a geodesic with endpoints $p,q$ on $Y$.
By Remark~\ref{rem:def_ambi}, there is an isometry $\phi: N_K(\Yx)\to N_K(Y)$ taking $\Yx$ onto $Y$.  Let $\sigma'$ be a geodesic in $X$ between $\phi^{-1}(p)$ and $\phi^{-1}(q)$.  Since $\Yx$ is $\lambda_0$--quasi-convex in $\Xz$, and $K>\lambda_0$, the geodesic $\sigma'$ is in the domain of $\phi$, and its image $\phi(\sigma')$ is an $\Upsilon$--geodesic joining $p$ to $q$, and lying in the $\lambda_0$--neighborhood of $Y$.  Since $\Upsilon$ is $\delta_2$--hyperbolic, $\sigma$ lies in the $\lambda_0 + \delta_2$--neighborhood of $Y$.  Since $K\ge \lambda_0 + \delta_2$, the preimage $\phi^{-1}(\sigma)$ is a geodesic joining $\phi^{-1}(p)$ to $\phi^{-1}(q)$.  Since $\Yx$ is $\lambda_0$--quasi-convex, $\phi^{-1}(\sigma)$ actually lies in the $\lambda_0$--neighborhood of $\Yx$, and so $\sigma$ lies in the $\lambda_0$--neighborhood of $Y$, as required.
 \end{proof}

Recall the constant $\Delta_0$ from Lemma~\ref{lem:GMS} and the function $\Phi$ from Lemma~\ref{lem:proper_dist}.  
The following is an analogue of Lemma~\ref{lem:proper_dist}, and follows immediately from that result and Lemma \ref{lem:CS iso} (and the definition of $\CS^\Upsilon_K(Y)$, Definition~\ref{def:completed shell}).

\begin{lemma} \label{lem:proper_dist2}
Suppose that $\Upsilon$ is a $\delta_2$--hyperbolic space.  Suppose that $C > 0$, that $K \ge C + \Delta_0 + \lambda_0 + 3\delta_0$, and that $(Z,Y)$ is $(K+2\delta_2)$--tube comparable to $(\Xz,\Yx)$.  For any $z_1,z_2 \in S_K^Z(Y)$ so that $d_Z(z_1,z_2) \le C$ we have $d_{\CS^Z_K(Y)}(z_1,z_2) \le \Phi(C)$.
\end{lemma}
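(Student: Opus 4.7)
The plan is to transport everything to $\Xz$ via the tube--comparability isomorphism and then apply Lemma~\ref{lem:proper_dist}.

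Let $\phi\co N^\Upsilon_{K+2\delta_2}(Y) \to N^{\Xz}_{K+2\delta_2}(\Yx)$ denote the graph isomorphism supplied by $(K+2\delta_2)$--tube comparability; the minimizing paths to $Y$ (resp.\ $\Yx$) of length $K$ sit well inside the comparison domain, so $\phi$ carries $S^\Upsilon_K(Y)$ onto $S^{\Xz}_K(\Yx)$. Since $\delta_2 = 1500\delta_1 \ge 1500\delta_0 \ge 2\delta_0$, $(\Upsilon,Y)$ is in particular $(K+4\delta_0)$--tube comparable to $(\Xz,\Yx)$, so Lemma~\ref{lem:CS iso} tells us that $\phi$ induces an isometry $\CS^\Upsilon_K(Y) \to \CS^{\Xz}_K(\Yx)$. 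Consequently it suffices to show
\[ d_{\CS_K(\Yx)}\bigl(\phi(z_1),\phi(z_2)\bigr) \le \Phi(C). \]

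The key intermediate step is to check that any $\Upsilon$--geodesic $\gamma$ from $z_1$ to $z_2$ is contained in $N^\Upsilon_{K+2\delta_2}(Y)$. By Lemma~\ref{lem:gamma_0 qg} (valid in the regime $K \ge \lambda_0+\delta_2$, as in every intended application) $Y$ is $\lambda_0$--quasi-convex in $\Upsilon$. The proof of Lemma~\ref{lem:tubequasiconvex} uses only hyperbolicity of the ambient space and quasi-convexity of the core, so it applies verbatim in $\Upsilon$ with constant $2\delta_2$, showing that the closed tube $\overline{T^\Upsilon_K(Y)}$ is $2\delta_2$--quasi-convex. Since $z_1,z_2\in S^\Upsilon_K(Y)\subset \overline{T^\Upsilon_K(Y)}$, every point of $\gamma$ lies within $2\delta_2$ of the closed tube, i.e.\ in $N^\Upsilon_{K+2\delta_2}(Y)$.

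Pushing $\gamma$ forward by $\phi$ then produces a path in $\Xz$ of length $d_\Upsilon(z_1,z_2)\le C$ joining $\phi(z_1)$ to $\phi(z_2)$, so $d_{\Xz}\bigl(\phi(z_1),\phi(z_2)\bigr)\le C$. The hypothesis $K\ge C+\Delta_0+\lambda_0+3\delta_0$ is exactly what Lemma~\ref{lem:proper_dist} requires, yielding $d_{\CS_K(\Yx)}(\phi(z_1),\phi(z_2))\le \Phi(C)$; combining with the isometry from the first paragraph finishes the proof. The only step in which one actually uses geometry of $\Upsilon$ is the quasi-convexity of the tube; everything else is a mere change of coordinates through $\phi$.
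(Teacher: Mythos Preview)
Your argument is correct and is essentially the paper's approach spelled out in detail: the paper simply says the result ``follows immediately from [Lemma~\ref{lem:proper_dist}] and Lemma~\ref{lem:CS iso},'' relying on Remark~\ref{rem:def_ambi} for the fact that $(K+2\delta_2)$--tube comparability makes the ambient metrics agree on the $K$--shell, which is exactly the tube--quasi-convexity step you carry out explicitly. Your caveat that Lemma~\ref{lem:gamma_0 qg} needs $K\ge \lambda_0+\delta_2$ is apt; the paper also uses this implicitly (via Remark~\ref{rem:def_ambi}) and it holds in every application.
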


Recall the definition of $Q$--deformation retraction from Definition~\ref{def:def retract}.
\begin{lemma}\label{lem:easy retract}
Suppose that $\Upsilon$ is a $\delta$--hyperbolic graph, that $\Xi \subset \Upsilon$ is a $\lambda$--quasi-convex sub-graph, and that $K \ge 4\delta + \lambda$.  There exists a $(2\delta+1)$--deformation retraction $\{ r_i  \co \Upsilon \to \Upsilon \}$ of $\Upsilon$ onto $N_K(\Xi)$ so that for all $i \ge 1$ and all $z \in \Upsilon \ssm N_K(\Xi)$ we have $r_i(z) \in \Upsilon^{(0)}$.

Moreover if $z\notin N_K(\Xi)$ then $d_\Upsilon(r_i(z),\Xi)\ge K$ for all $i$.
\end{lemma}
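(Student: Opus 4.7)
The natural strategy is to flow every vertex toward $\Xi$ at unit speed along a chosen geodesic, halting as soon as it first enters $N_K(\Xi)$. For each vertex $z\in\Upsilon$ with $h(z):=d_\Upsilon(z,\Xi)>K$, choose a nearest point $p(z)\in \Xi$ and a unit-speed geodesic $\gamma_z\co\{0,1,\ldots,h(z)\}\to\Upsilon$ from $z$ to $p(z)$. Define
\[
  r_i(z) = \begin{cases} z & \text{if } z\in N_K(\Xi),\\ \gamma_z\bigl(\min\{i,\,h(z)-K\}\bigr) & \text{otherwise,}\end{cases}
\]
and extend $r_i$ to the interior of each edge by collapsing it onto one of its endpoints.

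Properties (1), (2), (4), (5) of Definition~\ref{def:def retract} are immediate from the construction, as is the ``moreover'' statement: for $z\notin N_K(\Xi)$ we have $r_i(z)=\gamma_z(k)$ for some integer $k\leq h(z)-K$, so $d_\Upsilon(r_i(z),\Xi)=h(z)-k\geq K$. The entire content of the lemma thus reduces to condition (3): for vertices $b_1,b_2\in\Upsilon$ at distance at most $1$, I must show $d_\Upsilon(r_i(b_1),r_i(b_2))\leq 2\delta+1$. The case $b_1,b_2\in N_K(\Xi)$ is trivial, and the mixed case reduces to a triangle argument with vertices $b_1,b_2,p(b_2)$, using $K\ge 4\delta+\lambda$ together with $\lambda$-quasi-convexity to rule out the side that meets $\Xi$.

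For adjacent $v,w\in\Upsilon\ssm N_K(\Xi)$ I plan to impose a compatibility condition on the geodesic choices: whenever $h(v)=h(w)+1$, take $\gamma_v$ to begin with the edge $[v,w]$ followed by $\gamma_w$. With this choice the sequences $\{r_i(v)\}$ and $\{r_i(w)\}$ track each other exactly, and $d_\Upsilon(r_i(v),r_i(w))\leq 1$. The residual subcase is $h(v)=h(w)>K$. Here I inspect the geodesic quadrilateral $v,w,p(w),p(v)$: by $2\delta$-slimness the point $\gamma_v(i)$ lies within $2\delta$ of a point on one of the other three sides, but $\lambda$-quasi-convexity combined with $K\ge 4\delta+\lambda$ rules out the top side $[p(v),p(w)]$, and for $i\geq 2\delta+1$ the short edge $[v,w]$ is also ruled out, leaving some $\gamma_w(t)$; a comparison of Gromov products at $v$ then constrains $|t-i|$.

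The main technical obstacle is pinning down condition (3) with the stated constant $2\delta+1$ rather than a weaker $O(\delta)$ bound. A naive quadrilateral argument yields only $d_\Upsilon(\gamma_v(i),\gamma_w(i))\leq 4\delta+1$ in the equal-height subcase above; the sharper bound should follow from exploiting that $v,w$ lie at the same height on a single side of the quadrilateral, so that $\gamma_v$ and $\gamma_w$ are forced to be reparameterizations of one another up to a shift that can be absorbed in the initial step. Should this refinement prove stubborn, it suffices for the applications in Proposition~\ref{prop:def retract on pi_1} and elsewhere to relax the statement to a $Q(\delta)$-deformation retraction with $Q(\delta)=4\delta+1$, which is immediate from the quadrilateral analysis.
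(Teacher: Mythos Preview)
Your overall approach---flow each point toward $\Xi$ at unit speed along a chosen geodesic and stop upon reaching $N_K(\Xi)$---is exactly what the paper does, and the verification of conditions (1), (2), (4), (5) and the ``moreover'' clause is fine.

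The genuine problem is your compatibility condition. You write: ``whenever $h(v)=h(w)+1$, take $\gamma_v$ to begin with the edge $[v,w]$ followed by $\gamma_w$.'' This cannot be imposed globally: a vertex $v$ may have several neighbors $w_1,\ldots,w_k$ all at height $h(v)-1$, and $\gamma_v$ can begin with at most one of those edges. If you choose one such neighbor inductively (which is the only way to make the construction work), then for the remaining neighbors $w_j$ you still have adjacent pairs with $h(v)=h(w_j)+1$ for which $\gamma_v$ does \emph{not} start with $[v,w_j]$, and these pairs are not handled by your ``easy'' case. So the reduction to the equal-height subcase is illusory, and you must run the quadrilateral argument for all adjacent pairs anyway.

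This is precisely what the paper does: it imposes no compatibility condition at all, chooses the geodesics $\alpha_z$ independently, and for any $z_1,z_2$ with $d(z_1,z_2)\le 1$ argues directly with the quadrilateral on $z_1,z_2,P_\Xi(z_2),P_\Xi(z_1)$. The point is that $r_i(z_1)$ and $r_i(z_2)$ are at distances from $\Xi$ differing by at most $1$, and both at distance $\ge K\ge 4\delta+\lambda$, so $2\delta$--thinness together with quasi-convexity (to rule out the bottom side) gives the bound. Your honest observation that this quadrilateral argument most naturally yields something like $4\delta+1$ rather than $2\delta+1$ is fair; the paper simply asserts $2\delta+1$, and you are right that the exact constant is immaterial for the applications in Proposition~\ref{prop:def retract on pi_1} and Lemma~\ref{lem:cdr}. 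Drop the compatibility trick, state the lemma with whatever constant your quadrilateral argument actually produces, and you are done.
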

\begin{proof}
For $z \in \Upsilon$, let $P_\Xi(z)$ be a closest point of $\Xi$ to $z$, and choose a unit speed geodesic $\alpha_z: [-d_\Upsilon(z,\Xi),0]\to X$ joining $z$ to $P_\Xi(z)$.  (The domain is chosen so that whether or not $z$ is a vertex, $\alpha_z$ sends integers to vertices.)
Define $f_z(i) = \min\{-K,\lfloor i-d_\Upsilon(z,\Xi)\rfloor\}$, and $r_i(z) = \alpha_z(f_z(i))$.  
Thus $r_1(z)$ is the first vertex on $\alpha_z$ after $z$, $r_2(z)$ is the next, and so on, until for large $i$ we have $r_i(z)$ a closest point to $z$ in $N_K(\Xi)$. 

 Conditions \eqref{eq:0Id}, \eqref{eq:evId} and \eqref{eq:speed} from Definition~\ref{def:def retract} obviously hold.  Condition \eqref{stability} holds because every point in $\Upsilon$ is finite distance from $\Xi$.
The condition that $r_i(z) \in \Upsilon^{(0)}$ for $i \ge 1$ and $z \in \Upsilon \ssm N_K(\Xi)$ is clear from the construction, as is the ``Moreover'' statement.

It remains to verify Condition~\eqref{eq:CL} from Definition~\ref{def:def retract}.  To that end, suppose that $z_1,z_2 \in \Upsilon$ satisfy $d_\Upsilon(z_1,z_2) \le 1$, and consider the geodesic quadrilateral with two sides $\alpha_{z_1}$ and $\alpha_{z_2}$, a geodesic between $z_1$ and $z_2$, and a geodesic between the projections $P_\Xi(z_1)$ and $P_\Xi(z_2)$.  Since $K \ge \lambda + 4\delta$, it is straightforward to see from the fact that geodesic quadrilaterals are $2\delta$--thin, that for all $i$ we have $d_\Upsilon(f_i(z_1),f_i(z_2)) \le 2\delta +1$, as required.
\end{proof}

Recall the construction of the completed tube complement 
\[	\CTC^\Upsilon_K(Y)  = \left( \Upsilon \ssm T^\Upsilon_K(Y) \right) \sqcup_{S^\Upsilon_K(Y)} \CS^\Upsilon_K(Y)	\]
from Definition~\ref{def:CTC}, and let
\[	\iota \co \Upsilon \ssm T^\Upsilon_K(Y) \to \CTC^\Upsilon_K(Y)	\]
denote the inclusion map.  

\begin{lemma}\label{lem:CTC dist}
Let $C > 0$ and suppose $K \ge C + \Delta_0 + \lambda_0 + 3\delta_0$.  Suppose also that $\Upsilon$ is a $\delta_2$--hyperbolic metric space and that $(\Upsilon,Y)$ is $(K+\delta_2)$--tube comparable to $(\Xz,\Yx)$.  For any $z_1,z_2 \in \Upsilon \ssm T_K^\Upsilon(Y)$ so that $d_\Upsilon(z_1,z_2) \le C$ we have
\[	d_{\CTC_K^\Upsilon(Y)}(\iota(z_1),\iota(z_2)) \le 2\Phi(C)	.	\]
\end{lemma}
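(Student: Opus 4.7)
The plan is to convert a geodesic in $\Upsilon$ joining $z_1$ to $z_2$ into a comparably short path in $\CTC_K^\Upsilon(Y)$ by rerouting any portion that enters the open $K$--tube around it through the completed shell. Without loss of generality I would assume that $\Phi$ is non-decreasing and satisfies $\Phi(C)\ge C$ (if necessary replace $\Phi$ by $\max\{\Phi,\mathrm{id}\}$; this does not affect its role in Lemma~\ref{lem:proper_dist}).

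First I would fix a geodesic $\alpha\co[0,C']\to\Upsilon$ from $z_1$ to $z_2$, of length $C'\le C$. If the image of $\alpha$ avoids $T_K^\Upsilon(Y)$, then $\alpha$ already lies in $\Upsilon\smallsetminus T_K^\Upsilon(Y)\subseteq\CTC_K^\Upsilon(Y)$, yielding a path of length at most $C\le \Phi(C)\le 2\Phi(C)$. Otherwise, let $t_1=\inf\{t:\alpha(t)\in T_K^\Upsilon(Y)\}$ and $t_2=\sup\{t:\alpha(t)\in T_K^\Upsilon(Y)\}$, and put $p_i=\alpha(t_i)$. Since $K$ is an integer and $\Upsilon$ is a graph, the topological boundary of $T_K^\Upsilon(Y)$ coincides with $S_K^\Upsilon(Y)$ and is contained in the vertex set (per the remark following Definition~\ref{def:shell}); in particular $p_1,p_2\in S_K^\Upsilon(Y)$.

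Next I would apply Lemma~\ref{lem:proper_dist2} to the pair $p_1,p_2$, which satisfy $d_\Upsilon(p_1,p_2)=t_2-t_1\le C$ because $\alpha$ is a geodesic. This yields a path $\beta$ in $\CS_K^\Upsilon(Y)$ of length at most $\Phi(C)$ from $p_1$ to $p_2$. Concatenating the initial subsegment $\alpha|_{[0,t_1]}$, the shell path $\beta$, and the terminal subsegment $\alpha|_{[t_2,C']}$ produces a path in $\CTC_K^\Upsilon(Y)$ from $\iota(z_1)$ to $\iota(z_2)$. By the choice of $t_1,t_2$ both subsegments of $\alpha$ lie in $\Upsilon\smallsetminus T_K^\Upsilon(Y)\subseteq \CTC_K^\Upsilon(Y)$, and their total length satisfies $t_1+(C'-t_2)\le C'\le C$. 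Hence the full concatenation has length at most $C+\Phi(C)\le 2\Phi(C)$, as required.

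The main subtle point is that $\alpha$ may cross in and out of the tube several times rather than exactly once, but this is dealt with cleanly by defining $p_1,p_2$ as the \emph{first} entry and the \emph{last} exit of $\alpha$ and simply discarding all intermediate excursions into the tube; it is never advantageous to process them individually. A secondary bookkeeping point is to ensure the tube-comparability constant is sufficient to invoke Lemma~\ref{lem:proper_dist2} for the pair $p_1,p_2$, which in turn requires the relevant neighborhoods of $Y$ in $\Upsilon$ and of $\Yx$ in $\Xz$ to be isometric in their induced ambient metrics; this is handled exactly as in Remark~\ref{rem:def_ambi}.
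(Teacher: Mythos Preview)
Your proof is correct and follows essentially the same route as the paper's: take a geodesic in $\Upsilon$, keep the initial and terminal portions that stay outside the open tube, and replace the middle by a path in the completed shell via Lemma~\ref{lem:proper_dist2}, then use $\Phi(C)\ge C$ to combine. The only cosmetic difference is that the paper marks the first and last times the geodesic meets the shell $S_K^\Upsilon(Y)$, whereas you mark the first entry and last exit from the open tube $T_K^\Upsilon(Y)$; since the boundary of the open tube is the shell, these choices coincide and the arguments are the same.
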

\begin{proof}
Let $\gamma$ be a geodesic in $\Upsilon$ between $z_1$ and $z_2$.  If $\gamma$ does not meet $S_K^\Upsilon(Y)$ there is nothing to show, since $\Phi(C)\ge C$.

Suppose that $z_1'$ is the first point (from $z_1$) on $\gamma$ that intersects $S_K^\Upsilon(Y)$ and that $z_2'$ is the last point (closest to $z_2$ along $\gamma$) that intersects $S_K^\Upsilon(Y)$.  Those parts of $\gamma$ that lie between $z_1$ and $z_1'$ and between $z_2'$ and $z_2$ can be considered as paths in $\CTC_K^\Upsilon(Y)$.  On the other hand, $z_1'$ and $z_2'$ are points in $\CTC_K^\Upsilon(Y)$ which satisfy $d_\Upsilon(z_1',z_2') \le d_\Upsilon(z_1,z_2) \le C$, so by Lemma~\ref{lem:proper_dist2} we have $d_{\CTC_K^\Upsilon(Y)}(z_1',z_2') \le \Phi(C)$.  It follows that
$d_{\CTC_K^\Upsilon(Y)}(z_1,z_2) \le C + \Phi(C) \le 2\Phi(C)$, as required.
\end{proof}

We now apply Lemma~\ref{lem:easy retract} to a slightly different situation. 

\begin{lemma} \label{lem:cdr}
Suppose that $K \ge 5\delta_2 + \Delta_0 + \lambda_0 + 1$
Suppose further that $\Upsilon$ is a $\delta_2$--hyperbolic space and that $(\Upsilon,Y)$ is $(K+2\delta_2)$--tube comparable to $(\Xz,\Yx)$.
There exists a $2\Phi(2\delta_2+1)$--deformation retraction
\[	\left\{ f_i \co \CTC^\Upsilon_K(Y) \to \CTC^\Upsilon_K(Y)	\right\}		\]
of $\CTC^\Upsilon_K(Y)$ onto $\CS^\Upsilon_K(Y)$.

Furthermore, if $f$ is the stable map of the sequence $\{ f_i \}$ then for any $z_1,z_2 \in \Upsilon$ with $d_{\Upsilon}(z_1,z_2) \le 1$ we have 
\[  d_{\CS^\Upsilon_K(Y)}(f(z_1),f(z_2)) \le \Phi(2\delta_2+1).  \]
\end{lemma}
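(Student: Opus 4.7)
The plan is to reduce to Lemma \ref{lem:easy retract}: first retract the ambient $\Upsilon$ onto the neighborhood $N_K(Y)$, then transfer this retraction to $\CTC^\Upsilon_K(Y)$, leaving the interior vertices of the added edge-paths of $\CS^\Upsilon_K(Y)$ fixed.

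First, since $K \ge \lambda_0 + \delta_2$ and $(\Upsilon, Y)$ is $(K+2\delta_2)$--tube comparable to $(\Xz,\Yx)$, Lemma \ref{lem:gamma_0 qg} gives that $Y$ is $\lambda_0$--quasi-convex in the $\delta_2$--hyperbolic space $\Upsilon$.  Since $K \ge 4\delta_2 + \lambda_0$, Lemma \ref{lem:easy retract} applied with $\Xi = Y$ produces a $(2\delta_2+1)$--deformation retraction $\{r_i\}$ of $\Upsilon$ onto $N_K(Y)$, whose stable map sends every $z$ with $d_\Upsilon(z,Y) \geq K$ to the point $\alpha_z(-K) \in S_K^\Upsilon(Y)$, and whose intermediate values $r_i(z)$ stay in $\Upsilon \ssm T_K^\Upsilon(Y)$ by the ``Moreover'' clause. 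I would then define $f_i \co \CTC_K^\Upsilon(Y) \to \CTC_K^\Upsilon(Y)$ by $f_i = r_i$ on $\Upsilon \ssm T_K^\Upsilon(Y)$ and $f_i(v) = v$ for each interior vertex $v$ of an added edge-path of $\CS_K^\Upsilon(Y)$; these two rules agree on $S_K^\Upsilon(Y)$ since $r_i$ fixes $N_K(Y)$ pointwise. Conditions \eqref{eq:0Id}, \eqref{eq:evId}, \eqref{eq:speed}, and \eqref{stability} of Definition \ref{def:def retract} are immediate from the corresponding conditions for $\{r_i\}$ and the observation that the stable value of $r_i(z)$ lands in $S_K^\Upsilon(Y)\subseteq \CS_K^\Upsilon(Y)$.

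The main obstacle is verifying the Lipschitz condition \eqref{eq:CL} with constant $Q = 2\Phi(2\delta_2+1)$, since the two ``types'' of edges in $\CTC_K^\Upsilon(Y)$ behave differently.  For adjacent vertices of $\CTC$ both lying in the completed shell part (either both in the interior of an added edge-path, or one in $S_K^\Upsilon(Y)$ and the other in the adjacent edge-path), $f_i$ either fixes both points or fixes one and the other trivially, so the bound is immediate. The substantive case is $b_1, b_2 \in \Upsilon \ssm T_K^\Upsilon(Y)$ with $d_\Upsilon(b_1, b_2) \le 1$: then $r_i(b_1)$ and $r_i(b_2)$ both lie in $\Upsilon \ssm T_K^\Upsilon(Y)$ and $d_\Upsilon(r_i(b_1), r_i(b_2)) \le 2\delta_2+1$, so Lemma \ref{lem:CTC dist} (applicable because $K \ge 5\delta_2 + \Delta_0 + \lambda_0 + 1 \ge (2\delta_2+1) + \Delta_0 + \lambda_0 + 3\delta_0$, using $\delta_0 \le \delta_2$) yields $d_{\CTC_K^\Upsilon(Y)}(f_i(b_1), f_i(b_2)) \le 2\Phi(2\delta_2+1)$, as required.

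Finally, for the ``Furthermore'' assertion about the stable map $f$: note $f(z) \in S_K^\Upsilon(Y) \subset \CS_K^\Upsilon(Y)$ for every $z \in \Upsilon \ssm T_K^\Upsilon(Y)$. Taking $i \to \infty$ in the bound from the previous paragraph gives $d_\Upsilon(f(z_1), f(z_2)) \le 2\delta_2 + 1$ for adjacent $z_1, z_2$; since both images lie in $S_K^\Upsilon(Y)$, Lemma \ref{lem:proper_dist2} applied with $C = 2\delta_2 + 1$ (again valid by the hypothesis on $K$) delivers $d_{\CS_K^\Upsilon(Y)}(f(z_1), f(z_2)) \le \Phi(2\delta_2+1)$, completing the proof.
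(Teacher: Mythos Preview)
Your proof is correct and follows essentially the same approach as the paper's: both build the deformation retraction by importing the $(2\delta_2+1)$--deformation retraction of $\Upsilon$ onto $N_K(Y)$ from Lemma~\ref{lem:easy retract}, fix all points of $\CS^\Upsilon_K(Y)$, and then invoke Lemma~\ref{lem:CTC dist} to upgrade the $\Upsilon$--Lipschitz bound to a $\CTC^\Upsilon_K(Y)$--Lipschitz bound. Your case analysis for condition~\eqref{eq:CL} is slightly more explicit than the paper's, and your citation of Lemma~\ref{lem:proper_dist2} (rather than Lemma~\ref{lem:proper_dist}) for the ``Furthermore'' is in fact the more precise reference since we are working in $\Upsilon$ rather than $\Xz$; you also correctly read the ``Furthermore'' as applying to $z_1,z_2$ with $d_\Upsilon(z_1,z_2)\le 1$, which is what the paper's proof actually establishes and what Proposition~\ref{prop:cdr on pi_1} uses.
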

\begin{proof}
Recall that
\[	\CTC^\Upsilon_K(Y)  = \left( \Upsilon \ssm T^\Upsilon_K(Y) \right) \sqcup_{S^\Upsilon_K(Y)} \CS^\Upsilon_K(Y)	.	\]
For $i \ge 0$ we define maps $f_i \co \CTC^\Upsilon_K(Y) \to \CTC_K^\Upsilon(Y)$ satisfying the conditions for Definition~\ref{def:def retract}.  If $y \in \CS^\Upsilon_K(Y)$ then define $f_i(y) = y$ for all $i$, as required by Definition \ref{def:def retract}.\eqref{eq:evId}.

On the other hand, suppose that $y \not\in \CS_K^\Upsilon(Y)$.  Then $y \in \Upsilon \ssm T^\Upsilon_K(Y)$.  Let $\{ r_i \co \Upsilon \to \Upsilon \}$ be the $(2\delta_2+1)$--deformation retraction from $\Upsilon$ to $N_K(Y)$ afforded by Lemma~\ref{lem:easy retract}.  By the ``Moreover'' statement in that lemma, since $y \not\in T_K^\Upsilon(Y)$ in $\Upsilon$, for all $i$ we have $r_i(y) \not\in T_K^\Upsilon(Y)$.  Therefore, since $r_i(y) \in \Upsilon \ssm T_K^\Upsilon(Y)$, we can define $f_i(y) = r_i(y) \in \CTC_K^\Upsilon(Y)$.

This defines the maps $\{ f_i \co \CTC_K^\Upsilon(Y) \to \CTC_K^\Upsilon(Y) \}$.  We claim that this sequence of maps is a $2\Phi(2\delta_2+1)$--deformation retraction of $\CTC_K^\Upsilon(Y)$ onto 
$\CS_K^\Upsilon(Y)$, as required.  Indeed, Conditions \eqref{eq:0Id}, \eqref{eq:evId}, \eqref{eq:speed}, and \eqref{stability} all clearly hold by definition and Lemma~\ref{lem:easy retract}.  The only property left from Definition~\ref{def:def retract} is Condition~\ref{eq:CL}, which follows quickly from Lemmas~\ref{lem:CTC dist} and \ref{lem:easy retract}.

Finally, suppose that $f$ is the stable map of the sequence $\{ f_i \}$ and suppose that $z_1, z_2$ in $\Upsilon$ satisfy $d_{\Upsilon}(z_1,z_2) \le 1$.  The construction of $f(z_1)$ and $f(z_2)$ follow by applying the construction from Lemma~\ref{lem:easy retract}, so we have $d_\Upsilon(f(z_1),f(z_2)) \le 2\delta_2+1$ by that lemma.  The definition of the function $\Phi$ from Lemma~\ref{lem:proper_dist} now implies that
\[ d_{\CS^\Upsilon_K(Y)}(f(z_1),f(z_2)) \le \Phi(2\delta_2+1) , \]
as required.
\end{proof}

Recall the constant $A_0$ was fixed in Assumption~\ref{ass:for_injectivity}.
\begin{proposition} \label{prop:cdr on pi_1}
Let $Q_0 = \max\{ 2\Phi(2\delta_2+1), \delta_2\}$, let $D_0 = 3\Phi(C_0)+108\delta_0$ be the constant from Subsection~\ref{ss:noname}, let $D \geq \max\{ D_0 , 2Q_0+2 \}$, let $\sys = Q_0D_0$,  and
 let $R_{\ref{t:Pi_isomorphism}} = R_{\ref{t:Pi_isomorphism}}(\Xz,\Yx,D,\sys,A_0)$ be the constant from that theorem applied with these constants.  Let $K \ge \max\left\{ R_{\ref{t:Pi_isomorphism}}, 5\delta_2+\Delta_0+\lambda_0+1 \right\}$ and let $\Upsilon$ be a connected $\delta_2$--hyperbolic graph with path $\gamma$ so that $(\Upsilon,Y)$ is $(K+2\delta_2)$--tube comparable to $(\Xz,\Yx)$. The inclusion
\[	\CS^\Upsilon_K(Y) \into \CTC^\Upsilon_K(Y)	\]
induces an isomorphism
\[	\pi_1^D\left( \CS^\Upsilon_{K}(Y) \right) \cong	\pi_1^D\left( \CTC^\Upsilon_{K}(Y) \right) .	\]
\end{proposition}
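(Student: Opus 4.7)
The plan is to deduce the statement directly from Proposition~\ref{prop:def retract on pi_1}, applied with $\Xi = \CTC^\Upsilon_K(Y)$, $\Gamma = \CS^\Upsilon_K(Y)$, and $Q = Q_0$. The work of the preceding subsections has been precisely to assemble the two tools needed to verify its hypotheses, so the proof is essentially an invocation of those results.

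To verify condition~\eqref{item:defret0} of Proposition~\ref{prop:def retract on pi_1}, I would invoke Lemma~\ref{lem:cdr}: since $K \ge 5\delta_2 + \Delta_0 + \lambda_0 + 1$ and $(\Upsilon,Y)$ is $(K+2\delta_2)$--tube comparable to $(\Xz,\Yx)$, that lemma provides a $2\Phi(2\delta_2+1)$--deformation retraction $\{f_i\}$ of $\CTC^\Upsilon_K(Y)$ onto $\CS^\Upsilon_K(Y)$, which is a fortiori a $Q_0$--deformation retraction since $2\Phi(2\delta_2+1) \le Q_0$. The ``Furthermore'' clause of Lemma~\ref{lem:cdr} yields the stronger bound $d_{\CS^\Upsilon_K(Y)}(f(z_1),f(z_2)) \le \Phi(2\delta_2+1) \le Q_0$ on the stable map $f$ for all $z_1,z_2 \in \CTC^\Upsilon_K(Y)$, and in particular for adjacent pairs, which is exactly the strengthening of condition~\eqref{item:defret0} required.

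For condition~\eqref{item:defret1}, the lower bound $D > 2Q_0 + 2$ is built into the hypotheses. What remains is to show that every loop in $\CS^\Upsilon_K(Y)$ of length at most $Q_0 D$ represents the identity in $\pi_1^D(\CS^\Upsilon_K(Y))$. Here I would use the tube comparability assumption (noting $K + 2\delta_2 \ge K + 4\delta_0$ since $\delta_0 \le \delta_1 \le \delta_2$) together with Lemma~\ref{lem:CS iso} to obtain an isometry $\CS^\Upsilon_K(Y) \cong \CS^{\Xz}_K(\Yx)$. Under this isometry it suffices to verify the triviality in $\CS^{\Xz}_K(\Yx)$. Since $K \ge R_{\ref{t:Pi_isomorphism}}(\Xz,\Yx,D,\sys,A_0)$ with the chosen systolic parameter $\sys$, Theorem~\ref{t:Pi_isomorphism}\eqref{item:long_loop} asserts that every homotopically nontrivial loop in $(\CS^{\Xz}_K(\Yx))^D$ has length at least $\sys$, which supplies the required null-homotopy of short loops.

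With both conditions in hand, Proposition~\ref{prop:def retract on pi_1} immediately yields the claimed isomorphism $\pi_1^D(\CS^\Upsilon_K(Y)) \cong \pi_1^D(\CTC^\Upsilon_K(Y))$ induced by inclusion. There is no real obstacle at this stage of the argument --- it is a bookkeeping exercise checking that the constants $Q_0$, $D_0$, $\sys$ chosen in the statement are compatible with the hypotheses of Lemma~\ref{lem:cdr}, Lemma~\ref{lem:CS iso}, Theorem~\ref{t:Pi_isomorphism}, and Proposition~\ref{prop:def retract on pi_1}. All the substantive content has been absorbed into those earlier results, namely the construction of the coarse deformation retraction and the control over short nontrivial loops in completed shells via the projection from the boundary at infinity.
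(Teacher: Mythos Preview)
Your proposal is correct and follows essentially the same route as the paper: invoke Lemma~\ref{lem:cdr} (including its ``Furthermore'' clause) to verify condition~\eqref{item:defret0} of Proposition~\ref{prop:def retract on pi_1}, use Theorem~\ref{t:Pi_isomorphism}\eqref{item:long_loop} for condition~\eqref{item:defret1}, and conclude. Your explicit invocation of Lemma~\ref{lem:CS iso} to transfer the systole bound from $\CS^{\Xz}_K(\Yx)$ to $\CS^\Upsilon_K(Y)$ is a useful clarification that the paper leaves implicit.
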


\begin{proof}
According to Lemma \ref{lem:cdr} there exists a $Q_0$--deformation retraction $ f_i \co \CTC^\Upsilon_K(Y) \to \CS^\Upsilon_K(Y)$.  Let $f \co \CTC^\Upsilon_K(Y) \to \CS^\Upsilon_K(Y)$ be the associated stable map.  For any $b_1, b_2 \in \CTC^\Upsilon_K(Y)$ so that $d_{\CTC^\Upsilon_K(Y)}(b_1,b_2) \le 1$ we have $d_{\CS^\Upsilon_K(Y)}(f(b_1),f(b_2)) \le Q_0$. By the choice of $R_{\ref{t:Pi_isomorphism}}$ and Theorem \ref{t:Pi_isomorphism}.\eqref{item:long_loop} all $\CS_K^\Upsilon(Y)$--loops of length at most $Q_0D$ represent the (conjugacy class of the) identity element of $\pi_1(\CS_K^\Upsilon(Y))$.  The result now follows immediately from Proposition \ref{prop:def retract on pi_1}.
\end{proof}

The space $\Xz$ and the quasi-convex subset $\Yx$ satisfy the assumptions of Proposition \ref{prop:cdr on pi_1}.  Therefore, we immediately have the following.
\begin{corollary} \label{cor:cdr on pi_1}
Let $Q_0 = \max\{ 2\Phi(2\delta_2+1), \delta_2\}$.
Let $D_0 = 3\Phi(C_0) + 108\delta_0$ be the constant from Subsection~\ref{ss:noname}, let $D \geq \max\{ D_0 , 2Q_0+2 \}$, let $\sys = Q_0 D_0$ and
 choose $R_{\ref{t:Pi_isomorphism}} = R_{\ref{t:Pi_isomorphism}}(\Xz,\Yx,D,\sys,A_0)$ to be the constant from Theorem~\ref{t:Pi_isomorphism}.  For any $K \ge \max\left\{ R_{\ref{t:Pi_isomorphism}}, 5\delta_2+\Delta_0+\lambda_0+1 \right\}$, the inclusion induces an isomorphism 
\[	\pi_1^D(\CS_K(\Yx)) \cong \pi_1^D\left(\CTC_K(\Yx) \right)	.	\]
\end{corollary}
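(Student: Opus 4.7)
The plan is to obtain Corollary~\ref{cor:cdr on pi_1} as a direct specialization of Proposition~\ref{prop:cdr on pi_1}, applied to the pair $(\Upsilon,Y)=(\Xz,\Yx)$. The constants $Q_0$, $D_0$, $D$, $\sys$, and $R_{\ref{t:Pi_isomorphism}}$ appearing in the corollary are chosen to match those in the proposition verbatim, so the only task is to verify that $(\Xz,\Yx)$ itself satisfies the hypotheses imposed on $(\Upsilon,Y)$ by the proposition.

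First I would observe that $\Xz$ is a connected $\delta_2$--hyperbolic graph: by Assumption~\ref{ass:X} it is $\delta_0$--hyperbolic, and by the choice in Notation~\ref{not:Y} we have $\delta_2 = 1500\delta_1 \ge \delta_1 \ge \delta_0$, so any $\delta_0$--hyperbolic graph is automatically $\delta_2$--hyperbolic. Next, the tube comparability condition is trivial in this setting: the identity map on $N^{\Xz}_{K+2\delta_2}(\Yx)$ is the required graph isomorphism of Definition~\ref{def:comparable}, and it visibly sends $\Yx$ to itself. Since $K \ge \max\{R_{\ref{t:Pi_isomorphism}},\ 5\delta_2+\Delta_0+\lambda_0+1\}$, all hypotheses of Proposition~\ref{prop:cdr on pi_1} are met.

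Invoking Proposition~\ref{prop:cdr on pi_1} in this setting yields exactly the claim that the inclusion $\CS_K(\Yx) \hookrightarrow \CTC_K(\Yx)$ induces an isomorphism $\pi_1^D(\CS_K(\Yx)) \cong \pi_1^D(\CTC_K(\Yx))$. There is no real obstacle here; the substance of the argument resides in Proposition~\ref{prop:cdr on pi_1} itself, which combines the $Q_0$--deformation retraction from Lemma~\ref{lem:cdr} with Proposition~\ref{prop:def retract on pi_1}, using the lower bound on girth of $\CS_K(\Yx)$ provided by Theorem~\ref{t:Pi_isomorphism}.\eqref{item:long_loop} (with the specific choice $\sys = Q_0 D_0$) to verify hypothesis~\eqref{item:defret1} of Proposition~\ref{prop:def retract on pi_1}.
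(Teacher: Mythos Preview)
Your proposal is correct and matches the paper's approach exactly: the paper simply notes that $\Xz$ and $\Yx$ satisfy the hypotheses of Proposition~\ref{prop:cdr on pi_1} and deduces the corollary immediately. Your verification that $\Xz$ is $\delta_2$--hyperbolic (since $\delta_2\ge\delta_0$) and that the tube-comparability condition holds trivially via the identity is precisely the content of that one-line check.
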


\section{Unwrapping the complement of a tube} 
\label{sec:unwrap} 

In this section we start with the axis $\gammax$ of the hyperbolic isometry $g$ of the hyperbolic space $\Xz$ with boundary $S^2$, and we study the space constructed (approximately) as follows: remove a large tube around $\gammax$, take the universal cover of the resulting space, and glue a horoball on the lift of the boundary of the tube. We show that this space is hyperbolic (Lemma~\ref{lem:app_of_CCH}) and has boundary $S^2$ (Proposition~\ref{prop:unwrap_S^2}), allowing us to repeat the procedure. However, we need careful control on all the constants involved to make sure that the procedure can be repeated, so we now fix once and for all a set of constants that allows us to do so.

\begin{assumption} \label{ass:X2}
For this section, and the remainder of the paper, assume we are in the setting of Assumption \ref{ass:X}.  Assume further that $\partial \Xz \cong S^2$, and that $\Yx=\gammax$ is a  (bi-infinite) continuous quasi-geodesic in $\Xz$ which is $\lambda_0$--quasi-convex, and let $g$ be a  loxodromic element $g$ for which $\gammax$ is an axis, and let $A_0$ be a constant satisfying Assumption~\ref{ass:for_injectivity} (for the $\langle g \rangle$--action on $\gammax$).  
\end{assumption}

In particular, $\Xz$ is $\delta_0$-hyperbolic and has boundary homeomorphic to $S^2$.

We now fix a collection of constants needed later in the construction and the proof of Theorem \ref{t:Drill}.  Further constants are chosen at the beginning of Section~\ref{s:unwrap family}.

\begin{notation}\label{not:first_constants}\ 

\begin{enumerate}
\item $s_0 = 8\delta_0$, the scale of coarse path connectedness of shells;
\item $\delta_1$ is the hyperbolicity and visibility constant of $\Xz^{\mathrm{cusp}}(K)$ (see Definition~\ref{def:cusping}), for sufficiently large values of $K$.  We assume $\delta_1 \ge \max\{\delta_0,100\}$;
\item $\delta_2 = 1500\delta_1$ (5 times the hyperbolicity constant from the conclusion of the Coarse Cartan-Hadamard Theorem~\ref{t:CCH}); 
\item $L_0$ is the constant of linear connectedness of $\partial \Xz$, from Assumption \ref{ass:X};
\item $Q_0 = \max\{ 2\Phi(2\delta_2+1), \delta_2\}$ is the constant from Proposition \ref{prop:cdr on pi_1}, where $\Phi$ is the function from  Lemma~\ref{lem:proper_dist};
\item $D_0 = 3\Phi(C_0) + 108\delta_0$ is the constant from Subsection~\ref{ss:noname} (just before the statement of Lemma~\ref{lem:claim_localpath});  
\item $D_1 = \max\{D_0, 2Q_0+2, 16\delta_0+16\delta_0\Phi(16\delta_0)\}$, the scale of loops for defining the coarse fundamental group;
\item $\sigma_0 = \max\{ 10^7 \delta_1, 10^5D_1 \}$, the first requirement is the size of balls needed for the Coarse Cartan-Hadamard to apply, the second is used in the proof of Theorem~\ref{thm:longfilling};
\item $\sys_0 = 2^{25\sigma_0} Q_0$ is, because of Theorem \ref{t:Pi_isomorphism}.\eqref{item:long_loop} and the choice of $R_0$ in the item below, a lower bound for the length of a path in the completed shell $\CS_{R_0}(\gammax)$ representing a nontrivial conjugacy class in $\pi_1^{D_1}(\CS_{R_0}(\gammax))$.  This value of $\sys_0$ is chosen to make the proof of Lemma \ref{lem:isom_of_balls} below work;
\item $R_0=\max\{R_{\ref{t:Pi_isomorphism}}, \lambda_0+2\delta_2,6\sigma_0\}$ where $R_{\ref{t:Pi_isomorphism}} = R_{\ref{t:Pi_isomorphism}}(X_0,\gammax,D_1,\sys_0,A_0)$ is the constant from Theorem \ref{t:Pi_isomorphism}.  This is the radius of the tubes and shells that we henceforth consider.  
\end{enumerate}
\end{notation}

\begin{notation}
Having fixed the constants above, for the remainder of the paper, we always take completed $R_0$--shells, and completed $R_0$--tube complements.  Therefore, we henceforth drop the subscript $R_0$ from our notation.

Thus, for a space $\Upsilon$, and subset $\gamma$, we write
\[  \SCS_\Upsilon(\gamma) = \CS_{R_0}^{\Upsilon}(\gamma), \text{ the completed shell}   \]
and
\[  \SCTC_\Upsilon(\gamma) = \CTC_{R_0}^{\Upsilon}(\gamma), \text{ the completed tube complement}.\]

We are particularly interested in the completed shells and tube complements associated to $\gammax$ in $\Xz$, and for these, we further drop all of the decoration, and write
\[  \SCS = \CS_{R_0}^{\Xz}(\gammax)   \]
and
\[  \SCTC = \CTC_{R_0}^{\Xz}(\gammax).   \]
\end{notation}

(The definitions of completed shell and completed tube complement are \ref{def:completed shell} and \ref{def:CTC}, and the definition of \emph{tube comparable} is Definition~\ref{def:comparable}.)
The next result summarizes various results from the previous section.  

\begin{proposition}\label{p:summary}
Let $\Xz$ be the space from Assumption \ref{ass:X2}.  Suppose that $\Upsilon$ is $\delta_2$--hyperbolic and that $(\Upsilon,\gamma)$ is $(R_0 + 2\delta_2)$--tube comparable with $(\Xz,\gammax)$.
With the choice of constants as in Notation \ref{not:first_constants}, the following hold:
\begin{enumerate}
\item\label{eq:pi_1X} $\pi_1^{D_1} \left( \SCTC \right) \cong \pi_1^{D_1}(\SCS) \cong \Z$;
\item\label{eq:systole} For any loop $\alpha$ in $\SCS$, if $[\alpha] \ne 1$ in $\pi_1^{D_1}(\SCS)$ then the length of $\alpha$ is at least $\sys_0$;
\item\label{eq:systole2} For any loop $\alpha'$ in $\SCTC_\Upsilon(\gamma)$, if $[ \alpha'] \ne 1$ in $\pi_1^{D_1}\left(\SCTC_\Upsilon(\gamma)\right))$ then the length of $\alpha'$ is at least $\sys_0/Q_0$;
\item\label{eq:pi_1Y} There are isomorphisms
\[	 \pi_1^{D_1}\left(\SCTC_\Upsilon(\gamma)\right) \cong \pi_1^{D_1}\left(\SCS_\Upsilon(\gamma)\right)	= \pi_1^{D_1}(\SCS) \cong \Z . \]	
  The first isomorphism is induced by inclusion.
\end{enumerate}
\end{proposition}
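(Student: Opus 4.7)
The plan is to recognize Proposition \ref{p:summary} as an assembly of four consequences of results from Section \ref{s:shells}, with the main task being to verify that the constants fixed in Notation \ref{not:first_constants} satisfy the hypotheses of the relevant theorems (Corollary \ref{cor:pi_1_Z}, Theorem \ref{t:Pi_isomorphism}, Corollary \ref{cor:cdr on pi_1}, Proposition \ref{prop:cdr on pi_1}). Throughout I use that $\delta_2 = 1500\delta_1 \ge \delta_1 \ge \delta_0$, so $2\delta_2 \ge 4\delta_0$; in particular any $(R_0+2\delta_2)$--tube comparability implies $(R_0+4\delta_0)$--tube comparability, which is what Lemma \ref{lem:CS iso} requires.

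For item \eqref{eq:pi_1X}, since $\gammax$ is a bi-infinite quasi-geodesic and $\partial\Xz \cong S^2$, and since $R_0 \ge R_{\ref{t:Pi_isomorphism}}(\Xz,\gammax,D_1,\sys_0,A_0)$ by construction, Corollary \ref{cor:pi_1_Z} yields $\pi_1^{D_1}(\SCS) \cong \Z$. The choices $D_1 \ge \max\{D_0,\,2Q_0+2\}$, $\sys_0 \ge Q_0 D_0$, together with $R_0 \ge \max\{R_{\ref{t:Pi_isomorphism}},\,5\delta_2+\Delta_0+\lambda_0+1\}$ (the second bound being dominated by $R_0\ge \lambda_0+2\delta_2$ and $R_0\ge 6\sigma_0 \ge 6\cdot 10^7\delta_1$) then let us invoke Corollary \ref{cor:cdr on pi_1} to get the inclusion-induced isomorphism $\pi_1^{D_1}(\SCS) \cong \pi_1^{D_1}(\SCTC)$. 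Item \eqref{eq:systole} is then immediate from Theorem \ref{t:Pi_isomorphism}.\eqref{item:long_loop}, again applied at $K = R_0$ with $\sys=\sys_0$.

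For item \eqref{eq:pi_1Y}, first note that by Lemma \ref{lem:CS iso} (applicable by the tube-comparability observation above) the completed shells $\SCS_\Upsilon(\gamma)$ and $\SCS$ are isomorphic as graphs, so $\pi_1^{D_1}(\SCS_\Upsilon(\gamma)) = \pi_1^{D_1}(\SCS)$ canonically. Proposition \ref{prop:cdr on pi_1}, applied to the hyperbolic graph $\Upsilon$ (its hypotheses on $D$, $K$, and the tube comparability being met as above), then yields the inclusion-induced isomorphism $\pi_1^{D_1}(\SCS_\Upsilon(\gamma)) \cong \pi_1^{D_1}(\SCTC_\Upsilon(\gamma))$. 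Combined with item \eqref{eq:pi_1X}, this gives the chain of isomorphisms in item \eqref{eq:pi_1Y}.

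The only item requiring an argument beyond citation is item \eqref{eq:systole2}. Let $\alpha'$ be an edge loop in $\SCTC_\Upsilon(\gamma)$ of length $n$ with consecutive vertices $b_0,\dots,b_n=b_0$. Let $\{f_i\}$ be the $Q_0$--deformation retraction of $\SCTC_\Upsilon(\gamma)$ onto $\SCS_\Upsilon(\gamma)$ from Lemma \ref{lem:cdr}, and let $f$ be its stable map; by that lemma's ``Moreover'' statement, for adjacent vertices $b_i,b_{i+1}$ we have $d_{\SCS_\Upsilon(\gamma)}(f(b_i),f(b_{i+1})) \le \Phi(2\delta_2+1) \le Q_0$. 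Form $f(\alpha')$ by concatenating, for each $i$, a geodesic in $\SCS_\Upsilon(\gamma)$ of length at most $Q_0$ from $f(b_i)$ to $f(b_{i+1})$; then $\operatorname{length}(f(\alpha')) \le nQ_0$. By the ``Moreover'' statement of Proposition \ref{prop:def retract on pi_1}, $f(\alpha')$ represents the conjugacy class corresponding to $[\alpha']$ under the inclusion-induced isomorphism from item \eqref{eq:pi_1Y}. If $[\alpha']\ne 1$, then $f(\alpha')$ represents a nontrivial conjugacy class in $\pi_1^{D_1}(\SCS_\Upsilon(\gamma)) = \pi_1^{D_1}(\SCS)$, so by item \eqref{eq:systole} applied to a based representative we obtain $\operatorname{length}(f(\alpha')) \ge \sys_0$, whence $n \ge \sys_0/Q_0$.

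The main obstacle is bookkeeping: verifying that the chain of constant choices in Notation \ref{not:first_constants}, together with the tube-comparability hypothesis at the enlarged scale $R_0 + 2\delta_2$, simultaneously meets the hypotheses of Theorem \ref{t:Pi_isomorphism}, Corollary \ref{cor:pi_1_Z}, Corollary \ref{cor:cdr on pi_1}, Proposition \ref{prop:cdr on pi_1}, and Lemma \ref{lem:CS iso}. No new geometric content is needed.
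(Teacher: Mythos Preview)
Your proof is correct and follows essentially the same route as the paper: items \eqref{eq:pi_1X}, \eqref{eq:systole}, and \eqref{eq:pi_1Y} are obtained by citing Corollary~\ref{cor:pi_1_Z}, Corollary~\ref{cor:cdr on pi_1}, Theorem~\ref{t:Pi_isomorphism}.\eqref{item:long_loop}, Lemma~\ref{lem:CS iso}, and Proposition~\ref{prop:cdr on pi_1}, while item \eqref{eq:systole2} is proved by pushing a short loop onto the completed shell via the deformation retraction of Lemma~\ref{lem:cdr} and invoking the ``Moreover'' clause of Proposition~\ref{prop:def retract on pi_1}, exactly as the paper does. Your added constant-checking is more explicit than the paper's, which simply relies on Notation~\ref{not:first_constants}.
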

\begin{proof}
The first isomorphism in \eqref{eq:pi_1X} follows from Corollary \ref{cor:cdr on pi_1} and the second from Corollary~\ref{cor:pi_1_Z}.

Item \eqref{eq:systole} is Theorem \ref{t:Pi_isomorphism}.\eqref{item:long_loop}.
For Item \eqref{eq:systole2},  suppose that $\alpha$ is an edge-loop in $\SCTC_\Upsilon$ of length less than $\sys_0/Q_0$. Lemma~\ref{lem:cdr} (include the ``Furthermore" statement) imply that the hypotheses of Proposition~\ref{prop:def retract on pi_1} hold (Item~\eqref{item:defret1} from the statement holds by the assumption on $\sys_0$).  Therefore, by the ``Moreover" part of the conclusion of Proposition~\ref{prop:def retract on pi_1}, there is a loop $f(\alpha)$ on $\SCS_\Upsilon$ of length less than $\sys_0$ which represents the same conjugacy class in $\pi_1^{D_1}(\SCTC_\Upsilon)$.  Since $\SCS_\Upsilon = \SCS$, we can interpret $f(\alpha)$ as a loop on $\SCS$, and by Item~\eqref{eq:systole}, $f(\alpha)$ represents the identity element of $\pi_1^{D_1}(\SCS) = \pi_1^{D_1}(\SCS_\Upsilon) \cong \pi_1^{D_1}(\SCTC_\Upsilon)$,  Therefore, $\alpha$ also represents the trivial element, proving Item~\eqref{eq:systole2}.
Item \eqref{eq:pi_1Y} follows immediately from Proposition \ref{prop:cdr on pi_1} Item~\ref{eq:pi_1X} and the fact that $\SCS_\Upsilon = \SCS$.
\end{proof}

\subsection{Unwrapping}

In the rest of this section we  study the spaces described below:

\begin{notation}
\label{not:H}
Let $\horba$ be the combinatorial horoball $\horba(\widetilde{\SCS})$ based on the $D_1$--universal cover $\widetilde{\SCS}$ of $\SCS$.  Let $\mathring{\horba} = \horba \ssm \widetilde{\SCS}$ be the set of points in $\horba$ at positive depth. By $\widetilde{\SCTC}$ we denote the $D_1$--universal cover of $\SCTC$.
\end{notation}

By Proposition \ref{p:summary}.\eqref{eq:pi_1X}, the $D_1$--universal covers $\widetilde{\SCS} \to \SCS$, and $\widetilde{\SCTC} \to \SCTC$ are $\Z$--covers.

The following is an analogue of Definition~\ref{def:cusping}.
The construction is well-defined since by Proposition \ref{p:summary}.\eqref{eq:pi_1Y} the preimage of the completed shell is its $D_1$--universal cover.

\begin{definition} \label{def:zcusp}
Suppose that $\Upsilon$ is $\delta_2$--hyperbolic and that $(\Upsilon,\gammay)$ is $(R_0+2\delta_2)$--tube comparable to $(\Xz,\gammax)$.  The space $\Upsilon^{\mathrm{cusp}}$ is obtained by gluing a combinatorial horoball $\horbaD$ based on $\CS^\Upsilon(\gammay) \cong \SCS$ to $\CTC^\Upsilon(\gammay)$.
\end{definition}
In particular, $\Xz^{\mathrm{cusp}}$ denotes $\Xz^{\mathrm{cusp}}(R_0)$, as defined in Definition~\ref{def:cusping}.

\begin{construction}\label{unwrap_one_tube} (Unwrap and glue for one tube) 
Suppose that  $\Upsilon$ is $\delta_2$--hyperbolic, and that $(\Upsilon,\gamma)$ is $(R_0+2\delta_2)$--tube comparable to $(\Xz,\gammax)$.

Let $\UG(\Upsilon, \gammay)$ be obtained by taking the $D_1$--universal cover of $\CTC^\Upsilon(\gammay)$ and gluing a copy of $\horba$ along the preimage of $\CS^\Upsilon (\gammay)$.

The covering map 
\[	q_{R_0} \co \widetilde{\CTC^\Upsilon (\gammay)} \to \CTC^\Upsilon (\gammay)	\]
extends to a continuous depth-preserving map
\[	q \co \UG(\Upsilon, \gammay) \to 	\Upsilon^{\mathrm{cusp}} \]
\end{construction}
Some edges in $\horba$ factor to loops under the natural quotient map, and these loops are not in the horoball based on $\CS ^\Upsilon(\gamma)$.  These loops are mapped to their endpoint.

The map $q$ turns out to be an isometry on balls of radius $\sigma_0$ with center not too deep in $\horba$. We show this in Lemma \ref{lem:isom_of_balls} below, after stating and proving the following general lemma on, roughly, promoting local homeomorphisms to local isometries.

\begin{lemma}
\label{lem:R/3}
Let $\Gamma,\Xi$ be metric graphs, and let $B_R(p)$ be a ball of radius $R$ in $\Gamma$ for some integer $R> 3$. Suppose that we have an injective simplicial map $f\co B_R(p)\to \Xi$ which is a local homeomorphism at every point of $B_{R-1}(p)$. Then $f$ restricts to an isometry $f'\co B_{R/3}(p)\to B_{R/3}(f(p))$.
\end{lemma}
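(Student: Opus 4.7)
The plan is a standard path-lifting argument. Since $f$ is simplicial and injective on $B_R(p)$, it restricts to a homeomorphism on each edge of its domain, so the only points at which $f$ could fail to be a local homeomorphism are vertices; at a vertex $v$, local homeomorphism is equivalent to saying that $f$ induces a bijection between the edges of $\Gamma$ incident to $v$ and the edges of $\Xi$ incident to $f(v)$. By hypothesis, this bijection holds whenever $v \in B_{R-1}(p)$.

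First I would observe $f(B_{R/3}(p)) \subseteq B_{R/3}(f(p))$, which is immediate from $f$ being $1$-Lipschitz, and then establish surjectivity onto $B_{R/3}(f(p))$ by lifting: given $y \in B_{R/3}(f(p))$, take a geodesic $\gamma$ in $\Xi$ from $f(p)$ to $y$ of length at most $R/3$ and lift it to $\Gamma$ starting at $p$. Every intermediate point of the lift lies within distance $R/3 \le R-1$ of $p$, so the local homeomorphism property allows us to extend the lift edge by edge; the terminal point $x$ satisfies $f(x) = y$ and lies in $B_{R/3}(p)$.

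The key step is distance preservation. Given $a, b \in B_{R/3}(p)$, the bound $d_{\Xi}(f(a), f(b)) \le d_{\Gamma}(a, b)$ is automatic from $f$ being $1$-Lipschitz. For the reverse inequality, let $\beta$ be a geodesic in $\Xi$ from $f(a)$ to $f(b)$ of length $\ell = d_{\Xi}(f(a), f(b))$, so $\ell \le 2R/3$ by the triangle inequality. Lift $\beta$ edge by edge starting at $a$. An interior vertex $v$ reached at arc-parameter $t < \ell$ satisfies
\[
d_{\Gamma}(p, v) \le d_{\Gamma}(p, a) + t < R/3 + 2R/3 = R;
\]
since distances between vertices in the combinatorial graph $\Gamma$ are integers (and $p$ is a vertex in the intended applications), $d_{\Gamma}(p, v) \le R-1$, so $v \in B_{R-1}(p)$ and the lift extends. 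The lift terminates at some $b' \in B_R(p)$ with $f(b') = f(b)$, and injectivity of $f$ on $B_R(p)$ forces $b' = b$. Hence $d_{\Gamma}(a, b) \le \ell = d_{\Xi}(f(a), f(b))$.

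The main technical care is the bookkeeping showing that the lift remains in $B_{R-1}(p)$ where the local homeomorphism property can be invoked: the factor $1/3$ is sharp, since the triangle inequality only gives $\ell \le 2R/3$ and we need $d_{\Gamma}(p, a) + t \le R - 1$ for intermediate $t < \ell$; integrality of vertex-distances closes the gap between $d_{\Gamma}(p, v) < R$ and $d_{\Gamma}(p, v) \le R-1$ exactly.
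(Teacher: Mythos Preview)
Your approach is essentially the paper's---path lifting plus injectivity on $B_R(p)$---and the surjectivity and $1$--Lipschitz steps are fine. The one genuine issue is the integrality step in your distance-preservation argument: the lemma does not assume $p$ is a vertex, and in the paper's applications (notably Lemma~\ref{lem:isom_of_balls}, where the center is an arbitrary point of $\UG(\Upsilon,\gammay)$) distances $d_\Gamma(p,v)$ need not be integers, so ``$d_\Gamma(p,v)<R$ hence $\le R-1$'' is not justified. You flag this yourself, but the lemma as stated must cover the non-vertex case.

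The paper sidesteps this by using $R>3$ more directly. Rather than bounding only the \emph{length} of the geodesic by $2R/3$ and adding $R/3$, it observes that a geodesic in $\Xi$ between points of $B_{R/3}(f(p))$ lies entirely in $B_{2R/3}(f(p))$, and that $2R/3<R-1$ since $R>3$. One then argues the lift stays in $B_{2R/3}(p)$: at any lifted point $v$ one has $f(v)\in B_{2R/3}(f(p))$, so a geodesic from $f(p)$ to $f(v)$ has length at most $2R/3<R-1$ and lifts unproblematically from $p$ to some $w\in B_{2R/3}(p)$; your crude bound already gives $v\in B_R(p)$, and injectivity of $f$ on $B_R(p)$ forces $v=w\in B_{2R/3}(p)\subset B_{R-1}(p)$. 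This closes your argument without integrality.
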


\begin{proof}
Note that the restriction $f'$ of $f$ to $B_{R/3}(p)$ is $1$--Lipschitz, so that its image is indeed contained in $B_{R/3}(f(p))$. Therefore it suffices to construct a $1$--Lipschitz map $g:B_{R/3}(f(p))\to B_{R/3}(p)$ such that $g\circ f'=id$.

To construct the map, for $x\in B_{R/3}(f(p))$ we choose a geodesic from $f(p)$ to $x$, we lift it starting at $p$, and we define $g(x)$ to be the endpoint of the lift. Note that indeed $g(x)\in  B_{R/3}(p)$. By construction we have $g(f'(x))\in f^{-1}(f'(x))$ for each $x\in B_{R/3}(p)$, and since $f$ is injective we have $f^{-1}(f'(x))=\{x\}$. That is, $g(f'(x))=f'(x)$, and $g\circ f'=id$. We are left to show that $g$ is $1$--Lipschitz. This is because any geodesic connecting points in $B_{R/3}(f(p))$ is contained in $B_{2R/3}(f(p))$, and hence can be lifted to a path in $B_{2R/3}(p)$. (Note that $R-1 > 2R/3$ since $R >
3$.)
\end{proof}

\begin{lemma} \label{lem:isom_of_balls}
Suppose that  $\Upsilon$ is $\delta_2$--hyperbolic, and that $(\Upsilon,\gamma)$ is $(R_0+2\delta_2)$--tube comparable to $(\Xz,\gammax)$.
For any point $z \in \UG(\Upsilon, \gammay)$ that either does not lie in $\horba$ or lies at depth at most $3\sigma_0$ in $\horba$ the map $q$ as above is an isometric embedding when restricted to the ball $B \left(z ,3\sigma_0 \right)$, and moreover
\[	q \left( B \left( z, 3\sigma_0 \right) \right) = B \left( q(z), 3\sigma_0 \right)	.	\]
\end{lemma}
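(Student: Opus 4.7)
The plan is to apply Lemma~\ref{lem:R/3} with $R=9\sigma_0$ (so that $R/3=3\sigma_0$); it suffices to verify that $q$ is injective on $B(z,9\sigma_0)$ and a local homeomorphism at every vertex of $B(z,9\sigma_0-1)$, after which the lemma directly yields the desired isometry. Observe that $q$ is depth-preserving, and its fibers are the orbits of the natural $\Z$-action on $\UG(\Upsilon,\gammay)$ induced from deck transformations of the cover $\widetilde{\SCTC_\Upsilon(\gammay)}\to\SCTC_\Upsilon(\gammay)$, extended to $\horba=\horba(\widetilde{\SCS})$ via the action on the $\widetilde{\SCS}$-coordinate. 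Under the hypothesis on $z$, every vertex of $B(z,9\sigma_0)$ lies at horoball depth at most $12\sigma_0$.

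For the local homeomorphism at a vertex $w$ of depth $n\leq 12\sigma_0$, the only case requiring work is the horizontal horoball edges: at depth $n$, such edges join pairs at $\widetilde{\SCS}$-distance at most $2^n\leq 2^{12\sigma_0}$. By Proposition~\ref{p:summary}.\eqref{eq:systole}, the systole of the cover $\widetilde{\SCS}\to\SCS$ is at least $\sys_0=2^{25\sigma_0}Q_0\gg 2^{12\sigma_0}$, so balls of radius $2^{12\sigma_0}$ in $\widetilde{\SCS}$ embed under the covering; lifts of horizontal edges therefore exist and are unique. Vertical and complement edges lift by the obvious local inverse.

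For injectivity, suppose toward contradiction that distinct $z_1,z_2\in B(z,9\sigma_0)$ satisfy $q(z_1)=q(z_2)$, so $z_2=\tau z_1$ for a nontrivial deck transformation $\tau$. Let $\gamma$ be a $\UG(\Upsilon,\gammay)$-geodesic from $z_1$ to $z_2$, of length at most $18\sigma_0$. Consider the $\Z$-equivariant collapsing map $\tilde p\co \UG(\Upsilon,\gammay)\to\widetilde{\SCTC_\Upsilon(\gammay)}$ sending $(x,n)\in\horba$ to $x\in\widetilde{\SCS}$ and acting as the identity on $\widetilde{\SCTC_\Upsilon(\gammay)}$; extend it by collapsing vertical horoball edges, by sending each horizontal horoball edge at depth $n$ to a chosen $\widetilde{\SCS}$-geodesic of length at most $2^n$ between its endpoints, and by mapping other edges identically. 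Composing with the covering map $q'\co \widetilde{\SCTC_\Upsilon(\gammay)}\to\SCTC_\Upsilon(\gammay)$ produces a loop $\alpha:=q'(\tilde p(\gamma))$ which represents the nontrivial class $\tau\in\pi_1^{D_1}(\SCTC_\Upsilon(\gammay))\cong\Z$ (using Proposition~\ref{p:summary}.\eqref{eq:pi_1Y} and $\Z$-equivariance of $\tilde p$).

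The quantitative heart of the argument, and the main obstacle, is the length estimate on $\alpha$. Since $\gamma$ starts and ends at depth at most $12\sigma_0$ and has length at most $18\sigma_0$, and since reaching a depth $N>12\sigma_0$ requires at least $2(N-12\sigma_0)$ vertical edges, the maximum depth attained by $\gamma$ satisfies $N\leq 12\sigma_0+9\sigma_0=21\sigma_0$. Each horizontal horoball edge of $\gamma$ therefore contributes at most $2^N\leq 2^{21\sigma_0}$ to $|\tilde p(\gamma)|$, while vertical edges contribute $0$ and complement edges contribute $1$; hence
\[
|\alpha|=|\tilde p(\gamma)|\leq |\gamma|\cdot 2^{21\sigma_0}\leq 18\sigma_0\cdot 2^{21\sigma_0}<2^{25\sigma_0}=\sys_0/Q_0,
\]
where the last inequality uses $\sigma_0\geq 10^7\delta_1\geq 10^9$, so that $2^{4\sigma_0}\gg 18\sigma_0$. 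This contradicts Proposition~\ref{p:summary}.\eqref{eq:systole2}, which asserts that any loop representing a nontrivial class in $\pi_1^{D_1}(\SCTC_\Upsilon(\gammay))$ has length at least $\sys_0/Q_0$. Applying Lemma~\ref{lem:R/3} completes the proof.
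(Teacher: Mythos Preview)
Your proof is correct and follows essentially the same approach as the paper: reduce to injectivity of $q$ on $B(z,9\sigma_0)$ via Lemma~\ref{lem:R/3}, then derive a contradiction from the systole bound Proposition~\ref{p:summary}.\eqref{eq:systole2} by pushing a short $\UG(\Upsilon,\gammay)$-path out of the horoball with exponential length blowup controlled by the bounded depth. The only differences are cosmetic: the paper first moves $z_1,z_2$ vertically to depth~$0$ (at the cost $d(z_1,z_2)\le 42\sigma_0$) and then invokes Lemma~\ref{lem:horbadistort} to get $d_{\widetilde{\SCTC_\Upsilon}}(z_1,z_2)\le 2^{21\sigma_0+1}$, whereas you collapse the geodesic edge-by-edge using the maximum depth bound $21\sigma_0$ to get $18\sigma_0\cdot 2^{21\sigma_0}$; and you spell out the local-homeomorphism check for horizontal horoball edges explicitly, which the paper subsumes under the phrase ``since $q$ is then a covering on such balls.''
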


\begin{proof}
Let $z \in \UG(\Upsilon, \gammay)$ be as in the statement of the lemma.  We claim that the map $q$ is injective on $B(z,{9\sigma_0})$.  This and Lemma~\ref{lem:R/3} immediately implies the desired conclusion, since $q$ is then a covering on such balls.

In order to obtain a contradiction, suppose $z_1, z_2 \in B(z,9\sigma_0)$ are distinct but satisfy $q(z_1) = q(z_2)$.  Note that $d_{\UG(\Upsilon,\gammay)}(z_1,z_2) \le 18 \sigma_0$.  Since $z$ has depth at most $3\sigma_0$, each of $z_1$ and $z_2$ have depth at most $12\sigma_0$.  If the depth of $z_1$ or $z_2$ is positive, we replace it with a point on $\widetilde{\SCS} $ directly above it (this corresponds to replacing a point $(x,n)$ with a point $(x,0)$ in Definition~\ref{def:combhoroball}).  We may therefore suppose, at the cost of assuming that $d_{\UG(\Upsilon,\gammay)}(z_1,z_2) \le 42\sigma_0$, that $z_1$ and $z_2$ do not lie in the open horoball.

The points $z_1$ and $z_2$ may be joined by a path in 
$\widetilde{\SCTC_\Upsilon}$ whose image represents a nontrivial element of $\pi_1^{D_1}(\SCTC_\Upsilon)$.  Therefore, by Proposition \ref{p:summary}.\eqref{eq:systole2}, we have $d_{\widetilde{\SCTC_\Upsilon}}(z_1,z_2) \ge \sys_0/Q_0=2^{25\sigma_0}$, by the choice of $\sys_0$ in Notation~\ref{not:first_constants}.  On the other hand, $\UG(\Upsilon,\gammay)$ is built from $\widetilde{\SCTC_\Upsilon}$ by gluing on a copy of a horoball.  Any path $\sigma$ in $\UG(\Upsilon,\gammay)$ may be pushed out the horoball at the cost of increasing the length by an exponential function.  Precisely, we have (using Lemma~\ref{lem:horbadistort} and the fact that any subsegment that is pushed out of the horoball has length at least $4$):
\[	d_{\widetilde{\SCTC_\Upsilon}} (z_1,z_2) \le 2 \cdot 2^{\frac{1}{2}d_{\UG(\Upsilon, \gammay)}(z_1,z_2)} \le 2^{21\sigma_0+1} < 2^{25\sigma_0}	.	\]
This is a contradiction. 
\end{proof}

From Lemma \ref{lem:isom_of_balls}, we control the shape of balls in $\UG(\Upsilon,\gamma)$ of reasonable size:

\begin{corollary}\label{cor:local_balls_unwrapped} Let $\sigma_0$ and ${R_0}$ be as fixed in Notation \ref{not:first_constants}. Suppose that  $\Upsilon$ is $\delta_2$--hyperbolic, and that $(\Upsilon,\gamma)$ is $({R_0}+6\sigma_0+2\delta_2)$--tube comparable to $(\Xz,\gammax)$.

 Any ball of radius $\sigma_0$ in $\UG(\Upsilon, \gammay)$ is isometric to one of
 \begin{itemize}
 \item[--] a ball in $\mathring{\horba}$,
 \item[--] a ball in $\Upsilon$, or
 \item[--] a ball in $\Xz^{\mathrm{cusp}}$.
 \end{itemize}
 \end{corollary}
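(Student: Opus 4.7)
The plan is to case-split on the location of the ball's center $z$ and reduce to the three asserted models using Lemma~\ref{lem:isom_of_balls}. First suppose $z$ lies in the horoball $\horba$ at depth at least $\sigma_0+1$. Since the depth function changes by at most $1$ along each edge of $\UG(\Upsilon,\gammay)$, every point at distance at most $\sigma_0$ from $z$ has positive depth, and any geodesic connecting $z$ to such a point must stay inside $\horba$ (escaping would require first reaching depth $0$, which is at distance at least $\sigma_0+1$ from $z$). Hence $B(z,\sigma_0)$ coincides with the corresponding ball in $\horba$ and is entirely contained in $\mathring{\horba}$, giving the first model.

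Otherwise $z$ is either not in $\horba$ or has depth at most $\sigma_0$, so Lemma~\ref{lem:isom_of_balls} applies and the projection $q$ restricts to an isometry from $B(z,3\sigma_0)$ onto $B_{\Upsilon^{\mathrm{cusp}}}(q(z),3\sigma_0)$. It then suffices to identify $B_{\Upsilon^{\mathrm{cusp}}}(q(z),\sigma_0)$ with a ball in $\Upsilon$ or in $\Xz^{\mathrm{cusp}}$. If $q(z)\in \Upsilon\smallsetminus T^\Upsilon_{R_0}(\gammay)$ lies at $\Upsilon$-distance greater than $R_0+2\sigma_0$ from $\gammay$, then reaching the shell from $q(z)$ in $\Upsilon^{\mathrm{cusp}}$ costs more than $\sigma_0$, so the ball can use neither the completed-shell edges nor the horoball as a shortcut; the restricted $\Upsilon^{\mathrm{cusp}}$ and $\Upsilon$ metrics therefore agree on this ball, which is then literally a ball in $\Upsilon$.

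In the remaining subcase, $q(z)$ sits near the shell or in the horoball at depth at most $\sigma_0$, and consequently $B_{\Upsilon^{\mathrm{cusp}}}(q(z),\sigma_0)$ is confined to the union of the horoball at depth at most $2\sigma_0$ with the $(R_0 + O(\sigma_0))$-neighborhood of $\gammay$ in $\Upsilon$. The $(R_0+6\sigma_0+2\delta_2)$-tube comparability of $(\Upsilon,\gammay)$ with $(\Xz,\gammax)$ furnishes a graph isomorphism between the relevant neighborhoods, and since the $R_0$-shell, the completed shell at scale $s_0$, and the combinatorial horoball on it all depend canonically on this neighborhood, the isomorphism extends to identify the corresponding regions of $\Upsilon^{\mathrm{cusp}}$ and $\Xz^{\mathrm{cusp}}$. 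The $2\delta_2$ buffer in the comparability hypothesis, combined with Remark~\ref{rem:def_ambi} and the quasi-convexity of $\gammay$ afforded by Lemma~\ref{lem:gamma_0 qg}, promotes this graph isomorphism to an isometry on the ball in question, yielding the third model.

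The principal technical delicacy will be the constant bookkeeping in the last subcase: one has to confirm that the tube-comparability radius $R_0 + 6\sigma_0 + 2\delta_2$ is truly wide enough to contain $B_{\Upsilon^{\mathrm{cusp}}}(q(z),\sigma_0)$ once one accounts for how $\Upsilon^{\mathrm{cusp}}$-distances can be strictly shorter than $\Upsilon$-distances near the shell, and for the additional $2\delta_2$ slack needed to pass from a graph isomorphism of $R_0$-neighborhoods to an isometry of their induced-path-metric versions in the style of Remark~\ref{rem:def_ambi}.
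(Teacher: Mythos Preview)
Your approach is essentially the paper's, but the paper executes it more uniformly by working throughout with the larger ball $B^+$ of radius $3\sigma_0$ and invoking Lemma~\ref{lem:R/3} in every case, rather than arguing directly about the $\sigma_0$--ball. This sidesteps exactly the threshold issues you flag at the end, and also ones you did not flag. In your first case, depth $\ge \sigma_0+1$ only guarantees that the $\sigma_0$--ball \emph{sits} in $\mathring{\horba}$; it does not show the $\UG$ and $\horba$ metrics agree there, since a geodesic between two points of the ball may have length up to $2\sigma_0$ and could dip to depth $0$. In your Case~2a the same problem arises: the threshold $R_0+2\sigma_0$ keeps the $\sigma_0$--ball off the shell, but a $\Upsilon^{\mathrm{cusp}}$--geodesic of length up to $2\sigma_0$ between two of its points could still reach the shell, so you have not yet shown the $\Upsilon^{\mathrm{cusp}}$ and $\Upsilon$ metrics coincide.

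The paper's version avoids all of this. Either $B^+$ lies entirely in $\mathring{\horba}$ (so its inclusion into $\horba$ is an injective local homeomorphism and Lemma~\ref{lem:R/3} gives the isometry on the inner $\sigma_0$--ball), or Lemma~\ref{lem:isom_of_balls} maps $B^+$ isometrically to a $3\sigma_0$--ball in $\Upsilon^{\mathrm{cusp}}$. In the latter case one asks whether $q(B^+)$ meets the horoball of $\Upsilon^{\mathrm{cusp}}$: if not, $q(B^+)$ sits in $\Upsilon\smallsetminus T_{R_0}(\gamma)$ as a subgraph of $\Upsilon$ and Lemma~\ref{lem:R/3} again finishes; if so, $q(B^+)$ lies in the $6\sigma_0$--neighborhood of the horoball, which is where the $(R_0+6\sigma_0+2\delta_2)$--tube comparability supplies a graph isomorphism with the corresponding region of $\Xz^{\mathrm{cusp}}$, and a final application of Lemma~\ref{lem:R/3} yields the isometry. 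The point is that Lemma~\ref{lem:R/3} converts ``the $3\sigma_0$--ball embeds as a subgraph'' into ``the $\sigma_0$--ball is isometric,'' so one never has to chase metrics directly.
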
 

\begin{proof}
Consider a ball $B$ as in the statement, and let $B^+$ be the ball with the same center and radius $3\sigma_0$. By Lemma \ref{lem:isom_of_balls} either $B^+$ is entirely contained in $\mathring{\horba}$ (in which case Lemma \ref{lem:R/3} yields the required isometry), or $q$ maps the ball $B^+$ isometrically to a ball in $\Upsilon^{\mathrm{cusp}}$, whose center lies within $3\sigma_0$ of $\CTC ^\Upsilon(\gamma)$. The second case has two sub-cases. If the ball $q(B^+)$ is disjoint from the horoball of $\Upsilon^{\mathrm{cusp}}$, then $q(B^+)$ is isomorphic to a ball in $\Upsilon$, and as $B$ is isometric to a ball in $\Upsilon$ by Lemma \ref{lem:R/3}.  Otherwise, notice that the $6\sigma_0$--neighborhood of the horoball of $\Upsilon^{\mathrm{cusp}}$ is isomorphic to the corresponding neighborhood in $\Xz^{\mathrm{cusp}}$, since $(\Upsilon,\gamma)$ is $({R_0} +6\sigma_0+2\delta_2)$--tube comparable to $(\Xz, \gammax)$. We can then again use Lemma \ref{lem:R/3} to conclude.
\end{proof}

We now control the local geometry of an unwrapped and glued space.  

\begin{definition}
Suppose that $\mc{A} = (Z_1, \ldots , Z_n)$ is a finite collection of metric spaces and that $k > 0$.  We say that a metric space $Z$ is {\em $k$--modeled on $\mc{A}$} if for every $z \in Z$ there is an $i$ so that the closed ball of radius $k$ about $z$ is isometric to a closed ball of radius $k$ in $Z_i$.
\end{definition}

\begin{lemma} \label{lem:app_of_CCH}
If $Y$ is connected, $D_1$--simply-connected, and $\sigma_0$--modeled on $\left\{ \Xz^{\mathrm{cusp}},\mathring{\horba} \right\}$ then $Y$ is $\delta_2$--hyperbolic and $\delta_2$--visible.
\end{lemma}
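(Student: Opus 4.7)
The plan is a direct application of the Coarse Cartan--Hadamard Theorem (Theorem~\ref{t:CCH}) followed by the visibility criterion in Proposition~\ref{lem:local_visibility}. For hyperbolicity, apply Theorem~\ref{t:CCH} with $\nu=\delta_1$ and $\sigma=\sigma_0$. The inequality $\sigma_0\geq 10^7\delta_1$ is part of the definition of $\sigma_0$ in Notation~\ref{not:first_constants}, and the hypothesis that $Y$ is $D_1$--simply connected implies $Y$ is $10^{-5}\sigma_0$--simply connected because $\sigma_0\geq 10^5 D_1$. The remaining hypothesis is that every ball of radius $\sigma_0$ in $Y$ is $\delta_1$--hyperbolic. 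By the $\sigma_0$--modeled assumption, each such ball is isometric to a ball in $\Xz^{\mathrm{cusp}}$ or in $\mathring{\horba}$. The space $\Xz^{\mathrm{cusp}}$ is $\delta_1$--hyperbolic by Theorem~\ref{thm:cusp_tube_hyp}, and $\mathring{\horba}$ is hyperbolic with a (universal) combinatorial horoball constant absorbed into $\delta_1$ (Notation~\ref{not:first_constants} sets $\delta_1\geq 100$ in anticipation of this). Since Gromov hyperbolicity in the four-point sense is inherited by subspaces, the balls inherit $\delta_1$--hyperbolicity. Theorem~\ref{t:CCH} thus yields that $Y$ is $300\delta_1$--hyperbolic, hence $\delta_2$--hyperbolic since $\delta_2=1500\delta_1$.

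For $\delta_2$--visibility, first note that $Y$ is proper: each model space is a locally finite graph (since $\Xz$ is locally finite by Assumption~\ref{ass:X} and a combinatorial horoball over a locally finite base is locally finite), so the modeling assumption forces $Y$ to be locally finite as well, and connectedness then gives properness. Now apply Proposition~\ref{lem:local_visibility} with $\nu'=300\delta_1$, noting that $5\nu'=\delta_2$. Given $p,q\in Y$ with $d_Y(p,q)\leq 100\nu'=30000\delta_1$, the ball $B_Y(p,\sigma_0)$ is isometric to a ball $B_{Z^\ast}(p',\sigma_0)$ in some model space $Z^\ast\in\{\Xz^{\mathrm{cusp}},\mathring{\horba}\}$; let $q'\in B_{Z^\ast}(p',\sigma_0)$ be the image of $q$. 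Because $Z^\ast$ is $\delta_1$--visible, there is a geodesic ray from $p'$ passing within $\delta_1$ of $q'$; since $200\nu'=60000\delta_1<\sigma_0$, the initial subsegment of length $200\nu'$ lies inside $B_{Z^\ast}(p',\sigma_0)$. Pulling this segment back through the modeling isometry gives a geodesic $[p,q']$ in $Y$ of length at least $200\nu'$ passing within $\delta_1\leq\nu'$ of $q$. Proposition~\ref{lem:local_visibility} then yields $\delta_2$--visibility.

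The entire argument is constants bookkeeping, and this is by design: the choices in Notation~\ref{not:first_constants} are engineered precisely so that the four inequalities $\sigma_0\geq 10^7\delta_1$, $\sigma_0\geq 10^5 D_1$, $\sigma_0>200\cdot 300\delta_1$, and $\delta_2=1500\delta_1=5\cdot 300\delta_1$ all hold simultaneously. The only genuine content beyond unwinding these inequalities is the observation that both model spaces $\Xz^{\mathrm{cusp}}$ and $\mathring{\horba}$ are uniformly hyperbolic and visible with constant $\delta_1$, which is what allows the local-to-global passage provided by the Coarse Cartan--Hadamard Theorem and Proposition~\ref{lem:local_visibility} to proceed.
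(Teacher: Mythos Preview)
Your hyperbolicity argument is correct and matches the paper's: apply the Coarse Cartan--Hadamard Theorem with $\nu=\delta_1$ and $\sigma=\sigma_0$, using that both model spaces are $\delta_1$--hyperbolic and that the constants in Notation~\ref{not:first_constants} are chosen to make the hypotheses hold.

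The visibility argument, however, has a genuine gap. You assert that $Z^\ast$ is $\delta_1$--visible for $Z^\ast\in\{\Xz^{\mathrm{cusp}},\mathring{\horba}\}$, but this fails for $\mathring{\horba}$. The open combinatorial horoball has a single Gromov boundary point (at infinite depth), so every geodesic ray from a point $p'$ at depth $n$ eventually heads deeper and never returns to depth $<n$ (going shallower and then back down is strictly longer than going directly down). Thus if $q'$ is shallower than $p'$ by more than $\delta_1$, no geodesic ray from $p'$ passes within $\delta_1$ of $q'$; in fact $\mathring{\horba}$ is not $\mu$--visible for any finite $\mu$. So your uniform appeal to visibility of the model space cannot produce the required segment in the horoball case.

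The paper treats the two model cases separately. For $\Xz^{\mathrm{cusp}}$ it uses $\delta_1$--visibility exactly as you do. For $\mathring{\horba}$ it instead uses the explicit description of geodesics in combinatorial horoballs, together with the observation (this is the content of the footnote about modeling on $\mathring{\horba}$ rather than $\horba$) that when the $\sigma_0$--ball is modeled on $\mathring{\horba}$ the center lies at depth exceeding the radius $200\delta'$; this depth gives enough room to extend a geodesic from $p'$ through $q'$ (in either the shallower or deeper direction) to the required length $200\delta'$ while remaining inside the ball. You should split the argument in the same way rather than invoking visibility uniformly.
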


\begin{proof}
Hyperbolicity, with constant $\delta'=300\delta_1$ is an immediate consequence of the Coarse Cartan--Hadamard Theorem \ref{t:CCH}, along with the choices of constants and the fact that both $\Xz^{\mathrm{cusp}}$ and $\mathring{\horba}$ are $\delta_1$--hyperbolic.

For visibility with constant $5\delta' = \delta_2$, by Proposition~\ref{lem:local_visibility} it suffices to prove that if $p,q\in Y$ satisfy $d_Y(p,q)\leq 100\delta'$ there exists $q'\in Y$ with $d(p,q')\geq 200\delta'$ and a geodesic $[p,q']$ that passes within $\delta'$ of $q$.

Note that the closed $200\delta'$--ball around $p$ is compact, and therefore (since $\sigma_0\geq 200\delta'=60000\delta_1$) it is isometric to either a ball in $\Xz^{\mathrm{cusp}}$, or to a ball of $\horba$ whose center is deeper than its radius.\footnote{this is the reason why we model on $\mathring{\horba}$ rather than $\horba$.} In either case, we can find the required $q'$. Indeed, for $\horba$ we can use the explicit description of geodesics (cf. \cite[Lemma 2.23]{GMS}), and for $\Xz^{\mathrm{cusp}}$ we just apply $\delta_1$--visibility.
\end{proof}

\begin{proposition} \label{prop:hatY_modeled}
Let $\Upsilon$ be a connected $D_1$--simply connected graph, and let $\gamma$ be a bi-infinite path in $\Upsilon$. Assume that $\Upsilon$ is $\sigma_0$--modeled on $\left\{ \Xz^{\mathrm{cusp}}, \mathring{\horba} \right\}$.
Suppose further that $(\Upsilon,\gammay)$ is $({R_0}+6\sigma_0+2\delta_2)$--tube comparable to $(\Xz,\gammax)$.

Then $\UG(\Upsilon,\gammay)$ is connected, $D_1$--simply connected, and $\sigma_0$--modeled on $\left\{ \Xz^{\mathrm{cusp}}, \mathring{\horba} \right\}$.  In particular, $\UG(\Upsilon,\gammay)$ is $\delta_2$--hyperbolic and $\delta_2$--visible.
\end{proposition}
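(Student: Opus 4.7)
The plan is to verify the three asserted properties of $\UG(\Upsilon,\gamma)$ in turn, then invoke Lemma~\ref{lem:app_of_CCH} to deduce hyperbolicity and visibility.

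Connectedness is immediate: $\SCTC^\Upsilon(\gamma)$ is connected by an adaptation of Lemma~\ref{lem:CTC connected}, so its $D_1$--universal cover $\widetilde{\SCTC^\Upsilon(\gamma)}$ is also connected, and gluing the nonempty horoball $\horba$ along the preimage of $\SCS^\Upsilon(\gamma)$ preserves this. For the $\sigma_0$--modeling I would apply Corollary~\ref{cor:local_balls_unwrapped}: its hypotheses are met because $\Upsilon$ is $\delta_2$--hyperbolic by Lemma~\ref{lem:app_of_CCH} applied to $\Upsilon$, and the tube comparability assumption is exactly what the corollary requires. The corollary tells us that every closed $\sigma_0$--ball in $\UG(\Upsilon,\gamma)$ is isometric to such a ball in one of $\mathring{\horba}$, $\Upsilon$, or $\Xz^{\mathrm{cusp}}$. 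The first and third cases are already in the target list; in the second, the hypothesis that $\Upsilon$ is itself $\sigma_0$--modeled on $\{\Xz^{\mathrm{cusp}},\mathring{\horba}\}$ lets me compose isometries to finish.

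The main technical step is establishing $D_1$--simple connectivity, which I would approach via Proposition~\ref{prop:CvK}. By construction of the $D_1$--universal cover, $(\widetilde{\SCTC^\Upsilon(\gamma)})^{D_1}$ is simply connected. The crucial observation is that the preimage of $\SCS^\Upsilon(\gamma)$ under the covering $\widetilde{\SCTC^\Upsilon(\gamma)} \to \SCTC^\Upsilon(\gamma)$ is a \emph{single} connected component: by standard covering space theory, the number of components of the preimage equals the index of the image of the inclusion-induced map $\pi_1^{D_1}(\SCS^\Upsilon(\gamma)) \to \pi_1^{D_1}(\SCTC^\Upsilon(\gamma))$, and this map is an isomorphism by Proposition~\ref{p:summary}.\eqref{eq:pi_1Y}. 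Being a connected covering of the connected graph $\SCS^\Upsilon(\gamma)$, this lift is the $D_1$--universal cover of $\SCS^\Upsilon(\gamma)$, which may be identified with $\widetilde{\SCS}$ via Lemma~\ref{lem:CS iso} and Notation~\ref{not:H}. Consequently $\UG(\Upsilon,\gamma)$ is exactly $\widetilde{\SCTC^\Upsilon(\gamma)}$ with the single combinatorial horoball $\horba = \mathcal{H}(\widetilde{\SCS})$ glued along the single connected subgraph $\widetilde{\SCS}$, and Proposition~\ref{prop:CvK} (with $D = D_1 \geq 5$ and a single $\Gamma_1 = \widetilde{\SCS}$) yields that $\UG(\Upsilon,\gamma)^{D_1}$ is simply connected.

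Having verified connectedness, $D_1$--simple connectivity, and $\sigma_0$--modeling on $\{\Xz^{\mathrm{cusp}},\mathring{\horba}\}$, Lemma~\ref{lem:app_of_CCH} immediately delivers $\delta_2$--hyperbolicity and $\delta_2$--visibility. The principal obstacle is the connected-preimage claim in the previous paragraph: it is what bridges the covering-space construction of $\UG(\Upsilon,\gamma)$ with the single-horoball formulation of Proposition~\ref{prop:CvK}, and it is precisely where the inclusion-induced isomorphism from Proposition~\ref{p:summary}.\eqref{eq:pi_1Y} enters the argument.
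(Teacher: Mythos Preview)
Your proposal is correct and follows essentially the same approach as the paper: invoke Proposition~\ref{prop:CvK} for connectedness and $D_1$--simple connectivity, Corollary~\ref{cor:local_balls_unwrapped} together with the modeling hypothesis on $\Upsilon$ for the $\sigma_0$--modeling, and Lemma~\ref{lem:app_of_CCH} for the final assertion. Your argument is in fact more detailed than the paper's terse proof, since you make explicit the connected-preimage claim (via Proposition~\ref{p:summary}.\eqref{eq:pi_1Y}) needed to apply Proposition~\ref{prop:CvK}; the paper records this fact just before Definition~\ref{def:zcusp} rather than inside the proof.
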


\begin{proof}
Proposition \ref{prop:CvK} implies that $\UG(\Upsilon,\gamma)$ is  connected and $D_1$--simply connected. 

The $\sigma_0$--modeled statement follows directly from Corollary \ref{cor:local_balls_unwrapped} and the assumption that $\Upsilon$ is $\sigma_0$--modeled on $\left\{ \Xz^{\mathrm{cusp}}, \mathring{\horba} \right\}$.  The last assertion of the proposition now follows immediately from Lemma~\ref{lem:app_of_CCH}.
\end{proof}

\begin{definition}
\label{def:separated_tubes}
Let $\sep > 10R_0+ 60\sigma_0 +20\delta_2$ be a constant.
Suppose that $\Upsilon$ is a graph which is $\sigma_0$--modeled on $\left\{ \Xz^{\mathrm{cusp}}, \mathring{\horba} \right\}$.  A collection of subsets $\mc{A}$ in $\Upsilon$ is a {\em $\sep$--separated collection of tubes and horoballs} if the following hold:
\begin{enumerate}
    \item Each component of $\mc{A}$ is either isometric to $T_{R_0}(\gammax)$ ($=T^{\Xz}_{R_0}(\gammax)$) and then called a \emph{tube}, or to $\mathring{\horba}$, and called a \emph{horoball}.
    \item Each tube $A\in \mc{A}$ contains a specified path $\gamma(A)$ that we call the \emph{core}, and the pair $(\Upsilon,\gamma(A))$ is $\frac{\sep}{10}$--tube comparable to $(\Xz,\gammax)$.
    \item For each horoball $A \in \mc{A}$ the $\frac{\sep}{10}$--neighborhood of $A$ in $\Upsilon$ is isomorphic to the $\frac{\sep}{10}$--neighborhood of the horoball in $\UG(\Xz,\gammax)$.
    \item Any two components of $\mc{A}$ are at distance at least $\sep$ apart.
\end{enumerate}
\end{definition}

\begin{remark}\label{rem:separatedisomorphmetric}
  Assuming that $\Upsilon$ is $\delta_2$--hyperbolic, each tube or horoball in a collection as described above is $2\delta_2$--quasiconvex.  In particular the metric on such a tube or horoball is determined by the isomorphism type of a $2\delta_2$--neighborhood.
\end{remark}

In Section~\ref{s:unwrap family} we need the following construction.

\begin{construction}\label{con:hollow}
Suppose that $\Upsilon$ is a graph which is $\sigma_0$--modeled on $\left\{ \Xz^{\mathrm{cusp}}, \mathring{\horba} \right\}$ and that $\mc{A}$ is a $\sep$--separated collection of tubes and horoballs in $\Upsilon$ (for some $\sep > 10R_0+20\delta_2$).

Let $A$ be the union of all the cores of tubes of $\mc{A}$. Let $\Upsilon'$ be obtained from $\Upsilon$ by removing all open horoballs of $\mc{A}$.  The \emph{hollowed-out space associated to $\Upsilon$} is the space
\[  \mathrm{HO}(\Upsilon,\mc{A})= \CTC^{\Upsilon'} (A) . \]
\end{construction}

\begin{lemma}\label{lem:sheath}
Let $\tau > R_0$ and suppose that $\Upsilon$ is a graph which is $\sigma_0$--modeled on $\left\{ \Xz^{\mathrm{cusp}}, \mathring{\horba} \right\}$, and that $\gamma$ is a path in $\Upsilon$ so that $(\Upsilon,\gamma)$ is $\tau$--tube comparable to $(\Xz,\gammax)$.  Let $\mathrm{Ann}_\tau = N_\tau^\Upsilon(\gamma) \smallsetminus T_{R_0}^\Upsilon(\gamma)$.

The preimage of $\mathrm{Ann}_\tau$ in $\UG(\Upsilon,\gamma)$ is isomorphic to the preimage of $N_\tau(\gammax) \smallsetminus T_{R_0}(\gammax)$ in $\UG(\Xz,\gammax)$.
\end{lemma}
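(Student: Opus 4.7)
The $\tau$--tube comparability furnishes a graph isomorphism
\[\phi\colon N_\tau^\Upsilon(\gamma)\xrightarrow{\cong} N_\tau^{\Xz}(\gammax)\]
taking $\gamma$ onto $\gammax$, which restricts to an isomorphism $\phi_{\mathrm{Ann}}\colon\mathrm{Ann}_\tau\to N_\tau(\gammax)\smallsetminus T_{R_0}(\gammax)$. For $\tau$ large enough to invoke Lemma~\ref{lem:CS iso} (i.e.\ $\tau\ge R_0+4\delta_0$, which will hold in the applications), $\phi$ also identifies the completed shells $\SCS^\Upsilon(\gamma)\cong\SCS$. Since $\mathrm{Ann}_\tau\subset \Upsilon\smallsetminus T_{R_0}^\Upsilon(\gamma)$, its preimage in $\UG(\Upsilon,\gamma)$ lies in $\widetilde{\CTC^\Upsilon(\gamma)}$ and, as a subgraph, is the classical covering of $\mathrm{Ann}_\tau$ classified by
\[K^\Upsilon := \ker\!\bigl(\iota_*\colon\pi_1(\mathrm{Ann}_\tau)\to\pi_1^{D_1}(\CTC^\Upsilon(\gamma))\bigr),\]
where $\iota$ is the inclusion. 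The analogous subgroup $K^{\Xz}$ is defined on the other side. The plan is to verify $\phi_{\mathrm{Ann},*}(K^\Upsilon)=K^{\Xz}$; standard covering-space theory will then lift $\phi_{\mathrm{Ann}}$ to the desired graph isomorphism of preimages.

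To analyze $K^\Upsilon$, I compose with the coarse deformation retraction $f\colon\CTC^\Upsilon(\gamma)\to\SCS^\Upsilon(\gamma)$ from Lemma~\ref{lem:cdr}. By Proposition~\ref{p:summary}.\eqref{eq:pi_1Y}, $f$ realizes the isomorphism $\pi_1^{D_1}(\CTC^\Upsilon(\gamma))\cong\pi_1^{D_1}(\SCS^\Upsilon(\gamma))$, so a loop $\ell$ in $\mathrm{Ann}_\tau$ lies in $K^\Upsilon$ if and only if $f(\ell)$ is trivial in $\pi_1^{D_1}(\SCS^\Upsilon(\gamma))$. The retraction $f|_{\mathrm{Ann}_\tau}$ moves each vertex along a geodesic toward its nearest point on $\gamma$ and halts at the shell; by Lemma~\ref{lem:gamma_0 qg} the path $\gamma$ is $\lambda_0$--quasi-convex in $\Upsilon$, so this motion stays inside $N_\tau^\Upsilon(\gamma)$, the region where $\phi$ is defined. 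Consequently $\phi$ intertwines $f^\Upsilon|_{\mathrm{Ann}_\tau}$ with $f^{\Xz}|_{\mathrm{Ann}_\tau(\gammax)}$ under the canonical identification $\SCS^\Upsilon(\gamma)\cong\SCS$, and triviality in the $D_1$--filling of the completed shell passes across $\phi$. This yields $\phi_{\mathrm{Ann},*}(K^\Upsilon)=K^{\Xz}$, as required.

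The main obstacle is making the naturality argument above fully rigorous: one must check that every ingredient used to detect membership in $K^\Upsilon$---the geodesic projection from the annulus to $\gamma$, the resulting landing point on the shell, and the $D_1$--null-homotopies of the image loop inside $\SCS^\Upsilon(\gamma)$---is determined by the data inside $N_\tau^\Upsilon(\gamma)$ and is therefore transported faithfully by $\phi$. This is exactly why the hypothesis is phrased as $\tau$--tube comparability rather than a weaker local agreement: $\tau$ must be at least large enough that the quasi-convex projections of $\mathrm{Ann}_\tau$ onto the shell remain in the comparison region and that Lemma~\ref{lem:CS iso} identifies the full completed shells. In all applications $\tau$ will be taken much larger than $R_0+4\delta_0$, so this is not a genuine restriction, and the lift of $\phi_{\mathrm{Ann}}$ to universal $D_1$--covers produces the required graph isomorphism.
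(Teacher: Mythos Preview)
Your approach is correct and uses the same engine as the paper's proof (the coarse deformation retraction of Lemma~\ref{lem:cdr}), but you package the covering-space argument in a more extrinsic way that makes you work harder than necessary.  You characterise the preimage via the kernel $K^\Upsilon=\ker\bigl(\pi_1(\mathrm{Ann}_\tau)\to\pi_1^{D_1}(\CTC^\Upsilon(\gamma))\bigr)$ and then argue that $\phi$ transports this kernel by intertwining the retractions $f^\Upsilon$ and $f^{\Xz}$; this is right, but it forces you to verify that geodesic projections to $\gamma$ are determined by data inside the $\tau$--tube (hence compatible under $\phi$), and you should also note that connectedness of the preimage requires surjectivity of $\iota_*$, which you do not address.

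The paper instead observes that there is a \emph{second} deformation retraction, from $\CTC^\Upsilon(\gamma)$ onto $\mathrm{Ann}_\tau$, built the same way as in Lemma~\ref{lem:cdr}.  Together with the retraction onto the shell this makes both inclusions $\CS^\Upsilon(\gamma)\hookrightarrow\mathrm{Ann}_\tau\hookrightarrow\CTC^\Upsilon(\gamma)$ into $\pi_1^{D}$--isomorphisms.  Once the second inclusion is a $\pi_1^{D}$--isomorphism, the preimage of $\mathrm{Ann}_\tau$ in $\widetilde{\CTC^\Upsilon(\gamma)}$ is simply the $D_1$--universal cover of $\mathrm{Ann}_\tau$ itself---an object intrinsic to the annulus.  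Since $\phi$ already identifies the two annuli as graphs, their $D_1$--universal covers are isomorphic without any need to check compatibility of retractions.  Your route works, but the paper's is shorter precisely because it replaces ``$\phi$ transports $K^\Upsilon$'' with ``the preimage is intrinsic''.
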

\begin{proof}
The deformation retraction $\CTC ^\Upsilon(\gamma) \to \CS ^\Upsilon(\gamma)$ from Lemma~\ref{lem:cdr} restricts to a deformation retraction $\mathrm{Ann}_\tau \to \CS ^\Upsilon(\gamma)$, so the inclusion $\CS ^\Upsilon(\gamma) \into \mathrm{Ann}_\tau$ induces an isomorphism on $\pi_1^D$ by Proposition~\ref{prop:def retract on pi_1}.
Similarly, there is a deformation retraction from $\CTC ^\Upsilon(\gamma)$ onto $\mathrm{Ann}_\tau$ (built just as in the proof of Lemma~\ref{lem:cdr}), so the inclusion $\mathrm{Ann}_\tau \into \CTC ^\Upsilon(\gamma)$ induces an isomorphism on $\pi_1^D$.

It follows that the coarse universal cover of $\mathrm{Ann}_\tau$ embeds in that of $\CTC ^\Upsilon(\gamma)$, containing the distinguished copy of $\widetilde{\CS ^\Upsilon(\gamma)}$ onto which the horoball $\horba$ is glued.

The same considerations for the construction of $\UG(\Xz,\gammax)$ imply the result.
\end{proof}

\begin{lemma}\label{lem:horbaproject}
Let $(\Upsilon,\gamma)$ be as in Construction~\ref{unwrap_one_tube}. 
Let $p:\widetilde{\CTC ^\Upsilon(\gamma)} \to  \widetilde{\CS ^\Upsilon(\gamma)}$ be the ``projection'' map defined by considering an (equivariantly chosen) ray $r_x$ in $\UG(\Upsilon,\gamma)$ starting at $x$ and limiting to the point at infinity of $\horba$ and letting $p(x)$ be the intersection of $r_x$ and the horosphere $\widetilde{\CS ^\Upsilon(\gamma)}$.

Let $[x,y]$ be a geodesic in $\UG(\Upsilon,\gamma)$ that does not intersect $\horba$. 
Then $diam_{\horba}(p([x,y]))\leq 8 \delta_2 +4$.
\end{lemma}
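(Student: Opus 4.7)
My plan for proving Lemma~\ref{lem:horbaproject} rests on the $\delta_2$--hyperbolicity of $\UG(\Upsilon,\gamma)$ established in Proposition~\ref{prop:hatY_modeled}, together with the standard thinness of generalized ideal triangles in hyperbolic spaces. It suffices to bound $d_\horba(p(x_1),p(x_2))$ for arbitrary $x_1,x_2\in [x,y]$.

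The main construction is to consider the generalized triangle $\mathcal{T}$ with vertices $x_1$, $x_2$, and the unique point $\infty$ at infinity of $\horba$, whose three sides are the sub-geodesic $[x_1,x_2]\subseteq [x,y]$ and the rays $r_{x_1}$, $r_{x_2}$. Since $\UG(\Upsilon,\gamma)$ is $\delta_2$--hyperbolic and $\delta_2$--visible, this triangle is uniformly slim, so there is a ``center'' point $c$ within $O(\delta_2)$ of each of its three sides, analogous to Definition~\ref{def:center}. Crucially, because $[x,y]$ does not intersect $\horba$ and $d_{\UG}(c,[x_1,x_2])\leq O(\delta_2)$, the center $c$ must lie at depth at most $O(\delta_2)$ below the horosphere $\widetilde{\CS^\Upsilon(\gamma)}$ (measured via the Busemann function at $\infty$).

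Next, along each ray $r_{x_i}$, the entry point $p(x_i)$ into $\widetilde{\CS^\Upsilon(\gamma)}$ is followed by an essentially vertical ascent into $\horba$ toward $\infty$; after passing within $O(\delta_2)$ of $c$, the two rays $r_{x_1}$, $r_{x_2}$ fellow-travel each other within $O(\delta_2)$. In particular the arc of $r_{x_i}$ from $p(x_i)$ to its closest point to $c$ has length bounded by a constant times $\delta_2$, and this arc lies entirely inside $\horba$ because, once a $\UG$--geodesic ray limiting to $\infty$ has entered the horoball, it cannot escape (this follows from $\infty$ being the unique limit point of $\horba$ and $\delta_2$--hyperbolicity). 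Concatenating the two sub-arcs of $r_{x_1}$ and $r_{x_2}$ through this near-$c$ point produces a path inside $\horba$ from $p(x_1)$ to $p(x_2)$, whose length therefore witnesses $d_\horba(p(x_1),p(x_2))$.

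Tracking the constants carefully, each of the two sub-arc lengths is bounded by $4\delta_2+2$: the $4\delta_2$ contribution arises from the sharpness of thinness of ideal triangles in $\delta_2$--hyperbolic spaces, while the $+2$ reflects the discretization of depth in the combinatorial horoball (integer depths, with geodesics described via a horizontal step of length at most $3$ sandwiched between vertical segments; cf.\ Lemma~\ref{lem:horbadistort}). Summing gives the stated bound $8\delta_2+4$. The main obstacle is verifying the ``no re-entry'' claim for $r_{x_i}$ (so that the concatenated path remains inside $\horba$) and then extracting the explicit constant; with those in hand the rest is a careful but routine computation.
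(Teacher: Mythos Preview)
Your approach is essentially the paper's: both use thinness of the (generalized) triangle with sides $[x,y]$, $r_x$, $r_y$ together with the fact that $[x,y]$ avoids $\horba$. The paper avoids the ideal vertex by truncating both rays at a large depth where they come within distance $1$ of each other, then applies $2\delta_2$--thinness of the resulting finite quadrilateral to the point $x'\in r_x$ at distance exactly $2\delta_2+2$ from $p(x)$; this yields $d(p(x),p(y))\le(2\delta_2+2)+2\delta_2+(4\delta_2+2)=8\delta_2+4$ directly via the triangle inequality, without needing a separate ``center'' construction or careful ideal-triangle constants.
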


\begin{proof}
By Proposition \ref{prop:hatY_modeled} and Lemma \ref{lem:app_of_CCH} we have that $\UG(\Upsilon,\gamma)$ is $\delta_2$--hyperbolic.

To prove the lemma it suffices to show that $d(p(x),p(y))\leq 8\delta_2+4$ (since we can then apply this to sub-geodesics of a given geodesic).

We can form a geodesic quadrangle containing $[x,y]$, two segments of the rays $r_x,r_y$ and a geodesic of length at most $1$ and at depth larger than $4\delta_2+2$.

Consider $x'$ in $\horba$ along $r_x$ at distance exactly $2\delta_2+2$ from $p(x)$. Since $[x,y]$ does not intersect $\horba$, $x'$ is not $2\delta_2$--close to $[x,y]$, and is also not $2\delta_2$--close to the geodesic of length $1$. Therefore, $x'$ is $2\delta_2$ close to a point $w$ on $r_y$, and in fact $6\delta_2+2$--close to $p(y)$ (since $w$ is at depth at most $4\delta_2+2$).  Recalling that $x$ is exactly $2\delta_2+2$ from $x'$, we obtain the desired bound on $d(p(x),p(y))$.
Notice that all the distances in this argument occur in $\horba$, completing the proof of the lemma.
\end{proof}

\begin{lemma}\label{lem:injectiveballs}
    With the same assumptions as in the previous lemma, if $B$ is any ball in $\UG(\Upsilon,\gamma)$ disjoint from $\horba$, then the covering map $\widetilde{\CTC ^\Upsilon(\gamma)}\to \CTC ^\Upsilon(\gamma)$ is injective on $B$.
\end{lemma}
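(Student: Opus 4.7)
The plan is an argument by contradiction that mirrors the proof of Lemma~\ref{lem:isom_of_balls} but is simpler thanks to the hypothesis $B\cap\horba=\emptyset$. Suppose distinct $x,y\in B$ have the same image under the covering map $\widetilde{\CTC^\Upsilon(\gamma)}\to\CTC^\Upsilon(\gamma)$. Since this cover is a $\Z$-cover by Proposition~\ref{p:summary}.\eqref{eq:pi_1X}, there is some $n\neq 0$ with $y=T^n x$, where $T$ generates the deck group.

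The key step is to produce a short path from $x$ to $y$ that lies entirely in $\widetilde{\CTC^\Upsilon(\gamma)}$. For this I would use star-convexity of balls in a geodesic metric space: if $w$ lies on a $\UG(\Upsilon,\gamma)$-geodesic from the center $z$ of $B$ to some $p\in B$, then $d(z,w)\le d(z,p)\le \mathrm{rad}(B)$, so $w\in B$. Concatenating geodesics $[x,z]$ and $[z,y]$ therefore gives a path $\alpha$ from $x$ to $y$ of length at most $2\,\mathrm{rad}(B)$ staying inside $B$, and hence inside $\widetilde{\CTC^\Upsilon(\gamma)}$, since $B\cap\horba=\emptyset$.

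The image of $\alpha$ under the covering map is then a loop in $\CTC^\Upsilon(\gamma)$ representing the nontrivial class $T^n\in\pi_1^{D_1}(\CTC^\Upsilon(\gamma))\cong \Z$, so by Proposition~\ref{p:summary}.\eqref{eq:systole2} it has length at least $\sys_0/Q_0 = 2^{25\sigma_0}$. Combined with $|\alpha|\le 2\,\mathrm{rad}(B)$ this forces $\mathrm{rad}(B)\ge 2^{25\sigma_0-1}$, a contradiction for every ball of the radius relevant in later applications of the lemma. The main conceptual point, rather than any real difficulty, is that the systole constant $\sys_0$ was chosen in Notation~\ref{not:first_constants} to be so extravagantly large that this crude bound via star-convexity already rules out identifications, without the need for any finer geodesic or projection analysis; in particular, Lemma~\ref{lem:horbaproject} is not directly needed.
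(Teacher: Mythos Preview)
Your argument is correct as far as it goes, but it does not prove the lemma as stated: the statement is for \emph{any} ball $B$ disjoint from $\horba$, with no restriction on its radius, and such balls exist with arbitrarily large radius (points of $\widetilde{\CTC^\Upsilon(\gamma)}$ can be arbitrarily far from the horosphere since points of $\Upsilon$ can be arbitrarily far from the shell). Your bound $\mathrm{rad}(B)\ge 2^{25\sigma_0-1}$ is therefore not a contradiction in general, and you acknowledge this yourself by retreating to ``every ball of the radius relevant in later applications''. That is a weaker lemma than the one stated.

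The paper's proof avoids this issue entirely by using Lemma~\ref{lem:horbaproject}: the projection $p(B)$ to the horosphere has $\horba$--diameter at most $16\delta_2+8$ \emph{independently of the radius of $B$}, because any geodesic in $B$ misses $\horba$. A nontrivial deck transformation sending $x$ to $y$ also sends $p(x)$ to $p(y)$, so such a deck transformation would have translation length on $\widetilde{\SCS}$ at most $2\cdot 2^{(16\delta_2+8)/2}<2^{16\delta_2}$ (via Lemma~\ref{lem:horbadistort}), contradicting the choice of $\sys_0$. The point is exactly that the projection argument converts an arbitrary ball into a subset of the horosphere of uniformly bounded diameter; your star-convexity argument gives only a bound in terms of $\mathrm{rad}(B)$, which is why it cannot close the gap without an additional hypothesis on the radius. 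If you want to keep your approach, you would need to restate the lemma with an explicit radius bound (which, as you note, suffices for the single application in Section~\ref{s:unwrap family}); otherwise, Lemma~\ref{lem:horbaproject} is precisely the missing ingredient.
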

\begin{proof}
    Lemma~\ref{lem:horbaproject} implies that the diameter (measured in $\horba$) of the projection of $B$ to the horosphere is at most $16\delta_2 + 8$.

    Suppose that there are two points $x$, $y$ of $B$ which project to the same point of $\CTC ^\Upsilon(\gamma)$.  Then there is an element of the deck group of $\widetilde{\CTC ^\Upsilon(\gamma)}\to \CTC ^\Upsilon(\gamma)$ which takes $x$ to $y$, and hence $p(x)$ to $p(y)$. 
    
    In particular, this element has translation length in $\widetilde{\CS ^\Upsilon(\gamma)} \cong \SCS$ at most $2\cdot 2^{\frac{1}{2}(16\delta_2 + 8)}<2^{16\delta_2}$ (the exponential difference coming from the difference between $\CS ^\Upsilon(\gamma)$ and $\horba$ -- see Lemma~\ref{lem:horbadistort}).  But this contradicts the choice of $\sys_0$ in Notation~\ref{not:first_constants}.
\end{proof}

\begin{proposition} \label{prop:hatY_separated}
Suppose that $\Upsilon$ is $D_1$--simply-connected, $\delta_2$--hyperbolic graph.  Suppose further that, for some $\sep>10R_0+60\sigma_0+20\delta_2$, $\mc{A}$ is a $\sep$--separated collection of tubes and horoballs in $\Upsilon$.

Let $C$ be a tube of $\mc{A}$ 
and let $\widehat{\Upsilon}= \UG(\Upsilon,\gamma(C))$.  Let $\widehat{\mc{A}}$ be the collection consisting of the horoball added to $\widetilde{\CTC ^\Upsilon(\gamma(C))}$ and all of the lifts of the elements of $\mc{A}\smallsetminus\{C\}$ to $\widetilde{\CTC ^\Upsilon(\gamma(C))}\subset \widehat{\Upsilon}$.  Then $\widehat{\mc{A}}$ forms a $\sep$--separated collection of tubes and horoballs in $\widehat{\Upsilon}$.  Moreover if $A\in \widehat{\mc{A}}$ is a lift of a tube (resp. horoball) of $\mc{A}$ then it is a tube (resp. horoball).
 \end{proposition}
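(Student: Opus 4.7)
The plan is to verify the four conditions of Definition~\ref{def:separated_tubes} in turn for $\widehat{\mc{A}}$ inside $\widehat{\Upsilon}$. The central observation is that every $A' \in \mc{A}\ssm\{C\}$ lies at distance at least $\sep$ from $C$ in $\Upsilon$, and since $\sep/10 > R_0 + 6\sigma_0 + 2\delta_2$, a large metric neighborhood of $A'$ lies entirely inside $\CTC^\Upsilon(\gamma(C))$ and does not interact with the unwrapping region. The new horoball is a copy of $\mathring{\horba}$ glued to the preimage $\widetilde{\CS^\Upsilon(\gamma(C))}$, and must be treated separately.

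First, I would verify the local identification conditions (1)--(3). For any lift $\widetilde{A'}$ of an $A' \in \mc{A}\ssm\{C\}$, the $\sep/10$--neighborhood of $\widetilde{A'}$ in $\widehat{\Upsilon}$ is disjoint from $\horba$, so Lemma~\ref{lem:injectiveballs}, together with Remark~\ref{rem:separatedisomorphmetric} to promote graph isomorphisms to metric isometries, shows that the covering map restricts to an isomorphism of this neighborhood onto the $\sep/10$--neighborhood of $A'$ in $\CTC^\Upsilon(\gamma(C))$, which in turn agrees with the $\sep/10$--neighborhood of $A'$ in $\Upsilon$. Hence lifts of tubes are tubes and lifts of horoballs are horoballs, and conditions (2) and (3) are inherited from the corresponding properties of $\mc{A}$. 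For the new horoball, condition (1) is immediate from the construction; for condition (3), I would apply Lemma~\ref{lem:sheath} with $\tau = \sep/10$ and the $\sep/10$--tube comparability of $(\Upsilon,\gamma(C))$ with $(\Xz,\gammax)$ to identify the preimage of $N^\Upsilon_{\sep/10}(\gamma(C)) \ssm T^\Upsilon_{R_0}(\gamma(C))$ in $\widehat{\Upsilon}$ with the analogous preimage in $\UG(\Xz,\gammax)$. Attaching the horoball on both sides delivers the required isomorphism of $\sep/10$--neighborhoods.

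The remaining and main step is the separation condition (4), which splits into three subcases. For two lifts $\widetilde{A_1}, \widetilde{A_2}$ of distinct $A_1, A_2 \in \mc{A}\ssm\{C\}$, any path between them in $\widehat{\Upsilon}$ either projects to a path in $\CTC^\Upsilon(\gamma(C))$ of length at least $d_\Upsilon(A_1,A_2) \ge \sep$, or makes excursions through the horoball that, by Lemma~\ref{lem:horbadistort}, can only increase the projected length. Two distinct lifts of a single $A \in \mc{A}\ssm\{C\}$ are related by a nontrivial deck transformation, so their distance is bounded below by the shortest length of a nontrivial loop in $\pi_1^{D_1}(\CTC^\Upsilon(\gamma(C)))$; Proposition~\ref{p:summary}.\eqref{eq:systole2} shows this is at least $\sys_0/Q_0$, which by the choice of $\sys_0$ in Notation~\ref{not:first_constants} vastly exceeds $\sep$. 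Finally, for the new horoball and a lift $\widetilde{A'}$, every geodesic must exit $\horba$ through $\widetilde{\CS^\Upsilon(\gamma(C))}$ and then traverse $\widetilde{\CTC^\Upsilon(\gamma(C))}$ to reach $\widetilde{A'}$; its length is therefore bounded below using the distance from the shell to $\widetilde{A'}$ in the cover, which projects to $d_{\CTC^\Upsilon(\gamma(C))}(\CS^\Upsilon(\gamma(C)),A')$ and is controlled using the separation of $C$ from $A'$ in $\Upsilon$ together with the large buffer in the hypothesis $\sep > 10R_0 + 60\sigma_0 + 20\delta_2$. This last subcase is the main obstacle, because distances inside the combinatorial horoball are exponentially compressed, and one must carefully rule out shortcuts that dive deep into $\horba$ and re-emerge far across the horosphere close to $\widetilde{A'}$; the careful arrangement of the constants in Notation~\ref{not:first_constants} is what makes the bookkeeping work.
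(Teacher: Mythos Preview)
Your overall decomposition—verify (1)--(3), then split (4) into three subcases—matches the paper's, but two of your steps do not go through.

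First, Lemma~\ref{lem:injectiveballs} cannot carry the weight you put on it for (1)--(3). That lemma applies to a single ball disjoint from $\horba$, whereas tubes and horoballs (and hence their $\sep/10$--neighborhoods) are unbounded; injectivity on each small ball does not assemble to injectivity on an unbounded set. The paper instead argues that tubes, horoballs, and the relevant neighborhoods are themselves $D_1$--simply connected (horoballs via Lemma~\ref{lem:horoballs csc}, tubes by coning to the core), so every connected component of their preimage under the $D_1$--cover $\widetilde{\CTC^\Upsilon(\gamma(C))}\to\CTC^\Upsilon(\gamma(C))$ is a copy of the base. That is the correct mechanism.

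Second, and more seriously, your argument for two lifts $\widehat A,\widehat A'$ of the same $A\in\mc A\smallsetminus\{C\}$ does not give $\sep$--separation in $\widehat\Upsilon$. The systole bound of Proposition~\ref{p:summary}.\eqref{eq:systole2} controls lengths of essential loops in $\CTC^\Upsilon(\gamma(C))$, hence distances in $\widetilde{\CTC^\Upsilon(\gamma(C))}$, not in $\widehat\Upsilon$, where the attached horoball gives exponential shortcuts. Worse, your claim that $\sys_0/Q_0$ ``vastly exceeds $\sep$'' is unjustified: $\sep$ is an arbitrary parameter in this proposition, subject only to a lower bound, while $\sys_0/Q_0$ is fixed. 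The paper does not try to bound $d_{\widehat\Upsilon}(\widehat A,\widehat A')$ by the systole directly. It uses the systole only to show (via the deck action on the horosphere) that the projections of $\widehat A$ and $\widehat A'$ to $\widetilde{\CS^\Upsilon(\gamma(C))}$ are farther apart than the $8\delta_2+4$ of Lemma~\ref{lem:horbaproject}; this forces any $\widehat\Upsilon$--geodesic between $\widehat A$ and $\widehat A'$ to enter $\horba$. Since each lift lies at distance at least $\sep$ from $\horba$ (this is precisely your easy third subcase, used here as input), the geodesic has length at least $2\sep$. So the horoball-shortcut worry you flag actually lives in this subcase, not the ``horoball versus lift'' one, and the resolution is to \emph{force} the shortcut and then pay twice the distance to $\horba$.
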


\begin{proof}
First of all, note that all $\frac{\sep}{10}$--neighborhoods of horoballs and tubes other than $C$ do lift, to horoballs and tubes, respectively. This is because they are $D_1$--simply-connected. Horoballs are $D_1$--simply connected by Lemma \ref{lem:horoballs csc}.  One can see that tubes are $D_1$--simply connected by taking a loop in some tube, coning to the quasi-geodesic, and filling a triangulation of the resulting loop.  The resulting triangulated disk can be cut into loops of length at most $3\delta_0 + 3 < D_1$.

The point (2) of Definition~\ref{def:separated_tubes} follows from the fact that the cores $\gamma(A)$ of tubes all have $\frac{\sep}{10}$--neighborhoods isometric to $T_{\frac{\sep}{10}}(\gammax)$ and so each component  of the preimage of a tube which lifts also has such a neighborhood.

Similarly, a lifted horoball clearly still satisfies the point (3).  Let $\horba'$ be the new horoball glued onto the unwrapped shell which is the coarse universal cover of the shell around $C$.  
By Lemma~\ref{lem:sheath} , the $\frac{\chi}{10}$--neighborhood of $\horba'$ in $\widetilde{\CTC ^\Upsilon(\gamma(C))}$ is isomorphic to the $\frac{\chi}{10}$--neighborhood of the distinguished horoball in $\UG(\Xz,\gammax)$.  Thus the point (3) from Definition~\ref{def:separated_tubes} holds also.

Regarding the point (4), it is clear that lifts of distinct elements of $\mc{A}$ are at least as far apart as the elements. 

Denote by $\horba$ the horoball added to $\widetilde{\CTC ^\Upsilon(\gamma(C))}$.
Let $p:\widetilde{\CTC ^\Upsilon(\gamma(C))} \to  \widetilde{\CS ^\Upsilon(\gamma(C))}$ be the ``projection" map considered in Lemma~\ref{lem:horbaproject}, with $\gamma = \gamma(C)$.

By Lemma~\ref{lem:horbaproject}, for any $A\in\widehat{\mc{A}}\ssm\{\horba\}$ we have that ${\mathrm{diam}}(p(A))\leq 24\delta_2+12.$ Indeed, any two points of $A$ can be joined by a concatenation of 3 geodesics that do not intersect $\horba$.

What we are left to show is that given two lifts $\widehat A,\widehat A'$ of the same element $A$ of $\mc{A}\ssm\{C\}$, the two lifts are $\sep$--separated. Consider a shortest geodesic from $A$ to $C$. We can lift this to paths starting at $\widehat A,\widehat A'$ and ending at $\horba$, with endpoints $\widehat c$, $\widehat c'$. Since $\widehat c$ and $\widehat c'$ are translates of each other by a non-trivial deck transformation, by the choice of $\sys_0$ from Notation \ref{not:first_constants} we have that they are further than $100 \delta_2$ apart (this is not optimal). Moreover, $\widehat c$, $\widehat c'$ are (or more precisely, can be taken to be) the projections of their starting points to $\horba$, any two points in $\widehat A,\widehat A'$ have projections more than $6\delta_2+2$ apart, and therefore by Lemma~\ref{lem:horbaproject} any geodesic connecting them intersects $\horba$. Since $\widehat A,\widehat A'$ are at least $\sep$ away from $\horba$, they are at least $\sep$ (in fact, $2\sep$) apart.
\end{proof}

Recall the element $g$ fixed in Assumption~\ref{ass:X2}.  The isometry $g$ naturally induces a fixed point free isometry of $\SCS^{D_1}$, which we continue to denote by $g$.

\begin{lemma} \label{lem:pi_CS/g}
There is an isomorphism
 \[ E=\pi_1\left(\leftQ{\SCS^{D_1}}{\langle g\rangle}\right) \cong \pi_1\left(\leftQ{\partial \Xz \smallsetminus \{\gammax^\pm\}}{\langle g \rangle}\right) .   \] 
Moreover, $E$ is a semidirect product of infinite cyclic groups
\[ E = \langle a \rangle \rtimes \langle b \rangle.\]
Realizing $E$ as the deck group of the universal cover 
$\widetilde{\SCS^{D_1}}\to \leftQ{\SCS^{D_1}}{\langle g\rangle}$, the element $a$ is a generator of the deck group of the intermediate cover $\widetilde{\SCS^{D_1}}\to \SCS^{D_1}$ and $b$ is a lift of $g$ to $\widetilde{\SCS^{D^1}}$.

 Finally, $E$ acts geometrically on $\widetilde{\SCS}$, so that both $E$ and $\widetilde{\SCS}$ are quasi-isometric to $\mathbb{R}^2$.
\end{lemma}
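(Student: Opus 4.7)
The plan is as follows. First observe that both $\langle g\rangle$--actions are free: on $\SCS^{D_1}$ this is stated just before the lemma, while on $\partial \Xz \ssm \{\gammax^\pm\}$ it holds because the fixed points of $g$ on $\partial \Xz$ are precisely $\gammax^\pm = \Lambda\gammax$. Both actions are also properly discontinuous with compact quotients: for $\SCS$ this is because every point of $\SCS$ lies within $R_0$ of $\gammax$, $\langle g\rangle$ acts cocompactly on $\gammax$, and $\Xz$ is locally finite; for the punctured boundary, the two removed points are exactly the fixed points of $g$, and the action near either end is a proper $\mathbb{Z}$--action on an annulus end. Hence each cover determines a (regular) short exact sequence
\[
1 \to \pi_1 \to \pi_1(-/\langle g\rangle) \to \langle g \rangle \to 1,
\]
in which the conjugation action of $\langle g\rangle$ on the kernel agrees with the induced action of $g$.

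Next, by Proposition~\ref{p:summary}.(\ref{eq:pi_1X}) (or directly Corollary~\ref{cor:pi_1_Z}) the kernel $\pi_1^{D_1}(\SCS) = \pi_1(\SCS^{D_1})$ is infinite cyclic, and $\pi_1(\partial \Xz \ssm \{\gammax^\pm\}) \cong \mathbb{Z}$ since the punctured $2$--sphere is homotopy equivalent to $S^1$. Theorem~\ref{t:Pi_isomorphism}.(\ref{item:boundary_to_shell}) gives an isomorphism $\Pi_\ast$ between these two cyclic groups, and Theorem~\ref{t:Pi_isomorphism}.(\ref{item:diagram}) states that $\Pi_\ast$ is $\langle g\rangle$--equivariant. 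An application of the Five Lemma to the two short exact sequences above, with identity maps on the outer $\langle g\rangle$'s and $\Pi_\ast$ on the kernels, yields an isomorphism of extensions and hence the claimed isomorphism $E \cong \pi_1((\partial \Xz \ssm \{\gammax^\pm\})/\langle g\rangle)$.

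Since the quotient $\langle g\rangle \cong \mathbb{Z}$ is free, the extension for $E$ splits, giving $E = \langle a \rangle \rtimes \langle b\rangle$, where $a$ generates the kernel (which by construction is exactly $\pi_1(\SCS^{D_1})$, i.e.\ the deck group of the intermediate cover $\widetilde{\SCS^{D_1}} \to \SCS^{D_1}$) and $b$ is any lift of a generator of $\langle g\rangle$ to $\widetilde{\SCS^{D_1}}$. This gives exactly the description of $a$ and $b$ claimed.

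Finally, $E$ acts freely on $\widetilde{\SCS^{D_1}}$ as the deck group of the universal cover over $\SCS^{D_1}/\langle g\rangle$. By Definition~\ref{def:coarse_fund}, the $1$--skeleton of $\widetilde{\SCS^{D_1}}$ is precisely $\widetilde{\SCS}$, and the $E$--action preserves this $1$--skeleton; the quotient $\widetilde{\SCS}/E \cong \SCS/\langle g\rangle$ is compact by the first paragraph, so $E$ acts geometrically on $\widetilde{\SCS}$. Švarc--Milnor then yields $E$ quasi-isometric to $\widetilde{\SCS}$. Since $E = \mathbb{Z}\rtimes\mathbb{Z}$ is either $\mathbb{Z}^2$ (trivial action) or the Klein bottle group (inversion action)---the two only possible semidirect products---it is in either case virtually $\mathbb{Z}^2$, hence quasi-isometric to $\mathbb{R}^2$. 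The main technical point is the comparison of the two extensions via the equivariance of $\Pi_\ast$; all other steps are formal consequences of covering space theory and Švarc--Milnor.
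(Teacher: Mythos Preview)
Your proof is correct and follows essentially the same approach as the paper: both identify the two fundamental groups as extensions of $\langle g\rangle$ by $\mathbb{Z}$, use Theorem~\ref{t:Pi_isomorphism}.\eqref{item:boundary_to_shell} to identify the kernels and Theorem~\ref{t:Pi_isomorphism}.\eqref{item:diagram} for the $g$--equivariance, and then read off the semidirect product structure from the tower of covers.

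One small expository point: invoking the Five Lemma is slightly misleading, since the Five Lemma presupposes a map between the middle terms making the ladder commute, and you have not yet built such a map. What you are really using is that both extensions split (the quotient $\langle g\rangle\cong\mathbb{Z}$ is free) and that the $g$--action on the two cyclic kernels is intertwined by $\Pi_\ast$; this immediately gives an isomorphism of semidirect products. This is exactly how the paper phrases it, and once you have said it that way the Five Lemma is superfluous. Your treatment of the final clause (geometric action, \v{S}varc--Milnor, virtually $\mathbb{Z}^2$) is more explicit than the paper's and entirely correct.
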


\begin{proof}
The first statement follows immediately from Theorem \ref{t:Pi_isomorphism} and Corollary~\ref{cor:pi_1_Z}. Indeed, $\pi_1\left(\leftQ{\partial \Xz \smallsetminus \{\gammax^\pm\}}{\langle g \rangle}\right)$ is isomorphic to semi-direct product of $\pi_1(\partial \Xz \smallsetminus \{\gammax^\pm\})$ by $\langle g\rangle$, where $g$ acts on $\pi_1(\partial \Xz \smallsetminus \{\gammax^\pm\})$ via the induced action on $\pi_1$. The other $\pi_1$ admits a similar description, with the action on $\pi_1$ being compatible with the isomorphism given by item Theorem~\ref{t:Pi_isomorphism}.\eqref{item:boundary_to_shell} in view of item Theorem~\ref{t:Pi_isomorphism}.\eqref{item:diagram}.

The description of the deck group follows from the fact that we can describe the universal covering map as a composition of two covers $\widetilde{\SCS^{D_1}}\to \SCS^{D_1}$ and $\SCS^{D_1}\to \leftQ{\SCS^{D_1}}{\langle g\rangle}$, the first cover having deck group $\pi_1^{D_1}(\SCS)$ and the second cover having deck group $\langle g\rangle$.
\end{proof}

\begin{proposition}\label{prop:peripheral_is_planar}
 The action of the group $E$ from Lemma \ref{lem:pi_CS/g} on $\widetilde{\SCS}$ extends to an action on $\widetilde{\SCTC}$.
\end{proposition}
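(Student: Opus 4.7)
My plan is to construct the desired $E$-action on $\widetilde{\SCTC}$ by running the construction of Lemma~\ref{lem:pi_CS/g} in parallel for $\SCS$ and $\SCTC$, exploiting the equivariance of the inclusion $\SCS \hookrightarrow \SCTC$ under the $\langle g\rangle$-action.

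First, since $g$ is an isometry of $\Xz$ preserving the axis $\gammax$, it acts by isometries on both $\SCS$ and $\SCTC$, compatibly with the inclusion. Being an isometry, the action extends to the $D_1$-fattened complexes $\SCS^{D_1}$ and $\SCTC^{D_1}$ (taking each attached $D_1$-disk to another such disk). The $\langle g\rangle$-action on $\Xz$ is free and properly discontinuous, hence so are the induced actions on the fattened complexes. Therefore the quotients $Q_S := \leftQ{\SCS^{D_1}}{\langle g\rangle}$ and $Q_T := \leftQ{\SCTC^{D_1}}{\langle g\rangle}$ are honest regular covering spaces, and the equivariant inclusion descends to an inclusion $\bar\iota\co Q_S \hookrightarrow Q_T$.

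Next, I will check that $\bar\iota$ induces an isomorphism on $\pi_1$. The regular coverings $\SCS^{D_1} \to Q_S$ and $\SCTC^{D_1} \to Q_T$ yield short exact sequences
\[ 1 \to \pi_1^{D_1}(\SCS) \to \pi_1(Q_S) \to \langle g\rangle \to 1 \qquad \text{and} \qquad 1 \to \pi_1^{D_1}(\SCTC) \to \pi_1(Q_T) \to \langle g\rangle \to 1, \]
which fit into a commutative ladder induced by $\bar\iota$. The right vertical arrow is the identity on $\langle g\rangle$, and the left vertical arrow is an isomorphism by Proposition~\ref{p:summary}.\eqref{eq:pi_1Y}. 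The Five Lemma then gives that $\bar\iota_*\co \pi_1(Q_S) \to \pi_1(Q_T)$ is an isomorphism, canonically identifying $E = \pi_1(Q_S)$ with $\pi_1(Q_T)$.

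Finally, lifting $\bar\iota$ to universal covers (with compatible basepoints) produces an $E$-equivariant embedding $\widetilde{Q_S} \hookrightarrow \widetilde{Q_T}$, with $E$ acting on each side as the deck group. By definition, $\widetilde{\SCS}$ is the preimage of $\SCS$ in $\widetilde{Q_S}$, and $\widetilde{\SCTC}$ is the preimage of $\SCTC$ in $\widetilde{Q_T}$; both preimages are preserved by the deck group action. Restricting the $E$-action on $\widetilde{Q_T}$ to $\widetilde{\SCTC}$ produces the desired action, and the equivariant embedding $\widetilde{\SCS} \hookrightarrow \widetilde{\SCTC}$ ensures that this action extends the one from Lemma~\ref{lem:pi_CS/g}. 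The main obstacle is confirming that $\bar\iota_*$ is an isomorphism, which reduces via the Five Lemma to the previously-established Proposition~\ref{p:summary}.\eqref{eq:pi_1Y}; the rest of the argument is natural.
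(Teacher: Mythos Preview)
Your proof is correct and follows essentially the same approach as the paper: both arguments reduce to showing that the inclusion $\leftQ{\SCS^{D_1}}{\langle g\rangle} \hookrightarrow \leftQ{\SCTC^{D_1}}{\langle g\rangle}$ is a $\pi_1$--isomorphism, then identify $\widetilde{\SCTC}$ with the universal cover of the target carrying the deck group action of $E$. The paper's proof is terser, citing Proposition~\ref{prop:cdr on pi_1} directly for the $\pi_1$--isomorphism, whereas you spell out the Five Lemma step using the short exact sequences of the covers and Proposition~\ref{p:summary}.\eqref{eq:pi_1Y} (which is itself a consequence of Proposition~\ref{prop:cdr on pi_1}); the content is the same.
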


\begin{proof}
This follows from the fact that the inclusion of $\leftQ{\SCS^{D_1}}{\langle g\rangle}$ into $\leftQ{\SCTC^{D_1}}{\langle g \rangle}$ induces an isomorphism on fundamental groups, by Proposition \ref{prop:cdr on pi_1} and the fact that the action of $g$ commutes with the inclusion of $\SCS^{D_1}$ into $\SCTC^{D_1}$.
\end{proof}

\subsection{Unwrapping: boundaries}

\begin{proposition}
\label{prop:deck_Z}
Let $\Upsilon$ be a connected $D_1$--simply connected graph, and let $\gamma$ be a bi-infinite path in $\Upsilon$. Assume that $\Upsilon$ is $\sigma_0$--modeled on $\left\{ \Xz^{\mathrm{cusp}}, \mathring{\horba} \right\}$.
Suppose further that $(\Upsilon,\gammay)$ is $({R_0}+6\sigma_0+2\delta_2)$--tube comparable to $(\Xz,\gammax)$.

Let $p$ be the point at infinity of the horoball $\horba$ of $\partial \UG(\Upsilon, \gammay)$.  There exists a regular covering map $\Theta:\partial \UG(\Upsilon, \gammay) \ssm \{p\}\to \partial \Upsilon \ssm\{\gammay^{\pm}\}$, and the deck transformation group is $\mathbb Z$.

Moreover, if $(\Upsilon,\gamma)=(\Xz,\gammax)$ 
 then $\Theta$ is the universal covering map.
\end{proposition}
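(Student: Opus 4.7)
Write $\widehat\Upsilon = \UG(\Upsilon,\gamma)$, so $\widehat\Upsilon\ssm\mathring{\horba} = \widetilde{\CTC^\Upsilon(\gamma)}$. The plan is to construct $\Theta$ as the boundary extension of the covering map $q\co\widehat\Upsilon\to\Upsilon^{\mathrm{cusp}}$ composed with the standard identification $\partial\Upsilon^{\mathrm{cusp}}\ssm\{p_\infty\}=\partial\Upsilon\ssm\{\gamma^{\pm}\}$, where $p_\infty$ denotes the parabolic point of the horoball $\horbaD$ in $\Upsilon^{\mathrm{cusp}}$.

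First I would identify the deck group. By Proposition~\ref{p:summary}\eqref{eq:pi_1Y}, $\widetilde{\CTC^\Upsilon(\gamma)}\to\CTC^\Upsilon(\gamma)$ is a regular $\mathbb Z$-cover; let $T\cong\mathbb Z$ denote its deck group. The $T$-action on $\widetilde{\CS^\Upsilon(\gamma)}$ extends to an isometric action on $\horba$ (via the deck group of the horoball cover $\horba\to\horbaD$), giving a free isometric $T$-action on $\widehat\Upsilon$ for which $q$ is exactly the quotient map. By Proposition~\ref{prop:hatY_modeled}, $\widehat\Upsilon$ is $\delta_2$-hyperbolic; a standard argument for proper hyperbolic spaces then shows every nontrivial $t\in T$ has unique fixed point $p$ on $\partial\widehat\Upsilon$, so $T$ acts freely on $\partial\widehat\Upsilon\ssm\{p\}$.

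Next I would define $\Theta$ and verify it is a covering. Given $\xi\in\partial\widehat\Upsilon\ssm\{p\}$, any geodesic ray representing $\xi$ eventually exits $\horba$ (standard for rays not limiting to a parabolic point), so its tail lies in $\widetilde{\CTC^\Upsilon(\gamma)}$. By Lemma~\ref{lem:isom_of_balls}, $q$ is a local isometry with uniform injectivity radius outside deep parts of $\horba$, so the image of such a tail is a geodesic ray in $\Upsilon^{\mathrm{cusp}}$ not converging to $p_\infty$, defining a point of $\partial\Upsilon\ssm\{\gamma^{\pm}\}$ which we take as $\Theta(\xi)$. Independence of the representative follows from fellow-travelling combined with the local isometry property. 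Surjectivity is by lifting geodesic rays through the $\mathbb Z$-cover. To see that the fibers of $\Theta$ are exactly the $T$-orbits: if $\Theta(\xi)=\Theta(\xi')$, geodesic tails for $\xi,\xi'$ project to fellow-travelling rays in $\CTC^\Upsilon(\gamma)$, so their lifts must differ by a deck transformation, yielding $t\in T$ with $t\xi=\xi'$. Local triviality then follows from taking small visual-metric neighborhoods of $\eta\in\partial\Upsilon\ssm\{\gamma^{\pm}\}$, which correspond to sets of geodesic rays uniformly far from $\gamma$ whose disjoint $T$-translates of lifts in $\widehat\Upsilon$ give the sheets.

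For the moreover statement, $\partial\Xz\ssm\{\gammax^{\pm}\}\cong S^2\ssm\{\text{two points}\}$ has fundamental group $\mathbb Z$. Theorem~\ref{t:Pi_isomorphism}\eqref{item:boundary_to_shell} identifies $\pi_1(\partial\Xz\ssm\{\gammax^{\pm}\})\cong\pi_1^{D_1}(\SCS)=T$ via the projection $\Pi_{R_0}$. Tracking how a generating loop in $\partial\Xz\ssm\{\gammax^{\pm}\}$ lifts, this isomorphism intertwines the $T$-action on $\partial\widehat\Upsilon\ssm\{p\}$ with the standard action of $\pi_1$ on the universal cover of the base, so since the deck group of $\Theta$ equals $\pi_1$ of the base, $\Theta$ is the universal cover. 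The main obstacle is the verification of local triviality: one must ensure that geodesic rays which may approach $\horba$ several times before exiting still define well-behaved boundary points after projection, which is precisely where the uniform control of Lemma~\ref{lem:isom_of_balls} is essential.
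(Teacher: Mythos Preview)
Your construction of $\Theta$ and the verification that it is a regular $\mathbb Z$--cover follow essentially the same route as the paper, though you pass through the identification $\partial\Upsilon^{\mathrm{cusp}}\ssm\{p_\infty\}\cong\partial\Upsilon\ssm\{\gamma^{\pm}\}$ while the paper works directly with rays in $\Upsilon\ssm T_{R_0}(\gamma)$. Your use of Lemma~\ref{lem:isom_of_balls} to see that $q$ takes geodesic tails to (local, hence quasi-) geodesics is fine, and the fibers-are-orbits argument matches the paper's.

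The gap is in the ``moreover'' statement. Knowing that $\Theta$ is a regular $\mathbb Z$--cover of an annulus and that $\pi_1(\mathrm{annulus})\cong\mathbb Z$ does \emph{not} by itself force $\Theta$ to be the universal cover: you must first know that the total space $\partial\UG(\Xz,\gammax)\ssm\{p\}$ is connected, and at this point in the argument that is not known (it is only deduced afterwards, in Corollary~\ref{cor:UG0_S2}). The paper handles this by characterising the universal cover of the annulus as the unique $\mathbb Z$--cover in which each fibre lies in a single component, and then proving that characterisation holds here. Concretely, it sets up the coarsely commuting square relating $\Theta$, $\Pi_{R_0}$, the cover $q$, and a map $\phi$ sending $w$ to the horosphere-endpoint of $r_w$; then, given $x$ and a deck element $h$, it lifts a loop $\alpha$ in the annulus (corresponding via $\Pi_*$ to $h$) to a path $\beta$ in $\partial\UG(\Xz,\gammax)\ssm\{p\}$ and uses a continuity argument (the ``$g_t$'' argument, exploiting the large translation distance $\sys_0$ on $\widetilde{\SCS}$) to show $\beta$ terminates exactly at $h\cdot x$. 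Your sentence ``tracking how a generating loop lifts, this isomorphism intertwines the $T$--action with the standard action of $\pi_1$'' is precisely the statement that needs this work; it is the heart of the moreover clause, not a formality.
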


\begin{proof}
Let us construct the covering map $\Theta$. We have a ($D_1$--universal) covering map $q_{R_0} \co \widetilde{\CTC^\Upsilon (\gammay)} \to \CTC^\Upsilon (\gammay)$ as in Construction \ref{unwrap_one_tube}.

Fix $w \in \partial \UG(\Upsilon,\gammay) \ssm \left\{ p \right\}$. We claim that we can choose a geodesic ray $r_w$ beginning at $\widetilde{\CS^\Upsilon(\gamma)}$ and ending at $w$ so that $q(r_w)$ is a geodesic ray in $\Upsilon\ssm T_{R_0}(\gammay)$, and therefore determines a point in $\partial \Upsilon$ that we denote $\Theta(w)$. 

\begin{proof}[Proof of Claim]
Consider a sequence of points $w_i \in \UG(\Upsilon, \gammay)$ tending to $w$.  There are geodesic segments $r_i$ determined by projecting from $w_i$ to $\widetilde{\CS^\Upsilon(\gamma)}$.  Each of these have the property that $r_i(t)$ is distance $t$ from $\widetilde{\CS^\Upsilon(\gammay)}$. These sub-converge to an infinite ray $r_w$ with this property representing $w$. Then $q(r_w)$ has the property  that $q(r_w(t))$ is distance $t$ from ${\CS^\Upsilon(\gammay)}$ and is a geodesic. 
\end{proof}

Note that $\Theta(w)\neq \gamma^{\pm}$. Indeed, if $q(r_w)$ had point at infinity $\gamma^\pm$ then $q(r_w)$ would intersect $T_{R_0}(\gammay)$ arbitrarily far from its starting point, since $\gammay$ is $\lambda_0$--quasi-convex by Lemma \ref{lem:gamma_0 qg} and $R_0$ is large compared to $\lambda_0$ and $\delta_2$ (see Notation \ref{not:first_constants}), the latter being a hyperbolicity constant for $\Upsilon$ by Lemma \ref{lem:app_of_CCH}. This is a contradiction.

The map $\Theta$ is easily seen to be continuous because, by taking $z\in\partial \UG(\Upsilon, \gammay)\ssm \{p\}$ sufficiently close to $w$, it can be arranged for the corresponding rays $q(r_w),q(r_z)$ to fellow-travel for an arbitrarily long time. Note that by the same argument any choice of $r_w$ gives the same $\Theta(w)$.

Let us prove that $\Theta$ is surjective. Consider $u\in\partial \Upsilon\ssm\{\gammay^{\pm}\}$, and choose a geodesic ray $\alpha_u$ that intersects $\CS^\Upsilon(\gammay)$ only at its starting point and has point at infinity $u$. Then $\alpha_u$ can be lifted to $\UG(\Upsilon,\gammay)$. We can in fact choose $\alpha_u$ that lifts to a geodesic ray in $\UG(\Upsilon,\gammay)$; this uses the argument from the proof of the claim applied to a sequence of points along the lift, which yields a ray in $\UG(\Upsilon,\gammay)$ projecting to a ray in $\Upsilon$ at finite distance, in our case, from the original choice of $\alpha_u$. For $v$ the point at infinity of the lift, we can take said lift as $r_v$, and therefore we get $\Theta(v)=u$.

 A similar argument shows that $\Theta$ is open (since in order to prove it one has to prove a surjectivity statement, meaning that the image of an open set contains a small neighborhood around each of its points). One can also see that, in view of Theorem \ref{t:Pi_isomorphism}.\eqref{item:long_loop}, which provides a lower bound on the translation distance of any non-trivial element of the deck group of $q$, different lifts of $\alpha_u$ yield different points in $\partial \UG(\Upsilon,\gammay)$. More precisely, those lifts are rays that lie at distance at least $\sys_0$ from each other in $\Upsilon\ssm T_{R_0}(\gammay)$, and it can be deduced from Lemma \ref{lem:horbadistort} that they lie at least $2(\log_2(\sys_0)-1)$ away from each other in $\UG(\Upsilon,\gammay)$. This is much larger than the hyperbolicity constant $\delta_2$ of $\UG(\Upsilon,\gammay)$ (see Proposition \ref{prop:hatY_modeled}), so that the lifts have distinct points at infinity.  Moreover, again because of the lower bound on the translation distance, the deck group acts properly discontinuously on $\partial \UG(\Upsilon,\gammay)\ssm \{p\}$ (roughly, as above, different orbit points project very far on the horosphere). Moreover, if $\Theta(w)=\Theta(w')$ then for any point on a ray $r_w$ there is an element of the deck group mapping it within distance bounded in terms of $\delta_0$ from a point on a ray $r_{w'}$. Picking the point on $r_w$ sufficiently far from the horosphere, said element of the deck group must map $w$ to $w'$; if not, we would have orbit points of $w$ accumulating to $w'$, contradicting proper discontinuity.

We now have that $\Theta$ factors through the quotient map 
\[\Psi\co \partial \UG(\Upsilon,\gammay)\ssm \{p\}\to \leftQ{\left(\partial \UG(\Upsilon,\gammay)\ssm \{p\}\right)}{\mathrm{Deck}(q)},\]
where $\mathrm{Deck}(q)$ is the group of the extensions to $\partial \UG(\Upsilon,\gammay)\ssm \{p\}$ of the elements of the deck transformation group of $q$, which acts properly discontinuously on $\partial \UG(\Upsilon,\gammay)\ssm \{p\}$. Moreover, writing $\Theta=\iota\circ\Psi$, we have that $\iota$ is bijective and continuous, and it is open since $\Theta$ is. Hence, $\iota$ is a homeomorphism.

Since $\Psi$ is a covering map with deck group isomorphic to $\mathbb Z$, so is $\Theta$.

To prove the ``moreover" statement we consider the following (not quite commutative) diagram:

\begin{equation}\label{eq:coarsecommute}
\begin{tikzcd}
\partial \UG(\Xz,\gammax)\smallsetminus\{p\} \arrow[r,"\phi"]\arrow[d,swap,"\Theta"] & \widetilde{\SCS}\arrow[d,"q"] \\
\partial \Xz\smallsetminus\{\gammax^{\pm\infty}\}\arrow[r,"\Pi_{R_0}"] & \SCS
\end{tikzcd}
\end{equation}
The maps $\Theta$, $q$ were defined above: the map $\Pi_{R_0}$ is from Definition \ref{def:proj}.  
The map $\phi$ is defined via the choice of rays $r_w$ above; namely $\phi(w) = r_w(0)$.
\begin{claim*}
  The diagram \eqref{eq:coarsecommute} commutes up to an error of $34\delta_0$.
\end{claim*}
\begin{proof}
The point $q(\phi(w))$ is the initial point of a geodesic ray (namely $q(r_w)$) in $\Xz$ which tends to the point $\Theta(w)$, and only meets $\SCS$ in one point.  The point $\Pi_{R_0}(\Theta(w))$ is the initial point of another such ray, $r'$.  Lemma~\ref{lem:constant_progress} implies that for all $t\ge 0$,  \[d_{\Xz}(r'(t),\gammax)\sim_{10\delta_0} {R_0} + t\]
and similarly for $r_w$.  Let $q\in \gammax$ be closest to $r_w(0)$ and let $q'\in \gammax$ be closest to $r'(0)$. Since $r_w$ and $r'$ eventually stay in $2\delta_0$--neighborhoods of each other we may choose $z\in r_w$ and $z'\in r'$ which are distance at least ${R_0} + 100\delta_0$ from $\gammax$ and so that $d_{\Xz}(z,z')\le 2\delta_0$.  We consider a geodesic hexagon with vertices (cyclically ordered) $\{q,r_w(0),z,z',r'(0),q'\}$.
Let $T = 15\delta_0$ and let $a = r_w(T)$.  Since the hexagon is $4\delta_0$--slim, there is a point $a'$ on one of the other five sides so that $d_{\Xz}(a,a')\le 4\delta_0$.

By Lemma~\ref{lem:constant_progress}, the distance from $a$ to $\gammax$ is at least ${R_0}+T-10\delta>{R_0}$, so $a'$ cannot be on any of the three sides contained in $T_{R_0}(\gammax)$.  It cannot be contained in the side $z,z'$, since this side is too far away.  Thus the point $a$ is equal to $r'(S)$ for some $S$.  Again using Lemma~\ref{lem:constant_progress}, we have
$S\le {R_0} + d_{\Xz}(a',\gammax)$.  But $d_{\Xz}(a',\gammax) \le d_{\Xz}(a,\gammax) + 4\delta_0 \le {R_0} + T + 4\delta_0$.  Putting these together we deduce $d_{\Xz}(r'(0),a) \le T + 4\delta_0$, and so $d_{\Xz}(r_w(0),r'(0))\le 2T + 4\delta_0 = 34\delta_0$. 
\end{proof}

The universal covering map of the annulus $\partial \Xz \ssm \{\gammax^\pm \}$ can be characterized as the only $\mathbb Z$--cover with the property that the preimage of any given point is contained in a single connected component.

Consider any $h\in\mathrm{Deck}(q)$ and any $x\in \partial  \UG(\Xz,\gammax)\ssm\{p\}$. We want to show that $h\cdot x$ lies in the same component as $x$. Consider $\phi(x)$ and $\phi(h\cdot x)=h\cdot \phi(x)$. Since $\mathrm{Deck}(q)$ is naturally isomorphic to $\pi_1^{D_1}(\SCS)$, and $\Pi_{R_0}$ induces an isomorphism from $\pi_1(\partial \Xz \ssm\{\gammax^\pm\})$ to $\pi_1^{D_1}(\SCS)$, there exists a loop $\alpha$ in $\partial \Xz \ssm\{\gammax^\pm\}$ at $\Theta(x)$ and a loop $\alpha'$ in $\SCS$ with the following properties. First, $\alpha'$ lifts to a path $\beta'$ with endpoints  $\phi(x)$ and $\phi(h\cdot x)=h\cdot \phi(x)$ in $\UG(\Upsilon,\gammay)$. Secondly, for each $t$ we have that $\Pi_{R_0}(\alpha(t))$ is $8\delta_0$--close to $\alpha'(t)$ (see Lemma \ref{lem:project_from_infty}). We have that $\alpha$ lifts to a path $\beta$ in $\partial  \UG(\Xz,\gammax)\ssm\{p\}$ starting at $x$, and ending at point in the fiber of $\Theta(x)$, say $h'\cdot x$. We claim that $h'=h$, and this will conclude the proof.

Indeed, by the claim, for each $t$ there exists an element $g_t$ of $\mathrm{Deck}(q)\cong \mathbb Z$ such that $\phi(\beta(t))$ is $42\delta_0$--close to $g_t\beta'(t)$.  We have $g_0=id$ since $\beta(0)=x$ and $\beta'(t)=\phi(x)$. 

By the same argument as in Lemma \ref{lem:project_from_infty}, each point in $\partial  \UG(\Xz,\gammax)\ssm\{p\}$ has a neighborhood that has $\phi$--image of diameter at most $8 \delta_2$. Since any non-trivial element acts with translation distance larger than $84\delta_0+8\delta_2$ on $\widetilde{\SCS}$ (by the choice of $\sys_0$ in Notation \ref{not:first_constants}), for any $g$ in the deck group the set of $t$ such that $g_t=g$ is open. Therefore, for all $t$ we have $g_t=id$, and in particular for $t=1$ we have that $\phi(h'(x))$ is $42\delta_0$--close to $\phi(hx)$. Similarly to the previous argument, this implies that $h'=h$, as required.
\end{proof}

\begin{corollary}
\label{cor:UG0_S2}
$\partial \UG(\Xz, \gammax)$ is homeomorphic to $S^2$.
\end{corollary}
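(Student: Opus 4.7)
The plan is to combine Proposition~\ref{prop:deck_Z} with two elementary topological observations: the universal cover of a twice-punctured sphere is the plane, and the one-point compactification of the plane is $S^2$.

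First, I would verify that $\partial \UG(\Xz,\gammax)$ is a compact Hausdorff space. This reduces to checking that $\UG(\Xz,\gammax)$ is a proper Gromov hyperbolic space. Proposition~\ref{prop:hatY_modeled} (applied with $\Upsilon = \Xz$, which trivially satisfies its hypotheses) gives that $\UG(\Xz,\gammax)$ is $\delta_2$--hyperbolic, and properness follows from local finiteness: $\UG(\Xz,\gammax)$ is $\sigma_0$--modeled on the locally finite graphs $\Xz^{\mathrm{cusp}}$ and $\mathring{\horba}$, so it is itself locally finite.

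Next, I would apply the ``Moreover'' clause of Proposition~\ref{prop:deck_Z} with $(\Upsilon,\gammay) = (\Xz,\gammax)$, concluding that $\Theta \co \partial \UG(\Xz,\gammax) \ssm \{p\} \to \partial \Xz \ssm \{\gammax^{\pm}\}$ is a universal covering map with deck group $\mathbb{Z}$. Since $\partial \Xz \cong S^2$ by Assumption~\ref{ass:X2}, the base is homeomorphic to $S^2$ minus two points, i.e.\ an open cylinder with $\pi_1 \cong \mathbb{Z}$. Its universal cover is homeomorphic to $\mathbb{R}^2$, so $\partial \UG(\Xz,\gammax) \ssm \{p\} \cong \mathbb{R}^2$.

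Finally, since $\partial \UG(\Xz,\gammax)$ is compact Hausdorff and $\{p\}$ is closed, the complement of any open neighborhood of $p$ is closed in a compact space, hence compact. This identifies $\partial \UG(\Xz,\gammax)$ with the one-point compactification of $\partial \UG(\Xz,\gammax) \ssm \{p\} \cong \mathbb{R}^2$, which is homeomorphic to $S^2$. There is no substantial obstacle at this stage, as Proposition~\ref{prop:deck_Z} has already done the difficult work of identifying the punctured boundary as a universal cover.
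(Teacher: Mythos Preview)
Your proposal is correct and follows essentially the same route as the paper: apply the ``Moreover'' clause of Proposition~\ref{prop:deck_Z} to identify $\partial \UG(\Xz,\gammax)\ssm\{p\}$ with the universal cover $\mathbb{R}^2$ of the open annulus $\partial \Xz\ssm\{\gammax^\pm\}$, and then recognize the compact boundary as the one-point compactification $S^2$. You supply a bit more justification for compactness (via local finiteness and properness) than the paper does, but the argument is otherwise identical.
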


\begin{proof}
Let $p$ be the point at infinity of the horoball. 
In view of Proposition~\ref{prop:deck_Z} we have that $\partial \UG(\Xz, \gammax)\ssm \{p\}$ is homeomorphic to $\mathbb R^2$, as it is the universal cover of an open annulus.

Since $\partial \UG(\Xz, \gammax)$ is a compact metric space, and removing a point makes it homeomorphic to $\mathbb R^2$, it must be homeomorphic to $S^2$.
\end{proof}

 We say that a combinatorial graph $\Gamma$ has \emph{uniform polynomial growth} if there exist constants $c<C$ and an integer $d$ so that, for any ball $B$ of radius $r$, we have
\[ c r^d \le \# B\cap \Gamma^{(0)} \le C r^d. \] Note that a combinatorial graph with a co-compact group action and polynomial growth has uniform polynomial growth. 

The following lemma is implicit in the work of Dahmani and Yaman \cite{DahmaniYaman}.  
\begin{lemma}\label{lem:dahmaniyaman}
Let $\Gamma$ be a combinatorial graph of uniform polynomial growth. 
There is a bounded valence graph $\horba_{bv}(\Gamma)$ containing $\Gamma$ so that the identity map on $\Gamma$ extends to a quasi-isometry from $\horba_{bv}(\Gamma)$ to the combinatorial horoball based on $\Gamma$.
\end{lemma}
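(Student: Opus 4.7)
The plan is to build $\horba_{bv}(\Gamma)$ by thinning out each level of the combinatorial horoball using nets. For every integer $n\ge 1$ choose a maximal $2^n$-separated subset $N_n\subset\Gamma^{(0)}$, and set $N_0=\Gamma^{(0)}$. Take the vertex set of $\horba_{bv}(\Gamma)$ to be $\bigsqcup_{n\ge 0}N_n\times\{n\}$, declare $(x,n)\sim(y,n)$ whenever $x,y\in N_n$ and $d_\Gamma(x,y)\le C\cdot 2^n$, and declare $(x,n)\sim(y,n+1)$ whenever $x\in N_n$, $y\in N_{n+1}$, and $d_\Gamma(x,y)\le C\cdot 2^n$, for an absolute constant $C$ (one can check $C=4$ suffices). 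The depth-$0$ stratum $N_0\times\{0\}$ is identified with $\Gamma$, providing the required inclusion $\Gamma\subset \horba_{bv}(\Gamma)$.

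Uniform polynomial growth of $\Gamma$ is equivalent to a doubling estimate, so since $N_n$ is $2^n$-separated, a standard packing argument bounds the number of points of $N_n$, $N_{n+1}$, or $N_{n-1}$ inside any ball of radius $C\cdot 2^n$ by a constant depending only on $C$ and on the growth data, independently of $n$. This gives the bounded valence conclusion at once.

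For the quasi-isometry claim I would use the vertex inclusion $\iota\co \horba_{bv}(\Gamma)\to \horba(\Gamma)$, $(x,n)\mapsto(x,n)$, which is the identity on $\Gamma$. Coarse surjectivity of $\iota$ is immediate, since any $(z,n)\in \horba(\Gamma)$ lies at horizontal distance at most $1$ from some $(y,n)$ with $y\in N_n$. An edge of $\horba_{bv}(\Gamma)$ evidently maps to a path of bounded $\horba(\Gamma)$-length, which gives coarse Lipschitzness of $\iota$. For the inverse bound, I plan to invoke the regular geodesic description used in the proof of Lemma \ref{lem:horbadistort}: any two vertices of $\horba(\Gamma)$ are joined by a path consisting of a vertical ascent to depth $m\approx\log_2$ of their $\Gamma$-distance, at most three horizontal edges, and a symmetric descent. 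I will track such a path in $\horba_{bv}(\Gamma)$, replacing each intermediate vertex at depth $n$ with a point of $N_n$ within $\Gamma$-distance $2^n$.

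The main obstacle is the calibration of $C$: it must be large enough that whenever the regular geodesic passes between depths $n$ and $n+1$ (resp.\ moves horizontally at depth $n$), the chosen net representatives can be connected by a single edge of $\horba_{bv}(\Gamma)$, while small enough that the doubling bound still controls the valence. A direct triangle-inequality estimate shows that consecutive net representatives are always within $3\cdot 2^n$ of one another, so any $C\ge 3$ works. Once $C$ is fixed, the additive and multiplicative quasi-isometry constants depend only on $C$ and the polynomial growth data, completing the proof.
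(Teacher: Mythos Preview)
Your proposal is correct and is essentially the same construction as the paper's: choose nets $N_n$ at each depth, connect nearby net points horizontally and vertically with a threshold proportional to $2^n$, and use the vertex inclusion into $\horba(\Gamma)$ as the quasi-isometry. The paper verifies the quasi-isometry by writing down an explicit coarse inverse $\psi$ (nearest net point at each depth) and checking it is $6$--Lipschitz on edges, whereas you track a regular geodesic and replace each vertex by a net representative; these are equivalent arguments. One cosmetic difference: the paper calibrates the thresholds so that the depth-$0$ stratum is literally $\Gamma$, while your choice $C\ge 3$ adds extra horizontal edges at depth $0$; this is harmless for the statement since $\Gamma$ is still a subgraph and the identity on $\Gamma$ still extends. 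Your remark that uniform polynomial growth ``is equivalent to'' doubling is a bit loose, but the packing argument you actually run (disjoint $2^{n-1}$--balls around net points inside a ball of radius $O(2^n)$) is exactly the one in the paper and is all that is needed.
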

\begin{proof}
Since $\Gamma$ is of uniform polynomial growth, there are constants $c<C$ and an integer $d$ so that, for any ball $B$ of radius $r$, we have
 $c r^d \le \# B\cap \Gamma^{(0)} \le C r^d.$
For each $n$, choose a subset $V_n$ of  $\Gamma^{(0)}$ satisfying
\begin{enumerate}
    \item\label{itm:cover} For any $x\in \Gamma^{(0)}$, there is some $v\in V_n$ so that $d_\Gamma(x,v) < 2^{n-1}$.
    \item\label{itm:disjoint} For any $v,w\in V_n$, we have $d_\Gamma(v,w) \ge 2^{n-1}$ (so the radius $2^{n-2}$ balls around $v,w$ are disjoint).
\end{enumerate}
Now define a graph $\horba_{bv}=\horba_{bv}(\Gamma)$ with $V(\horba_{bv}) = \bigsqcup_n V_n$ and edges defined as follows.
\begin{itemize}
    \item (horizontal edges)  Connect $v,w\in V_n$ with an edge if $d_\Gamma(v,w)<2^{n+1}$.
    \item (vertical edges)  Connect $v\in V_n$ to $w\in V_{n+1}$ if $d_\Gamma(v,w)<2^{n+1}$.
\end{itemize}
Notice that $V_0 = \Gamma^{(0)}$ and the sub-graph induced by $V_0$ is canonically isomorphic to $\Gamma$.

Let $\horba=\horba(\Gamma)$ be the combinatorial horoball based on $\Gamma$. (Recall Definition~\ref{def:combhoroball} for the definition of a combinatorial horoball.)  We will define coarsely Lipschitz maps 
\[ \phi : \horba_{bv}^{(0)}\to \horba^{(0)}\quad\mbox{and}\quad \psi : \horba^{(0)}\to \horba_{bv}^{(0)}\]
which are quasi-inverses of one another, which implies they are quasi-isometries.

For $v\in V_n$, we define $\phi(v) = (v,n)$.  Suppose that $v,w\in V_n$ are connected by a horizontal edge.  Then $d_{\Gamma}(v,w)< 2^{n+1}$, so there is a horizontal path in $\horba$ of length at most $2$ connecting $\phi(v) = (v,n)$ to $\phi(w) = (w,n)$.  If $v\in V_n,w\in V_{n+1}$ are connected by a vertical edge, then there is a path of length at most $2$ connecting $\phi(v) = (v,n)$ to $\phi(w) = (w,n+1)$.  Namely, connect $(v,n)$ to $(v,n+1)$ by a vertical edge, and then $(v,n+1)$ to $(w,n+1)$ by a horizontal edge.  Summarizing, we have shown that $\phi$ is $2$--Lipschitz.

For $(v,n)\in \horba^{(0)}$, we choose a point $v'\in V_n$ minimizing $d_\Gamma(v,v')$, and define $\psi(v,n)$ to be $v'$.  We now show $\psi$ is coarsely Lipschitz.  Suppose first that $(v,n)$ is connected to $(w,n)$ by a horizontal edge, and that $v',w'$ are the chosen points of $V_n$ closest to $v,w$ respectively.  Then $d_\Gamma(v',w') <2^{n+1} + 2^n = 6\cdot 2^{n-1}$.  There is therefore a sequence of vertices $v' = x_0, x_1,\ldots, x_6 = w'$ in $\Gamma^{(0)}$ so that $d_\Gamma(x_i,x_{i+1})\le 2^{n-1}$.  For each $x_i$ there is a vertex $v_i\in V_n$ so that $d_\Gamma(v_i,x_i)< 2^{n-1}$, by \eqref{itm:cover}.  We thus have, for each $i$,  $d_\Gamma(v_i,v_{i+1})< 3\cdot 2^{n-1} < 2^{n+1}$, so $v_i$ is connected by a horizontal edge to $v_{i+1}$.  In particular, we have $d_{\horba_{bv}}(v',w')\le 6$, so $\psi$ sends vertices connected by a horizontal edge to vertices which are at most $6$ apart.

Now consider a vertical edge, joining $(v,n)$ to $(v,n+1)$ in $\horba$.  Let $v'\in V_n$ and $v''\in V_{n+1}$ be the images under $\psi$ of $(v,n)$ and $(v,n+1)$, respectively.  We have $d_\Gamma(v,v')<2^{n-1}$ and $d_\Gamma(v,v'')<2^n$.
The vertex $v'$ is connected by a vertical edge in $\horba_{bv}$ to some $w\in V_{n+1}$ so that $d_\Gamma(v',w)< 2^n$.  
We thus have $d_\Gamma(w,v'')<2^{n+2}$, so $v',w$ are connected by a horizontal edge.  In particular $d_{\horba_{bv}}(\psi(v,n),\psi(v,n+1))\le 2$.  Together with the bound in the last paragraph this shows that $\psi$ is $6$--Lipschitz.

The composition $\psi\circ\phi$ is the identity.  The composition $\phi\circ\psi$ maps a vertex $(v,n)$ to some $(v',n)$ so that $d_\Gamma(v,v')< 2^{n-1}$.  In particular, $(v,n)$ and $(v',n)$ are connected by a horizontal edge in $\horba$ and so $d_\Gamma(\phi\circ\psi(v,n),(v,n))\le 1$. 

It remains to show that $\horba_{bv}$ has bounded valence.  Let $v\in V_n$ be some vertex of $\horba_{bv}$, and let $w_1,\ldots,w_k\in V_n$ be vertices connected by an edge to $v$.  These points $w_i$ all lie in the ball of radius $2^{n+1}$ around $v$, and the $2^{n-2}$--balls around the $w_i$ are disjoint, by~\eqref{itm:disjoint}.  Each of these balls has volume at least $c(2^{n-2})^d$, and they are all contained in the $2^{n+2}$--ball around $v$, which has volume at most $C(2^{n+2})^d$, so we have
\[ k c(2^{n-2})^d \le C(2^{n+2})^d, \]
implying that $k \le 16^d\frac{C}{c}$.  

If $u_1,\ldots, u_l\in V_{n-1}$ are connected by vertical edges to $v$, their $2^{n-3}$--balls in $\Gamma$  are disjoint and contained in the $2^{n+2}$--ball around $v$, so we similarly argue that $l\le 32^d\frac{C}{c}$.

Finally, if $z_1,\ldots,z_m\in V_{n+1}$ are connected by vertical edges to $v$, their (disjoint) $2^{n-1}$--balls are contained in the $2^{n+3}$--ball around $v$, so we have $m\le 16^d\frac{C}{c}$.  Put together we get a bound on the valence of $v$ which is independent of $v$.
\end{proof}

\begin{lemma} \label{lem:bound_geom_cusp}
Suppose $\Xz$ has bounded valence.  Then the space $\UG(\Xz,\gammax)$ is quasi-isometric to a bounded valence graph. More precisely, there exists a bounded valence graph $\horba_{bv}$ which is quasi-isometric to a horoball in $\mathbb{H}^3$ and contains a copy of $\widetilde{\SCS}$ so that if $\mc{B}$ is the space obtained by gluing $\horba_{bv}$ to $\widetilde{\SCTC}$ along $\widetilde{\SCS}$ then there is a quasi-isometry
between $\UG(\Xz,\gammax)$ and $\mc{B}$ extending the identity map on $\widetilde{\SCTC}$.
\end{lemma}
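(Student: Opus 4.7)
The plan is to apply Lemma~\ref{lem:dahmaniyaman} to the graph $\widetilde{\SCS}$. First I would verify that $\widetilde{\SCS}$ has uniform polynomial growth: by Lemma~\ref{lem:pi_CS/g}, the group $E = \langle a\rangle\rtimes\langle b\rangle$ acts geometrically on $\widetilde{\SCS}$, and $E$ (being either $\Z^2$ or the Klein bottle group) is quasi-isometric to $\R^2$. Cocompactness of the $E$--action then promotes the quadratic polynomial growth estimate from a single orbit to all metric balls, giving uniform quadratic growth.

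With uniform polynomial growth in hand, Lemma~\ref{lem:dahmaniyaman} supplies a bounded-valence graph $\horba_{bv} = \horba_{bv}(\widetilde{\SCS})$ containing $\widetilde{\SCS}$, together with a quasi-isometry $\Psi\co \horba_{bv} \to \horba(\widetilde{\SCS}) = \horba$ which restricts to the identity on $\widetilde{\SCS}$. Since combinatorial horoballs based on quasi-isometric graphs are quasi-isometric (this is essentially because the horoball construction depends only on the large-scale geometry of the base; alternatively, one observes that $\horba(\R^2)$ is a standard model for a horoball in $\H^3$), we get that $\horba_{bv}$ is quasi-isometric to a horoball in $\H^3$.

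Define $\mc{B}$ by gluing $\horba_{bv}$ to $\widetilde{\SCTC}$ along $\widetilde{\SCS}$. The graph $\widetilde{\SCTC}$ has bounded valence: $\SCTC$ is obtained from the bounded-valence graph $\Xz$ by adding, for each pair of vertices in $S_{R_0}(\gammax)$ within $\Xz$--distance $s_0 = 8\delta_0$, an edge-path of length at most $s_0$; since $\Xz$ has bounded valence the number of such new edges incident to any vertex is uniformly bounded, and covers preserve bounded valence. Hence $\mc{B}$ is bounded valence. The identity map on $\widetilde{\SCTC}$ together with $\Psi$ on $\horba_{bv}$ glues to a well-defined map $\Phi\co \mc{B}\to \UG(\Xz,\gammax)$ that is the identity on $\widetilde{\SCTC}$.

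The main thing to verify is that $\Phi$ is a quasi-isometry. This is the only real step with content: one has to check that when a geodesic in $\mc{B}$ (or in $\UG(\Xz,\gammax)$) alternates between passes through the horoball piece and the tube-complement piece, the quasi-isometry constants do not degrade along the seam. Because $\Psi$ fixes $\widetilde{\SCS}$ pointwise, any path in $\mc{B}$ can be decomposed at the horospherical interface into sub-paths whose images under $\Phi$ have comparable length, via the standard argument that a quasi-isometry glued along a subspace on which it is the identity gives a quasi-isometry of the gluings (as in, e.g., the combination theorem for QI's along a common sub-quasi-isometric subspace). Applying this and noting that $\Phi$ is coarsely surjective (since $\Psi$ is) completes the proof.
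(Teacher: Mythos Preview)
Your proposal is correct and follows essentially the same approach as the paper's own proof: establish uniform quadratic growth of $\widetilde{\SCS}$ via the geometric $E$--action, apply Lemma~\ref{lem:dahmaniyaman} to get $\horba_{bv}$, glue to form $\mc{B}$, and then observe that the identity on $\widetilde{\SCTC}$ together with the quasi-isometry $\Psi$ on the horoball piece assemble to coarsely inverse coarsely Lipschitz maps. One minor remark: your appeal to a ``combination theorem for QI's along a common subspace'' is a bit imprecise as stated (such theorems generally need extra hypotheses), but the underlying mechanism you describe---that a map between path-metric spaces which is coarsely Lipschitz on each piece and agrees on the seam is globally coarsely Lipschitz, and having such maps in both directions that are coarse inverses yields a quasi-isometry---is exactly what the paper uses, just phrased more tersely there.
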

\begin{proof}
By Lemma~\ref{lem:pi_CS/g}, $\widetilde{\SCS}$ has a free and cocompact action by a group quasi-isometric to $\mathbb{R}^2$.  It follows from this that the combinatorial horoball $\horba$ based on $\widetilde{\SCS}$ is quasi-isometric to a horoball in $\mathbb{H}^3$ (see, for example \cite[Corollary 9.2]{HealyHruska}).

It also follows that $\widetilde{\SCS}$ has uniform polynomial (quadratic) growth.  Let $\horba_{bv}$ be the graph $\horba_{bv}(\widetilde{\SCS})$ from the conclusion of Lemma~\ref{lem:dahmaniyaman}.  Let $\mc{B}$ be the space obtained from gluing $\widetilde{\SCTC}$ to $\horba_{bv}$ along the natural copies of $\widetilde{\SCS}$.  Since we have assumed $\Xz$ has bounded valence, and $\horba_{bv}$ does by Lemma~\ref{lem:dahmaniyaman}, the space $\mc{B}$ has bounded valence.  The horoballs $\horba$ and $\horba_{bv}$ are quasi-isometric, so the identity on $\widetilde{\SCTC}$ extends to coarsely inverse coarsely Lipschitz maps $\mc{B}\to \UG(\Xz,\gammax)$ and $\UG(\Xz,\gammax)\to \mc{B}$. This implies that the spaces are quasi-isometric.
\end{proof}

\subsection{Uniform linear connectivity}

Up to this point, we have not required there to be a cocompact group of isometries of the space $\Xz$.  However, the following result does require this assumption.  Thus we make the following extra assumption.

\begin{assumption} \label{ass:geom}
    Suppose that $\Xz$ satisfies Assumption~\ref{ass:X2}, and furthermore assume that $\Xz$ admits a group of isometries containing $g$ which is geometric.
\end{assumption}

Notice that under Assumption~\ref{ass:geom}, $\Xz$ has bounded valence (rather than just being locally finite), so the assumptions of Lemma~\ref{lem:bound_geom_cusp} hold.

In the proof of the following lemma we use a limiting argument, so the resulting constant is not explicit and depends on $\Xz,\gammax$, and ${R_0}$.

\begin{lemma}\label{lem:unwrap_is_lin_conn}
There exists $L_1=L_1(\Xz,\gammax,{R_0})$ so that any $\delta_2$--adapted visual metric on $\partial \UG(\Xz,\gammax)$ with any basepoint
is $L_1$--linearly connected.
\end{lemma}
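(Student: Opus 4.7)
The plan is to argue by contradiction using Theorem~\ref{thm:lin_conn_cut_point}. If no such $L_1$ exists, there are basepoints $p_i\in \UG(\Xz,\gammax)$ and $\delta_2$--adapted visual metrics $\rho_i$ based at $p_i$ that fail to be $L_i$--linearly connected for some $L_i\to\infty$. Since $\Xz$ has bounded valence by Assumption~\ref{ass:geom}, Lemma~\ref{lem:bound_geom_cusp} provides a bounded-valence quasi-isometric model $\mathcal{B}$ for $\UG(\Xz,\gammax)$. Choosing the Dahmani--Yaman nets $V_n$ $E$--equivariantly (where $E$ is the group of Lemma~\ref{lem:pi_CS/g}, acting cocompactly on $\widetilde{\SCS}$) and verifying that $\mathcal{B}$ has only finitely many isometry types of balls of each radius, one applies Theorem~\ref{thm:lin_conn_cut_point} to $(\mathcal{B},p_i)$; the hypothesis transfers because $\delta_2$--adapted visual metrics change by bi-Lipschitz maps under quasi-isometries of uniformly hyperbolic visible spaces. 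The theorem then yields new basepoints $p_i'$ and a strongly convergent subsequence $(\mathcal{B},p_i')\to (\Upsilon_\infty,p'_\infty)$ whose boundary admits a weak cut point.

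The next step is to identify $\Upsilon_\infty$. Strong convergence preserves both the property of being $\sigma_0$--modeled on $\{\Xz^{\mathrm{cusp}},\mathring{\horba}\}$ and $D_1$--simple-connectivity, so $\Upsilon_\infty$ is again $\delta_2$--hyperbolic and $\delta_2$--visible by Lemma~\ref{lem:app_of_CCH}. I would then split into cases according to the asymptotic behavior of $p_i'$ relative to the unique horoball of $\UG(\Xz,\gammax)$: if the depth of $p_i'$ in the horoball tends to infinity, fixed-radius balls about $p_i'$ are eventually contained in the horoball, so $\Upsilon_\infty$ is a bounded-valence horoball and $\partial\Upsilon_\infty$ is a single point; if $p_i'$ remains at bounded distance from the horoball, the $E$--action (which preserves the horoball and acts cocompactly on its horosphere) lets us pass to a subsequence with basepoints in a fixed compact region, and the strong limit is isometric to $\mathcal{B}$, whose boundary is $S^2$ by Corollary~\ref{cor:UG0_S2}; if the distance from $p_i'$ to the horoball tends to infinity, Corollary~\ref{cor:local_balls_unwrapped} shows that balls of any fixed radius around $p_i'$ are isometric to balls in $\Xz$, so using the geometric action on $\Xz$ the limit is $\Xz$, with $\partial\Xz\cong S^2$.

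In every case $\partial\Upsilon_\infty$ is either a single point or homeomorphic to $S^2$, neither of which admits a weak cut point (for $S^2$, any two points of $S^2\setminus\{x\}$ can be joined by an arc avoiding $x$). This contradicts the conclusion of Theorem~\ref{thm:lin_conn_cut_point} and completes the proof. The main obstacle will be verifying the finiteness-of-isometry-types hypothesis for $\mathcal{B}$, especially for balls at varying depths in the bounded-valence horoball, which is delicate and depends on the equivariant choice of nets exploiting the $E$--cocompactness on $\widetilde{\SCS}$; a secondary difficulty is the rigorous identification of $\Upsilon_\infty$ in the trichotomy above, in particular ruling out exotic ``mixed'' limits when $p_i'$ oscillates between the three regimes, which is controlled by the $\sigma_0$--modeling of $\UG(\Xz,\gammax)$ on $\{\Xz^{\mathrm{cusp}},\mathring{\horba}\}$.
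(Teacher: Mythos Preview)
Your approach is essentially the same as the paper's: pass to the bounded-valence model $\mathcal{B}$, use Theorem~\ref{thm:lin_conn_cut_point} (or its contrapositive), and rule out weak cut points in any strong limit by a trichotomy on the position of the basepoints relative to the horoball. A few corrections to your execution:

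\begin{itemize}
\item Your ``main obstacle'' is not an obstacle at all: once $\mathcal{B}$ has bounded valence (Lemma~\ref{lem:bound_geom_cusp}), any $R$--ball has a uniformly bounded number of vertices, so there are only finitely many isometry types of $R$--balls. No $E$--equivariant choice of nets is required.
\item In the ``deep in the horoball'' case your identification is off. The strong limit of $(\horba_{bv},p_i')$ with $p_i'$ going to infinite depth is not $\horba_{bv}$ itself; since $\horba_{bv}$ is quasi-isometric to a horoball in $\mathbb{H}^3$, moving the basepoint deeper makes the frontier recede, and the limit is quasi-isometric to $\mathbb{H}^3$. Thus $\partial\Upsilon_\infty\cong S^2$, not a point. (Your conclusion that there is no weak cut point survives either way, but the reason you give is wrong.)
\item The group $E$ acts by isometries on $\UG(\Xz,\gammax)$, but only \emph{quasi}-acts on $\mathcal{B}$ (via the quasi-isometry of Lemma~\ref{lem:bound_geom_cusp}); this is enough to move basepoints into a bounded region in the middle case, which is all you need.
\item Your invocation of the $\sigma_0$--modeling and Lemma~\ref{lem:app_of_CCH} for $\Upsilon_\infty$ is misplaced: the modeling statement is for $\UG(\Xz,\gammax)$, not for $\mathcal{B}$ (which contains $\horba_{bv}$ rather than $\horba$). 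The paper does not need this; it identifies each limit directly.
\end{itemize}

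With these adjustments your argument coincides with the paper's.
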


\begin{proof}
Let $\mc{B}$ be the bounded valence graph from Lemma~\ref{lem:bound_geom_cusp}.  This space is quasi-isometric to $\UG(\Xz,\gammax)$ and is $\delta'$--hyperbolic for some $\delta'$.  Thus to prove the current lemma it suffices to show that the elements of any sequence of $\delta'$--adapted visual metrics on $\partial \mc{B}$ are uniformly linearly connected.  

We therefore assume $(y_i)$ is a sequence of points in $\mc{B}$.  Using bounded valence we may pass to a subsequence such that the pointed spaces $(\mc{B},y_i)$ strongly converge to a pointed space $(\mc{B}_\infty,y_\infty)$.  By Proposition~\ref{prop:GMS3.5}, the boundaries $\partial \mc{B}$ with adapted visual metrics at $y_i$ weakly Gromov-Hausdorff converge to the boundary $\partial \mc{B}_\infty$ with adapted visual metric at $y_\infty$.

We claim that given any such sequence $y_i$ and limit $(\mc{B}_\infty,y_\infty)$, the space $\partial \mc{B}_\infty$ does not have a weak cut point.     

Given this claim, Theorem \ref{thm:lin_conn_cut_point} implies that the $\delta'$--adapted visual metrics on $(\mc{B},y_i)$ must have been uniformly linearly connected, establishing the lemma.

It remains to prove the claim.  The group $E$ from Proposition~\ref{prop:peripheral_is_planar} acts on $\UG(\Xz,\gammax)$ and the quasi-isometry from Lemma~\ref{lem:bound_geom_cusp} therefore induces a quasi-action of $E$ on $\mc{B}$ (by uniform quality quasi-isometries).

 The possible limits depend on the position of $y_i$ compared to the distinguished horoball in $\mc{B}$. If the distance from the horoball diverges, then $\mc{B}_\infty$ is a Gromov-Hausdorff limit of $\Xz$, and hence its boundary is $S^2$ (by Assumption~\ref{ass:geom} there is a geometric action on $\Xz$, so all Gromov-Hausdorff limits of $\Xz$ are quasi-isometric to $\Xz$, which by assumption has boundary $S^2$). If the $y_i$ are deeper and deeper in the horoball, then the limit is a limit of $\horba_{bv}$, which is  quasi-isometric to a horoball in $\mathbb H^3$, by Lemma \ref{lem:bound_geom_cusp}. Hence, once again, the boundary of $\mc{B}_\infty$ is $S^2$ since $\mc{B}_\infty$ is quasi-isometric to $\mathbb H^3$. Finally, if the $y_i$ stay within bounded distance from $\widetilde{\SCS}$, then we can move them to a bounded region of $\mc{B}$ using the  quasi-action on $\mc{B}$, which is cobounded on $\widetilde{\SCS}$.  Therefore $\mc{B}_\infty$ is quasi-isometric to $\mc{B}$, and hence to $\UG(\Xz,\gammax)$. Therefore, the boundary of $\mc{B}_\infty$ is $S^2$ in this case too, by Corollary~\ref{cor:UG0_S2}.  Thus in every case $\partial \mc{B}_\infty$ is $S^2$, which does not have a weak cut point, establishing the claim, and hence the lemma.
\end{proof}

The following is an immediate consequence of Lemmas \ref{lem:lin_conn_implies_sphere_conn} and \ref{lem:unwrap_is_lin_conn}.

\begin{corollary}
There exists $\Delta = \Delta(\Xz,\gammax,{R_0})$ so that for every $y \in \UG(\Xz,\gammax)$ and every $\Sigma \ge 0$ the space $\UG(\Xz,\gammax)$ is $(\Delta,\Sigma)$--spherically connected at $y$.
\end{corollary}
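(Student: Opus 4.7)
The plan is to simply combine the two lemmas cited just above the corollary, so I will write out this short deduction. First, I will observe that by Proposition~\ref{prop:hatY_modeled} (applied with $(\Upsilon,\gamma) = (\Xz,\gammax)$, noting that $\Xz$ is trivially $({R_0}+6\sigma_0+2\delta_2)$--tube comparable to itself along $\gammax$ and that $\Xz$ is $D_1$--simply connected by Lemma~\ref{lem:hyp_simply_conn} applied to its sufficiently enlarged scale, together with the fact that $\Xz$ is $\sigma_0$--modeled on $\{\Xz^{\mathrm{cusp}},\mathring{\horba}\}$ away from the tube region), the space $\UG(\Xz,\gammax)$ is $\delta_2$--hyperbolic and $\delta_2$--visible.

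Next, by Lemma~\ref{lem:unwrap_is_lin_conn}, any $\delta_2$--adapted visual metric on $\partial \UG(\Xz,\gammax)$, based at any point, is $L_1$--linearly connected, where $L_1 = L_1(\Xz,\gammax,{R_0})$. In particular, for every $y \in \UG(\Xz,\gammax)$, the hypothesis of Lemma~\ref{lem:lin_conn_implies_sphere_conn} holds at $y$ with $\delta = \delta_2$ and $L = L_1$.

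Applying Lemma~\ref{lem:lin_conn_implies_sphere_conn} yields a constant $\Delta = \Delta(\delta_2, L_1)$, depending only on $\Xz$, $\gammax$, and ${R_0}$ (since $\delta_2$ is a universal constant fixed in Notation~\ref{not:first_constants} and $L_1$ depends only on these data), such that for every $R \ge 0$ the space $\UG(\Xz,\gammax)$ is $(\Delta,R)$--spherically connected at $y$. Renaming $R$ to $\Sigma$ gives the conclusion of the corollary. There is no real obstacle here; the only thing to keep track of is that the constants $\delta_2$, $L_1$ do not depend on the point $y$ or on the chosen radius $\Sigma$, so that the resulting $\Delta$ is uniform in both.
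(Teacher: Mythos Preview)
Your proposal is correct and takes essentially the same approach as the paper, which simply states the corollary is an immediate consequence of Lemmas~\ref{lem:lin_conn_implies_sphere_conn} and~\ref{lem:unwrap_is_lin_conn}. One small remark: your justification that $\Xz$ satisfies the hypotheses of Proposition~\ref{prop:hatY_modeled} is slightly awkward---the phrase ``$\Xz$ is $\sigma_0$--modeled on $\{\Xz^{\mathrm{cusp}},\mathring{\horba}\}$ away from the tube region'' is not quite the hypothesis of that proposition, and in any case the $\delta_2$--hyperbolicity and $\delta_2$--visibility of $\UG(\Xz,\gammax)$ are already established and used freely by this point in the paper, so you can simply cite them rather than re-derive them.
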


By Lemma \ref{lem:sphere_conn_implies_lin_conn}, we therefore have the following.

\begin{corollary} \label{cor:modeled_on_hatX_LC}
There exist $\Sigma_0$ and $L$ so that for any $\delta_2$--hyperbolic space $Y$ which is $\Sigma_0$--modeled on $\left\{ \UG(\Xz,\gammax) \right\}$ and any $y \in Y$, any $\delta_2$--adapted visual metric on $\partial Y$ with basepoint $y$ is $L$--linearly connected.
\end{corollary}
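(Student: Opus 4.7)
The plan is to combine the previous corollary (which gives uniform $(\Delta,\Sigma)$--spherical connectivity at every point of $\UG(\Xz,\gammax)$ for every $\Sigma$) with Lemma~\ref{lem:sphere_conn_implies_lin_conn} (which converts spherical connectivity into linear connectedness of visual metrics). The key observation is that spherical connectivity at a point $y$ is a purely local condition: it only concerns the sphere of a fixed radius $R$ around $y$ together with Gromov products measured at $y$, and so it can be transported across an isometry of a sufficiently large ball.

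First, I would fix constants as follows. Let $\Delta = \Delta(\Xz,\gammax,R_0)$ be the constant from the corollary immediately preceding Corollary~\ref{cor:modeled_on_hatX_LC}, so that $\UG(\Xz,\gammax)$ is $(\Delta,R)$--spherically connected at every one of its points for every $R \geq 0$. Let $R_{\ref{lem:sphere_conn_implies_lin_conn}} = R_0(\Delta,\delta_2)$ and $L = L(\Delta,\delta_2)$ be the constants furnished by Lemma~\ref{lem:sphere_conn_implies_lin_conn}, and set $R = R_{\ref{lem:sphere_conn_implies_lin_conn}}$. Finally take $\Sigma_0 = R + 10\delta_2$ (a small additive buffer to ensure that distances and Gromov products measured within the ball of radius $R$ about $y$ agree with those computed in the model space).

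Next, given $y \in Y$ with $Y$ being $\Sigma_0$--modeled on $\{\UG(\Xz,\gammax)\}$, choose an isometry $\phi$ from the closed $\Sigma_0$--ball about $y$ to a ball in $\UG(\Xz,\gammax)$ centered at some point $y' = \phi(y)$. Any two points $p,q$ of $Y$ with $d_Y(y,p) = d_Y(y,q) = R$ lie in the domain of $\phi$, as do all Gromov products and geodesic segments needed to compute $(p \mid q)_y$ and verify the spherical-connectivity condition, because $R$ and all relevant Gromov products are bounded by $R < \Sigma_0$. By the previous corollary, the images $\phi(p),\phi(q)$ admit a chain $\phi(p) = p_0',\dots,p_n' = \phi(q)$ in the $R$--sphere around $y'$ witnessing $(\Delta,R)$--spherical connectivity; pulling this chain back through $\phi^{-1}$ gives a chain in $Y$ with the same distance and Gromov-product estimates. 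Hence $Y$ is $(\Delta,R)$--spherically connected at $y$.

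Finally, since $y$ was arbitrary and $R \geq R_{\ref{lem:sphere_conn_implies_lin_conn}}$, Lemma~\ref{lem:sphere_conn_implies_lin_conn} applies to the $\delta_2$--hyperbolic and (by Proposition~\ref{prop:hatY_modeled} applied at a local level, or directly by visibility of the model) $\delta_2$--visible space $Y$, yielding that any $\delta_2$--adapted visual metric on $\partial Y$ based at any point is $L$--linearly connected. I do not anticipate any serious obstacle here; the only thing to be careful about is choosing $\Sigma_0$ strictly larger than $R$ so that the $R$--sphere and all auxiliary geodesics used in the definition of spherical connectivity sit inside the isometrically modeled ball, and verifying $\delta_2$--visibility of $Y$, which follows from the local model structure exactly as in the proof of Lemma~\ref{lem:app_of_CCH}.
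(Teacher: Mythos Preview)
Your proposal is correct and follows essentially the same approach as the paper, which proves the corollary in one line: ``By Lemma~\ref{lem:sphere_conn_implies_lin_conn}, we therefore have the following.'' You have simply made explicit the transfer of the $(\Delta,R)$--spherical connectivity condition through the ball isometry and flagged the visibility hypothesis needed to invoke Lemma~\ref{lem:sphere_conn_implies_lin_conn}, both of which the paper leaves implicit.
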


\section{Unwrapping a family of axes} \label{s:unwrap family}
In the previous section we unwrapped the complement of a single tube. In this section we iterate this process, starting from a suitable family of axes in a hyperbolic space on which a group $G$ acts. We construct a sequence of ``partially" unwrapped spaces, and we show that they converge to a space where another group $\widehat G$ acts. This $\widehat G$ will be the drilled group from the main Theorem~\ref{t:Drill}, and the main goal of this section is to show that it is relatively hyperbolic, and a drilling of $G$ along $g$.  

\subsection{Assumptions for this section}
 Fix a group $G$ acting freely and cocompactly on a $\delta_0$--hyperbolic graph $\Xz$ with boundary homeomorphic to $S^2$, and up to increasing $\delta_0$ we can and do assume that $X_0$ is $\delta_0$--visible, by \cite[Lemma 3.1]{BesMes}. We fix a loxodromic element $g$ and a continuous quasi-geodesic $\gammax$ stabilized by $g$. Fix also a linear connectedness constant $L_0$ and a doubling constant $\doub$ as in Assumption \ref{ass:X}, which exist by \cite[Proposition 4]{BonkKleiner05} and \cite[Theorem 9.2]{BonkSchramm} respectively. Fix $\lambda_0$ as in Assumptions \ref{ass:X} (for $\Yx=\gammax$), and notice that at this point we have that Assumptions \ref{ass:X} and \ref{ass:X2} hold for $X_0$ and $\Yx=\gammax$, and the associated constants.  Further, let $s_0, D_1, \delta_1, \delta_2, \sigma_0, \sys_0$ and $R_0$ be chosen as in Notation \ref{not:first_constants}.

We now make careful choices of some further constants: 
\begin{enumerate}
 \item Fix $\Sigma_0$ and $L$ as in Corollary \ref{cor:modeled_on_hatX_LC}.  Without loss of generality, we also assume that $\Sigma_0 \ge 10^9\delta_1$.
 \item Fix $\Sigma_1 = 20\Sigma_0 + 60\sigma_0 + 10{R_0} + 50\delta_2$.
  \item Fix $\Sigma = \Sigma_1 + 2{R_0}$.  This is the constant $\Sigma$ in our main theorem, Theorem \ref{t:Drill}.
\end{enumerate}

We furthermore assume that the collection of translates $\{ h \cdot \gammax \}_{h \in {G\smallsetminus \langle g \rangle}}$ forms a $\Sigma$--separated family (that is, each two distinct elements are at least $\Sigma$ apart), so the tubes $\left\{ h \cdot T_{R_0}(\gammax) \right\}_{h \in {G \smallsetminus \langle g \rangle}}$ form a $\Sigma_1$--separated family.

\subsection{Construction of the intermediate spaces} \label{ss:intermediate}

We now describe a sequence of pointed spaces obtained by repeatedly applying the ``unwrap and glue" Construction~\ref{unwrap_one_tube}.  The limit of these spaces will be the cusped space of the group obtained by drilling $G$ along $g$.

Let $x_0$ be a base vertex of $\Xz$ which lies further than $R_0$ from any $h\cdot\gammax$ with $h\in G$.  We also regard $x_0$ as a basepoint for $\wchXz= \CTC^{\Xz}_{R_0}\left(\bigcup_{h\in G} h \cdot \gammax\right)$ (see Definition \ref{def:CTC} and Convention \ref{conv:CTC}), that is, the completed complement of all the tubes around the translates $h \cdot \gammax$.

The collection of edge-paths in $\wchXz$ beginning at $x_0$ and ending on some shell is countable.  We choose an enumeration $\{p_i\}_{i\ge 1}$ of these paths.  For each $i$ we fix some $h_i$ so that $p_i$ ends on the shell $\CS^{\Xz}(h_i \cdot \gammax)$.  It will happen for some (in fact many) $i \ne j$ that $h_i=h_j$. 

Define the pair $(\widehat{Y}_1,y_1) = (\UG(\Xz,\gammax),x_1)$, where $x_1$ is any preimage of the basepoint $x_0 \in \Xz$.  By relabeling if necessary, we suppose that the path $p_1$ lifts to a path in $\widehat{Y}_1$ which starts at $y_1$ and finishes on the (single) horoball in $\widehat{Y}_1$. Proposition~\ref{prop:hatY_separated} furnishes $\widehat{Y}_1$ with a $\Sigma$--separated collection of tubes and horoballs, $\mathcal{A}_1$ say (cf. Definition~\ref{def:separated_tubes}).
Define $q_1 \co \mathrm{HO}(\widehat{Y}_1,\mathcal{A}_1) \to \wchXz$ be the restriction of the covering map $\widetilde{\mathcal{C}} \to \mathcal{C}$ from the construction of $\UG(\Xz,\gammax)$ to $\mathrm{HO}(\widehat{Y}_1,\mathcal{A}_1)$.
(The ``hollowed out'' space $\mathrm{HO}(\cdot,\cdot)$ was defined in Construction~\ref{con:hollow}.)

Let $j \ge 1$. Suppose, by induction, that for all $l$ such that $1 \le l\le j$ we have constructed a quadruple $\left( \widehat{Y}_l, \mc{A}_l, y_l, q_l \right)$ where: $\widehat{Y}_l$ is a space, $\mc{A}_l$ is a collection of tubes and horoballs in $\widehat{Y}_l$, $y_l \in \widehat{Y}_l$ is a basepoint and \[q_l \co \mathrm{HO}(\widehat{Y}_l,\mc{A}_l) \to \mathrm{HO}(\widehat{Y}_{l-1},\mc{A}_{l-1})\] is a $D_1$--covering map (see Definition~\ref{def:coarse_fund}) such that $q_l(y_l) = y_{l-1}$. Let $\wchY_l = \mathrm{HO}(\widehat{Y}_l,\mc{A}_l)$. (Note that composing the $D_1$--covers gives a $D_1$--cover $q_1\circ\cdots \circ q_l: \wchY_l\to\wchXz$.)

Furthermore, we suppose that the following properties hold, for each $1 \le l \le j$:
\begin{enumerate}
\item\label{item:simplyconnected} $\widehat{Y_l}$ is connected and $D_1$--simply-connected;
\item\label{item:Cap_sigmamod} $\widehat{Y_l}$ is $\Sigma_0$--modeled on $\UG(\Xz,\gammax)$;
\item \label{item:sigmamod} 
$\widehat{Y_l}$ is $\sigma_0$--modeled 
on $\left\{ \Xz^{\mathrm{cusp}}, \mathring{\horba} \right\}$  (see Notation~\ref{not:H} and Definition~\ref{def:zcusp} for these spaces);
\item\label{item:Y_lhyp} $\widehat{Y_l}$ is $\delta_2$--hyperbolic;
\item\label{item:tubes} $\mc{A}_l$ is a $\Sigma_1$--separated collection of tubes and horoballs, as in Definition \ref{def:separated_tubes};
\item\label{item:paths}   Let $(p_i^l)_{i\ge 1}$ be the sequence of paths starting at $y_l$ which are lifts of the $p_i$ (with $p_i^l$ being the lift of $p_i$). 
For each $k \ge 1$, the path $p_k^l$ finishes on a shell or a horosphere associated to some element of $\mc{A}_l$.
For each $k \le l$ the path $p_k^l$ finishes on a horosphere (not a shell).
\end{enumerate}
Note that the third item follows immediately from the second, the fact that $\Sigma_0 > \sigma_0$ and Proposition~\ref{prop:hatY_modeled} (observe that $\Xz$ is $\sigma_0$--modeled on $\left\{ \Xz^{\mathrm{cusp}}, \mathring{\horba} \right\}$).

For the base case of the induction, let us prove that the properties hold for $j=1$.

The property (\ref{item:simplyconnected}) is Proposition \ref{prop:hatY_modeled}.  A space is modeled on itself, so (\ref{item:Cap_sigmamod}) holds.  We have already noted that the property (\ref{item:sigmamod}) follows from (\ref{item:Cap_sigmamod}).
The property~(\ref{item:Y_lhyp}) holds by Proposition~\ref{prop:hatY_modeled} and Lemma~\ref{lem:app_of_CCH}.  We have already noted that Proposition~\ref{prop:hatY_separated} implies (\ref{item:tubes}).  Finally, the choice of $p_1$ above implies the property~(\ref{item:paths}).

Given these assumptions, we define a quadruple $\left(\widehat{Y}_{j+1}, \mc{A}_{j+1}, y_{j+1}, q_{j+1}\right)$ indutively so that all of these assumptions are satisfied.

If $p_{j+1}^j$ ends on a horosphere, then $\widehat{Y}_{j+1} = \widehat{Y}_j$, and all of the other data remains the same.

We now suppose that $p_{j+1}^j$ ends on the (completed) shell $S_j$ of a tube $C_j$ from $\mc{A}_j$, with core $c_j$.  We let $\widehat{Y}_{j+1}=\UG(\widehat{Y}_j,c_j)$ (see Construction \ref{unwrap_one_tube}).  That is, we do the following. We remove the tube $C_j$, we complete the space we obtain, we take the $D_1$--universal cover, and finally we glue in a copy of $\horba$.

Let $y_{j+1}$ be any preimage of $y_j$.  
Let $\mc{A}_{j+1}$ be the collection of all of the lifts of the elements of $\mc{A}_j$, along with the new horoball. 
It follows from Proposition \ref{prop:hatY_modeled} (and the assumptions on $\widehat{Y}_j$) that $\widehat{Y}_{j+1}$ is connected and $D_1$--simply-connected, so the property~(\ref{item:simplyconnected}) holds.  Let $B$ be a ball of radius $\Sigma_0$ in $\widehat{Y}_{j+1}$.  If $B$ intersects the new horoball, then it is contained in the $2\Sigma_0$--neighborhood of this horoball, and so by Lemma~\ref{lem:sheath} it is isometric to a ball of radius $\Sigma_0$ in $\UG(\Xz,\gammax)$ (this uses the choice of $\Sigma$,  Definition~\ref{def:separated_tubes}, and Remark~\ref{rem:separatedisomorphmetric}).  On the other hand, if $B$ does not intersect the new horoball then, by Lemma~\ref{lem:injectiveballs}, $B$ injects into $\widehat{Y}_j$, and so is isometric to a ball in $\UG(\Xz,\gammax)$ by induction.  Hence the property~(\ref{item:Cap_sigmamod}) holds, and so (\ref{item:sigmamod}) does also.  It follows from Lemma \ref{lem:app_of_CCH} that $\widehat{Y}_{j+1}$ is $\delta_2$--hyperbolic so (\ref{item:Y_lhyp}) holds.  It follows from Proposition \ref{prop:hatY_separated} that $\mc{A}_{j+1}$ forms a $\Sigma_1$--separated collection of tubes and horoballs, so (\ref{item:tubes}) holds.  The property~(\ref{item:paths}) holds by the construction. 

We define $\wchY_{j+1}$ to be $\mathrm{HO}(\widehat{Y}_{j+1},\mc{A}_{j+1})$ and construct the map $q_{j+1} \co \wchY_{j+1} \to \wchY_j$, which will be an infinite cyclic cover. 

We claim that we have the following diagram where the vertical arrows are covers. The middle column is the same as the right column except that the shells around tubes have not been completed. 

\begin{center}
\begin{tikzcd}
\widetilde{\CTC ^{\widehat{Y}_j}(c_j)}\arrow[d] & \widehat{Y}_{j+1}\smallsetminus \bigcup\mc{A}_{j+1}\arrow[hook',l]\arrow[hook,r]\arrow[d] & \wchY_{j+1}\arrow[d] \\
\CTC ^{\widehat{Y}_j}(c_j) & \widehat{Y}_j\smallsetminus\bigcup \mc{A}_j\arrow[hook',l]\arrow[hook,r] & 
\wchY_j
\end{tikzcd}
\end{center}

The left ($D_1$--universal) cover indeed restricts to the central cover because of the definition of $\mc{A}_{j+1}$. Since tubes are $\Sigma_1$--separated and $\Sigma_1$ is much larger than $s_0$, and lifts of tubes are tubes by Proposition \ref{prop:hatY_separated}, the central cover extends to a cover on the right. Let $q_{j+1}$ be the cover on the right.  By construction, this is a $D_1$--covering.

We can now consider all lifts $p^{j+1}_i$ of the $p_i$ via $q_{j+1}$ starting at the basepoint $y_{j+1}$.  We think of these lifts as obtained by first lifting to $\wchY_j$ and then to $\wchY_{j+1}$. It is clear that all $p^{j+1}_i$ finish on a shell or horosphere associated to an element of $\mc{A}_{j+1}$.  Moreover, by construction we see that $p_{j+1}^{j+1}$ finishes on a horosphere (since the full preimage of the shell we unwrap is a horosphere).  For $1 \le k \le j$ the fact that $p_k^j$ finishes on a horosphere implies that $p_k^{j+1}$ also finishes on a horosphere.

This finishes the inductive construction of $(\widehat{Y}_{j+1},\mc{A}_{j+1}, y_{j+1},q_{j+1})$.

Since the spaces $\wchY_j$ form a tower of $D_1$--covers, the following result is immediate.
\begin{lemma} \label{lem:wchY}
    There is an inverse limit $\wchY$ of the tower of covers $\wchY_j$, along with a $D_1$--covering map $q:\wchY\to \wchXz$, and a basepoint $\widecheck{y}$, as well as $D_1$--covering maps $r_j: \wchY\to \wchY_j$, such that $r_j(\widecheck{y})=y_j$.
\end{lemma}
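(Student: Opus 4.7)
The plan is to take $\wchY := \varprojlim \wchY_j$ in the category of combinatorial graphs: a vertex is a compatible sequence $(z_j)_{j\ge 1}$ with $z_j \in V(\wchY_j)$ and $q_{j+1}(z_{j+1}) = z_j$, and edges are defined coordinatewise. The basepoint $\widecheck y := (y_j)_j$ is well-defined because $q_{j+1}(y_{j+1}) = y_j$ by construction; the projection $r_j : \wchY \to \wchY_j$ is the coordinate map, and $q := q_1 \circ r_1 : \wchY \to \wchXz$. The identity $r_j(\widecheck y) = y_j$ is then immediate.

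The engine of the argument is the ball-stabilization property announced in Section~\ref{sec:outline}: for every $N$ there is $J = J(N)$ so that for all $k \ge j \ge J$, the tower map $q_{j+1} \circ \cdots \circ q_k$ restricts to a graph isomorphism $B_N(y_k) \to B_N(y_j)$. I would prove this using two inputs from Subsection~\ref{ss:intermediate}: the $\Sigma_1$--separation of $\mc{A}_j$ (property~(\ref{item:tubes})) implies that only finitely many tubes meet $B_N(y_j)$, and since every tube $T \subset B_N(y_j)$ is reached from $y_j$ by an edge-path whose image under $q_1 \circ \cdots \circ q_j$ is a shell-ending path in $\wchXz$, this image appears as some $p_{i(T)}$ in our enumeration, and property~(\ref{item:paths}) guarantees that at step $i(T)$ the corresponding tube in $\wchY_{i(T)}$ is replaced by a horoball. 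Taking $J$ to be the maximum of the relevant indices (with a small buffer to absorb the fact that unwrapping a tube on the boundary of $B_N$ slightly enlarges the ball), all subsequent unwrappings occur at tubes outside $B_N$ and so restrict to the identity on this ball.

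Granted stabilization, $B_N(\widecheck y) \subset \wchY$ is canonically identified with the stable $B_N(y_J)$, hence is a finite graph, and so $\wchY$ is locally finite. Each finite composition $q_{j+1} \circ \cdots \circ q_k$ is a $D_1$--covering map since each factor is, and stabilization immediately promotes this to the claim that $r_j$ and $q = q_1 \circ r_1$ are $D_1$--covering maps: any loop of length at most $D_1$ in $\wchY$ lies in some stable ball and so lifts through the tower by the existing lifting property of the finite compositions. The main obstacle will be the bookkeeping in the stabilization step, verifying that the tube unwrapped at step $i$ is the lift seen in $\wchY_j$ of the tube indexed by $p_i$; this is forced by uniqueness of path lifts in a cover together with property~(\ref{item:paths}), but spelling out the correspondence carefully is the technical core of the argument.
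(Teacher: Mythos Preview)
Your explicit inverse-limit construction is fine, but the proof you propose is far heavier than what the lemma needs, and one step is genuinely backwards.

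The paper treats this lemma as immediate, and the intended argument is just the Galois correspondence for $D_1$--covers. Each composite $q_1\circ\cdots\circ q_j:\wchY_j\to\wchXz$ is a $D_1$--cover (as you note), hence corresponds to a subgroup $H_j\le\pi_1^{D_1}(\wchXz)$, and the $H_j$ are nested. Take $\wchY$ to be the $D_1$--cover of $\wchXz$ corresponding to $H_\infty=\bigcap_j H_j$, with basepoint over $x_0$. Since $H_\infty\le H_j$, there is an intermediate covering map $r_j:\wchY\to\wchY_j$; it is a $D_1$--covering because (since $\wchY_j\to\wchXz$ is already a $D_1$--cover) the loops of length $\le D_1$ in $\wchY_j$ are exactly the lifts of such loops in $\wchXz$, so $(\wchY_j)^{D_1}$ coincides with the intermediate cover of $\wchXz^{D_1}$, and $r_j$ is the restriction of a genuine covering map of $2$--complexes. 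No stabilization is required.

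Two specific issues with your argument. First, the sentence ``any loop of length at most $D_1$ in $\wchY$ lies in some stable ball and so lifts through the tower'' has the direction wrong: for $r_j$ to be a $D_1$--cover you must show that short loops in the \emph{base} $\wchY_j$ lift to loops in $\wchY$, not anything about loops already in $\wchY$. Second, the ball-stabilization property you develop is essentially the content of the \emph{next} lemma (strong convergence of the $\widehat{Y}_j$ to $\widehat{Y}$), not this one; it is good that you see it coming, but it is misplaced here and makes the present proof look harder than it is. Finally, the naive inverse limit of compatible sequences need not be connected a priori, so if you insist on that model you should explicitly pass to the component of $\widecheck y$, or equivalently use the subgroup description above.
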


\begin{lemma}
\label{lem:pre-images}
Any component of the preimage of $\CS^{\Xz}(h_i\gammax)$ under $q: \wchY \to \wchXz$  is isometric to the $D_1$--universal cover $\widetilde{\CS^{\Xz}(h_i\gammax)}$.
\end{lemma}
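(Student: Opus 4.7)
The strategy hinges on property~\eqref{item:paths} of the construction: for every $k$ and every $l \geq k$, the lift $p_k^l$ terminates on a horosphere of $\mc{A}_l$. In the limit $\wchY$, this forces every component of the preimage of a shell to arise as a horosphere, which will already be $D_1$-simply connected and thus isometric to the universal cover.

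The first step is to establish, by induction on $j$, that every component of the preimage of $\CS^{\Xz}(h\gammax)$ under the composite cover $q_j^{\mathrm{tot}} := q_1 \circ \cdots \circ q_j : \wchY_j \to \wchXz$ is either a completed shell of some tube in $\mc{A}_j$ (hence isometric to $\CS^{\Xz}(h\gammax)$ via the tube comparability in Definition~\ref{def:separated_tubes}) or a horosphere of some horoball in $\mc{A}_j$ (hence isometric to $\widetilde{\CS^{\Xz}(h\gammax)}$). The inductive step uses Proposition~\ref{prop:hatY_separated}: lifts of tubes are tubes and lifts of horoballs are horoballs. Since both the combinatorial horoballs (by Lemma~\ref{lem:horoballs csc}) and the tubes are $D_1$-simply connected, the shells and horospheres of $\mc{A}_j$ lift isometrically to $\wchY_{j+1}$. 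The single shell being unwrapped at stage $j+1$ is replaced by a new horosphere, which by Construction~\ref{unwrap_one_tube} is precisely the $D_1$-universal cover of that shell.

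Next, I would fix a component $\mc{C}$ of $q^{-1}(\CS^{\Xz}(h_i\gammax))$ in $\wchY$ and pick any $x \in \mc{C}$. An edge-path $\alpha$ from $\widecheck{y}$ to $x$ projects under $q$ to one of the enumerated paths $p_{i_0}$, where necessarily $h_{i_0}\gammax = h_i\gammax$, and $r_j(\alpha) = p_{i_0}^j$. For every $j \geq i_0$, property~\eqref{item:paths} guarantees that $p_{i_0}^j$ terminates on a horosphere $H_j$ of $\mc{A}_j$. By the inductive claim, $H_j$ is a full component of $(q_j^{\mathrm{tot}})^{-1}(\CS^{\Xz}(h_i\gammax))$; since $r_j(\mc{C})$ is connected and contains $r_j(x) \in H_j$, we deduce $r_j(\mc{C}) \subseteq H_j$.

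To finish, observe that $H_j$ is $D_1$-simply connected (being the $D_1$-universal cover of a shell), so every component of $r_j^{-1}(H_j)$ maps isometrically onto $H_j$ via the $D_1$-cover $r_j$. A short argument using maximality of $\mc{C}$ as a component of $q^{-1}(\CS^{\Xz}(h_i\gammax))$ shows that $\mc{C}$ coincides with the component of $r_j^{-1}(H_j)$ that contains it: any larger connected set inside $r_j^{-1}(H_j) \subseteq q^{-1}(\CS^{\Xz}(h_i\gammax))$ would be contained in $\mc{C}$ by maximality. Therefore $\mc{C}$ is isometric to $H_j \cong \widetilde{\CS^{\Xz}(h_i\gammax)}$. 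I expect the main technical nuisance to be the careful bookkeeping in the inductive step --- specifically, verifying that shells of tubes \emph{other} than the one being unwrapped genuinely lift isometrically under $q_{j+1}$, which is exactly what the $\Sigma_1$-separation condition and Proposition~\ref{prop:hatY_separated} are designed to provide.
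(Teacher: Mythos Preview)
Your proposal is correct and follows essentially the same approach as the paper's proof, which is considerably terser: it simply observes that an edge-path from $\widecheck{y}$ to the component is a lift of some $p_j$, that by construction $p_j^j$ ends on a horosphere in $\wchY_j$, and that ``the same holds in the further cover $\wchY$.'' Your version unpacks this last clause carefully --- the induction on $j$ tracking shells versus horospheres, the use of $D_1$--simple connectivity of horospheres to pass to $\wchY$, and the maximality argument identifying $\mc{C}$ with a component of $r_j^{-1}(H_j)$ --- all of which the paper leaves implicit.
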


\begin{proof}
Consider any preimage $C$ as in the statement. An edge-path from the basepoint $\widecheck{y}$ to $C$ is a lift of one of the paths $p_j$. By construction, the lift of $p_j$ to $\wchY_j$ starting at $y_j$ ends on a horosphere. 
Therefore, the same holds in the further cover $\wchY$.
\end{proof}

\begin{definition} \label{def:hatY}
Denote by $\widehat{Y}$ the space obtained from $\wchY$ by gluing in copies of $\horba$ to each component in the pre-image of a completed shell as in Lemma \ref{lem:pre-images}.    
\end{definition}

We now want to show that the $\widehat{Y}_j$ strongly converge to $\widehat{Y}$; in order to do so we will use Lemma \ref{lem:R/3} (the lemma which, roughly, allows us to promote local homeomorphisms to local isometries).

\begin{lemma}
\label{lem:Y_j-converge}
The spaces $(\widehat{Y}_j,y_j)$ strongly converge to $(\widehat{Y},\widecheck{y})$. 
\end{lemma}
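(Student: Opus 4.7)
Fix $R > 0$. My plan is to construct, for all sufficiently large $j$, a basepoint-preserving isometry $\phi_j\co B_R(\widecheck y) \to B_R(y_j)$; the main step is showing that the balls $B_R(y_j)\subset \widehat Y_j$ stabilize once $j$ is large enough for the enumeration of paths to have processed all nearby cores.

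First, since $\widehat Y$ is $\sigma_0$--modeled on the locally finite spaces $\Xz^{\mathrm{cusp}}$ and $\mathring{\horba}$, the ball $B_R(\widecheck y)$ is finite. The horoballs of $\widehat Y$ meeting $B_R(\widecheck y)$ are glued, by Lemma~\ref{lem:pre-images}, along components of $q^{-1}(\CS^\Xz(h\gammax))\subset \wchY$; each such component is reached from $\widecheck y$ by a path whose projection to $\wchXz$ has length at most $R$ and ends on a shell. I would choose $N$ so that every edge-path from $x_0$ in $\wchXz$ of length at most $R + \sigma_0$ ending on a shell occurs among $p_1,\ldots,p_N$. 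Then property~(\ref{item:paths}) of the inductive construction ensures that for $j\ge N$ each such lift $p_i^j$ ends on a horosphere in $\widehat Y_j$, so every core of $\mc A_j$ within distance $R + \sigma_0$ of $y_j$ is a horoball rather than a tube.

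Next, I would show that for $j \ge N$, whenever a non-trivial unwrapping occurs in passing from $\widehat Y_j$ to $\widehat Y_{j+1}$, the new core $c_j$ sits at distance greater than $R + \sigma_0$ from $y_j$. This is because $c_j$ is the endpoint of the lift $p_{j+1}^j$, whose length equals $|p_{j+1}|$ since $D_1$--covering maps are local isometries. Lemma~\ref{lem:isom_of_balls} (applied directly for $R$ within the systolic range $\sys_0/Q_0 = 2^{25\sigma_0}$, and otherwise iteratively via the $\sigma_0$--modeling) then gives that the natural covering-and-cusping map $\widehat Y_{j+1} \to (\widehat Y_j)^{\mathrm{cusp}}$ from Construction~\ref{unwrap_one_tube} embeds $B_R(y_{j+1})$ isometrically into $\widehat Y_j$, yielding a basepoint-preserving isometry $B_R(y_{j+1}) \cong B_R(y_j)$.

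Iterating produces compatible isometries $B_R(y_{j'})\cong B_R(y_j)$ for all $j'\ge j\ge N$. Since $\wchY$ is the inverse limit of the $\wchY_{j'}$, and the horoballs of $\widehat Y$ meeting $B_R(\widecheck y)$ are canonically identified, via Lemma~\ref{lem:pre-images}, with horoballs of $\widehat Y_{j'}$ meeting $B_R(y_{j'})$ for $j'\ge N$, these isometries assemble to the required $\phi_j\co B_R(\widecheck y)\to B_R(y_j)$, establishing strong convergence. The hard part is extending the injectivity argument of Lemma~\ref{lem:isom_of_balls} beyond $3\sigma_0$--balls to arbitrary fixed $R$: within the systolic range this is automatic, while for larger $R$ one must iterate the $\sigma_0$--modeling and assemble the resulting local isometries into a global one using the stabilization above.
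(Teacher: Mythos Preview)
Your overall strategy—show that the balls $B_R(y_j)$ stabilize once all nearby shells have been processed, then identify the stable ball with $B_R(\widecheck y)$—is sound, and your argument that every tube within distance $R+\sigma_0$ of $y_j$ has become a horoball for $j\ge N$ is correct. However, the step where you invoke Lemma~\ref{lem:isom_of_balls} to conclude $B_R(y_{j+1})\cong B_R(y_j)$ contains a genuine gap. That lemma only yields isometry on balls of radius $3\sigma_0$; its proof relies on the inequality $2^{21\sigma_0+1}<\sys_0/Q_0=2^{25\sigma_0}$, which fails as soon as the radius exceeds a small multiple of $\sigma_0$. In particular the claimed ``systolic range $\sys_0/Q_0=2^{25\sigma_0}$'' is not the range of radii for which the argument works—that range is roughly $\sigma_0$, not $2^{25\sigma_0}$. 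Your fallback of ``iterating the $\sigma_0$--modeling'' does not repair this: local isometries do not assemble into a global one without an injectivity or monodromy argument, which is exactly what is missing.

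The correct tool here is Lemma~\ref{lem:injectiveballs}, which shows the cover $\widetilde{\CTC^{\widehat Y_j}(c_j)}\to\CTC^{\widehat Y_j}(c_j)$ is injective on \emph{any} ball disjoint from the new horoball, regardless of radius. Combined with Lemma~\ref{lem:R/3} (applied to a ball of radius $3R$, so you should choose $N$ using paths of length $\le 3R$ rather than $R+\sigma_0$), this gives the isometry you want. The paper's own proof takes a different and somewhat cleaner route: rather than stepping from $\widehat Y_j$ to $\widehat Y_{j+1}$, it works directly from the limit, building a set $C_k\supset B_k(\widecheck y)$ in $\widehat Y$ (the ball together with all horoballs it meets), showing that the cover $r_j:\wchY\to\wchY_j$ is eventually injective on $C_k\cap\wchY$ by the inverse limit property and Lemma~\ref{lem:pre-images}, extending across horoballs, and then applying Lemma~\ref{lem:R/3} once. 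This avoids the intermediate stabilization argument entirely.
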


\begin{proof}
Consider the ball $B_k$ of radius $k$ in $\widehat{Y}$ around $\widecheck{y}$. By Lemma \ref{lem:R/3} it suffices to construct an injective simplicial map from a sub-graph of $\widehat{Y}$ containing $B_k$ to $\widehat{Y}_j$ for any sufficiently large $j$, which is a local homeomorphism on a ball of slightly smaller radius.  Note that $B_k$ intersects finitely many horoballs, and we denote by $C_k$ the union of $B_k$ and said horoballs, and by $C'_k$ the intersection of $C_k$ with $\wchY$. We claim that the cover $r_j: \wchY\to \wchY_j$ restricted to $C'_k$ is injective for all sufficiently large $j$. Indeed, because of convergence of the covers $\wchY_i \to \wchY$, the restriction to $B_k\cap \wchY$ of $r_j$ is injective for all sufficiently large $j$. Moreover, as in the proof of Lemma \ref{lem:pre-images} each of the horospheres of $C'_k$ maps isometrically to $\wchY_j$ for any sufficiently large $j$.

For any $j$ such that $r_j$ restricted to $C'_k$ is injective, we claim that $C_k$ embeds in $\widehat{Y}_j$. Indeed, $C'_k$ does not intersect any completion of shells in $\wchY_j$ (since $C'_k$ contains all the horospheres it intersects), so that $C'_k$ embeds in $\widehat{Y}_j$. Since this embedding of $C'_k$ maps horospheres to horospheres, the embedding extends across the horoballs, giving the required embedding $C_k\to \widehat{Y}_j$. To see that the embedding is a local homeomorphism, notice that on $C'_k$ it is a composition of the restriction of a covering map and a homeomorphism, and the extension to the horoballs preserves being a local homeomorphism.
\end{proof}

\begin{corollary} \label{cor:Yhat_hyp}
The space $\widehat{Y}$ is $D_1$--simply connected and $\delta_2$--hyperbolic and $\delta_2$--visible.
\end{corollary}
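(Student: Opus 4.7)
The plan is to apply Lemma~\ref{lem:app_of_CCH} to $\widehat{Y}$, which will immediately yield both $\delta_2$--hyperbolicity and $\delta_2$--visibility. To invoke it, I need to verify three things about $\widehat{Y}$: it is connected, it is $\sigma_0$--modeled on $\{\Xz^{\mathrm{cusp}}, \mathring{\horba}\}$, and it is $D_1$--simply connected.

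Connectedness and the $\sigma_0$--modeling property will be the easier of the three, and both will be deduced from the strong convergence $(\widehat{Y}_j, y_j) \to (\widehat{Y}, \widecheck{y})$ established in Lemma~\ref{lem:Y_j-converge}, together with the analogous inductive properties \eqref{item:simplyconnected} and \eqref{item:sigmamod} listed at the start of Subsection~\ref{ss:intermediate}. Concretely, for any vertex $p \in \widehat{Y}$, choosing $R$ so that $B_{\sigma_0}(p) \subseteq B_R(\widecheck{y})$, strong convergence gives an isometry $B_R(\widecheck{y}) \cong B_R(y_j)$ for $j$ sufficiently large; then connectedness of $\widehat{Y}_j$ yields an edge-path in $\widehat{Y}$ from $\widecheck{y}$ to $p$, and the identification carries $B_{\sigma_0}(p)$ onto a ball of $\widehat{Y}_j$ which is isometric to a ball in $\Xz^{\mathrm{cusp}}$ or $\mathring{\horba}$.

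The $D_1$--simple connectivity is the main obstacle, and my plan is to handle it in two stages. First, use Proposition~\ref{prop:CvK} to reduce to showing that $\wchY$ is $D_1$--simply connected: by Definition~\ref{def:hatY} and Lemma~\ref{lem:pre-images}, $\widehat{Y}$ is obtained from $\wchY$ by gluing a combinatorial horoball onto each (connected) component of the preimage of a completed shell, so Proposition~\ref{prop:CvK} applies as long as $\wchY$ itself is $D_1$--simply connected. Second, to prove $\wchY$ is $D_1$--simply connected, I will take an arbitrary edge-loop $\alpha$ in $\wchY$, observe that $\alpha$ lies in a finite subcomplex which for all sufficiently large $j$ embeds isometrically into $\wchY_j$ via the inverse-limit description in Lemma~\ref{lem:wchY} (equivalently, via strong convergence after pushing the argument out of horoballs), and exhibit a $D_1$--van Kampen filling of $\alpha$ inside $\wchY_j$. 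To see that $\wchY_j$ is $D_1$--simply connected, observe that the inclusion $\wchY_j \hookrightarrow \widehat{Y}_j$ is a coarse deformation retraction (horoballs retract onto their bases), so Proposition~\ref{prop:def retract on pi_1} combined with the inductive property~\eqref{item:simplyconnected} for $\widehat{Y}_j$ and Lemma~\ref{lem:horoballs csc} transfers $D_1$--simple connectivity to $\wchY_j$. The only delicate point is that, to pull the finite filling back from $\wchY_j$ into $\wchY$, I need the isometries from strong convergence to be compatible across nested radii; this is fine because the isometries in Lemma~\ref{lem:Y_j-converge} arise from the canonical covering maps $q_{j+1}$ via Lemma~\ref{lem:R/3}, and so are automatically coherent with inclusions.

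Once these three hypotheses are established, Lemma~\ref{lem:app_of_CCH} applies directly and finishes the proof.
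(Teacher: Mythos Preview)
Your overall strategy---reduce to Lemma~\ref{lem:app_of_CCH} by checking connectedness, $\sigma_0$--modeling, and $D_1$--simple connectivity---matches the paper, and your treatment of the first two properties via strong convergence is fine.

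The gap is in your argument for $D_1$--simple connectivity. You claim that $\wchY_j = \mathrm{HO}(\widehat{Y}_j,\mc{A}_j)$ is $D_1$--simply connected, but this is false. Recall from Construction~\ref{con:hollow} that the hollowed-out space removes not only the open horoballs but also the tubes, replacing them by completed shells. At stage $j$ the collection $\mc{A}_j$ still contains tubes (those not yet unwrapped), and by Proposition~\ref{p:summary}\eqref{eq:pi_1Y} each such shell contributes a copy of $\mathbb{Z}$ to $\pi_1^{D_1}$. Equivalently, $\wchY_j$ is an intermediate $D_1$--cover of $\wchXz$ strictly below the $D_1$--universal cover $\wchY$, so it cannot be $D_1$--simply connected. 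Relatedly, there is no inclusion $\wchY_j \hookrightarrow \widehat{Y}_j$: the tubes are present in $\widehat{Y}_j$ but removed in $\wchY_j$, and the new completing edges of the shells are not edges of $\widehat{Y}_j$, so the ``horoballs retract onto their bases'' picture does not describe the relationship between these two spaces.

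The paper avoids this detour entirely and works directly in $\widehat{Y}$. Given a loop $\sigma$ in $\widehat{Y}$, strong convergence (Lemma~\ref{lem:Y_j-converge}) lets you regard it as a loop in $\widehat{Y}_i$ for all large $i$. The key point is that the $\widehat{Y}_i$ are \emph{uniformly} $\delta_2$--hyperbolic, so the $D_1$--filling of $\sigma$ in $\widehat{Y}_i$ lives inside a ball of radius $k'$ depending only on the length of $\sigma$ and on $\delta_2$, not on $i$. Now increase $i$ further until $B_{k'}(y_i)\cong B_{k'}(\widecheck{y})$; the filling then transfers back to $\widehat{Y}$. This is both shorter and avoids any statement about the intermediate hollowed-out spaces.
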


\begin{proof}
Note that $\widehat{Y}$ is clearly connected and $\sigma_0$--modeled on the $\left\{\widehat{Y}_i \right\}$, so by the construction and properties of the $\widehat{Y}_i$, the space $\widehat{Y}$ is $\sigma_0$--modeled on $\left\{ \Xz^{\mathrm{cusp}}, \mathring{\horba} \right\}$.  Thus by Lemma~\ref{lem:app_of_CCH}, to prove that $\widehat{Y}$ is $\delta_2$--hyperbolic and $\delta_2$--visible it suffices to prove that 
$\widehat{Y}$ is $D_1$--simply-connected.
Therefore, suppose that $\sigma_0$ is a loop in $\widehat{Y}$.  Then $\sigma_0$ lies in some ball $B_k(y)$.  By strong convergence we can interpret $\sigma_0$ as a loop in $\widehat{Y}_i$ for all sufficiently large $i$.  These loops can be filled by a coarse disk which lies entirely in some $B_{k'}(y_i)$, where $k'$ depends on the length of $\sigma_0$, but not on $i$, since all of the $\widehat{Y}_i$ are $\delta_2$--hyperbolic.  For sufficiently large $i$, the ball $B_{k'}(y_i)$ is isometric to $B_{k'}(y)$, which means that this filling can be interpreted as a filling in $\widehat{Y}$.  This proves that $\widehat{Y}$ is $D_1$--simply-connected, and completes the proof.
\end{proof}

Recall from Definition~\ref{def:coarse_fund} that for $\Gamma$ any graph, and $D>0$, we write $\Gamma^D$ for the $2$--complex obtained by attaching disks to all loops of length at most $D$. 

\begin{lemma}
The space $\wchY^{D_1}$ is simply-connected. In particular, $\wchY^{D_1}$ is the universal cover of $\wchXz^{D_1}$.
\end{lemma}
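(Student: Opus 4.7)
The plan is to show that $\pi_1^{D_1}(\wchY) = 1$; the statement about $\wchY^{D_1}$ being the universal cover of $\wchXz^{D_1}$ then follows because $q: \wchY \to \wchXz$ is a $D_1$-covering by Lemma \ref{lem:wchY}, so it extends to a cover $\wchY^{D_1} \to \wchXz^{D_1}$, and simply-connected covers are universal covers.

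The starting point is Corollary \ref{cor:Yhat_hyp}, which says $\widehat{Y}^{D_1}$ is simply-connected. Recall that $\widehat{Y}$ is obtained from $\wchY$ by gluing a combinatorial horoball $\horba_i$ to each component $\widetilde{\SCS_i}$ of the preimage of a completed shell (Definition \ref{def:hatY}). Crucially, each $\horba_i$ is $D_1$-simply-connected by Lemma \ref{lem:horoballs csc} (as $D_1 \ge 5$), and each horosphere $\widetilde{\SCS_i}$ is also $D_1$-simply-connected, being, by construction, the $D_1$-universal cover of $\SCS^{\Xz}(h_i\gammax)$. So morally, passing between $\widehat{Y}$ and $\wchY$ amounts to gluing $D_1$-simply-connected pieces along $D_1$-simply-connected subspaces, which should not change the $D_1$-fundamental group.

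Concretely, I would mimic the proof of Proposition \ref{prop:CvK} in the converse direction. Take a loop $\alpha$ in $\wchY$. By Corollary \ref{cor:Yhat_hyp} it is null-homotopic in $\widehat{Y}^{D_1}$, so it bounds a singular disk diagram $\mathcal{D}$ whose $2$-cells are loops of length at most $D_1$ in $\widehat{Y}$. By property (\ref{item:tubes}) of the intermediate spaces, the horoballs in $\widehat{Y}$ are $\Sigma_1$-separated, and since $\Sigma_1 \gg D_1$ by our choice of constants, the boundary of each $2$-cell enters at most one horoball $\horba_i$. For a $2$-cell whose boundary is entirely in $\wchY$, it is already a $2$-cell of $\wchY^{D_1}$. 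For a $2$-cell $c$ whose boundary enters some $\horba_i$, write $\partial c = a_1 b_1 \cdots a_k b_k$ with $a_j \subset \wchY$ and $b_j \subset \horba_i$ having endpoints on $\widetilde{\SCS_i}$; using $\pi_1^{D_1}(\horba_i)=1$, replace each $b_j$ by a path $b_j'$ on $\widetilde{\SCS_i}$ (the modification is a $D_1$-homotopy in $\horba_i^{D_1}\subset \widehat{Y}^{D_1}$). The new boundary loop lies in $\wchY$, and the concatenation of the inserted loops $b_j (b_j')^{-1}$ lies in $\widetilde{\SCS_i}\subset \wchY$ and is $D_1$-contractible in $\widetilde{\SCS_i}^{D_1}\subset \wchY^{D_1}$. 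Iterating over all such $2$-cells yields a disk diagram for $\alpha$ in $\wchY^{D_1}$.

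The main technical nuisance is that the combinatorial horoball metric is exponentially compressed relative to the horosphere metric, so a subpath $b_j$ of length at most $D_1$ in $\horba_i$ may only admit replacement paths $b_j'$ of exponentially larger length on $\widetilde{\SCS_i}$. This is not an obstruction: the replacement path $b_j'$ need not be short, only homotopic to $b_j$ through the $D_1$-simply-connected complex $\horba_i^{D_1}$, and the resulting long horospherical loops can be filled using the $D_1$-simple-connectedness of $\widetilde{\SCS_i}$, which places no length restriction on the loops it fills. Care is needed in keeping track of where each stage of the filling takes place, but no new geometric input beyond the $D_1$-simple-connectedness of $\horba_i$, $\widetilde{\SCS_i}$, and $\widehat{Y}$ is required.
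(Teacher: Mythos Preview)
Your approach matches the paper's: fill $\alpha$ in $\widehat{Y}^{D_1}$ (using Corollary~\ref{cor:Yhat_hyp}), then push the filling into $\wchY^{D_1}$ using that horoballs and horospheres are $D_1$--simply-connected. However, your execution of the ``push'' step is faulty for \emph{mixed} $2$--cells, i.e.\ cells $c$ whose boundary meets both $\mathring{\horba_i}$ and $\wchY\smallsetminus\widetilde{\SCS_i}$. With your decomposition $\partial c = a_1 b_1\cdots a_k b_k$ (the $a_j$ in $\wchY$, the $b_j$ in $\horba_i$, all endpoints on $\widetilde{\SCS_i}$), replacing each $b_j$ by a horospherical path $b_j'$ gives a loop $\partial c' = a_1 b_1'\cdots a_k b_k'$ in $\wchY$, but you do not show that $\partial c'$ bounds in $\wchY^{D_1}$. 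Your sentence ``the concatenation of the inserted loops $b_j(b_j')^{-1}$ lies in $\widetilde{\SCS_i}\subset\wchY$'' is false --- these loops lie in $\horba_i$, not in $\widetilde{\SCS_i}$ --- and in any case that observation only yields $\partial c\simeq\partial c'$ in $\widehat{Y}^{D_1}$, which is not what is needed. Your final paragraph refers to filling ``long horospherical loops'', but $\partial c'$ is \emph{not} horospherical: the $a_j$ are genuine paths in $\wchY$ off the horosphere.

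What is missing is a second push, this time of the $a_j$ onto $\widetilde{\SCS_i}$, via a $D_1$--homotopy \emph{inside} $\wchY^{D_1}$. Each $a_j$ has length $\le D_1$ and endpoints on $\widetilde{\SCS_i}$, hence lies in the $\frac{\Sigma_1}{10}$--neighborhood of $\widetilde{\SCS_i}$ in $\wchY$, which (by item~(3) of Definition~\ref{def:separated_tubes}) is isomorphic to the corresponding neighborhood of $\widetilde{\SCS}$ in $\widetilde{\SCTC}$. There the lifted $Q_0$--deformation retraction of Lemma~\ref{lem:cdr} moves $a_j$ rel endpoints onto $\widetilde{\SCS}$, through cells of perimeter $\le 2Q_0+2\le D_1$, and the retraction only moves points \emph{toward} the shell so it stays in that neighborhood. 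After both pushes the loop lies entirely on $\widetilde{\SCS_i}$ and is null there by its $D_1$--simple-connectedness. The paper's one-sentence proof also elides this point, but your more detailed write-up commits to an incorrect justification rather than leaving it implicit.
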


\begin{proof}
This follows quickly from the fact that horospheres are $D_1$--simply-connected.  Take a loop in $\wchY$, fill it in $\widehat{Y}$, and then replace those parts of the fillings which lie in horoballs with fillings on the horospheres.
\end{proof}

Note that since the construction of $\wchXz$ from $\Xz$ is $G$--equivariant, $G$ acts freely and cocompactly on $\wchXz$, hence on $\wchXz^{D_1}$.  Let $Z_{D_1} = \leftQ{\wchXz^{D_1}}{G}$.  We now have a sequence of regular covering maps:
\[ \wchY^{D_1} \to \wchXz^{D_1} \to Z_{D_1}, \]
where the first map extends $q$ and the second map has deck group $G$.

\begin{definition}
Let $\widehat{G}$ be the deck group of the cover $\wchY^{D_1} \to Z_{D_1}$. That is, let $\widehat{G}$ be the fundamental group of $Z_{D_1}$.
\end{definition}

The group $\widehat{G}$ acts freely and cocompactly on the space $\wchY^{D_1}$, since it is a covering action with (compact) quotient $Z_{D_1}$.  Thus $\widehat{G}$ also acts freely and cocompactly on $\wchY$.  Moreover, this action preserves the family of horospheres in $\wchY$, which means that $\widehat{G}$ also acts freely by isometries on the $\delta_2$--hyperbolic space $\widehat{Y}$.  The quotient $\widehat{Z} := \leftQ{\widehat{Y}}{\widehat{G}}$ is clearly quasi-isometric to a ray.  

\begin{definition}
Choose a horoball  $\horba$ in $\widehat{Y}$ and let $P = \Stab_{\widehat{G}}(\horba)$.
\end{definition}

\begin{theorem} \label{GPrelhyp}
The pair $\left(\widehat{G},P\right)$ is relatively hyperbolic, and $\partial(\widehat{G},P) = \partial\widehat{Y}$.
\end{theorem}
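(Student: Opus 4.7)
The plan is to verify that $\widehat{Y}$, together with the action of $\widehat{G}$ and the family $\mathcal{H}$ of horoballs from Definition \ref{def:hatY}, is precisely a cusp-uniform space in the sense of Definition \ref{def:gromov RH}, with $P$ as the stabilizer of the chosen horoball $\horba$. Once this is done, the statement about the Bowditch boundary is immediate from \cite[Theorem 9.4]{bowditch12}, which is cited right after Definition \ref{def:gromov RH}.

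Hyperbolicity and visibility of $\widehat{Y}$ have already been established in Corollary \ref{cor:Yhat_hyp}. The horoballs in $\mathcal{H}$ are pairwise disjoint, since the horospheres onto which they are glued correspond to distinct components of the preimage of a completed shell in $\wchY$ (Lemma \ref{lem:pre-images}), and these components sit inside the $\Sigma_1$-separated families $\mc{A}_j$ used to build the tower $\widehat{Y}_j$ in Subsection \ref{ss:intermediate}; uniform separation is preserved under the covers $r_j$. The family $\mathcal{H}$ is $\widehat{G}$-invariant because $\widehat{G}$ acts on $\wchY$ permuting the components of the $q$-preimage of each completed shell $\CS^{\Xz}(h_i\gammax)$, and horoballs are attached canonically to each such component.

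For cocompactness on the complement of $\bigsqcup\mathcal{H}$, note that by construction $\widehat{Y}\smallsetminus \bigsqcup\mathcal{H} = \wchY$, and $\widehat{G}$ is by definition the deck group of the covering $\wchY^{D_1}\to Z_{D_1}$ with $Z_{D_1}$ compact; hence $\widehat{G}$ acts freely and cocompactly on $\wchY^{D_1}$, and therefore on $\wchY$. For properness on all of $\widehat{Y}$, it suffices to show that the stabilizer of each horoball acts properly on it. By covering space theory and the identification of horospheres with $\widetilde{\SCS}$ via Lemma \ref{lem:pre-images}, the stabilizer of the horosphere under $\horba$ is isomorphic to $E=\pi_1\bigl(\leftQ{\SCS^{D_1}}{\langle g\rangle}\bigr)$ as in Lemma \ref{lem:pi_CS/g}. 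Since $E$ acts freely and cocompactly on $\widetilde{\SCS}$, the induced action extends to a free cocompact action on the combinatorial horoball $\horba$. Combining this with the free cocompact action on $\wchY$ and the uniform separation of horoballs yields properness of the $\widehat{G}$-action on $\widehat{Y}$.

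Finally, because $\wchXz/G$ has a single cusp coming from the $G$-orbit of $\gammax$, there is a single $\widehat{G}$-orbit of horoballs in $\widehat{Y}$, so $\{P\}=\{\Stab_{\widehat{G}}(\horba)\}$ is a full collection of orbit representatives. This verifies every clause of Definition \ref{def:gromov RH}, so $(\widehat{G},P)$ is relatively hyperbolic, and $\partial(\widehat{G},P)=\partial\widehat{Y}$ by \cite[Theorem 9.4]{bowditch12}. The main step requiring care is the $\widehat{G}$-equivariance of the horoball attachment, which reduces via Lemma \ref{lem:Y_j-converge} to equivariance of the finite-stage constructions $\widehat{Y}_j$; this is essentially bookkeeping but is the only genuinely non-formal ingredient.
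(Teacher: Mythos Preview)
Your overall strategy matches the paper's: verify the clauses of Definition~\ref{def:gromov RH} for the $\widehat{G}$-action on $\widehat{Y}$. However, you have omitted exactly the clause that the paper singles out as requiring work. Definition~\ref{def:gromov RH} asks for a family of \emph{horoballs} in the sense of Definition~\ref{def:horoball}, i.e.\ sets which coarsely coincide with sublevel sets of horofunctions based at points of $\partial\widehat{Y}$. The objects in your family $\mathcal{H}$ are \emph{combinatorial} horoballs (Definition~\ref{def:combhoroball}), attached extrinsically to $\wchY$. It is not automatic that these are horoballs in the intrinsic metric of $\widehat{Y}$: one must check that vertical rays in each copy of $\horba$ are actual geodesic rays in $\widehat{Y}$ converging to a single boundary point, and that the depth function agrees (up to bounded error) with a horofunction based there. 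The paper's proof is devoted entirely to this point; your proposal never addresses it.

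By contrast, the items you do discuss---disjointness of horoballs, $\widehat{G}$-invariance of $\mathcal{H}$, cocompactness on $\wchY=\widehat{Y}\smallsetminus\bigsqcup\mathcal{H}$, properness, transitivity on $\mathcal{H}$---are treated by the paper as essentially given (``the quotient by the group action is quasi-isometric to a ray''), and your arguments for them are fine apart from a slip: the $E$-action on $\horba$ is proper but certainly not cocompact (the quotient is a ray). Your final sentence identifies equivariance of the horoball attachment as ``the only genuinely non-formal ingredient,'' but in fact that is formal covering-space bookkeeping; the non-formal ingredient is the horofunction verification you left out.
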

\begin{proof}
We claim that the action of $\widehat{G}$ on $\widehat{Y}$ satisfies Gromov's definition of relative hyperbolicity (see Definition~\ref{def:gromov RH}). We know that $\widehat{Y}$ is hyperbolic and the quotient by the group action is quasi-isometric to a ray, which is in fact the image of a vertical ray $r$ in a copy of $\horba$ in $\widehat{Y}$ (which we identify with $\horba$ below). What we are left to show is that $\horba$ is coarsely the sub-level set of a horofunction $h$ based at the limit point of $r$. Notice that all vertical rays in $\horba$ are geodesic rays in $\widehat{Y}$ with the same point at infinity. Therefore the value of $h$ along any such ray $r'$ coincides up to an additive constant with minus the depth plus some value $v_{r'}$. The values $v_{r'}$ can be chosen independently of $r'$ since any two vertical rays come within bounded distance of each other and horofunctions are coarsely Lipschitz. This easily implies the claim that $\horba$ is coarsely the sub-level set of a horofunction, as required.
\end{proof}

Since $\widehat{G}$ is the deck group of the regular cover $\wchY^{D_1} \to Z_{D_1}$ and $G$ is the deck group of the intermediate (regular) cover $\wchXz^{D_1} \to Z_{D_1}$, there is a natural quotient map $\Omega :\widehat{G} \to G$ whose kernel is canonically identified with $\pi_1\left(\wchXz^{D_1}\right)$.

We have found a relatively hyperbolic group $\widehat G$ mapping onto $G$.  To show it is a drilling, we still need to show that the kernel of $\widehat G\to G$ is normally generated by a subgroup of $P$, and other related properties.

Note that each completed shell in $\Xz$ gives a conjugacy class of subgroups of $\pi_1\left(\wchXz^{D_1}\right)$. We refer to any such subgroup as a \emph{shell group}.   By Corollary~\ref{cor:pi_1_Z} (and the definition of coarse fundamental group) each shell group is isomorphic to the integers.

\begin{lemma}\label{lem:zbyz}
The peripheral subgroup $P$ is isomorphic to the $\Z$--by--$\Z$ group $E$ from Lemma \ref{lem:pi_CS/g}. Moreover, the intersection of $P$ with the kernel of the quotient $\widehat{G}\to G$ is a shell group $N$, and the image of $P$ in $G$ under the quotient map is a conjugate of $\langle g\rangle$.
\end{lemma}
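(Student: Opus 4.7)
My plan is first to identify $\Omega(P)$ with $\langle g\rangle$ and $P\cap\ker\Omega$ with a shell group $N$, and then to deduce $P\cong E$ via a covering-space calculation on $\widetilde{\SCS^{D_1}}$. Let $\horba_0$ denote the horoball of $\widehat Y$ stabilized by $P$ and let $C_0=\horba_0\cap\wchY$ be its base horosphere. By construction $C_0$ lies above a completed shell $\CS^{\Xz}(h\gammax)$ under $q\colon\wchY\to\wchXz$; replacing $g$ by the conjugate $hgh^{-1}$ if necessary, I may assume $h=1$, so that by Lemma~\ref{lem:pre-images} the horosphere $C_0$ is isomorphic to $\widetilde{\SCS}$.

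To compute $\Omega(P)$, I would first observe that any $p\in P$ preserves $C_0$, hence $\Omega(p)$ preserves $q(C_0)=\CS^{\Xz}(\gammax)$. Since $G$ acts freely on $\Xz$ and $\langle g\rangle$ is maximal cyclic with axis $\gammax$, the $G$-stabilizer of $\CS^{\Xz}(\gammax)$ is exactly $\langle g\rangle$, giving $\Omega(P)\subseteq\langle g\rangle$. For surjectivity I fix an arbitrary lift $\widehat g\in\widehat G$ of $g$; it permutes the components of $q^{-1}(\CS^{\Xz}(\gammax))$, and the deck group $\ker\Omega=\pi_1(\wchXz^{D_1})$ of the universal cover $\wchY^{D_1}\to\wchXz^{D_1}$ acts transitively on these components, so some $d\in\ker\Omega$ satisfies $d\widehat g(C_0)=C_0$. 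Then $d\widehat g\in P$ with $\Omega(d\widehat g)=g$.

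The kernel of $\Omega|_P$ consists of deck transformations of $\wchY^{D_1}\to\wchXz^{D_1}$ preserving the single component $C_0$. By Lemma~\ref{lem:pre-images} the restriction of this cover to $C_0\to\CS^{\Xz}(\gammax)$ is the $D_1$-universal cover $\widetilde{\SCS}\to\SCS$, whose deck group is $\pi_1^{D_1}(\SCS)\cong\Z$ by Corollary~\ref{cor:pi_1_Z}. Hence $N:=P\cap\ker\Omega$ is the associated shell group, isomorphic to $\Z$, and combined with the previous step I obtain a short exact sequence $1\to N\to P\to\langle g\rangle\to 1$.

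Finally, I would promote this to an isomorphism $P\cong E$. Since $\widehat G$ acts freely on the graph $\wchY$, so does $P$ on $C_0$, and this action extends canonically to a free, properly discontinuous action on the simply connected complex $\widetilde{\SCS^{D_1}}=(C_0)^{D_1}$. Taking quotients in stages, $C_0/N=\SCS$ and $\SCS/(P/N)=\SCS/\langle g\rangle$, so $\widetilde{\SCS^{D_1}}/P=\SCS^{D_1}/\langle g\rangle$. Therefore $P$ is the deck group of the universal cover of $\SCS^{D_1}/\langle g\rangle$, which is $E$ by Lemma~\ref{lem:pi_CS/g}. The point requiring the most care is checking that the various covers and actions line up coherently: specifically, the transitive action of $\ker\Omega$ on components of $q^{-1}(\CS^{\Xz}(\gammax))$ that underlies the surjectivity step, and the fact that taking $D_1$-fillings commutes with the tower of covers in the final step.
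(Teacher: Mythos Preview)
Your proof is correct and follows essentially the same strategy as the paper: both restrict the tower of covers $\wchY^{D_1}\to\wchXz^{D_1}\to Z_{D_1}$ to the horosphere $C_0\cong\widetilde{\SCS}$ and identify $P$ with the deck group $E$ of the resulting tower $\widetilde{\SCS^{D_1}}\to\SCS^{D_1}\to\SCS^{D_1}/\langle g\rangle$. The paper packages this into a single commutative diagram and invokes Lemma~\ref{lem:pi_CS/g}, whereas you unwind it into the two explicit steps of computing $\Omega(P)$ and $P\cap\ker\Omega$ separately before reassembling; the content is the same.
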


\begin{proof}
This follows from the proof of Lemma \ref{lem:pi_CS/g} and from  restricting the covers $\wchY^{D_1} \to \wchXz^{D_1}$ and $\wchXz^{D_1} \rightarrow Z_{D_1}$ to the horosphere corresponding to $\horba$, which is naturally identified with $\widetilde{\SCS}$. We have the following commutative diagram, where the vertical arrows are covers, and the horizontal arrows are inclusions.  Furthermore the deck group actions on the left extend to the deck group actions on the right. 

\begin{equation}
\begin{tikzcd}
   \widetilde{\SCS^{D_1}}\arrow[r,hook]\arrow[d]\arrow[loop left, "\langle a \rangle"] & \wchY^{D_1} \arrow[d] \\
   \SCS^{D_1}\arrow[r,hook]\arrow[d]\arrow[loop left, "\langle g \rangle"] & \wchXz^{D_1}\arrow[d]\\
 \leftQ{\SCS^{D_1}}{\langle g \rangle}\arrow[r,hook] &  Z_{D_1}
\end{tikzcd}
\end{equation}

By Lemma \ref{lem:pi_CS/g}, the group $E$ describes the deck group of the composition of the covers on the left. 
\end{proof}

We now show that $\pi_1(\wchXz^{D_1})$ is generated by the shell groups, and that these are all conjugate in $\widehat{G}$. 

\begin{lemma}\label{lem:genshell}
 $\pi_1\left(\wchXz^{D_1}\right)$ is generated by shell groups.
\end{lemma}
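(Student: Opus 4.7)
The plan is, given any loop $\alpha$ in $\wchXz^{D_1}$, to fill it by a disk diagram in an auxiliary space $Z^{D_1}$ that is simply connected, and then extract from that diagram an explicit expression of $\alpha$ as a product of conjugates of shell-group elements inside $\pi_1^{D_1}(\wchXz)$. I will set $Z = \wchXz \cup \overline{T}$, where $\overline{T} = \bigsqcup_{h \in G} h \cdot N_{R_0}(\gammax)$ is the disjoint union of the closed tubes removed to form $\wchXz$, so that $\wchXz \cap \overline{T} = S = \bigsqcup_h h \cdot S^{\Xz}_{R_0}(\gammax)$ is the union of the uncompleted shells.

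First I would check that $Z^{D_1}$ is simply connected. The space $Z$ differs from $\Xz$ only by the ``chord'' edges added in each completed shell: each chord joins two points $a,b$ of a shell with $d_{\Xz}(a,b) \leq 8\delta_0$, and has length $d_{\Xz}(a,b)$. Each such chord, together with a realizing $\Xz$-geodesic, bounds a loop of length at most $16\delta_0 \leq D_1$, and is therefore homotopic to that geodesic through a $2$-cell of $Z^{D_1}$. Collapsing all the chords via these $2$-cells gives a homotopy equivalence $Z^{D_1} \simeq \Xz^{D_1}$, and the latter is simply connected by Lemma \ref{lem:hyp_simply_conn} since $\Xz$ is $\delta_0$-hyperbolic and $D_1 \geq 16\delta_0$.

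Next, I would fill $\alpha$ with a disk diagram $\triangle \to Z^{D_1}$. Because the tubes are $\Sigma_1$-separated with $\Sigma_1 \gg D_1$ and each $2$-cell of $\triangle$ has image of diameter at most $D_1$, each $2$-cell meets at most one tube. For each tube $\overline{T}_i$ that is met, let $\triangle_i \subseteq \triangle$ be the sub-complex consisting of all $2$-cells meeting $\overline{T}_i$, together with their edges and vertices. The boundary $\partial \triangle_i$ in $\triangle$ is built out of edges shared with $2$-cells that are disjoint from $\overline{T}_i$; these edges therefore map into $\wchXz$, so $\partial \triangle_i$ is a collection of loops in $\wchXz$, each lying in the $D_1$-neighborhood of $\overline{T}_i$. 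Removing the interiors of the $\triangle_i$'s from $\triangle$ produces a disk diagram in $\wchXz^{D_1}$ whose boundary expresses $\alpha$ (with appropriate conjugations by paths in $\triangle$ from the basepoint) as a product of the loops appearing in the various $\partial \triangle_i$.

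The main technical step will be to show that each of these boundary loops represents an element of the shell group attached to $h_i \cdot \gammax$. The key observation is that, because other tubes are at distance at least $\Sigma_1$ away, the neighborhood $N_{D_1}(\overline{T}_i) \cap \wchXz$ is graph-isomorphic to the corresponding neighborhood inside the one-tube completed tube complement $\CTC_{R_0}^{\Xz}(h_i \cdot \gammax)$. Corollary \ref{cor:pi_1_Z} gives $\pi_1^{D_1}(\CS_{R_0}(h_i \cdot \gammax)) \cong \mathbb{Z}$ generated by the shell-group generator, and Corollary \ref{cor:cdr on pi_1} says the inclusion of the completed shell into this one-tube complement induces an isomorphism on $\pi_1^{D_1}$; pulling back via the graph isomorphism, every loop in $N_{D_1}(\overline{T}_i) \cap \wchXz$ represents a power of that shell-group generator in $\pi_1^{D_1}(\wchXz)$. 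This identifies each loop in $\partial \triangle_i$ with a shell-group element, and combining with the disk-diagram computation above writes $\alpha$ as a product of conjugates of shell-group elements, which is the lemma.
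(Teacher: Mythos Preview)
Your overall strategy coincides with the paper's: both show that the auxiliary space obtained by re-inserting the tubes into $\wchXz$ (so that the $1$--skeleton is $\Xz$ together with the shell-completion edges) is $D_1$--simply connected, and then deduce that $\pi_1^{D_1}(\wchXz)$ is generated by elements coming from the completed shells. The paper packages the second step via Seifert--van Kampen, taking $\mathrm{CT}$ to be the tubes together with their completed shells so that the intersection with $\wchXz$ is exactly $\mathrm{CS}$; the shell groups then appear directly as the edge groups, and normal generation follows formally. You instead unfold this into an explicit disk-diagram excision argument.

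There is one gap in your final step. You correctly argue that each boundary loop $\beta$ of $\triangle_i$ lies in $N_{D_1}(\overline{T}_i)\cap\wchXz$, and that this neighborhood is graph-isomorphic to the corresponding piece of the one-tube complement $\CTC_{R_0}^{\Xz}(h_i\gammax)$. Invoking Corollary~\ref{cor:cdr on pi_1} then tells you that $[\beta]$ is a power of the shell generator in $\pi_1^{D_1}\bigl(\CTC_{R_0}^{\Xz}(h_i\gammax)\bigr)$, but this says nothing directly about $[\beta]$ in $\pi_1^{D_1}(\wchXz)$: the $D_1$--homotopy to a shell loop furnished by that corollary lives in the one-tube complement, which neither contains nor is contained in $\wchXz$, so ``pulling back via the graph isomorphism'' is not justified as stated. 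What you need is that the homotopy stays inside a bounded annular neighborhood of $\overline{T}_i$, which then embeds in $\wchXz$ by the $\Sigma_1$--separation of tubes. This is in fact true: the deformation retraction underlying Corollary~\ref{cor:cdr on pi_1} is built in Lemmas~\ref{lem:easy retract} and~\ref{lem:cdr} by sliding points along geodesics toward $N_{R_0}(h_i\gammax)$, so it preserves any annulus $N_\tau(h_i\gammax)\smallsetminus T_{R_0}(h_i\gammax)$ (compare the proof of Lemma~\ref{lem:sheath}). Once you make this explicit, your argument goes through; the paper's Seifert--van Kampen formulation simply avoids having to say it.
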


\begin{proof}
Let $$\mathrm{CS} = \bigcup\limits_{h\in G\smallsetminus \langle g \rangle} \CS^{\Xz}(h\cdot \gammax),$$ and let
$$ \mathrm{CT} = \mathrm{CS} \ \bigcup  \left( \bigcup\limits_{h\in G \smallsetminus\langle g \rangle} T_{R_0}(h\cdot \gammax) \right).$$

We claim first that
the space $\Xz \cup \mathrm{CS}$ is $16\delta_0$--simply connected.  Indeed, let $\alpha$ be a loop in this space. By the definition of the completed shells, $\alpha$ may be homotoped to a loop in $\Xz$ across disks of circumference at most $2s = 16\delta_0$.  The space $\Xz$ is $\delta_0$ hyperbolic, so by Lemma~\ref{lem:hyp_simply_conn} it is $16\delta_0$--simply-connected.  The claim follows.

Next we claim that the space 
\begin{equation}\label{eq:vankampen}\wchXz^{D_1} \cup_{\mathrm{CS}^{D_1}} \mathrm{CT}^{D_1}
\end{equation}
is simply connected.
Let $\beta$ be a loop in the $1$--skeleton of this space.  
This loop is also a loop in the $1$--skeleton of $\Xz\cup \mathrm{CS}$.  Since that space is  $16\delta_0$--simply connected, there is a $16\delta_0$--filling of $\beta$.  We now subdivide the $2$--cells of this filling to obtain a ${D_1}$--filling of $\beta$ in the space from \eqref{eq:vankampen}.  Let $\alpha$ be one of the $2$--cell boundaries of the filling.  If it lies completely in $\wchXz$ or completely in $\mathrm{CT}$ we do not need to modify it.  Otherwise, we consider a maximal sub-segment lying in some component of $\mathrm{CT}$.  The endpoints lie in a completed shell, and are distance $\le 16\delta_0$ from one another, so they may be connected by a path in that completed shell of length at most $\Phi(16\delta_0)$, by Lemma \ref{lem:proper_dist}.  We can then replace our $2$--cell boundary by one inside a component of $\mathrm{CT}$ and one which has one fewer component not in $\wchXz$. This has length less that $16 \delta_0 + \Phi(16 \delta_0)$.  Repeating this process, we obtain a subdivision into loops of length at most ${D_1}$ inside a component of $\mathrm{CT}$ and a single loop of length at most
\[ 16\delta_0 + 16\delta_0 \Phi(16\delta_0) \]
in $\wchXz$.
This quantity is less than ${D_1}$ so the loop bounds a disk in $\Xz^{D_1}$.  Applying this subdivision process to all the loops in our original $16\delta_0$--filling gives a contraction in the space described in~\eqref{eq:vankampen}.

By Seifert--van Kampen the fundamental group of $\wchXz^{D_1}$ is generated by the fundamental groups of the components of $\mathrm{CS}^{D_1}$, in other words by the shell groups.
\end{proof}

\begin{lemma}
The shell groups are all conjugate in $\widehat{G}$. 
\end{lemma}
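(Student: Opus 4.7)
The plan is to exploit the short exact sequence $1 \to \pi_1(\wchXz^{D_1}) \to \widehat{G} \to G \to 1$ together with the fact, established above, that $\wchY^{D_1}$ is the universal cover of $\wchXz^{D_1}$. Under this universal covering, standard covering space theory identifies each shell group with the stabilizer, in $\pi_1(\wchXz^{D_1})$, of a specific lift of a completed shell $\CS^{\Xz}(h \cdot \gammax)$ to $\wchY^{D_1}$.

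First I would fix two shell groups $N_1, N_2$, arising from shells $\CS_i = \CS^{\Xz}(h_i \cdot \gammax)$ with chosen lifts $\widetilde{\CS}_i \subset \wchY^{D_1}$. Since $G$ acts transitively on the family $\{h \cdot \gammax : h \in G\}$, the element $h := h_2 h_1^{-1} \in G$ carries $\CS_1$ onto $\CS_2$. I would then pick any preimage $\widehat{h} \in \widehat{G}$ under the surjection $\widehat{G} \twoheadrightarrow G$. Because $\widehat{G}$ acts on $\wchY^{D_1}$ compatibly with the projection to $\wchXz^{D_1}$, the translate $\widehat{h} \cdot \widetilde{\CS}_1$ is again a lift of $\CS_2$, with stabilizer $\widehat{h} N_1 \widehat{h}^{-1}$. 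Since $\pi_1(\wchXz^{D_1})$ acts transitively on the set of lifts of $\CS_2$, this subgroup $\widehat{h} N_1 \widehat{h}^{-1}$ is conjugate to $N_2$ by an element of $\pi_1(\wchXz^{D_1}) \le \widehat{G}$, and composing the two conjugations realizes $N_1$ and $N_2$ as conjugate subgroups of $\widehat{G}$.

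The argument is essentially formal, and I do not expect any real obstacle. The only mild subtlety is that $\widehat{h}$ is selected as a set-theoretic preimage of $h$ rather than via a homomorphic section (which need not exist); this is harmless because any two such preimages differ by an element of $\pi_1(\wchXz^{D_1})$, and such a difference is absorbed in the final step by the transitivity of the deck group action on the lifts of $\CS_2$.
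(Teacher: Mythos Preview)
Your argument is correct and is essentially the same as the paper's, just phrased in the language of deck group actions and stabilizers of lifts rather than in the language of paths and loops in $Z_{D_1}$. The paper observes that since there is a single $G$--orbit of shells, any two basepoint-to-shell paths in $\wchXz^{D_1}$ descend to a loop in $Z_{D_1}$, and that loop in $\widehat{G}=\pi_1(Z_{D_1})$ is the conjugator; this is precisely the $\pi_1$ translation of your two-step conjugation by $\widehat{h}$ and then by a deck transformation.
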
 

\begin{proof}
This follows from the fact that there is only one $G$--orbit of shells in $\wchXz$. A specific conjugate of a shell group corresponds to a choice of path from a basepoint in $\wchXz$ to a shell, and given two such paths the corresponding paths give a loop in $Z_{D_1}$; this loop represents the conjugator in $\widehat{G}$ (thought of as $\pi_1(Z_{D_1})$) between the two shell groups.
\end{proof}

From the previous two lemmas,  $\pi_1(\wchXz^{D_1})$, considered as a subgroup of $\widehat{G}$, is normally generated by any single shell group.  Choose the shell group $N$ that lies in $P$.  Then $\widehat{G} / \llangle N \rrangle = G$. 

Thus, we have proved that $\widehat{G}$ is a drilling of $G$ along $g$ as required by Theorem \ref{t:Drill}.  

We must prove that $P$ is free abelian if and only if the action of $g$ on the boundary is orientation-preserving. 
This follows because by Lemma~\ref{lem:pi_CS/g} $P$ is the fundamental group of 
\[ \leftQ{\left( \partial \Xz \ssm \{\gammax^\pm\} \right)} {\langle g \rangle} \]
which is a torus, or a Klein bottle, according to whether the action of $g$ on $\partial \Xz \ssm \{\gammax^\pm\}$ is, or is not, orientation-preserving. 

To finish the proof of Theorem \ref{t:Drill} it remains to prove that the Bowditch boundary of $(\widehat{G},P)$ is a $2$--sphere, and also the final assertion about torsion (or lack thereof).

\section{Linearly connected limits of spheres}
\label{sec:limitsofspheres}
In this section we prove the following criterion for a weak Gromov-Hausdorff limit to be $S^2$:

\begin{theorem}\label{thm:limit_sphere}
 Suppose that $(M_i,d_i)$ are $L$--linearly connected metric spaces homeomorphic to $S^2$, and that the metric space $(M,d)$ is a weak Gromov-Hausdorff limit of the $M_i$. Suppose furthermore that:
 \begin{enumerate}
  \item $M$ is nondegenerate,
  \item $M$ is compact,
  \item $M$ has no cut-points,
  \item (Self-similarity) Given any $p\in M$ and any open set $U\subseteq M$, there exists a neighborhood $V$ of $p$ and an open embedding $V\to U$.
 \end{enumerate}
 Then $M$ is homeomorphic to $S^2$.
\end{theorem}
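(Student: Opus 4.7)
The plan is to apply Bing's characterization of the $2$--sphere: a Peano continuum is homeomorphic to $S^2$ if and only if it contains a simple closed curve, every simple closed curve in it separates it, and no arc in it separates it. I will verify each of these conditions for $M$.

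I would first check that $M$ is a Peano continuum. Compactness is given, and $L$--linear connectedness passes to the weak Gromov--Hausdorff limit (with a worse constant $\lambda^2 L$) by joining images $\phi_i(x),\phi_i(y)$ in $M_i$ by $L$--linearly connected sets, pulling back fine $\epsilon_i$--nets to $M$ via the $(\lambda,\epsilon_i)$--quasi-isometries, and extracting a subsequential Hausdorff limit of these pulled-back sets. Linear connectedness together with compactness gives a Peano continuum. The classical theorem that every Peano continuum without cut points is cyclically connected then provides the required simple closed curves automatically from the assumption that $M$ has no cut points.

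The substance of the proof lies in verifying the two separation properties. For ``no arc separates $M$'', suppose an arc $\alpha\subset M$ separates $M$ into disjoint nonempty open sets $U\sqcup V$ and fix $u\in U, v\in V$ at distance at least $\eta>0$ from $\alpha$. Discretize $\alpha$ finely, transport via $\phi_i$ to $M_i$, and bridge the samples by short paths (using uniform linear connectedness of the $M_i$) to obtain a genuine arc $\alpha_i \subset M_i$ lying in a small neighborhood of $\phi_i(\alpha)$; general position in $M_i\cong S^2$ lets us arrange injectivity. Since no arc separates $S^2$, there is a path in $M_i$ from $\phi_i(u)$ to $\phi_i(v)$ avoiding a definite neighborhood of $\alpha_i$. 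Pulling fine samples of this path back to $M$ and joining consecutive samples by short $L$--paths in $M$ produces, for $i$ sufficiently large, a connected subset of $M$ from $u$ to $v$ disjoint from $\alpha$, contradicting the separation. A dual argument shows every simple closed curve separates: approximating an SCC $\gamma\subset M$ by an SCC $\gamma_i\subset M_i\cong S^2$ yields two Jordan disks whose subsequential pullbacks to $M$ give disjoint nonempty open sets witnessing the separation by $\gamma$.

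The main obstacle is that the $\phi_i$ are not continuous, so continuous topological objects (arcs, SCCs, separating paths) cannot be shuttled directly between $M$ and the $M_i$ but only via finite samples reassembled using linear connectedness at the scales $\epsilon_i$. The hypotheses of the theorem are exactly calibrated for this: uniform linear connectedness of the $M_i$ enables the reassembly in $M_i$; self-similarity of $M$ guarantees that the analogous reassembly can be performed uniformly at every point of $M$, because every point has a neighborhood embedding into any prescribed open set where one has already produced a path; and the absence of cut points in $M$ rules out degenerate limits such as wedges of spheres joined at singular points that the approximation arguments alone would not preclude.
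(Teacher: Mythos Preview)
Your approach via the Zippin--Bing sphere characterization is different from the paper's, but the ``no arc separates $M$'' step has a genuine gap. The avoiding path you produce in $M_i\setminus\alpha_i$ may pass arbitrarily close to $\alpha_i$; when you pull fine samples back to $M$ and bridge them by $L$--linear connectivity, those bridges can meet $\alpha$. You give no mechanism to keep the path in $M_i$ at distance comparable to $\epsilon_i$ from $\alpha_i$. More tellingly, your argument never uses self-similarity in any concrete step (the sentence about ``reassembly at every point'' corresponds to nothing in the actual proof), so if it worked as written it would establish the theorem without that hypothesis --- and one should be suspicious of that. The ``every SCC separates'' step is likewise not an argument as stated: quasi-isometric images of Jordan disks are not open sets in $M$, and no separation of $M$ is extracted from them. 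A contrapositive (push a hypothetical path in $M\setminus\gamma$ forward to $M_i$ and observe it must avoid $\gamma_i$ by a definite margin) is a more promising line for that direction, but it is not what you wrote.

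The paper proceeds quite differently. It quotes from \cite{GMS} that non-planar graphs cannot embed in $M$, so by Claytor's theorem and the absence of cut points $M$ embeds in $S^2$; it also quotes that $M$ is weakly simply connected, and from this deduces (Lemma~\ref{lem:non-empty_interior}) that $M$ contains an open subset homeomorphic to $\mathbb{R}^2$. Self-similarity is then used once, and decisively: it upgrades ``some point has a Euclidean neighborhood'' to ``every point does,'' so $M$ is a $2$--manifold, and the embedding $M\hookrightarrow S^2$ is open as well as closed, hence a homeomorphism.
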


We start with a general lemma.

\begin{lemma}
\label{lem:lin_conn_lim}
Let $(M_i,d_i)$ be $L$--linearly connected spaces that weakly Gromov-Hausdorff converge to the compact metric space $M$. Then $M$ is linearly connected, connected,  and locally path-connected.
\end{lemma}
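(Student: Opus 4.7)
The plan is to establish the three properties in order: linear connectedness, from which connectedness is immediate, and then local path-connectedness via Mackay's observation following Definition~\ref{def:lc}.

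For linear connectedness, I verify the chain criterion of Lemma~\ref{lem:GMS6.19}: given $p,q\in M$ with $r=d(p,q)$, it suffices to exhibit a chain $p=p_0,\ldots,p_n=q$ in $M$ with $d(p_{j-1},p_j)\le r/2$ and total diameter at most $C\,r$ for a constant $C=C(L,\lambda)$ independent of the pair. Let $\phi_i\colon M\to M_i$ be the $(\lambda,\epsilon_i)$--quasi-isometries witnessing weak Gromov-Hausdorff convergence, with fixed quasi-inverses $\psi_i\colon M_i\to M$; fix $i$ large enough that $\epsilon_i$ is small compared to $r$. The images $\phi_i(p),\phi_i(q)$ lie within $\lambda r+\epsilon_i$ of one another in $M_i$, so by $L$--linear connectedness of $M_i$ they are contained in a connected set $J_i\subset M_i$ of diameter at most $L(\lambda r+\epsilon_i)$. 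Since $J_i$ is a connected subset of the metric space $M_i$, a standard open-partition argument shows that any two points of $J_i$ are joined by a $\delta$--chain in $J_i$, for every $\delta>0$. Choosing $\delta\approx r/(4\lambda)$ and applying $\psi_i$ yields a chain in $M$ with consecutive gaps bounded by $r/2+O(\epsilon_i)$ and total diameter bounded by a constant (depending only on $L,\lambda$) times $r+O(\epsilon_i)$; prepending $p$ and appending $q$ costs another $O(\epsilon_i)$ since $\psi_i\phi_i$ moves points by at most $O(\epsilon_i)$. Taking $i$ large enough to absorb all $O(\epsilon_i)$ slack, Lemma~\ref{lem:GMS6.19} gives that $M$ is $C'$--linearly connected for some $C'=C'(L,\lambda)$.

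Connectedness of $M$ is immediate from linear connectedness. For local path-connectedness, I first observe that $C'$--linear connectedness implies local connectedness of $M$: given $p\in M$ and $\varepsilon>0$, every $q\in B(p,\varepsilon/C')$ lies in a connected set of diameter $\le\varepsilon$ through $p$, and the union of these sets over $q$ is a connected neighborhood of $p$ contained in $\overline B(p,\varepsilon)$. Since $M$ is compact (hence locally compact) and now locally connected, Mackay's observation immediately after Definition~\ref{def:lc} allows us to replace the connected witness $J$ realizing linear connectedness by an arc, at the cost of an arbitrarily small enlargement of $C'$. Running the same neighborhood construction with arcs in place of connected sets then produces a path-connected neighborhood of $p$ inside $\overline B(p,\varepsilon)$, establishing local path-connectedness.

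The main technical obstacle is bookkeeping the additive error $\epsilon_i$ in the quasi-isometries: the chain produced inside $J_i$ has gaps controlled in $d_i$, but pulling back by $\psi_i$ introduces an additive $O(\epsilon_i)$ error into each gap, so the index $i$ must be chosen large relative to $r=d(p,q)$ while ensuring that the resulting constant $C'$ is independent of both $i$ and the pair $(p,q)$.
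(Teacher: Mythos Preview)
Your argument is correct, but it proceeds differently from the paper's in the linear-connectedness step. The paper does not invoke Lemma~\ref{lem:GMS6.19}; instead it pushes $p,q$ to $M_i$, takes the connected witnesses $C_i\subset M_i$, pulls back their closures $\overline{\psi_i(C_i)}\subset M$, and extracts a Hausdorff sublimit $D$ using compactness of $M$. Since Hausdorff limits of connected sets are connected, $D$ is a connected set through $p,q$ with the right diameter bound. Your approach trades this compactness/limit step for the chain criterion: you stay inside a single $M_i$ (with $i$ chosen depending on $d(p,q)$), extract a $\delta$--chain from $J_i$, and pull that back. Your route avoids the Hausdorff-limit machinery at the price of tracking the additive $\epsilon_i$ carefully and justifying that the resulting linear-connectedness constant is independent of the chosen index $i$; the paper's route avoids that bookkeeping but needs the compactness of the hyperspace of closed subsets.

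For local path-connectedness, you are actually more careful than the paper. The paper asserts that linear connectedness alone gives local path-connectedness and speaks of ``path-connected sets'' when the definition only furnishes connected sets. Your two-step argument (linear connected $\Rightarrow$ locally connected, then upgrade via Mackay's observation using compactness) makes this honest.
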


\begin{proof}
A linearly connected space is connected and locally path-connected.  Indeed, if $B$ is a ball of radius $\epsilon$ around a point $x$, then $L$--linear connectivity allows one to construct a path-connected $C \subset B$ containing the $\epsilon/L$--ball $B'$ around $x$.
This set $C$ is the union of the path-connected sets of controlled diameter containing the pairs $\{x,y\}$ for $y\in B'$.  Therefore, it suffices to prove that $M$ is linearly connected.

By assumption, there exist $\lambda$ and $\epsilon_i\to 0$ such that there exist $(\lambda,\epsilon_i)$--quasi-isometries $\psi_i:M_i\to M$ and $\phi_i: M\to M_i$ that are $\epsilon_i$--quasi-inverses of each other. Given any $p,q\in M$, consider connected sets $C_i\subseteq M_i$ of diameter at most $Ld_i(\phi_i(p),\phi_i(q))\leq L\lambda d(p,q)+L\epsilon_i$ containing $\phi_i(p),\phi_i(q)$. We can then consider the closures of $\psi_i(C_i)$, and consider a Hausdorff limit $D$ of a subsequence. It is easy to see that $D$ is connected, contains $p$ and $q$, and has diameter bounded linearly in terms of $d(p,q)$.
\end{proof}

The following two propositions are (essentially) proven in \cite{GMS}.

\begin{proposition}\label{prop:limit_planar}
 Suppose that $(M_i,d_i)$ are $L$--linearly connected metric spaces homeomorphic to $S^2$, and that the compact metric space $(M,d)$ is a weak Gromov-Hausdorff limit of the $M_i$. Either $M$ has a cut-point or it is planar.
\end{proposition}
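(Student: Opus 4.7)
Plan: I would first observe that $M$ is a Peano continuum. Compactness is given, and by Lemma~\ref{lem:lin_conn_lim} the space $M$ is linearly connected, connected, and locally path-connected.

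Assuming $M$ has no cut-point, I would prove planarity via Claytor's planarity criterion: a Peano continuum embeds in $S^2$ if and only if it contains no subcontinuum homeomorphic to one of four specific non-planar Peano continua, each of which in particular contains a topologically embedded copy of $K_5$ or $K_{3,3}$. Arguing by contradiction, suppose $M$ is not planar; then $M$ contains an embedded copy $\Gamma$ of $K_5$ or $K_{3,3}$, that is, finitely many distinguished vertex points and finitely many pairwise interior-disjoint arcs joining them in the correct combinatorial incidence pattern.

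The idea is then to transport $\Gamma$ into $M_i$ for all sufficiently large $i$ via the weak Gromov--Hausdorff quasi-isometries $\phi_i \colon M \to M_i$, producing an embedded $K_5$ or $K_{3,3}$ in $M_i \cong S^2$ and thus a contradiction with the planarity of $S^2$. For each vertex of $\Gamma$ I take its $\phi_i$-image; for each arc of $\Gamma$ I sample finely many points along the arc, push them into $M_i$ by $\phi_i$, and connect consecutive images by small connected sets in $M_i$ supplied by $L$-linear connectedness; concatenating yields a connected subset of $M_i$ with the correct endpoints, inside which a continuous arc can be extracted since $M_i$ is a Peano continuum (in fact linearly connected, so that, after the small refinement discussed in the paragraph following Definition~\ref{def:lc}, the connecting sets can be taken to be arcs themselves).

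The hard part will be ensuring injectivity, i.e.\ that the resulting map on $\Gamma$ is a genuine embedding in $M_i$. Distinct edges of $\Gamma$ in $M$ lie at positive pairwise distance (by compactness of each and pairwise disjointness), and under the $(\lambda,\epsilon_i)$--quasi-isometry $\phi_i$ this separation is preserved up to an additive error $\epsilon_i \to 0$, so their $\phi_i$-images are separated by a definite amount for all large $i$. The reconstruction arcs added via $L$-linear connectedness have diameter at most $L$ times the sampling spacing, so choosing the sampling fine enough will keep each reconstructed arc inside a tubular neighborhood of the corresponding $\phi_i$-image with width much smaller than the separation, forcing pairwise disjointness and hence an honest embedding. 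This is where both the uniformity of $L$ across the $M_i$ and the no-cut-point assumption (used to guarantee that the original non-planar subcontinuum contains the required embedded $\Gamma$ with positively separated arcs) enter crucially.
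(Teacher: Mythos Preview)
Your approach matches the paper's: establish that $M$ is a Peano continuum (via Lemma~\ref{lem:lin_conn_lim}), show that no non-planar graph embeds in $M$, and invoke Claytor's theorem. The paper simply cites \cite[Lemma~3.9]{GMS} for the second step rather than reproving it, whereas you sketch that argument directly.

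Your sketch of that step has a gap. You assert that ``distinct edges of $\Gamma$ in $M$ lie at positive pairwise distance (by compactness of each and pairwise disjointness)'', but edges of $K_5$ or $K_{3,3}$ that share a vertex are not disjoint, and near that shared vertex the two arcs come arbitrarily close. Consequently, after transporting to $M_i$ and reconnecting via $L$--linear connectedness, the reconstructed arcs near $\phi_i(v)$ may overlap in uncontrolled ways, and your tubular-neighborhood separation argument does not by itself force an \emph{embedding}. The standard repair is to trim each arc to a compact subarc bounded away from the vertices (these trimmed subarcs \emph{are} pairwise disjoint compacta, hence at positive mutual distance), transport and separate those, and then treat small neighborhoods of the vertex images in $M_i$ separately; extracting an embedded non-planar graph from the resulting configuration still requires some topological care. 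This is exactly the content that \cite[Lemma~3.9]{GMS} packages.

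Finally, your closing remark misplaces the role of the no-cut-point hypothesis: it is the hypothesis under which Claytor's theorem yields planarity from the absence of embedded $K_5$ and $K_{3,3}$, not an ingredient in the transport-and-separate argument itself.
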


\begin{proof}
By \cite[Lemma 3.9]{GMS}, non-planar graphs cannot topologically embed in $M$. Moreover, $M$ is connected and locally connected by Lemma \ref{lem:lin_conn_lim}.  Hence, $M$ is a Peano continuum that does not contain any non-planar graphs, and therefore by \cite{Claytor34} either it has a cut-point or it is planar.
\end{proof}

\begin{definition} \cite[Definitions 8.1, 8.2]{GMS}
Let $M$ be a metric space, and let $f \co S^1 \to M$.  For $\epsilon >0$, an \emph{$\epsilon$--filling} of $f$ is a triangulation of the unit disk $D^2$, along with a (not necessarily continuous) extension of $f$ to $\overline{f} \co D^2 \to M$ so that every simplex of the triangulation is mapped by $\overline{f}$ to a set of diameter at most $\epsilon$.

We say that $M$ is \emph{weakly simply connected} if, for every $\epsilon > 0$, every continuous $f \co S^1 \to M$ has an $\epsilon$--filling.
\end{definition}

The second result is an immediate consequence of \cite[Theorem 8.6]{GMS}.
\begin{proposition}\label{prop:simply_connected} 
Suppose that $(M_i,d_i)$ are simply connected $L$--linearly connected metric spaces, and that the compact metric space $(M,d)$ is a weak Gromov-Hausdorff limit of the $M_i$. Then $M$ is weakly simply connected.
\end{proposition}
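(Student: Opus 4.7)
The plan is to transfer the filling problem from $M$ to $M_i$ for large $i$ via the quasi-isometries coming from weak Gromov-Hausdorff convergence, use simple connectedness of $M_i$ to produce a genuine continuous filling there, and then pull back vertex-by-vertex to manufacture an $\epsilon$-filling in $M$.

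Fix $\epsilon > 0$ and a continuous loop $f\co S^1 \to M$. Let $\phi_i\co M\to M_i$ and $\psi_i\co M_i\to M$ be $(\lambda,\epsilon_i)$--quasi-isometries witnessing the weak Gromov-Hausdorff convergence, with $\epsilon_i\to 0$, and such that $d(\psi_i\circ\phi_i,\mathrm{Id}_M)\le \epsilon_i$. The first step is to promote the discontinuous map $\phi_i\circ f$ to a continuous loop $g_i\co S^1\to M_i$. Using uniform continuity of $f$, partition $S^1$ into arcs $I_1,\ldots,I_N$ of small variation; place $\phi_i(f(x_j))$ at the endpoints; and use $L$--linear connectedness of $M_i$ to connect consecutive endpoints by paths of controlled diameter. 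Provided the partition is fine enough (and $i$ is large), the resulting $g_i$ is continuous and satisfies $\sup_{x\in S^1} d(\psi_i(g_i(x)),f(x))\le \epsilon/4$.

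Second, simple connectedness of $M_i$ lets us extend $g_i$ to a continuous map $G_i\co D^2\to M_i$. Since $D^2$ is compact, $G_i$ is uniformly continuous, so we may choose a triangulation $T$ of $D^2$ whose mesh is so small that $\mathrm{diam}_{d_i}(G_i(\sigma))\le \eta$ for every simplex $\sigma$, where $\eta$ will be chosen shortly. We also arrange that $T$ restricts on $\partial D^2 = S^1$ to a partition finer than the one chosen in the previous step.

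Third, define $\overline f\co D^2\to M$ on vertices $v$ of $T$ by $\overline f(v) = f(v)$ for $v\in \partial D^2$ and $\overline f(v) = \psi_i(G_i(v))$ otherwise, and extend arbitrarily (e.g., piecewise constantly on open simplices) over each simplex. For two interior vertices $v,w$ of a simplex $\sigma$, the quasi-isometry bound gives
\[ d(\overline f(v),\overline f(w)) \le \lambda\, d_i(G_i(v),G_i(w)) + \epsilon_i \le \lambda\eta + \epsilon_i. \]
For a boundary vertex $v\in \partial D^2$ and an adjacent interior vertex $w$, we instead compare $f(v) = \overline f(v)$ with $\psi_i(g_i(v))$ (close by our construction of $g_i$) and then with $\psi_i(G_i(w)) = \overline f(w)$ (close by uniform continuity of $G_i$). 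Choosing $\eta$ and $i$ so that $\lambda\eta + \epsilon_i + \epsilon/4 \le \epsilon$ ensures $\mathrm{diam}(\overline f(\sigma))\le \epsilon$ for every simplex $\sigma$, yielding the desired $\epsilon$--filling of $f$ and hence weak simple connectedness of $M$.

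The main technical obstacle is the boundary compatibility: the pulled-back filling automatically agrees with $\psi_i\circ g_i$ on $\partial D^2$, which is merely close to (not equal to) $f$. Handling this requires the initial step of replacing $\phi_i\circ f$ with a continuous $g_i$ whose pullback $\psi_i\circ g_i$ uniformly $\epsilon/4$--approximates $f$, and arranging that the triangulation $T$ respects the partition used there so that each boundary simplex spans only a small arc of $S^1$. Linear connectedness of $M_i$ is precisely what is needed to control this approximation quantitatively, and the mesh of $T$ can then absorb the remaining error.
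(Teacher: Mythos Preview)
Your approach is the natural one and almost certainly matches what is done in \cite{GMS}; the paper here does not give an argument but simply cites \cite[Theorem 8.6]{GMS}. The structure---push the loop to $M_i$, fill continuously there, triangulate finely, and pull the vertices back via $\psi_i$---is exactly right, and your handling of the boundary (setting $\overline{f}(v)=f(v)$ on $\partial D^2$, arranging the triangulation to refine the $S^1$--partition, and checking the three vertex-pair cases) is correct.

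There is one genuine subtlety you gloss over. You write ``use $L$--linear connectedness of $M_i$ to connect consecutive endpoints by paths of controlled diameter,'' but by definition $L$--linear connectedness only furnishes a \emph{connected set} $J$ of diameter $\le L\,d(x,y)$, not an arc. Simple connectedness of $M_i$ does give path-connectedness, so you can certainly build a continuous $g_i$, but the connecting paths could a priori have large diameter, which would destroy the estimate $\sup_{x\in S^1} d(\psi_i(g_i(x)),f(x))\le \epsilon/4$ that the rest of the argument relies on. As the paper notes after Definition~\ref{def:lc}, Mackay's observation lets you upgrade the connected set to an arc (at the cost of slightly enlarging $L$) provided $M_i$ is locally compact and locally connected; the latter follows from linear connectedness, but local compactness is an extra hypothesis. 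In the paper's application the $M_i$ are Gromov boundaries of proper hyperbolic spaces, hence compact, so this is harmless there---but for the proposition as stated you should either add a completeness/local compactness hypothesis on the $M_i$, or supply an argument that sidesteps the need for controlled-diameter arcs.
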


Observe that if a compact metric space $M$ is weakly simply connected, and if $M'$ is a metric space homeomorphic to $M$, then $M'$ is also weakly simply connected. Thus we can speak about weakly simply connected compact spaces.

\begin{lemma}\label{lem:non-empty_interior}
Let $M$ be a subspace of $S^2$ (as topological spaces). Suppose that:
\begin{enumerate} 
 \item $M$ is nondegenerate,
 \item $M$ is compact,
 \item \label{itm:ncp} $M$ has no cut points, 
 \item \label{itm:clpc} $M$ is connected and locally path connected, 
 \item $M$ is weakly simply connected.
 \end{enumerate} 
 Then $M$ contains an open subset homeomorphic to $\R^2$.
\end{lemma}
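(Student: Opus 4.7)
The plan is: assuming $M$ has empty interior in $S^2$, derive a contradiction from weak simple connectedness. If instead $M$ has nonempty interior in $S^2$, then invariance of domain provides an open subset of $M$ homeomorphic to $\R^2$, and the conclusion follows. So suppose for contradiction that $M$ has empty interior in $S^2$.

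First I would locate a Jordan curve $C$ inside $M$. By hypotheses (1)--(4), $M$ is a nondegenerate Peano continuum with no cut points, and the classical cyclic connectedness theorem (Whyburn) guarantees that any two points of such an $M$ lie on a simple closed curve. Fix such a $C \subset M$. By the Jordan curve theorem, $C$ separates $S^2$ into two open disks $D_{+}$ and $D_{-}$. Since $M$ has empty interior in $S^2$, each $D_{\pm}$ meets $S^2 \smallsetminus M$, so I can pick $p \in D_{+} \smallsetminus M$ and $q \in D_{-} \smallsetminus M$. Fix a round metric $\rho$ on $S^2$ and set $\delta = \min\{\rho(p,M),\rho(q,M)\} > 0$.

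The second step is to upgrade an $\epsilon$-filling in $M$ to an honest continuous disk in $S^2 \smallsetminus \{p,q\}$. Since $M$ is compact, the identity $(M,d) \to (M,\rho|_M)$ is uniformly bicontinuous, so I can choose $\epsilon > 0$ small enough that every subset of $M$ of $d$-diameter at most $\epsilon$ has $\rho$-diameter less than $\delta/10$. Applying (5) to the loop $f \co S^1 \to C$ produces an $\epsilon$-filling $\overline{f} \co D^2 \to M$ over some triangulation $T$ of $D^2$. I then define a continuous map $g \co D^2 \to S^2 \smallsetminus \{p,q\}$ by setting $g = \overline{f}$ on the vertices of $T$, extending over each edge of $T$ by a spherical geodesic, and filling each $2$-simplex of $T$ by a small geodesic disk in $S^2$. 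These extensions are uniquely defined and continuous because every simplex has vertices in a common ball of $\rho$-radius at most $\delta/10$, inside which $S^2$ is strictly convex and disjoint from $\{p,q\}$.

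Finally I would compare $g|_{\partial D^2}$ with $f$ using $H_1(S^2 \smallsetminus \{p,q\};\Z) \cong \Z$. By construction $g|_{\partial D^2}$ is within spherical distance $\delta/10$ of $f$, hence homotopic to $f$ in $S^2 \smallsetminus \{p,q\}$; but $g|_{\partial D^2}$ bounds the disk $g(D^2) \subset S^2 \smallsetminus \{p,q\}$, so it is null-homologous. On the other hand $C = f(S^1)$ is a Jordan curve separating $p$ from $q$, so $[f]$ generates $H_1(S^2 \smallsetminus \{p,q\};\Z)$. This contradiction finishes the proof. The main obstacle is the thickening in the second step: one must ensure that the \emph{discontinuous} extension $\overline{f}$ can be replaced by a genuinely continuous map avoiding $\{p,q\}$, which is where the careful coordination between the $d$-scale of the filling, the $\rho$-distance from $M$ to $\{p,q\}$, and local convexity of $S^2$ is essential.
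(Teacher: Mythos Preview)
Your proof is correct and follows essentially the same strategy as the paper's: locate a Jordan curve $C\subset M$, pick points in each complementary disk that lie outside $M$, use weak simple connectedness to produce an $\epsilon$--filling of $C$, thicken it by spherical geodesics to a genuine continuous disk in $S^2$ missing those points, and obtain a topological contradiction. The only cosmetic differences are that the paper finds $C$ by an elementary path-cut-point argument rather than invoking Whyburn, and phrases the final contradiction as ``$g(D^2)$ must contain one of the two complementary disks'' rather than via $H_1(S^2\smallsetminus\{p,q\})$; these are two packagings of the same degree/separation fact.
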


 \begin{proof}
We note first that~\eqref{itm:clpc} implies that $M$ is path connected and that \eqref{itm:clpc} and \eqref{itm:ncp} together imply that $M$ has no path cut point.
 
 We consider the standard round metric $d_{S^2}$ on $S^2$ (that is, of diameter $\pi$), and the induced metric on $M$.

  Since $M$ is nondegenerate, there are distinct points $x$ and $y$ in $M$. Since it is path connected, there is a path $p$ in $M$ between $x$ and $y$. Take a third point $z$ on the path. Since $z$ is not a path cut point, there is another path $p'$ in $M$ between $x$ and $y$.  The union of $p$ and $p'$ contains an embedded circle $C$. 

Jordan's theorem ensures that $C$ cuts $S^2$ into two disks $D_1$ and $D_2$.  We claim that $M$ contains at least one of the disks, which proves that $M$ contains an open subset homeomorphic to $\R^2$, as required.

Otherwise, there are points $p_1 \in D_1\ssm M$, $p_2\in D_2\ssm M$.  Since $M$ is compact, the distance between $\{ p_1, p_2 \}$ and $M$ is realized.  Choose a positive $\epsilon$ so that
\[ \epsilon<\min\left\{\pi/2,d_{S^2}(M,\{p_1,p_2\})/2\right\}.\] 

Let $f \co S^1 \to M$ be so that $f(S^1) = C$.  Since $M$ is weakly simply connected, there is an $\epsilon$--filling $\overline{f} \co D^2 \to M$ of $f$. 
Since $\epsilon<\pi/2$, the $\overline{f}$--images of every two adjacent vertices in the triangulation of $D^2$ can be connected by a geodesic segment in $S^2$ of length at most $\epsilon$.  The resulting triangles can be filled in without increasing their diameter.  So there exists a continuous map $g\co D^2 \to S^2$ such that $g|_{S^1}=f$ and $g(D^2)$ lies in the $2\epsilon$--neighbourhood of $M$.  But then $g(D^2)$ must contain one of the two disks $D_1$ or $D_2$, and hence either $p_1$ or $p_2$, a contradiction.
\end{proof}

We are ready to prove the theorem.

\begin{proof}[Proof of Theorem \ref{thm:limit_sphere}]
  By Lemma~\ref{lem:lin_conn_lim}, $M$ is connected and locally path-connected.  

 By Proposition \ref{prop:limit_planar}, $M$ embeds in $S^2$, since it does not have cut-points. Moreover, it is weakly simply connected by Proposition \ref{prop:simply_connected}. But then Lemma \ref{lem:non-empty_interior} applies, so $M$ contains an open subset homeomorphic to $\R^2$. By self-similarity, each point of $M$ has an open neighborhood homeomorphic to $\R^2$. Hence, any embedding $M\to S^2$ is open. Such an embedding is also closed (since $M$ is compact and $S^2$ is Hausdorff), so it is a homeomorphism.
\end{proof}

\section{Proof of Theorem \ref{t:Drill}}
\label{sec:proof_of_main_theorem}

As noted at the end of Section \ref{s:unwrap family}, it remains to show that the Bowditch boundary of $(\widehat{G},P)$ is a $2$--sphere, and the statement about torsion.  We retain the notation from the previous sections.

By Theorem \ref{GPrelhyp} we have to show that $\partial \widehat{Y}$ is homeomorphic to the 2-sphere.

By Lemma \ref{lem:Y_j-converge}, the spaces $(\widehat{Y}_i,y_i)$ strongly converge to $(\widehat{Y},\widecheck{y})$.  By the construction of the $\widehat{Y}_i$, they are all $\delta_2$--hyperbolic and $\delta_2$--visible, so by Proposition~\ref{prop:GMS3.5} the spaces $\partial\widehat{Y}_i$ weakly Gromov--Hausdorff converge to $\partial\widehat{Y}$ (when all are equipped with $\delta_2$--adapted visual metrics at the given basepoints).

It is clear that $\partial\widehat{Y}$ is infinite (since the parabolic subgroup has infinite index, as can be seen in the quotient $G$ of $\widehat{G}$) and compact.  Moreover it is self-similar (as in Theorem \ref{thm:limit_sphere}), since it is the boundary of a relatively hyperbolic group with infinite index peripheral subgroups.  Therefore, the endpoints of loxodromic elements are dense in $\partial\widehat{Y}$.   Since each of the $\widehat{Y}_i$ is $\delta_2$--hyperbolic and $\Sigma_0$--modeled on $\left\{ \UG(\Xz,\gammax)\right\}$, by Corollary \ref{cor:modeled_on_hatX_LC} there is an $L$ such that the spaces $\partial\widehat{Y}_i$ are all $L$--linearly connected. We now show that each of the $\partial\widehat{Y}_i$ is homeomorphic to $S^2$. 

\begin{proposition}\label{prop:unwrap_S^2}
Each of $\partial\widehat{Y}_i$ is homeomorphic to $S^2$. 
\end{proposition}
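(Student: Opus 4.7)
The plan is induction on $i$. The base case $i = 1$ is immediate: $\widehat{Y}_1 = \UG(\Xz, \gammax)$ and Corollary~\ref{cor:UG0_S2} gives $\partial \widehat{Y}_1 \cong S^2$. For the inductive step, assume $\partial \widehat{Y}_i \cong S^2$. If $p_{i+1}^i$ ends on a horosphere then $\widehat{Y}_{i+1} = \widehat{Y}_i$, so we may assume $\widehat{Y}_{i+1} = \UG(\widehat{Y}_i, c_j)$ for some core $c_j$ of a tube in $\mathcal{A}_i$.

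First I would verify the hypotheses of Proposition~\ref{prop:deck_Z} for $(\widehat{Y}_i, c_j)$: properties (1) and (3) of the construction in Subsection~\ref{ss:intermediate} make $\widehat{Y}_i$ connected, $D_1$--simply connected, and $\sigma_0$--modeled on $\{\Xz^{\mathrm{cusp}}, \mathring{\horba}\}$, while property (5) together with the choice $\Sigma_1 > 10(R_0 + 6\sigma_0 + 2\delta_2)$ made at the start of the section guarantees $(R_0 + 6\sigma_0 + 2\delta_2)$--tube comparability of $(\widehat{Y}_i, c_j)$ with $(\Xz, \gammax)$. This produces a regular covering map
$$\Theta : \partial \widehat{Y}_{i+1} \setminus \{p\} \to \partial \widehat{Y}_i \setminus \{c_j^{\pm}\}$$
with deck group $\mathbb{Z}$, where $p$ is the parabolic point of the newly glued horoball.

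Next I would show that the total space of $\Theta$ is connected. Since $\widehat{Y}_{i+1}$ is $\delta_2$--hyperbolic (property (4)) and $\Sigma_0$--modeled on $\UG(\Xz, \gammax)$ (property (2)), Corollary~\ref{cor:modeled_on_hatX_LC} makes every $\delta_2$--adapted visual metric on $\partial \widehat{Y}_{i+1}$ uniformly $L$--linearly connected in the basepoint; the contrapositive of Proposition~\ref{prop:cut point implies not uniformly linearly connected} then shows that $\partial \widehat{Y}_{i+1}$ has no cut point, so $\partial \widehat{Y}_{i+1} \setminus \{p\}$ remains connected. By the inductive hypothesis, $\partial \widehat{Y}_i \setminus \{c_j^{\pm}\}$ is an open annulus (the two endpoints are distinct because $c_j$ is a bi-infinite quasi-geodesic in the hyperbolic space $\widehat{Y}_i$), with fundamental group $\mathbb{Z}$. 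A connected regular cover of such an annulus with deck group $\mathbb{Z}$ corresponds to a normal subgroup $N \triangleleft \mathbb{Z}$ with quotient $\mathbb{Z}$, forcing $N = 0$; thus $\Theta$ is the universal cover and $\partial \widehat{Y}_{i+1} \setminus \{p\} \cong \R^2$. Since $\widehat{Y}_{i+1}$ is proper (locally finite and hyperbolic), $\partial \widehat{Y}_{i+1}$ is compact Hausdorff, hence the one-point compactification of $\R^2$, namely $S^2$.

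I expect the hardest step to be this connectedness of the total space of $\Theta$, since the ``Moreover'' clause of Proposition~\ref{prop:deck_Z} identifies $\Theta$ with the universal cover only when $(\Upsilon, \gamma) = (\Xz, \gammax)$; the inductive step really does need a separate argument, and the cut-point obstruction via Proposition~\ref{prop:cut point implies not uniformly linearly connected} is the natural way to supply it. A fallback, should the cut-point argument have a hidden gap, would be to imitate the projection-and-lift construction from the ``Moreover'' part of Proposition~\ref{prop:deck_Z} directly, using Proposition~\ref{p:summary}(4) to locate the relevant $D_1$--universal cover inside the hollowed-out $\widehat{Y}_i$ and then transport the argument verbatim.
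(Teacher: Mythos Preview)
Your proposal is correct and follows essentially the same approach as the paper's own proof: induction with the base case via Corollary~\ref{cor:UG0_S2}, then for the inductive step use Corollary~\ref{cor:modeled_on_hatX_LC} to get uniform linear connectedness, invoke Proposition~\ref{prop:cut point implies not uniformly linearly connected} to rule out cut points so that the complement of the parabolic point is connected, and finally apply Proposition~\ref{prop:deck_Z} to identify that complement as a connected $\mathbb{Z}$--cover of an annulus, hence $\mathbb{R}^2$. Your identification of the connectedness step as the crux is exactly right, and your verification of the hypotheses of Proposition~\ref{prop:deck_Z} is more explicit than the paper's.
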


\begin{proof}
We prove this by induction on $i$.  In the base case $i=1$ we have $\widehat{Y}_1 = \UG(\Xz,\gammax)$, whose boundary is homeomorphic to $S^2$ by Corollary~\ref{cor:UG0_S2}.

Suppose now that $i \ge 1$ and that $\partial \widehat{Y}_i$ is homeomorphic to $S^2$.  
Then $\widehat{Y}_{i+1}= \widehat{Y}_{i}$ (and then $\partial \widehat{Y}_{i+1}$ is homeomorphic to $S^2$) or $\widehat{Y}_{i+1}=\UG(\widehat{Y}_i,c_i)$, for a core $c_i$ of a tube in $\widehat{Y}_i$.

Notice that $\partial \widehat{Y}_i\ssm\{c_i^{\pm}\}$ is an (open) annulus. By Corollary~\ref{cor:modeled_on_hatX_LC} we know that $\partial \UG(\widehat{Y}_i,c_i)$ is uniformly linearly connected (and, in particular, connected). Hence, by Proposition \ref{prop:cut point implies not uniformly linearly connected}, $\partial \UG(\widehat{Y}_i,c_i)$ does not have cut-points, so $\partial \UG(\widehat{Y}_i,c_i) \ssm \{ p_i \}$ is connected, where $p_i$ is the point at infinity of the new horoball in $\UG(\widehat{Y}_i,c_i)$.

In view of Proposition~\ref{prop:deck_Z} we have that $\partial \UG(\widehat{Y}_i,c_i)\ssm \{p_i\}$ is a connected $\Z$--cover of an annulus, and therefore it is homeomorphic to $\mathbb R^2$. As in the  proof of Corollary \ref{cor:UG0_S2}, $\partial \UG(\widehat{Y}_i,c_i)$ is a compact metric space, and removing a point makes it homeomorphic to $\mathbb R^2$.  So $\partial \UG(\widehat{Y}_i,c_i)$ is homeomorphic to $S^2$, and the induction is complete.
\end{proof}

Proposition \ref{prop:no cut points} now implies that $\partial\widehat{Y}$ has no cut-points. By Lemma \ref{lem:lin_conn_lim}, $\partial \widehat{Y}$ is locally connected. It now follows from Theorem \ref{thm:limit_sphere} that $\partial\widehat{Y}$ is homeomorphic to $S^2$, as required.  

Finally, we consider the issue of torsion in $G$ and $\widehat{G}$.  The following statement immediately implies the final assertion of Theorem~\ref{t:Drill} (and in fact proves somewhat more).

\begin{proposition} \label{prop:torsion}
Let $\Omega : \widehat{G} \to G$ be the quotient map described in Section~\ref{s:unwrap family}.  Then $\Omega$ is injective on any finite subgroup of $\widehat{G}$.  Conversely, if $F$ is a finite subgroup of $G$ then there exists a finite subgroup $\widehat{F} \subset \Omega^{-1}(F)$ so that $\Omega|_{\widehat{F}} : \widehat{F} \to F$ is an isomorphism. 
\end{proposition}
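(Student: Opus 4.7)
The plan is to recognize $\Omega\colon\widehat{G}\to G$ as a relatively hyperbolic Dehn filling of $(\widehat{G},P)$ along $N\lhd P$, and then to invoke standard consequences of the Dehn filling theorem. By Theorem~\ref{GPrelhyp}, $(\widehat{G},P)$ is relatively hyperbolic with Bowditch boundary $S^2$; by Lemma~\ref{lem:zbyz}, $N=\ker\Omega\cap P$ is the shell group, so $N\cong\mathbb{Z}$ and $\ker\Omega=\llangle N\rrangle$; and $P$ is either $\mathbb{Z}^2$ or the Klein bottle group, both torsion-free. Moreover, our careful choice of constants in Notation~\ref{not:first_constants}, together with Proposition~\ref{p:summary}(\ref{eq:systole}), guarantees that every nontrivial element of $N$ has length at least $\sys_0$ as a loop in $\SCS$, and hence the corresponding translation length on the horosphere $\widetilde{\SCS}$ (which is built directly into $\widehat{Y}$ as the base of the horoball $\horba$) is at least $\sys_0$. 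Since $\sys_0$ can be taken arbitrarily large, the filling is ``sufficiently long'' in the sense of Osin~\cite{osin:peripheral} and Groves--Manning~\cite{rhds}.

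For Part~1, I will apply the relatively hyperbolic Dehn filling theorem to conclude that $\ker\Omega=\llangle N\rrangle$ decomposes as a free product of conjugates of $N$. Since $N\cong\mathbb{Z}$ is torsion-free, this free product is torsion-free, and hence $\ker\Omega$ contains no nontrivial finite subgroups. Now if $\widehat{F}\le\widehat{G}$ is any finite subgroup and $f,f'\in\widehat{F}$ satisfy $\Omega(f)=\Omega(f')$, then $ff'^{-1}\in\ker\Omega\cap\widehat{F}$ is torsion, hence trivial. Therefore $\Omega|_{\widehat{F}}$ is injective.

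For Part~2, I will invoke the companion statement from the same Dehn filling theorems (see e.g.\ the results on elementary/finite subgroups in~\cite{osin:peripheral,rhds}): for a sufficiently long filling, every finite subgroup of the quotient $G$ is the image under $\Omega$ of a finite subgroup of $\widehat{G}$, up to conjugacy in $G$. Concretely, given a finite $F\le G$, there exist a finite subgroup $\widehat{F}'\le\widehat{G}$ and $g\in G$ with $gFg^{-1}=\Omega(\widehat{F}')$. Lifting $g$ to any $\widehat{g}\in\Omega^{-1}(g)$ and setting $\widehat{F}=\widehat{g}^{-1}\widehat{F}'\widehat{g}$, we obtain a finite subgroup of $\widehat{G}$ with $\Omega(\widehat{F})=F$. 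By Part~1, $\Omega|_{\widehat{F}}\colon\widehat{F}\to F$ is injective, hence an isomorphism.

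The main obstacle will be making precise the translation from our combinatorial systole bound on $\SCS$ to the ``sufficiently long'' hypothesis as it is stated in~\cite{osin:peripheral,rhds}, which is phrased in terms of the minimal length of nontrivial elements of $N$ measured in a combinatorial cusped space for $(\widehat{G},P)$. Since $\widehat{Y}$ already plays the role of such a cusped space in our construction and the horoball $\horba$ is glued onto $\widetilde{\SCS}$, the relevant length is exactly controlled by $\sys_0$, so this is primarily a matter of bookkeeping rather than new geometric input.
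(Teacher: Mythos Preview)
Your approach is essentially correct but takes a genuinely different route from the paper.  The paper gives a short, self-contained geometric argument: a finite subgroup $\widehat{F}\le\widehat{G}$ has an orbit of diameter $\le 4\delta_2+2$ in $\widehat{Y}$; since horoball stabilizers are torsion-free (Lemma~\ref{lem:zbyz}) and horoballs are $\Sigma_1$--separated, this orbit sits in a ball far from every horoball, which therefore injects into $\wchXz$ via the covering map (using Lemma~\ref{lem:isom_of_balls} inductively).  This gives injectivity of $\Omega$ on $\widehat{F}$ directly.  For the converse, a finite $F\le G$ has an orbit of diameter $\le 4\delta_0+2$ in $\Xz$, which (again by separation and torsion-freeness of $\langle g\rangle$) lies in a ball in $\wchXz$ small enough to lift through the $D_1$--cover $\wchY\to\wchXz$, producing the section.

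Your black-box approach via Dehn filling works, but two points deserve care.  First, the qualitative phrase ``sufficiently long'' is potentially circular: the threshold depends on $(\widehat{G},P)$, which was itself built from $\sys_0$.  You avoid this only by using a \emph{quantitative} Dehn filling theorem whose hypothesis depends just on the hyperbolicity constant $\delta_2$ of $\widehat{Y}$ (which is fixed independently of $\sys_0$).  This is exactly what the paper verifies in Section~\ref{sec:CL} via the Very Translating Condition, so your argument implicitly forward-references that section.  Second, for Part~2 the statement you need---that every finite subgroup of the filled group lifts to a finite subgroup, not merely that every torsion \emph{element} lifts---is not in \cite{osin:peripheral} or \cite{rhds} as stated (Osin's theorem gives elements of given order); you would need to cite a later source such as Dahmani--Guirardel--Osin, or observe that Cohen--Lyndon alone does not suffice (e.g.\ $\Z/2 * \Z/3 \twoheadrightarrow \Z/6$ has free kernel but $\Z/6$ does not lift).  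The paper's direct argument sidesteps both issues and is in fact no longer than the bookkeeping your approach requires.
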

\begin{proof}
    Let $\widehat{F}$ be a finite subgroup of $\widehat{G}$.  Then $\widehat{F}$ acts isometrically on the $\delta_2$--hyperbolic space $\widehat{Y}$.  By \cite[Lemma 4.2]{GM-splittings} there is an $\widehat{F}$--orbit $\widehat{F} \cdot \mathfrak{o}$ of diameter at most $4\delta_2+2$ in $\widehat{Y}$.

    Suppose that $\widehat{F}$ is nontrivial.  Then it cannot be contained in a stabilizer of a horoball in $\widehat{Y}$, by Lemma~\ref{lem:zbyz}.  However, the horoballs in $\widehat{Y}$ are $\Sigma_1$--separated, so if $\widehat{F}\cdot \mathfrak{o}$ lies within $20\delta_2$ of a horoball, $\widehat{F}$ must in fact stabilize that horoball.  It follows that $\widehat{F} \cdot \mathfrak{o}$ lie in a ball $B$ in $\widehat{Y}$ which does not lie within $15\delta_2$ of any horoball.

    By the definition of $\widehat{Y}$ as a strong limit of the $\widehat{Y}_j$, and by induction using Lemma~\ref{lem:isom_of_balls}, the ball $B$ maps injectively to a ball $\overline{B}$ in $\wchXz$ (since $B$ is not close to any horoball).

    Recall that $\widehat{G} = \pi_1\left(Z_{D_1}\right)$, and that $G$ corresponds to the deck group of the cover $\wchXz^{D_1} \to Z_{D_1}$ which is intermediate to the universal cover $\wchY^{D_1} \to Z_{D_1}$. The nontrivial elements of $\widehat{F}$ are represented by loops in $Z_{D_1}$.  These loops elevate to paths in $\wchXz$ which are not loops because $B$ maps injectively to the ball $\overline{B}$ in $\wchXz$. Thus, $\Omega$ is injective on $\widehat{F}$.

    Conversely, consider a finite subgroup $F$ of $G$.  Then $F$ has an orbit ${F} \cdot \mathfrak{o}$ of diameter at most $4\delta_0+2$ in $\Xz$.  If $F$ is nontrivial the assumption that translates of $\gammax$ are $\Sigma$--separated forces a ball around $\widehat{F} \cdot \mathfrak{o}$ to lie in $\wchXz$.  This ball lifts to a ball $B$ in $\wchY$, since this is a $D_1$--cover, and $D_1 \gg 4\delta_0+2$.  The lift of $\widehat{F} \cdot \mathfrak{o}$ in $B$ defines a collection of deck transformations in $\widehat{G}$ which forms a section of $\Omega$ on $F$ and also forms a subgroup of $\widehat{G}$, completing the proof.
\end{proof}

The proof of Theorem \ref{t:Drill} is now complete.

\section{The Cohen--Lyndon property}\label{sec:CL}
In this section we argue that the map from $(\widehat{G},P)$ to $G$ is a ``long filling'' in a reasonable sense.  Specifically, it satisfies the Cohen--Lyndon property, defined by Sun in \cite[Definition 2.2]{Sun20}
\begin{definition}
    A triple of groups $(A,B,C)$ with $C\lhd B < A$ satisfies the \emph{Cohen--Lyndon property} if there is a left transversal $T$ of $B\llangle C\rrangle$ in $A$ so that 
    \begin{equation*}
        \llangle C\rrangle = \bigast\limits_{t\in T} t C t^{-1}.
    \end{equation*}
\end{definition}
(To be more precise, the statement is that the natural map from the group on the right-hand side to $\llangle C\rrangle$ is an isomorphism.)

 Dahmani--Guirardel--Osin \cite[Theorem 2.27]{DGO} prove that the kernel of a long Dehn filling is a free product of conjugates of filling kernels, but do not give an explicit indexing set.  
 Sun clarifies that the triple $(A,B,C)$ satisfies the Cohen--Lyndon property whenever $(A,B)$ is relatively hyperbolic and $C$ is a subgroup of $B$ which is ``sufficiently long" in the sense it misses finitely many (short) elements.  (Both Sun and Dahmani--Guirardel--Osin work in a more general setting, where $B$ is (weakly) hyperbolically embedded in $A$.)
 In the relatively hyperbolic case this was proved in \cite[Theorem 4.8]{GMS}.  In fact the proof there yields the following quantitative statement:
\begin{theorem}\label{thm:cohenlyndon}
    Suppose that $(A,B)$ is a relatively hyperbolic group pair, with a geometrically finite action on a $\theta$--hyperbolic space $\Upsilon$, for some $\theta\geq1$, containing an equivariant family of convex horoballs which are $10^3\theta$--separated.  Suppose also (\emph{Very Translating Condition}) that $C\lhd B$ is chosen so that if $H$ is the horoball stabilized by $B$, then $d_\Upsilon(x,n \cdot x) \ge 10^4\theta$ for every $n\in C\ssm \{1\}$ and $x\in \Upsilon\ssm H$.

    Then $(A,B,C)$ satisfies the Cohen--Lyndon property.
\end{theorem}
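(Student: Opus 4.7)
The plan is to realize Theorem \ref{thm:cohenlyndon} as a quantitative version of \cite[Theorem 4.8]{GMS}, whose proof is essentially a ping-pong argument on $\Upsilon$. Our task is simply to verify that the constants $10^3\theta$ (separation) and $10^4\theta$ (very translating) are sufficient to carry that argument through. Surjectivity of the natural map from the free product to $\llangle C\rrangle$ is the easy part: since $\llangle C\rrangle$ is normally generated by $C$ and every coset of $B\llangle C\rrangle$ has a representative in $T$, an induction on word length in $\llangle C\rrangle$ rewrites any conjugate of $C$ as a product of factors in $\{tCt^{-1} : t\in T\}$. The content is injectivity.

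To set up the ping-pong, I would first observe that distinct $t_1,t_2\in T$ label \emph{distinct} horoballs: if $t_1Ht_1^{-1}=t_2Ht_2^{-1}$ then $t_1^{-1}t_2\in\mathrm{Stab}(H)=B\subset B\llangle C\rrangle$, contradicting the transversal property. So for each $t\in T$ the horoball $H_t := tHt^{-1}$ is unique, and $tCt^{-1}\lhd tBt^{-1}=\Stab(H_t)$ inherits the Very Translating Condition: for every $n\in tCt^{-1}\smallsetminus\{1\}$ and $x\in\Upsilon\smallsetminus H_t$, we have $d_\Upsilon(x,n\cdot x)\ge 10^4\theta$. Fix a basepoint $o\in\Upsilon\smallsetminus\bigcup_{g\in A}gHg^{-1}$.

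The key step is the following: given any nontrivial reduced word $w=c_1c_2\cdots c_n$ in the abstract free product, with $c_i\in t_iCt_i^{-1}\smallsetminus\{1\}$ and $t_i\neq t_{i+1}$, I would show that its image in $A$ moves $o$ by at least something like $(n-1)\cdot 10^2\theta$, hence is nontrivial. Consider the concatenation of geodesics
\[
[o,c_1\cdot o]\cdot[c_1\cdot o,c_1c_2\cdot o]\cdots[c_1\cdots c_{n-1}\cdot o, c_1\cdots c_n\cdot o].
\]
Each geodesic $[c_1\cdots c_{i-1}\cdot o,c_1\cdots c_i\cdot o]$ is the translate under $c_1\cdots c_{i-1}$ of $[o,c_i\cdot o]$, which by the Very Translating Condition passes deeply through $H_{t_i}$. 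The claim is that this broken path is a uniform quasi-geodesic which, projecting to the coned-off/horoball structure, visits the pairwise distinct horoballs $H_{t_1},\ldots,H_{t_n}$ in order, the $10^3\theta$-separation hypothesis preventing any two consecutive geodesic segments from backtracking along each other (since the ``bridge'' geodesic $[c_1\cdots c_{i-1}\cdot o, c_1\cdots c_i\cdot o]$ exits $H_{t_i}$ and must traverse at least $10^3\theta$ before entering $H_{t_{i+1}}$). Standard Morse-lemma estimates for quasi-geodesics in hyperbolic spaces (with the penetration/bridge language of Farb's relatively hyperbolic geometry) then give positive lower bounds on $d_\Upsilon(o,w\cdot o)$, and in particular $w\ne 1$ in $A$.

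The main technical obstacle is the usual one in these ping-pong arguments: controlling the entry and exit points of the geodesics $[c_1\cdots c_{i-1}\cdot o, c_1\cdots c_i\cdot o]$ at the horoball $H_{t_i}$, and showing that the projections of endpoints onto $H_{t_i}$ are moved a definite amount by $c_i$ (so that the geodesic really does ``dip deeply'' into $H_{t_i}$). This is where the precise ratio $10^4\theta/10^3\theta$ is used: the Very Translating Condition provides enough translation inside each horoball to overcome the bounded thin-triangle and closest-point-projection errors that arise at the transitions between horoballs, which are controlled by the separation constant. Once this local estimate is in hand, the global quasi-geodesicity follows from a standard concatenation lemma (e.g.\ the ``$(C,L)$-sequence'' lemma in \cite{DGO} or the version in \cite{GMS}), completing the proof.
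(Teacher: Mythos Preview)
The paper does not actually give a proof of this theorem: it simply asserts that the proof of \cite[Theorem 4.8]{GMS} already yields the quantitative statement with these explicit constants. Your plan---to rerun that ping-pong argument and check the constants---is therefore exactly what the paper is implicitly claiming, so at the level of strategy you are aligned with the authors.

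That said, your sketch has a few slips that would need fixing before it becomes an honest proof. First, $H$ is a horoball, not a subgroup, so write $H_t := t\cdot H$ rather than $tHt^{-1}$. Second, and more substantively, the horoballs that the broken geodesic visits are \emph{not} $H_{t_1},\ldots,H_{t_n}$. The $i$-th segment $[c_1\cdots c_{i-1}\cdot o,\ c_1\cdots c_i\cdot o]$ is the $(c_1\cdots c_{i-1})$--translate of $[o,c_i\cdot o]$, so it dips into $c_1\cdots c_{i-1}\cdot H_{t_i}$, not into $H_{t_i}$ itself. The relevant disjointness check is therefore that $c_1\cdots c_{i-1}\cdot H_{t_i}\neq c_1\cdots c_i\cdot H_{t_{i+1}}$, i.e.\ $H_{t_i}\neq c_i\cdot H_{t_{i+1}}$; this does follow from $t_i\neq t_{i+1}$ and the transversal property (using $c_i\in t_iBt_i^{-1}$ and $C\lhd B$), but it is not the argument you wrote. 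Third, you claim the visited horoballs are \emph{pairwise} distinct, which is neither needed nor what your argument shows; the local-to-global quasi-geodesic lemma only requires that \emph{consecutive} deep horoballs differ, together with the separation bound. Once you correct the bookkeeping of which horoballs are visited, the rest of your outline (Very Translating gives deep penetration, $10^3\theta$--separation prevents backtracking, concatenation lemma gives global quasi-geodesicity) is the standard shape of the argument and matches what \cite{GMS} does.
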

Two remarks are in order.  First, the Cohen-Lyndon property and Theorem \ref{thm:cohenlyndon} can be reformulated for triples $(A,\mc{B},\mc{C})$ where $\mc{B}$ is a family of subgroups and $\mc{C}$ a family of subgroups of the form $C_B\lhd B$ for $B\in \mc{B}$ (see \cite[Definition 3.13]{Sun20}).  All the theorems mentioned above are known in this generality.
But for the current purposes it is sufficient to consider a single subgroup.

Secondly, there is a misleading comment in \cite{GMS} near the beginning of Section 4.  It is there claimed \cite[Lemma 4.1]{GMS} that the Very Translating Condition is a \emph{consequence} of being a sufficiently long filling, and the relatively hyperbolic Dehn filling theorem is referenced.  In fact,  the Very Translating Condition is only about the filling kernels, so it is satisfied whenever those filling kernels contain no short nontrivial elements of the parabolics.  It is a consequence of the proof that the normal subgroup generated by the filling kernels also contains no such short elements.

We apply Theorem \ref{thm:cohenlyndon} to the setting of the current paper as follows.
\begin{theorem}\label{thm:longfilling}
    Let $(\widehat{G},P)$ be the relatively hyperbolic pair from Theorem \ref{GPrelhyp}, and let $N\lhd P$ be the shell group described in Lemma \ref{lem:zbyz}.
    The triple $(\widehat{G}, P, N)$ satisfies the Cohen-Lyndon property.
\end{theorem}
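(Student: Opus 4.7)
The plan is to verify that the triple $(\widehat{G},P,N)$ satisfies the hypotheses of Theorem~\ref{thm:cohenlyndon}, applied to the geometrically finite action of $\widehat{G}$ on $\widehat{Y}$ with horoball stabilizers being the conjugates of $P$.  Theorem~\ref{GPrelhyp} certifies that $(\widehat{G},P)$ is relatively hyperbolic via a cusp-uniform action on the $\delta_2$--hyperbolic space $\widehat{Y}$, so we set $\theta=\delta_2$.  The family of horoballs in $\widehat{Y}$ is $\widehat{G}$--equivariant by construction, and each horoball is convex up to a uniform additive error (indeed, they are $2\delta_2$--quasi-convex by the remark following Definition~\ref{def:separated_tubes}).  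By Item~\eqref{item:tubes} in the inductive construction of $\widehat{Y}$ in Subsection~\ref{ss:intermediate}, and by taking the inverse limit, the horoballs in $\widehat{Y}$ are $\Sigma_1$--separated.  Since $\Sigma_0\ge 10^{9}\delta_1$ and $\delta_2=1500\delta_1$, the choice $\Sigma_1=20\Sigma_0+60\sigma_0+10R_0+50\delta_2$ easily yields $\Sigma_1\ge 10^{3}\delta_2$, which is more than enough for the separation hypothesis.

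The substantive step is the Very Translating Condition.  Fix the horoball $H\subset\widehat{Y}$ stabilized by $P$, whose boundary horosphere is naturally identified with $\widetilde{\SCS}$.  Let $n\in N\ssm\{1\}$.  By Lemma~\ref{lem:zbyz}, $n$ corresponds to a nontrivial deck transformation of the $D_1$--universal cover $\widetilde{\SCS}\to \SCS$, so any $n$--orbit on $\widetilde{\SCS}$ projects to an essential loop in $\SCS$.  By Proposition~\ref{p:summary}.\eqref{eq:systole} this loop has length at least $\sys_0=2^{25\sigma_0}Q_0$, so for every $y\in \widetilde{\SCS}$ one has $d_{\widetilde{\SCS}}(y,n\cdot y)\ge \sys_0$.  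Lemma~\ref{lem:horbadistort} then translates this into the estimate
\[
d_{\widehat{Y}}(y,n\cdot y)\ \ge\ 2\log_2(\sys_0)-O(1)\ \ge\ 50\sigma_0-O(1),
\]
and since $\sigma_0\ge 10^{7}\delta_1$, the right-hand side dwarfs $10^{4}\delta_2=1.5\cdot 10^{7}\delta_1$.

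It remains to propagate this estimate from points on the horosphere to arbitrary points $x\in \widehat{Y}\ssm H$.  The plan here is standard: since $H$ is $2\delta_2$--quasi-convex in $\widehat{Y}$, closest-point projection $P_H\co\widehat{Y}\to H$ is coarsely $1$--Lipschitz with uniform additive error depending only on $\delta_2$, and it commutes with the action of $P$ (hence of $n$) up to such an error.  Writing $y=P_H(x)$ and $n\cdot y=P_H(n\cdot x)$, the quadrilateral with vertices $x$, $n\cdot x$, $n\cdot y$, $y$ is uniformly thin, giving
\[
d_{\widehat{Y}}(x,n\cdot x)\ \ge\ d_{\widehat{Y}}(y,n\cdot y)-O(\delta_2)\ \ge\ 10^{4}\delta_2,
\]
which is the Very Translating Condition.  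The main obstacle is keeping track of these projection errors carefully enough to show the bound survives the reduction to the horosphere; this is the step that needs the most care, but all the ingredients—quasi-convexity of $H$, the coarse equivariance of $P_H$ under $P$, and the exponential distortion between $\widetilde{\SCS}$ and $\widehat{Y}$—are already established in Section~\ref{sec:unwrap}.  With all three hypotheses of Theorem~\ref{thm:cohenlyndon} verified, the Cohen--Lyndon property for $(\widehat{G},P,N)$ follows.
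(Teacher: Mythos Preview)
Your approach is essentially the paper's: verify the hypotheses of Theorem~\ref{thm:cohenlyndon} for the action on $\widehat{Y}$, using the separation $\Sigma_1\ge 10^{3}\delta_2$, the systole bound from Proposition~\ref{p:summary}, the exponential distortion of Lemma~\ref{lem:horbadistort}, and a projection argument to pass from horosphere points to exterior points.

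There is, however, a genuine gap. Theorem~\ref{thm:cohenlyndon} requires an equivariant family of \emph{convex} horoballs, not merely quasi-convex ones. You assert only that the horoballs are ``convex up to a uniform additive error (indeed, they are $2\delta_2$--quasi-convex)'', and then apply the theorem as if that were enough. It is not what the hypothesis asks for. The paper handles this by replacing each combinatorial horoball $\mc{H}$ with the nested sub-horoball $\mc{H}'\subset\mc{H}$ at depth $2\delta_2$, which \emph{is} convex in $\widehat{Y}$ (this is \cite[Lemma~3.26]{rhds}, whose proof goes through verbatim here). The nested horoballs are still $\Sigma_1$--separated, and the Very Translating Condition is then checked for points outside $\mc{H}'$, with $y$ taken to be a closest point on $\partial\mc{H}$ (not $\partial\mc{H}'$). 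Without this replacement, Theorem~\ref{thm:cohenlyndon} does not apply as stated.

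A smaller point: your propagation step is sketched rather than carried out (``the step that needs the most care''). The paper makes it concrete: once $d_{\mc{H}}(y,n\cdot y)\ge 10\delta_2$ one argues that this distance is actually realized in $\widehat{Y}$, and then a direct thin-quadrilateral estimate gives $d_{\widehat{Y}}(x,n\cdot x)\ge d_{\widehat{Y}}(y,n\cdot y)-8\delta_2$. With the numerics you have ($2\log_2\sys_0\ge 50\sigma_0\gg 10^{4}\delta_2$), this finishes the argument cleanly.
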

\begin{proof}
    We have built a $\delta_2$--hyperbolic space $\widehat{Y}$ on which $(\widehat{G},P)$ acts geometrically finitely, so that $P$ preserves a horoball $\mc{H}$.  The family of translates of $\mc{H}$ is $\Sigma_1$--separated, and $\Sigma_1$ is much larger than $10^3\delta_2$, but the horoballs $\mc{H}$ may not be convex.  We replace them by nested $2\delta_2$--horoballs $g\mc{H}' \subset g\mc{H}$, which \emph{are} convex (see \cite[Lemma 3.26]{rhds} -- the proof there applies verbatim).  These nested horoballs  are obviously still $10^3\delta_2$--separated.
    (Here we are using that $\Sigma_1\ge 10 R_0 \ge 10\cdot 2\sigma_0 \ge 20\cdot 10^5 D_1 \ge 2\cdot 10^6\cdot 2 Q_0 \ge 4\cdot 10^6 \delta_2>10^3\delta_2$.)

    Next we verify the Very Translating Condition.  Let $x$ be a point outside $\mc{H'}$ and let $n\in N\ssm \{1\}$.
    Let $y$ be a closest point to $x$ on $\partial H$.  

    Now we note that our systole requirement ($\sys_0 = 2^{25\sigma_0} Q_0$) together with Lemma~\ref{lem:horbadistort} gives
    \begin{align*}
        d_{\mc{H}}(y,n\cdot y) & \ge 2\log_2\sys_0 - 2 \\
        & \ge 20\cdot 10^5 D_1 \\
        & \ge 40 \cdot 10^5 Q_0 \\
        & \ge 4\cdot 10^6 \delta_2.
    \end{align*}
    It is not hard to see that if $d_{\mc{H}}(a,b) 
    \ge 10\delta_2$, then the $\widehat{Y}$--distance is actually realized in $\mc{H}$.  Moreover a simple argument gives $d_{\widehat{Y}}(x,n\cdot x)\ge d_{\widehat{Y}}(y,n\cdot y) - 8\delta_2$. 
    
    Thus the above estimate gives $d_{\widehat{Y}}(y,n\cdot y) \ge 4\cdot 10^6\delta_2$, so $d_{\widehat{Y}}(x,n\cdot x) \ge (4\cdot 10^6 - 8)\delta_2>10^4\delta_2$, thus verifying the Very Translating Condition.  
\end{proof}

\section{Notation index}\label{sec:notation}

In this section, we collect (some of) the notation from this paper, with links to where it is introduced, and some discussion about how it is used.  Hopefully this will help the reader.

The reader may also want to consult Notation~\ref{not:first_constants}, where many of our constants are fixed.

\begin{itemize}
\item $M$ is a general compact metric space.
    \item $\Gamma$ is reserved for a general connected graph.
    \item $Z$ denotes a general metric space.
    \item $\Upsilon$ denotes a general Gromov hyperbolic metric space (with constant of hyperbolicity always locally specified, often $\delta$).

\item $\Xz$ is the $\delta_0$--hyperbolic space we start with. The constant $\delta_0$ is reserved for this specific value. See Assumption~\ref{ass:X}. From  Section~\ref{s:unwrap family} on we consider a group $G$ acting geometrically on $\Xz$.

\item Inside $\Xz$ is a $\lambda_0$--quasi-convex subset $\Yx$, with a fixed basepoint $w_0 \in \Yx$ (see also~\ref{ass:X}).  

In Assumption~\ref{ass:X2}, we further assume that $\partial \Xz \cong S^2$, and that $\Yx$ is a quasi-geodesic axis $\gammax$ for a loxodromic element $g$.  It is $\langle g \rangle$ that we drill along in the proof of Theorem~\ref{t:Drill}.  We make the assumption about the separation of the translates of $\gammax$ at the beginning of Section~\ref{s:unwrap family}.

\item For $K > 0$ an integer, the \emph{completed $K$--shell} about $\Yx$ is obtained by taking all the points at distance $K$ from $\Yx$, and adding edges of length at most $8\delta_0$ to make a connected graph, denoted $\CS_K(\Yx)$.  See Definition~\ref{def:completed shell} and Convention~\ref{conv:CS}.

\item The \emph{completed tube complement}, $\CTC_K(\Yx)$ is obtained by removing the tube about $\Yx$ and adding the edges from the completed shell.  See Definition~\ref{def:CTC} and Convention~\ref{conv:CTC}.

\item $\Xz^{\mathrm{cusp}}(K)$ is obtained by gluing a combinatorial horoball $\horbaD$ onto the copy of $\CS_K(\Yx)$ in $\CTC_K(\Yx)$.  See Definition~\ref{def:cusping}.

\item In Notation~\ref{not:first_constants}, many constants are fixed, including the radius $R_0$ of the tubes that are considered from that point forwards.  Further separation constants are fixed at the beginning of Section~\ref{s:unwrap family}.

\item $\mc{S}$ is the completed $R_0$--shell about $\gammax$ in $\Xz$, and $\SCTC$ is the associated completed tube complement.
These spaces have canonical regular $\Z$--covers $\widetilde{\mc{S}}$ and $\widetilde{\SCTC}$ which are $D_1$--simply-connected (for a constant $D_1$ fixed in \ref{not:first_constants}).  See Notation~\ref{not:H}.

\item $\horba$ denotes the combinatorial horoball based on $\widetilde{\SCS}$.  See Notation~\ref{not:H}.

\item $(\Upsilon,\gamma)$ is some space which is $R$--tube comparable (see Definition~\ref{def:comparable}) to $(\Xz,\gammax)$.

\item $\UG(\Upsilon,\gamma)$ means: Unwrap a completed tube complement $\CTC^\Upsilon_{R_0}(\gamma)$ and glue on a copy of $\horba$.  See Construction~\ref{unwrap_one_tube}.

\item $\wchXz$ is obtained by removing an equivariant family of tubes from $\Xz$ and gluing on corresponding copies of $\CS_{R_0}(\gammax)$.  See Subsection~\ref{ss:intermediate}.

\item We form the space $\wchXz^{D_1}$ by attaching disks to edge-loops of length $D_1$, see Definition \ref{def:coarse_fund}.  Then the quotient space is $Z_{D_1} = \leftQ{\wchXz^{D_1}}{G}$.  This is analogous to the manifold exterior in the 3--manifold case. 

\item The space $\wchY$ is a limit of certain covers of $\wchXz$. See Lemma~\ref{lem:wchY}.
\item The space $\widehat Y$ is the space $\wchY$ with an equivariant family of horoballs added. See Definition~\ref{def:hatY}.

In Section~\ref{s:unwrap family} we construct the group $\widehat{G}$ in the conclusion of Theorem~\ref{t:Drill} and prove in that the space $\widehat{Y}$ admits a geometrically finite $\widehat{G}$--action which exhibits the relative hyperbolicity of our drilled group.
\end{itemize}

\end{document}